\newtheorem{theorem}{Theorem}[section]
\newtheorem{remark}[theorem]{Remark}
\newtheorem{fact}[theorem]{Fact}
\newtheorem{caution}[theorem]{Caution}
\newtheorem{condition}[theorem]{Condition}
\newtheorem{claim}[theorem]{Claim}
\theoremstyle{definition}
\newtheorem{definition}[theorem]{Definition}
\newtheorem{corollary}[theorem]{Corollary}
\newtheorem{proposition}[theorem]{Proposition}
\newtheorem{example}[theorem]{Example}
\newtheorem{lemma}[theorem]{Lemma}
\begin{document}

\title[Cotorsion pairs in extriangulated categories.]{Mutation via Hovey twin cotorsion pairs and model structures in extriangulated categories.}

\author{Hiroyuki Nakaoka}
\email{nakaoka@sci.kagoshima-u.ac.jp} 
\address{Research and Education Assembly, Science and Engineering Area, Research Field in Science, Kagoshima University, 1-21-35 Korimoto, Kagoshima, 890-0065 Japan}
\author{Yann Palu}
\email{yann.palu@u-picardie.fr}
\address{LAMFA, Universit\'{e} de Picardie Jules Verne, 33 rue Saint-Leu, 80000 Amiens, France}
%

\thanks{Both authors would like to thank Professor Osamu Iyama and Professor Ivo Dell'Ambrogio for inspiring comments on a previous version of the paper.}
\thanks{The first author wishes to thank Professor Frederik Marks and Professor Jorge Vit\'{o}ria for their interests, and for giving him several opportunities.}
\thanks{The first author is supported by JSPS KAKENHI Grant Numbers 25800022.}

\begin{abstract}
We give a simultaneous generalization of exact categories and triangulated categories, which is suitable for considering cotorsion pairs, and which we call extriangulated categories.  Extension-closed, full subcategories of triangulated categories are examples of extriangulated categories. We give a bijective correspondence between some pairs of cotorsion pairs which we call Hovey twin cotorsion pairs, and admissible model structures. As a consequence, these model structures relate certain localizations with certain ideal quotients, via the homotopy category which can be given a triangulated structure. This gives a natural framework to formulate reduction and mutation of cotorsion pairs, applicable to both exact categories and triangulated categories.
These results can be thought of as arguments towards the view that extriangulated categories are a convenient setup for writing down proofs which apply to both exact categories and (extension-closed subcategories of) triangulated categories.
\end{abstract}

\maketitle

\tableofcontents

\section{Introduction and Preliminaries}

Cotorsion pairs, first introduced in \cite{Sal}, are defined on an exact category or a triangulated category, and are related to several homological structures, such as: $t$-structures \cite{BBD}, cluster tilting subcategories \cite{KR,KZ}, co-$t$-structures~\cite{Pau}, functorially finite rigid subcategories. A careful look reveals that what is necessary to define a cotorsion pair on a category is the existence of an $\mathrm{Ext}^1$ bifunctor with appropriate properties. In this article, we formalize the notion of an {\it extriangulated category} by extracting those properties of $\mathrm{Ext}^1$ on exact categories and on triangulated categories that seem relevant from the point-of-view of cotorsion pairs.

The class of extriangulated categories not only contains exact categories and extension-closed subcategories of triangulated categories as examples, but it is also closed under taking some ideal quotients (Proposition \ref{PropReduction}). This will allow us to construct an extriangulated category which is not exact nor triangulated. Moreover, this axiomatization rams down the problem of the {\it non-existence of a canonical choice of the middle arrow in the axiom (TR3)} to the ambiguity of a representative of realizing sequences (Section~\ref{section_reali}) and the exactness of the associated sequences of natural transformations (Proposition \ref{PropExact1}).

Let us motivate a bit more the use of extriangulated categories. Many results which are homological in nature apply (after suitable adaptation) to both setups: exact categories and triangulated categories. In order to transfer a result known for triangulated categories to a result that applies to exact categories, the usual strategy is the following (non-chronological):
\begin{enumerate}
 \item[(1)] Specify to the case of stable categories of Frobenius exact categories.
 \item[(2)] Lift all definitions and statements from the stable category to the Frobenius category.
 \item[(3)] Adapt the proof so that it applies to any exact categories (with suitable assumptions).
\end{enumerate}
Conversely, a result known for exact categories might have an analog for triangulated categories proven as follows:
\begin{enumerate}
 \item[(1)] Specify to the case of a Frobenius exact category.
 \item[(2)] Descend all definitions and statements to the stable category.
 \item[(3)] Adapt the proof so that it applies to any triangulated categories (with suitable assumptions).
\end{enumerate}
Even though step (2) might be far from trivial, the main difficulty often lies in step (3) for both cases. The use of extriangulated categories somehow removes that difficulty. It is not that this difficulty has vanished into thin air, but that it has already been taken care of in the first results on extriangulated categories obtained in Section~\ref{section_Fund}.

The term ``extriangulated" stands for {\it externally triangulated} by means of a bifunctor. It can also be viewed as the mixing of {\it exact} and {\it triangulated}, or as an abbreviation of {\it $\mathrm{Ext}$-triangulated}. The precise definition will be given in Section~\ref{section_Ex}.
Fundamental properties including several analogs of the octahedron axiom in an extriangulated category, will be given in Section~\ref{section_Fund}.

On an extriangulated category, we can define the notion of a cotorsion pair, which generalizes that on exact categories \cite{Ho1,Ho2,Liu,S} and on triangulated categories \cite{AN,Na1}. Basic properties will be stated in Section~\ref{section_Cot}.

In Section~\ref{section: model}, we give a bijective correspondence between {\it Hovey twin cotorsion pairs} and {\it admissible model structures}. This result is inspired from~\cite{Ho1,Ho2,G}, where the case of exact categories is studied in more details. We note that an analog of Hovey's result in~\cite{Ho1,Ho2} has been proven for triangulated categories in~\cite{Y}. The use of extriangulated categories allows for a uniform proof.

As a result, we can realize the associated homotopy category by a certain ideal quotient, on which we can give a triangulated structure as in Section~\ref{section_Tria}. This triangulation can be regarded as a simultaneous generalization of those given by Happel's theorem on stable categories of Frobenius exact categories and of Iyama-Yoshino reductions, and gives a link to the one given by the Verdier quotient.
As a consequence, the homotopy category of any exact model structure on a weakly idempotent complete exact category is triangulated. This result was previously known in the case of hereditary exact model structures \cite[Proposition 5.2]{G}.

With this view, in Section~\ref{section_Mut}, we propose a natural framework to formulate reduction and mutation of cotorsion pairs, applicable to both exact categories and triangulated categories. Indeed, we establish a bijective correspondence between the class of {\it mutable cotorsion pairs} associated with a Hovey twin cotorsion pair and the class of all cotorsion pairs on the triangulated homotopy category.

\section{Extriangulated category}\label{section_Ex}
In this section, we abstract the properties of extension-closed subcategory of triangulated or exact category, to formulate it in an internal way by means of an $\mathrm{Ext}^1$ functor. This gives a simultaneous generalization of triangulated categories and exact categories, suitable for dealing with cotorsion pairs.

\subsection{$\mathbb{E}$-extensions}
Throughout this paper, let $\mathscr{C}$ be an additive category.
\begin{definition}\label{DefExtension}
Suppose $\mathscr{C}$ is equipped with a biadditive functor $\mathbb{E}\colon\mathscr{C}^\mathrm{op}\times\mathscr{C}\to\mathit{Ab}$. For any pair of objects $A,C\in\mathscr{C}$, an element $\delta\in\mathbb{E}(C,A)$ is called an {\it $\mathbb{E}$-extension}. Thus formally, an $\mathbb{E}$-extension is a triplet $(A,\delta,C)$.
\end{definition}

\begin{remark}
Let $(A,\delta,C)$ be any $\mathbb{E}$-extension. Since $\mathbb{E}$ is a bifunctor, for any $a\in\mathscr{C}(A,A^{\prime})$ and $c\in\mathscr{C}(C^{\prime},C)$, we have $\mathbb{E}$-extensions
\[ \mathbb{E}(C,a)(\delta)\in\mathbb{E}(C,A^{\prime})\ \ \text{and}\ \ \mathbb{E}(c,A)(\delta)\in\mathbb{E}(C^{\prime},A). \]
We abbreviately denote them by $a_{\ast}\delta$ and $c^{\ast}\delta$.
In this terminology, we have
\[ \mathbb{E}(c,a)(\delta)=c^{\ast} a_{\ast}\delta=a_{\ast} c^{\ast}\delta \]
in $\mathbb{E}(C^{\prime},A^{\prime})$.
\end{remark}

\begin{definition}\label{DefMorphExt}
Let $(A,\delta,C),(A^{\prime},\delta^{\prime},C^{\prime})$ be any pair of $\mathbb{E}$-extensions. A {\it morphism} $(a,c)\colon(A,\delta,C)\to(A^{\prime},\delta^{\prime},C^{\prime})$ of $\mathbb{E}$-extensions is a pair of morphisms $a\in\mathscr{C}(A,A^{\prime})$ and $c\in\mathscr{C}(C,C^{\prime})$ in $\mathscr{C}$, satisfying the equality
\[ a_{\ast}\delta=c^{\ast}\delta^{\prime}. \]
Simply we denote it as $(a,c)\colon\delta\to\delta^{\prime}$.

We obtain the category $\mathbb{E}\text{-}\mathrm{Ext}(\mathscr{C})$ of $\mathbb{E}$-extensions, with composition and identities naturally induced from those in $\mathscr{C}$.
\end{definition}

\begin{remark}\label{RemMorphExt}
Let $(A,\delta,C)$ be any $\mathbb{E}$-extension. We have the following.
\begin{enumerate}
\item[(1)] Any morphism $a\in\mathscr{C}(A,A^{\prime})$ gives rise to a morphism of $\mathbb{E}$-extensions
\[ (a,\mathrm{id}_C)\colon\delta\to a_{\ast}\delta. \]
\item[(2)] Any morphism $c\in\mathscr{C}(C^{\prime},C)$ gives rise to a morphism of $\mathbb{E}$-extensions
\[ (\mathrm{id}_A,c)\colon c^{\ast}\delta\to\delta. \]
\end{enumerate}
\end{remark}

\begin{definition}\label{DefSplitExtension}
For any $A,C\in\mathscr{C}$, the zero element $0\in\mathbb{E}(C,A)$ is called the {\it split $\mathbb{E}$-extension}.
\end{definition}

\begin{definition}\label{DefSumExtension}
Let $\delta=(A,\delta,C),\delta^{\prime}=(A^{\prime},\delta^{\prime},C^{\prime})$ be any pair of $\mathbb{E}$-extensions. Let
\[ C\overset{\iota_C}{\longrightarrow}C\oplus C^{\prime}\overset{\iota_{C^{\prime}}}{\longleftarrow}C^{\prime} \]
and 
\[ A\overset{p_A}{\longleftarrow}A\oplus A^{\prime}\overset{p_{A^{\prime}}}{\longrightarrow}A^{\prime} \]
be coproduct and product in $\mathscr{C}$, respectively. Remark that, by the biadditivity of $\mathbb{E}$, we have a natural isomorphism
\[ \mathbb{E}(C\oplus C^{\prime},A\oplus A^{\prime})\cong \mathbb{E}(C,A)\oplus\mathbb{E}(C,A^{\prime})\oplus\mathbb{E}(C^{\prime},A)\oplus\mathbb{E}(C^{\prime},A^{\prime}). \]

Let $\delta\oplus\delta^{\prime}\in\mathbb{E}(C\oplus C^{\prime},A\oplus A^{\prime})$ be the element corresponding to $(\delta,0,0,\delta^{\prime})$ through this isomorphism. This is the unique element which satisfies
\begin{eqnarray*}
\mathbb{E}(\iota_C,p_A)(\delta\oplus\delta^{\prime})=\delta&,&\mathbb{E}(\iota_C,p_{A^{\prime}})(\delta\oplus\delta^{\prime})=0,\\
\mathbb{E}(\iota_{C^{\prime}},p_A)(\delta\oplus\delta^{\prime})=0&,&\mathbb{E}(\iota_{C^{\prime}},p_{A^{\prime}})(\delta\oplus\delta^{\prime})=\delta^{\prime}.
\end{eqnarray*}

If $A=A^{\prime}$ and $C=C^{\prime}$, then the sum $\delta+\delta^{\prime}\in\mathbb{E}(C,A)$ of $\delta,\delta^{\prime}\in\mathbb{E}(C,A)$ is obtained by
\[ \delta+\delta^{\prime}=\mathbb{E}(\Delta_C,\nabla_A)(\delta\oplus\delta^{\prime}), \]
where $\Delta_C=\Big[\raise1ex\hbox{\leavevmode\vtop{\baselineskip-8ex \lineskip1ex \ialign{#\crcr{$1$}\crcr{$1$}\crcr}}}\Big]\colon C\to C\oplus C$, $\nabla_A=[1\ 1]\colon A\oplus A\to A$.
\end{definition}

\subsection{Realization of $\mathbb{E}$-extensions}\label{section_reali}
Let $\mathscr{C},\mathbb{E}$ be as before.
\begin{definition}\label{DefSqEquiv}
Let $A,C\in\mathscr{C}$ be any pair of objects. Sequences of morphisms in $\mathscr{C}$
\[ A\overset{x}{\longrightarrow}B\overset{y}{\longrightarrow}C\ \ \text{and}\ \ A\overset{x^{\prime}}{\longrightarrow}B^{\prime}\overset{y^{\prime}}{\longrightarrow}C \]
are said to be {\it equivalent} if there exists an isomorphism $b\in\mathscr{C}(B,B^{\prime})$ which makes the following diagram commutative.
\[
\xy
(-16,0)*+{A}="0";
(3,0)*+{}="1";
(0,8)*+{B}="2";
(0,-8)*+{B^{\prime}}="4";
(-3,0)*+{}="5";
(16,0)*+{C}="6";
{\ar^{x} "0";"2"};
{\ar^{y} "2";"6"};
{\ar_{x^{\prime}} "0";"4"};
{\ar_{y^{\prime}} "4";"6"};
{\ar^{b}_{\cong} "2";"4"};
{\ar@{}|\circlearrowright "0";"1"};
{\ar@{}|\circlearrowright "5";"6"};
\endxy
\]

We denote the equivalence class of $A\overset{x}{\longrightarrow}B\overset{y}{\longrightarrow}C$ by $[A\overset{x}{\longrightarrow}B\overset{y}{\longrightarrow}C]$.
\end{definition}

\begin{definition}\label{DefAddSeq}
$\ \ $
\begin{enumerate}
\item[(1)] For any $A,C\in\mathscr{C}$, we denote as
\[ 0=[A\overset{\Big[\raise1ex\hbox{\leavevmode\vtop{\baselineskip-8ex \lineskip1ex \ialign{#\crcr{$\scriptstyle{1}$}\crcr{$\scriptstyle{0}$}\crcr}}}\Big]}{\longrightarrow}A\oplus C\overset{[0\ 1]}{\longrightarrow}C]. \]

\item[(2)] For any $[A\overset{x}{\longrightarrow}B\overset{y}{\longrightarrow}C]$ and $[A^{\prime}\overset{x^{\prime}}{\longrightarrow}B^{\prime}\overset{y^{\prime}}{\longrightarrow}C^{\prime}]$, we denote as
\[ [A\overset{x}{\longrightarrow}B\overset{y}{\longrightarrow}C]\oplus [A^{\prime}\overset{x^{\prime}}{\longrightarrow}B^{\prime}\overset{y^{\prime}}{\longrightarrow}C^{\prime}]=[A\oplus A^{\prime}\overset{x\oplus x^{\prime}}{\longrightarrow}B\oplus B^{\prime}\overset{y\oplus y^{\prime}}{\longrightarrow}C\oplus C^{\prime}]. \]
\end{enumerate}
\end{definition}

\begin{definition}\label{DefRealization}
Let $\mathfrak{s}$ be a correspondence which associates an equivalence class $\mathfrak{s}(\delta)=[A\overset{x}{\longrightarrow}B\overset{y}{\longrightarrow}C]$ to any $\mathbb{E}$-extension $\delta\in\mathbb{E}(C,A)$. This $\mathfrak{s}$ is called a {\it realization} of $\mathbb{E}$, if it satisfies the following condition $(\ast)$. In this case, we say that sequence $A\overset{x}{\longrightarrow}B\overset{y}{\longrightarrow}C$ {\it realizes} $\delta$, whenever it satisfies $\mathfrak{s}(\delta)=[A\overset{x}{\longrightarrow}B\overset{y}{\longrightarrow}C]$.
\begin{itemize}
\item[$(\ast)$] Let $\delta\in\mathbb{E}(C,A)$ and $\delta^{\prime}\in\mathbb{E}(C^{\prime},A^{\prime})$ be any pair of $\mathbb{E}$-extensions, with $\mathfrak{s}(\delta)=[A\overset{x}{\longrightarrow}B\overset{y}{\longrightarrow}C],\, \mathfrak{s}(\delta^{\prime})=[A^{\prime}\overset{x^{\prime}}{\longrightarrow}B^{\prime}\overset{y^{\prime}}{\longrightarrow}C^{\prime}]$.
Then, for any morphism $(a,c)\in\mathbb{E}\text{-}\mathrm{Ext}(\mathscr{C})(\delta,\delta^{\prime})$, there exists $b\in\mathscr{C}(B,B^{\prime})$ which makes the following diagram commutative.
\begin{equation}\label{MorphRealize}
\xy
(-12,6)*+{A}="0";
(0,6)*+{B}="2";
(12,6)*+{C}="4";
(-12,-6)*+{A^{\prime}}="10";
(0,-6)*+{B^{\prime}}="12";
(12,-6)*+{C^{\prime}}="14";
{\ar^{x} "0";"2"};
{\ar^{y} "2";"4"};
{\ar_{a} "0";"10"};
{\ar^{b} "2";"12"};
{\ar^{c} "4";"14"};
{\ar_{x^{\prime}} "10";"12"};
{\ar_{y^{\prime}} "12";"14"};
{\ar@{}|\circlearrowright "0";"12"};
{\ar@{}|\circlearrowright "2";"14"};
\endxy
\end{equation}
\end{itemize}
Remark that this condition does not depend on the choices of the representatives of the equivalence classes. In the above situation, we say that $(\ref{MorphRealize})$ (or the triplet $(a,b,c)$) {\it realizes} $(a,c)$.
\end{definition}

\begin{definition}\label{DefAdditiveRealization}
Let $\mathscr{C},\mathbb{E}$ be as above. A realization of $\mathbb{E}$ is said to be {\it additive}, if it satisfies the following conditions.
\begin{itemize}
\item[{\rm (i)}] For any $A,C\in\mathscr{C}$, the split $\mathbb{E}$-extension $0\in\mathbb{E}(C,A)$ satisfies
\[ \mathfrak{s}(0)=0. \]
\item[{\rm (ii)}] For any pair of $\mathbb{E}$-extensions $\delta=(A,\delta,C)$ and $\delta^{\prime}=(A^{\prime},\delta^{\prime},C^{\prime})$,
\[ \mathfrak{s}(\delta\oplus\delta^{\prime})=\mathfrak{s}(\delta)\oplus\mathfrak{s}(\delta^{\prime}) \]
holds.
\end{itemize}
\end{definition}

\begin{remark}\label{RemSplit}
If $\mathfrak{s}$ is an additive realization of $\mathbb{E}$, then the following holds.
\begin{enumerate}
\item[(1)] For any $A,C\in\mathscr{C}$, if $0\in\mathbb{E}(C,A)$ is realized by $A\overset{x}{\longrightarrow}B\overset{y}{\longrightarrow}C$, then there exist a retraction $r\in\mathscr{C}(B,A)$ of $x$ and a section $s\in\mathscr{C}(C,B)$ of $y$ which give an isomorphism $\Big[\raise1ex\hbox{\leavevmode\vtop{\baselineskip-8ex \lineskip1ex \ialign{#\crcr{$r$}\crcr{$y$}\crcr}}}\Big]\colon B\overset{\cong}{\longrightarrow}A\oplus C$.
\item[(2)] For any $f\in\mathscr{C}(A,B)$, the sequence
\[ A\overset{\Big[\raise1ex\hbox{\leavevmode\vtop{\baselineskip-8ex \lineskip1ex \ialign{#\crcr{$\,\,\scriptstyle{1}\,$}\crcr{$\scriptstyle{-f}$}\crcr}}}\Big]}{\longrightarrow}A\oplus B\overset{[f\ 1]}{\longrightarrow}B \]
realizes the split $\mathbb{E}$-extension $0\in\mathbb{E}(B,A)$.
\end{enumerate}
\end{remark}

\subsection{Definition of extriangulated category}

\begin{definition}\label{DefExtCat}
We call the pair $(\mathbb{E},\mathfrak{s})$ an {\it external triangulation} of $\mathscr{C}$ if it satisfies the following conditions. In this case, we call $\mathfrak{s}$ an $\mathbb{E}$-{\it triangulation of }$\mathscr{C}$, and call the triplet $(\mathscr{C},\mathbb{E},\mathfrak{s})$ an {\it externally triangulated category}, or for short, {\it extriangulated category}.
\begin{itemize}
\item[{\rm (ET1)}] $\mathbb{E}\colon\mathscr{C}^{\mathrm{op}}\times\mathscr{C}\to\mathit{Ab}$ is a biadditive functor.
\item[{\rm (ET2)}] $\mathfrak{s}$ is an additive realization of $\mathbb{E}$.
\item[{\rm (ET3)}] Let $\delta\in\mathbb{E}(C,A)$ and $\delta^{\prime}\in\mathbb{E}(C^{\prime},A^{\prime})$ be any pair of $\mathbb{E}$-extensions, realized as
\[ \mathfrak{s}(\delta)=[A\overset{x}{\longrightarrow}B\overset{y}{\longrightarrow}C],\ \ \mathfrak{s}(\delta^{\prime})=[A^{\prime}\overset{x^{\prime}}{\longrightarrow}B^{\prime}\overset{y^{\prime}}{\longrightarrow}C^{\prime}]. \]
For any commutative square
\begin{equation}\label{SquareForET3}
\xy
(-12,6)*+{A}="0";
(0,6)*+{B}="2";
(12,6)*+{C}="4";
(-12,-6)*+{A^{\prime}}="10";
(0,-6)*+{B^{\prime}}="12";
(12,-6)*+{C^{\prime}}="14";
{\ar^{x} "0";"2"};
{\ar^{y} "2";"4"};
{\ar_{a} "0";"10"};
{\ar^{b} "2";"12"};
{\ar_{x^{\prime}} "10";"12"};
{\ar_{y^{\prime}} "12";"14"};
{\ar@{}|\circlearrowright "0";"12"};
\endxy
\end{equation}
in $\mathscr{C}$, there exists a morphism $(a,c)\colon\delta\to\delta^{\prime}$ which is realized by $(a,b,c)$.
\item[{\rm (ET3)$^{\mathrm{op}}$}] Let $\delta\in\mathbb{E}(C,A)$ and $\delta^{\prime}\in\mathbb{E}(C^{\prime},A^{\prime})$ be any pair of $\mathbb{E}$-extensions, realized by
\[ A\overset{x}{\longrightarrow}B\overset{y}{\longrightarrow}C\ \ \text{and}\ \ A^{\prime}\overset{x^{\prime}}{\longrightarrow}B^{\prime}\overset{y^{\prime}}{\longrightarrow}C^{\prime} \]
respectively.
For any commutative square
\[
\xy
(-12,6)*+{A}="0";
(0,6)*+{B}="2";
(12,6)*+{C}="4";
(-12,-6)*+{A^{\prime}}="10";
(0,-6)*+{B^{\prime}}="12";
(12,-6)*+{C^{\prime}}="14";
{\ar^{x} "0";"2"};
{\ar^{y} "2";"4"};
{\ar_{b} "2";"12"};
{\ar^{c} "4";"14"};
{\ar_{x^{\prime}} "10";"12"};
{\ar_{y^{\prime}} "12";"14"};
{\ar@{}|\circlearrowright "2";"14"};
\endxy
\]
in $\mathscr{C}$, there exists a morphism $(a,c)\colon\delta\to\delta^{\prime}$ which is realized by $(a,b,c)$.
\item[{\rm (ET4)}] Let $(A,\delta,D)$ and $(B,\delta^{\prime},F)$ be $\mathbb{E}$-extensions realized by
\[ A\overset{f}{\longrightarrow}B\overset{f^{\prime}}{\longrightarrow}D\ \ \text{and}\ \ B\overset{g}{\longrightarrow}C\overset{g^{\prime}}{\longrightarrow}F \]
respectively. Then there exist an object $E\in\mathscr{C}$, a commutative diagram
\begin{equation}\label{DiagET4}
\xy
(-21,7)*+{A}="0";
(-7,7)*+{B}="2";
(7,7)*+{D}="4";
(-21,-7)*+{A}="10";
(-7,-7)*+{C}="12";
(7,-7)*+{E}="14";
(-7,-21)*+{F}="22";
(7,-21)*+{F}="24";
{\ar^{f} "0";"2"};
{\ar^{f^{\prime}} "2";"4"};
{\ar@{=} "0";"10"};
{\ar_{g} "2";"12"};
{\ar^{d} "4";"14"};
{\ar_{h} "10";"12"};
{\ar_{h^{\prime}} "12";"14"};
{\ar_{g^{\prime}} "12";"22"};
{\ar^{e} "14";"24"};
{\ar@{=} "22";"24"};
{\ar@{}|\circlearrowright "0";"12"};
{\ar@{}|\circlearrowright "2";"14"};
{\ar@{}|\circlearrowright "12";"24"};
\endxy
\end{equation}
in $\mathscr{C}$, and an $\mathbb{E}$-extension $\delta^{\prime\prime}\in\mathbb{E}(E,A)$ realized by $A\overset{h}{\longrightarrow}C\overset{h^{\prime}}{\longrightarrow}E$, which satisfy the following compatibilities.
\begin{itemize}
\item[{\rm (i)}] $D\overset{d}{\longrightarrow}E\overset{e}{\longrightarrow}F$ realizes $\mathbb{E}(F,f^{\prime})(\delta^{\prime})$,
\item[{\rm (ii)}] $\mathbb{E}(d,A)(\delta^{\prime\prime})=\delta$,

\item[{\rm (iii)}] $\mathbb{E}(E,f)(\delta^{\prime\prime})=\mathbb{E}(e,B)(\delta^{\prime})$. 
\end{itemize}
By {\rm (iii)}, $(f,e)\colon\delta^{\prime\prime}\to\delta^{\prime}$ is a morphism of $\mathbb{E}$-extensions, realized by
\[ (f,\mathrm{id}_C,e)\colon [A\overset{h}{\longrightarrow}C\overset{h^{\prime}}{\longrightarrow}E]\to [B\overset{g}{\longrightarrow}C\overset{g^{\prime}}{\longrightarrow}F]. \]

\item[{\rm (ET4)$^{\mathrm{op}}$}] Dual of {\rm (ET4)} (see Remark~\ref{RemET4op}).
\end{itemize}
\end{definition}

\begin{example}\label{Example1}
Exact categories (with a condition concerning the smallness) and triangulated categories are examples of extriangulated categories. See also Remark~\ref{RemETrExtClosed}.

We briefly show how an exact category can be viewed as an extriangulated category. As for triangulated categories, see the construction in Proposition~\ref{PropTriaExt}.

For the definition and basic properties of an exact category, see \cite{Bu} and \cite{Ke}.
Let $A,C\in\mathscr{C}$ be any pair of objects. Remark that, as shown in \cite[Corollary~3.2]{Bu}, for any morphism of short exact sequences (={\it conflations} in \cite{Ke}) of the form
\[
\xy
(-12,6)*+{A}="0";
(0,6)*+{B}="2";
(12,6)*+{C}="4";
(-12,-6)*+{A}="10";
(0,-6)*+{B^{\prime}}="12";
(12,-6)*+{C}="14";
{\ar^{x} "0";"2"};
{\ar^{y} "2";"4"};
{\ar@{=} "0";"10"};
{\ar^{b} "2";"12"};
{\ar@{=} "4";"14"};
{\ar_{x^{\prime}} "10";"12"};
{\ar_{y^{\prime}} "12";"14"};
{\ar@{}|\circlearrowright "0";"12"};
{\ar@{}|\circlearrowright "2";"14"};
\endxy,
\]
the morphism $b$ in the middle automatically becomes an isomorphism. Consider the same equivalence relation as in Definition~\ref{DefSqEquiv}, and define $\mathrm{Ext}^1(C,A)$ to be the collection of all equivalence classes of short exact sequences of the form $A\overset{x}{\longrightarrow}B\overset{y}{\longrightarrow}C$. We denote the equivalence class by $[A\overset{x}{\longrightarrow}B\overset{y}{\longrightarrow}C]$ as before.

This becomes a small set, for example in the following cases.
\begin{itemize}
\item $\mathscr{C}$ is skeletally small.
\item $\mathscr{C}$ has enough projectives or injectives.
\end{itemize}
In such a case, we obtain a biadditive functor $\mathrm{Ext}^1\colon\mathscr{C}^{\mathrm{op}}\times\mathscr{C}\to\mathit{Ab}$, as stated in \cite[Definitions~5.1]{S}. Its structure is given as follows.
\begin{itemize}
\item[-] For any $\delta=[A\overset{x}{\longrightarrow}B\overset{y}{\longrightarrow}C]\in\mathrm{Ext}^1(C,A)$ and any $a\in\mathscr{C}(A,A^{\prime})$, take a pushout in $\mathscr{C}$, to obtain a morphism of short exact sequences
\[
\xy
(-12,6)*+{A}="0";
(0,6)*+{B}="2";
(12,6)*+{C}="4";
(-6,0)*+{\mathrm{PO}}="6";
(-12,-6)*+{A^{\prime}}="10";
(0,-6)*+{M}="12";
(12,-6)*+{C}="14";
{\ar^{x} "0";"2"};
{\ar^{y} "2";"4"};
{\ar_{a} "0";"10"};
{\ar^{} "2";"12"};
{\ar@{=} "4";"14"};
{\ar_{m} "10";"12"};
{\ar_{e} "12";"14"};
%
{\ar@{}|\circlearrowright "2";"14"};
\endxy.
\]
This gives $\mathrm{Ext}^1(C,a)(\delta)=a_{\ast}\delta=[A^{\prime}\overset{m}{\longrightarrow}M\overset{e}{\longrightarrow}C]$.
\item[-] Dually, for any $c\in\mathscr{C}(C^{\prime},C)$, the map $\mathrm{Ext}^1(c,A)=c^{\ast}\colon \mathrm{Ext}^1(C,A)\to\mathrm{Ext}^1(C^{\prime},A)$ is defined by using pullbacks.
\item[-] The zero element in $\mathrm{Ext}^1(C,A)$ is given by the split short exact sequence \[ 0=[A\overset{\Big[\raise1ex\hbox{\leavevmode\vtop{\baselineskip-8ex \lineskip1ex \ialign{#\crcr{$\scriptstyle{1}$}\crcr{$\scriptstyle{0}$}\crcr}}}\Big]}{\longrightarrow}A\oplus C\overset{[0\ 1]}{\longrightarrow}C]. \]
For any pair $\delta_1=[A\overset{x_1}{\longrightarrow}B_1\overset{y_1}{\longrightarrow}C],\delta_2=[A\overset{x_2}{\longrightarrow}B_2\overset{y_2}{\longrightarrow}C]\in\mathrm{Ext}^1(C,A)$, its sum $\delta_1+\delta_2$ is given by the Baer sum
\[ \Delta_C^{\ast}(\nabla_A)_{\ast}(\delta_1\oplus\delta_2)=\Delta_C^{\ast}(\nabla_A)_{\ast}([A\oplus A\overset{x_1\oplus x_2}{\longrightarrow}B_1\oplus B_2\overset{y_1\oplus y_2}{\longrightarrow}C\oplus C]). \]
\end{itemize}
This shows {\rm (ET1)}. Define the realization $\mathfrak{s}(\delta)$ of $\delta=[A\overset{x}{\longrightarrow}B\overset{y}{\longrightarrow}C]$ to be $\delta$ itself. Then {\rm (ET2)} is trivially satisfied. For {\rm (ET3)} and {\rm (ET4)}, the following fact is useful.
\begin{fact}(\cite[Proposition~3.1]{Bu})\label{FactBu}
For any morphism of short exact sequences
\[
\xy
(-12,6)*+{A}="0";
(0,6)*+{B}="2";
(12,6)*+{C}="4";
(-12,-6)*+{A^{\prime}}="10";
(0,-6)*+{B^{\prime}}="12";
(12,-6)*+{C^{\prime}}="14";
{\ar^{x} "0";"2"};
{\ar^{y} "2";"4"};
{\ar_{a} "0";"10"};
{\ar^{b} "2";"12"};
{\ar^{c} "4";"14"};
{\ar_{x^{\prime}} "10";"12"};
{\ar_{y^{\prime}} "12";"14"};
{\ar@{}|\circlearrowright "0";"12"};
{\ar@{}|\circlearrowright "2";"14"};
\endxy
\]
in $\mathscr{C}$, there exists a commutative diagram
\[
\xy
(-14,12)*+{A}="2";
(0,12)*+{B}="4";
(14,12)*+{C}="6";
(-14,0)*+{A^{\prime}}="12";
(0,0)*+{{}^{\exists}M}="14";
(14,0)*+{C}="16";
(-14,-12)*+{A^{\prime}}="22";
(0,-12)*+{B^{\prime}}="24";
(14,-12)*+{C^{\prime}}="26";
(-7,6)*+{\mathrm{PO}}="32";
(7,-6)*+{\mathrm{PB}}="34";
{\ar^{x} "2";"4"};
{\ar^{y} "4";"6"};
{\ar_{a} "2";"12"};
{\ar "4";"14"};
{\ar@{=} "6";"16"};
{\ar_{m} "12";"14"};
{\ar_{e} "14";"16"};
{\ar@{=} "12";"22"};
{\ar "14";"24"};
{\ar^{c} "16";"26"};
{\ar_{x^{\prime}} "22";"24"};
{\ar_{y^{\prime}} "24";"26"};
{\ar@{}|\circlearrowright "4";"16"};
{\ar@{}|\circlearrowright "12";"24"};
\endxy
\]
whose middle row is also a short exact sequence, the upper-left square is a pushout, and the down-right square is a pullback.
Remark that this means $a_{\ast} [A\overset{x}{\longrightarrow}B\overset{y}{\longrightarrow}C]=c^{\ast} [A^{\prime}\overset{x^{\prime}}{\longrightarrow}B^{\prime}\overset{y^{\prime}}{\longrightarrow}C^{\prime}]$.
\end{fact}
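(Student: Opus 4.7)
Plan: the proof combines the pushout and pullback constructions available in an exact category, together with the short five lemma cited just above as \cite[Corollary~3.2]{Bu}.

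First, I would form the pushout of the inflation $x\colon A\to B$ along $a\colon A\to A^{\prime}$. Since inflations are stable under pushout in an exact category, this yields an inflation $m\colon A^{\prime}\to M$ together with a morphism $B\to M$, and the canonical isomorphism between the cokernels of $x$ and of $m$ produces a deflation $e\colon M\to C$ fitting into a short exact sequence $A^{\prime}\xrightarrow{m}M\xrightarrow{e}C$, compatibly with $y\colon B\to C$. Now the universal property of the pushout applied to $b\colon B\to B^{\prime}$ and $x^{\prime}\colon A^{\prime}\to B^{\prime}$, which agree after precomposition with the structural maps out of $A$ by commutativity of the original left square, produces a unique morphism $\mu\colon M\to B^{\prime}$. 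A direct verification after precomposing with the two structural maps into $M$ yields $y^{\prime}\circ\mu = c\circ e$, so that $(\mathrm{id}_{A^{\prime}},\mu,c)$ assemble into a morphism of short exact sequences from $A^{\prime}\to M\to C$ to $A^{\prime}\to B^{\prime}\to C^{\prime}$.

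Second, dually I would form the pullback $P$ of $y^{\prime}$ along $c$; since pullbacks of deflations are deflations, this yields a short exact sequence $A^{\prime}\to P\to C$, and the universal property of $P$ produces a factorization $\nu\colon M\to P$ compatible with the identities on $A^{\prime}$ and on $C$. The short five lemma then forces $\nu$ to be an isomorphism, so that the right-hand square of the middle row of the completed diagram is a pullback, as required. The concluding remark $a_{\ast}[A\overset{x}{\longrightarrow}B\overset{y}{\longrightarrow}C]=c^{\ast}[A^{\prime}\overset{x^{\prime}}{\longrightarrow}B^{\prime}\overset{y^{\prime}}{\longrightarrow}C^{\prime}]$ then follows immediately from the definitions of $a_{\ast}$ and $c^{\ast}$ via pushout and pullback recalled in the paragraphs preceding the statement.

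The main obstacle is not technical but conceptual: it is the verification that the pushout $M$ genuinely sits inside a short exact sequence with outer terms $A^{\prime}$ and $C$, and dually for $P$. This reduces to the basic exact-category fact that the pushout of an inflation along an arbitrary morphism remains an inflation with canonically identified cokernel, which is already established in the preliminary sections of \cite{Bu}. Once that is granted, the assembly of the diagram and the identification $M\cong P$ follow purely formally from universal properties and the short five lemma.
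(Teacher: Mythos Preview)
Your proof sketch is correct and follows the standard argument. Note, however, that the paper does not itself prove this statement: it is quoted as a \emph{Fact} with an explicit citation to \cite[Proposition~3.1]{Bu}, and is used only as input for verifying {\rm (ET3)} and {\rm (ET4)} in Example~\ref{Example1}. Your outline---form the pushout of $x$ along $a$ to obtain the middle row, use the universal property to produce the comparison map to $B'$, then invoke the short five lemma to identify the pushout with the pullback of $y'$ along $c$---is precisely the argument B\"uhler gives, so there is nothing to compare against within this paper.
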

 By Fact~\ref{FactBu}, {\rm (ET3)} follows immediately from the universality of cokernel. Similarly, {\rm (ET4)} follows from \cite[Lemma~3.5]{Bu}. Dually for {\rm (ET3)$^{\mathrm{op}}$} and {\rm (ET4)$^{\mathrm{op}}$}.
\end{example}

\subsection{Terminology in an extriangulated category}
To allow an argument with familiar terms, we introduce terminology from both exact categories and triangulated categories (cf. \cite{Ke,Bu,Ne}).
\begin{definition}\label{DefTermExact1}
Let $(\mathscr{C},\mathbb{E},\mathfrak{s})$ be a triplet satisfying {\rm (ET1)} and {\rm (ET2)}.
\begin{enumerate}
\item[(1)] A sequence $A\overset{x}{\longrightarrow}B\overset{y}{\longrightarrow}C$ is called a {\it conflation} if it realizes some $\mathbb{E}$-extension $\delta\in\mathbb{E}(C,A)$. For the ambiguity of such an $\mathbb{E}$-extension, see Corollary~\ref{CorModif}.
\item[(2)] A morphism $f\in\mathscr{C}(A,B)$ is called an {\it inflation} if it admits some conflation $A\overset{f}{\longrightarrow}B\to C$. For the ambiguity of such a conflation, see Remark~\ref{RemConeCocone}.
\item[(3)] A morphism $f\in\mathscr{C}(A,B)$ is called a {\it deflation} if it admits some conflation $K\to A\overset{f}{\longrightarrow}B$.
\end{enumerate}
\end{definition}

\begin{remark}\label{RemInfInf}
Condition {\rm (ET4)} implies that inflations are closed under composition. Dually, {\rm (ET4)$^{\mathrm{op}}$} implies the composition-closedness of deflations.
\end{remark}

\begin{definition}\label{DefExtClosed}
Let $\mathcal{D}\subseteq\mathscr{C}$ be a full additive subcategory, closed under isomorphisms. We say $\mathcal{D}$ is {\it extension-closed} if it satisfies the following condition.
\begin{itemize}
\item If a conflation $A\to B\to C$ satisfies $A,C\in\mathcal{D}$, then $B\in\mathcal{D}$.
\end{itemize}
\end{definition}

The following can be checked in a straightforward way.
\begin{remark}\label{RemETrExtClosed}
Let $(\mathscr{C},\mathbb{E},\mathfrak{s})$ be an extriangulated category, and let $\mathcal{D}\subseteq\mathscr{C}$ be an extension-closed subcategory. If we define $\mathbb{E}_{\mathcal{D}}$ to be the restriction of $\mathbb{E}$ onto $\mathcal{D}^{\mathrm{op}}\times\mathcal{D}$, and define $\mathfrak{s}_{\mathcal{D}}$ by restricting $\mathfrak{s}$, then $(\mathcal{D},\mathbb{E}_{\mathcal{D}},\mathfrak{s}_{\mathcal{D}})$ becomes an extriangulated category.
\end{remark}

\begin{definition}\label{DefTermExact2}
Let $(\mathscr{C},\mathbb{E},\mathfrak{s})$ be a triplet satisfying {\rm (ET1)} and {\rm (ET2)}.
\begin{enumerate}
\item[(1)] If a conflation $A\overset{x}{\longrightarrow}B\overset{y}{\longrightarrow}C$ realizes $\delta\in\mathbb{E}(C,A)$, we call the pair $(A\overset{x}{\longrightarrow}B\overset{y}{\longrightarrow}C,\delta)$ an {\it $\mathbb{E}$-triangle}, and write it in the following way.
\begin{equation}\label{Etriangle}
A\overset{x}{\longrightarrow}B\overset{y}{\longrightarrow}C\overset{\delta}{\dashrightarrow}
\end{equation}
\item[(2)] Let $A\overset{x}{\longrightarrow}B\overset{y}{\longrightarrow}C\overset{\delta}{\dashrightarrow}$ and $A^{\prime}\overset{x^{\prime}}{\longrightarrow}B^{\prime}\overset{y^{\prime}}{\longrightarrow}C^{\prime}\overset{\delta^{\prime}}{\dashrightarrow}$ be any pair of $\mathbb{E}$-triangles. If a triplet $(a,b,c)$ realizes $(a,c)\colon\delta\to\delta^{\prime}$ as in $(\ref{MorphRealize})$, then we write it as
\[
\xy
(-12,6)*+{A}="0";
(0,6)*+{B}="2";
(12,6)*+{C}="4";
(24,6)*+{}="6";
(-12,-6)*+{A^{\prime}}="10";
(0,-6)*+{B^{\prime}}="12";
(12,-6)*+{C^{\prime}}="14";
(24,-6)*+{}="16";
{\ar^{x} "0";"2"};
{\ar^{y} "2";"4"};
{\ar@{-->}^{\delta} "4";"6"};
{\ar_{a} "0";"10"};
{\ar^{b} "2";"12"};
{\ar^{c} "4";"14"};
{\ar_{x^{\prime}} "10";"12"};
{\ar_{y^{\prime}} "12";"14"};
{\ar@{-->}_{\delta^{\prime}} "14";"16"};
{\ar@{}|\circlearrowright "0";"12"};
{\ar@{}|\circlearrowright "2";"14"};
\endxy
\]
and call $(a,b,c)$ a {\it morphism of $\mathbb{E}$-triangles}.
\end{enumerate}
\end{definition}

\begin{caution}
Although the abbreviated expression $(\ref{Etriangle})$ looks superficially asymmetric, we remark that the definition of an extriangulated category is completely self-dual.
\end{caution}

\begin{remark}
{\rm (ET3)} means that any commutative square $(\ref{SquareForET3})$ bridging $\mathbb{E}$-triangles can be completed into a morphism of $\mathbb{E}$-triangles. Dually for {\rm (ET3)$^{\mathrm{op}}$}.

Condition $(\ast)$ in Definition~\ref{DefRealization} means that any morphism of $\mathbb{E}$-extensions can be realized by a morphism of $\mathbb{E}$-triangles.
\end{remark}

In the above terminology, condition {\rm (ET4)$^{\mathrm{op}}$} can be stated as follows.
\begin{remark}\label{RemET4op}
$($Paraphrase of {\rm (ET4)$^{\mathrm{op}}$}$)$
Let $D\overset{f^{\prime}}{\longrightarrow}A\overset{f}{\longrightarrow}B\overset{\delta}{\dashrightarrow}$ and $F\overset{g^{\prime}}{\longrightarrow}B\overset{g}{\longrightarrow}C\overset{\delta^{\prime}}{\dashrightarrow}$ be $\mathbb{E}$-triangles. Then there exist an $\mathbb{E}$-triangle $E\overset{h^{\prime}}{\longrightarrow}A\overset{h}{\longrightarrow}C\overset{\delta^{\prime\prime}}{\dashrightarrow}$ and a commutative diagram
\[
\xy
(-21,7)*+{D}="0";
(-7,7)*+{E}="2";
(7,7)*+{F}="4";
(-21,-7)*+{D}="10";
(-7,-7)*+{A}="12";
(7,-7)*+{B}="14";
(-7,-21)*+{C}="22";
(7,-21)*+{C}="24";
{\ar^{d} "0";"2"};
{\ar^{e} "2";"4"};
{\ar@{=} "0";"10"};
{\ar_{h^{\prime}} "2";"12"};
{\ar^{g^{\prime}} "4";"14"};
{\ar_{f^{\prime}} "10";"12"};
{\ar_{f} "12";"14"};
{\ar_{h} "12";"22"};
{\ar^{g} "14";"24"};
{\ar@{=} "22";"24"};
{\ar@{}|\circlearrowright "0";"12"};
{\ar@{}|\circlearrowright "2";"14"};
{\ar@{}|\circlearrowright "12";"24"};
\endxy
\]
in $\mathscr{C}$, satisfying the following compatibilities.
\begin{itemize}
\item[{\rm (i)}] $D\overset{d}{\longrightarrow}E\overset{e}{\longrightarrow}F\overset{g^{\prime\ast}\delta}{\dashrightarrow}$ is an $\mathbb{E}$-triangle,
\item[{\rm (ii)}] $\delta^{\prime}=e_{\ast}\delta^{\prime\prime}$,
\item[{\rm (iii)}] $d_{\ast}\delta=g^{\ast}\delta^{\prime\prime}$.
\end{itemize}
\end{remark}

\section{Fundamental properties}\label{section_Fund}

\subsection{Associated exact sequence}

In this section, we will associate exact sequences of natural transformations
\[ \mathscr{C}(C,-)\!\overset{\mathscr{C}(y,-)}{\Longrightarrow}\!\mathscr{C}(B,-)\!\overset{\mathscr{C}(x,-)}{\Longrightarrow}\!\mathscr{C}(A,-)\!\overset{\delta^\sharp}{\Longrightarrow}\!\mathbb{E}(C,-)\!\overset{\mathbb{E}(y,-)}{\Longrightarrow}\!\mathbb{E}(B,-)\!\overset{\mathbb{E}(x,-)}{\Longrightarrow}\!\mathbb{E}(A,-), \]
\[ \mathscr{C}(-,A)\!\overset{\mathscr{C}(-,x)}{\Longrightarrow}\!\mathscr{C}(-,B)\!\overset{\mathscr{C}(-,y)}{\Longrightarrow}\!\mathscr{C}(-,C)\!\overset{\delta_\sharp}{\Longrightarrow}\!\mathbb{E}(-,A)\!\overset{\mathbb{E}(-,x)}{\Longrightarrow}\!\mathbb{E}(-,B)\!\overset{\mathbb{E}(-,y)}{\Longrightarrow}\!\mathbb{E}(-,C) \]to any $\mathbb{E}$-triangle $A\overset{x}{\longrightarrow}B\overset{y}{\longrightarrow}C\overset{\delta}{\dashrightarrow}$ in an extriangulated category $(\mathscr{C},\mathbb{E},\mathfrak{s})$ (Corollary~\ref{ExactToShow}).

Here, $\delta_\sharp$ and $\delta^\sharp$ are defined in the following.
\begin{definition}\label{DefYoneda}
Assume $\mathscr{C}$ and $\mathbb{E}$ satisfy {\rm (ET1)}.
By Yoneda's lemma, any $\mathbb{E}$-extension $\delta\in\mathbb{E}(C,A)$ induces natural transformations
\[ \delta_\sharp\colon\mathscr{C}(-,C)\Rightarrow\mathbb{E}(-,A)\ \ \text{and}\ \ \delta^\sharp\colon\mathscr{C}(A,-)\Rightarrow\mathbb{E}(C,-). \]
For any $X\in\mathscr{C}$, these $(\delta_\sharp)_X$ and $\delta^\sharp_X$ are given as follows.
\begin{enumerate}
\item[(1)] $(\delta_\sharp)_X\colon\mathscr{C}(X,C)\to\mathbb{E}(X,A)\ ;\ f\mapsto f^{\ast}\delta$.
\item[(2)] $\delta^\sharp_X\colon\mathscr{C}(A,X)\to\mathbb{E}(C,X)\ ;\ g\mapsto g_{\ast}\delta$.
\end{enumerate}
We abbreviately denote $(\delta_\sharp)_X(f)$ and $\delta^\sharp_X(g)$ by $\delta_\sharp f$ and $\delta^\sharp g$, when there is no confusion.
\end{definition}

\begin{lemma}\label{LemZero}
Assume $(\mathscr{C},\mathbb{E},\mathfrak{s})$ satisfies {\rm (ET1),(ET2),(ET3),(ET3)$^{\mathrm{op}}$}. Then for any $\mathbb{E}$-triangle $A\overset{x}{\longrightarrow}B\overset{y}{\longrightarrow}C\overset{\delta}{\dashrightarrow}$, the following hold.
\begin{enumerate}
\item[(1)] $y\circ x=0$.
\item[(2)] $x_{\ast}\delta(=\delta^\sharp x)=0$.
\item[(3)] $y^{\ast}\delta(=\delta_\sharp y)=0$.
\end{enumerate}
\end{lemma}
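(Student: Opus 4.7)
My plan is to prove all three statements simultaneously by comparing the given $\mathbb{E}$-triangle $A\overset{x}{\longrightarrow}B\overset{y}{\longrightarrow}C\overset{\delta}{\dashrightarrow}$ with suitable split $\mathbb{E}$-triangles, using the axioms {\rm (ET3)} and {\rm (ET3)$^{\mathrm{op}}$}. By Definition~\ref{DefAddSeq}(1) together with the additivity of $\mathfrak{s}$ in Definition~\ref{DefAdditiveRealization}(i), the split extensions $0\in\mathbb{E}(B,A)$ and $0\in\mathbb{E}(C,B)$ are realized respectively by the canonical split conflations $A\overset{\iota_A}{\longrightarrow}A\oplus B\overset{\pi_B}{\longrightarrow}B$ and $B\overset{\iota_B}{\longrightarrow}B\oplus C\overset{\pi_C}{\longrightarrow}C$; these will serve as the comparison objects.

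For (1) and (3) I would apply {\rm (ET3)} to the split $\mathbb{E}$-triangle with base $(A,B)$ and to $\delta$, using the commutative left square whose verticals are $\mathrm{id}_A\colon A\to A$ and $[x\ 1]\colon A\oplus B\to B$ (commutativity is immediate since $[x\ 1]\circ\iota_A=x$). The axiom produces a morphism $c\colon B\to C$ such that $(\mathrm{id}_A,c)\colon 0\to\delta$ is a morphism of $\mathbb{E}$-extensions realized by the triplet $(\mathrm{id}_A,[x\ 1],c)$. Reading off the commutativity of the right square $c\circ\pi_B=y\circ[x\ 1]$ on the summand inclusions $\iota_A$ and $\iota_B$ yields $yx=0$ and $c=y$, giving~(1); the morphism-of-extensions relation $(\mathrm{id}_A)_\ast 0=c^\ast\delta$ then becomes $0=y^\ast\delta$, giving~(3).

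Dually, (2) is obtained by applying {\rm (ET3)$^{\mathrm{op}}$} to $\delta$ and the split $\mathbb{E}$-triangle with base $(B,C)$, this time using the commutative right square whose verticals are $\bigl[\begin{smallmatrix}1\\ y\end{smallmatrix}\bigr]\colon B\to B\oplus C$ and $\mathrm{id}_C\colon C\to C$. The completion produces $a\colon A\to B$ with $(a,\mathrm{id}_C)\colon\delta\to 0$; commutativity of the resulting left square forces $a=x$ (and reproves $yx=0$), and the morphism-of-extensions relation $x_\ast\delta=(\mathrm{id}_C)^\ast 0=0$ gives~(2).

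I do not anticipate a real obstacle. The only delicate step is identifying the correct middle arrow of each square—namely $[x\ 1]$ for {\rm (ET3)} and $\bigl[\begin{smallmatrix}1\\ y\end{smallmatrix}\bigr]$ for {\rm (ET3)$^{\mathrm{op}}$}. These are precisely the choices for which the uncompleted square is commutative, and for which the commutativity of the remaining, completed square encodes the desired equations. Once that observation is made, the argument reduces to a direct reading of the two axioms.
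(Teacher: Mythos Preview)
Your argument is correct. The overall strategy---compare the given $\mathbb{E}$-triangle with split ones via {\rm (ET3)} and {\rm (ET3)$^{\mathrm{op}}$}---is the same as the paper's, but the specific comparisons differ slightly. The paper uses the degenerate split triangles $A\overset{\mathrm{id}}{\longrightarrow}A\to 0$ and $B\overset{\mathrm{id}}{\longrightarrow}B\to 0$: applying {\rm (ET3)} to the first (with verticals $\mathrm{id}_A$ and $x$) forces the third vertical to be $0$, hence $y\circ x=0$; applying {\rm (ET3)} to the second (with verticals $x$ and $\mathrm{id}_B$) gives a morphism $(x,0)\colon\delta\to 0$ and hence $x_\ast\delta=0$; part~(3) is declared dual to~(2). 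Your choice of the ``full'' split triangles $A\to A\oplus B\to B$ and $B\to B\oplus C\to C$ costs a line of bookkeeping to identify the completed arrow, but buys you (1) and (3) simultaneously from a single application of {\rm (ET3)}. Either packaging is fine.
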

\begin{proof}
{\rm (1)} By {\rm (ET2)}, the conflation $A\overset{\mathrm{id}}{\longrightarrow}A\to 0$ realizes $0\in\mathbb{E}(0,A)$. Applying {\rm (ET3)} to 
\[
\xy
(-12,6)*+{A}="0";
(0,6)*+{A}="2";
(12,6)*+{0}="4";
(-12,-6)*+{A}="10";
(0,-6)*+{B}="12";
(12,-6)*+{C}="14";
{\ar^{\mathrm{id}_A} "0";"2"};
{\ar^{} "2";"4"};
{\ar_{\mathrm{id}_A} "0";"10"};
{\ar^{x} "2";"12"};
{\ar_{x} "10";"12"};
{\ar_{y} "12";"14"};
{\ar@{}|\circlearrowright "0";"12"};
\endxy,
\]
we obtain a morphism of $\mathbb{E}$-triangles $(\mathrm{id}_A,x,0)$. Especially we have $y\circ x=0$.

{\rm (2)} Similarly, applying {\rm (ET3)} to
\[
\xy
(-12,6)*+{A}="0";
(0,6)*+{B}="2";
(12,6)*+{C}="4";
(-12,-6)*+{B}="10";
(0,-6)*+{B}="12";
(12,-6)*+{0}="14";
{\ar^{x} "0";"2"};
{\ar^{y} "2";"4"};
{\ar_{x} "0";"10"};
{\ar^{\mathrm{id}_B} "2";"12"};
{\ar_{\mathrm{id}_B} "10";"12"};
{\ar "12";"14"};
{\ar@{}|\circlearrowright "0";"12"};
\endxy,
\]
we obtain a morphism of $\mathbb{E}$-extensions $(x,0)\colon\delta\to0$. Especially we have $x_{\ast}\delta=0$.
{\rm (3)} is dual to {\rm (2)}.
\end{proof}

\begin{proposition}\label{PropExact1}
Assume $(\mathscr{C},\mathbb{E},\mathfrak{s})$ satisfies {\rm (ET1),(ET2)}. Then the following are equivalent.
\begin{enumerate}
\item[(1)] $(\mathscr{C},\mathbb{E},\mathfrak{s})$ satisfies {\rm (ET3)} and {\rm (ET3)$^{\mathrm{op}}$}.
\item[(2)] For any $\mathbb{E}$-triangle $A\overset{x}{\longrightarrow}B\overset{y}{\longrightarrow}C\overset{\delta}{\dashrightarrow}$, the following sequences of natural transformations are exact.
\begin{itemize}
\item[{\rm (i)}] $\mathscr{C}(C,-)\overset{\mathscr{C}(y,-)}{\Longrightarrow}\mathscr{C}(B,-)\overset{\mathscr{C}(x,-)}{\Longrightarrow}\mathscr{C}(A,-)\overset{\delta^\sharp}{\Longrightarrow}\mathbb{E}(C,-)\overset{\mathbb{E}(y,-)}{\Longrightarrow}\mathbb{E}(B,-)$\\ in $\mathrm{Mod}(\mathscr{C})$. Here $\mathrm{Mod}(\mathscr{C})$ denotes the abelian category of additive functors from $\mathscr{C}$ to $\mathit{Ab}$.
\item[{\rm (ii)}] $\mathscr{C}(-,A)\overset{\mathscr{C}(-,x)}{\Longrightarrow}\mathscr{C}(-,B)\overset{\mathscr{C}(-,y)}{\Longrightarrow}\mathscr{C}(-,C)\overset{\delta_\sharp}{\Longrightarrow}\mathbb{E}(-,A)\overset{\mathbb{E}(-,x)}{\Longrightarrow}\mathbb{E}(-,B)$\\ in $\mathrm{Mod}(\mathscr{C}^{\mathrm{op}})$.
\end{itemize}
\end{enumerate}
\end{proposition}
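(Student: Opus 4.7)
The plan is to prove both directions of the equivalence, focusing on sequence (i) and axiom (ET3); the corresponding statements for sequence (ii) and (ET3)$^{\mathrm{op}}$ follow by passage to the opposite extriangulated category. Since the three compositions appearing in sequence (i) vanish by Lemma~\ref{LemZero}, only the converse inclusions at its three middle terms are nontrivial.

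For the direction $(1)\Rightarrow(2)$, at $\mathscr{C}(B,-)$ I would take any $b\in\mathscr{C}(B,X)$ with $bx=0$, form the commutative square between $\delta$ and the split $\mathbb{E}$-triangle $A\to A\oplus X\to X$ (which realizes $0$ by (ET2)) with left column $0\colon A\to A$ and right column $\bigl[\begin{smallmatrix} 0 \\ b \end{smallmatrix}\bigr]\colon B\to A\oplus X$, and read off the third component from (ET3) to obtain $c\colon C\to X$ with $cy=b$. At $\mathscr{C}(A,-)$, given $a$ with $a_\ast\delta=0$, the extension $a_\ast\delta$ is realized by the split sequence $X\to X\oplus C\to C$, and condition $(\ast)$ of Definition~\ref{DefRealization} applied to the morphism of extensions $(a,\mathrm{id}_C)\colon\delta\to a_\ast\delta$ produces a middle map $B\to X\oplus C$ whose upper component gives the desired factorization of $a$ through $x$. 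At $\mathbb{E}(C,-)$, given $\theta\in\mathbb{E}(C,X)$ with $y^\ast\theta=0$, I would realize $\theta$ by some $X\to E\overset{v}{\to}C$, use $(\ast)$ on the morphism $(\mathrm{id}_X,y)\colon y^\ast\theta=0\to\theta$ to extract a map $w_B\colon B\to E$ with $vw_B=y$, and apply (ET3)$^{\mathrm{op}}$ to the commutative square with entries $(w_B,\mathrm{id}_C)$ to obtain $a\colon A\to X$ with $a_\ast\delta=\theta$.

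For the direction $(2)\Rightarrow(1)$, given a commutative square with $bx=x'a$, exactness of (i) at $\mathscr{C}(B,C')$ applied to $y'b$ (which satisfies $y'bx=y'x'a=0$) yields some $c_1\colon C\to C'$ with $c_1y=y'b$. The obstruction $\theta:=a_\ast\delta-c_1^\ast\delta'\in\mathbb{E}(C,A')$ is annihilated by both $y^\ast$ and $x'_\ast$, as one verifies using Lemma~\ref{LemZero} together with $bx=x'a$; hence exactness at $\mathbb{E}(C,A')$ in (i) produces $\phi\colon A\to A'$ with $\phi_\ast\delta=\theta$, and exactness at $\mathbb{E}(C,A')$ in the dual sequence (ii) applied to $\delta'$ produces $\gamma\colon C\to C'$ with $\gamma^\ast\delta'=\theta$. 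Condition $(\ast)$ then realizes the morphisms of extensions $(a-\phi,c_1)\colon\delta\to\delta'$ and $(\phi,\gamma)\colon\delta\to\delta'$ by triples $(a-\phi,b_1,c_1)$ and $(\phi,\nu,\gamma)$, so that the sum $(a,b_1+\nu,c_1+\gamma)$ realizes $(a,c_1+\gamma)$. Setting $\Delta:=b-(b_1+\nu)$ one checks $\Delta x=0$, so exactness of (i) at $\mathscr{C}(B,B')$ yields $\kappa\colon C\to B'$ with $\Delta=\kappa y$; the final choice $c:=c_1+\gamma+y'\kappa$ then satisfies both $cy=y'b$ (by construction) and $c^\ast\delta'=a_\ast\delta$ (using ${y'}^\ast\delta'=0$ from Lemma~\ref{LemZero}), so that $(a,b,c)$ realizes $(a,c)\colon\delta\to\delta'$.

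The main obstacle is this final assembly step in $(2)\Rightarrow(1)$: the naive candidate $c_1$ satisfies the commutativity $c_1y=y'b$ but typically not the extension equation $c_1^\ast\delta'=a_\ast\delta$, while the correction $c_1+\gamma$ fixes the equation at the cost of disturbing commutativity. Only by combining the pushforward correction $\phi$ (from (i)), the pullback correction $\gamma$ (from (ii)), two invocations of $(\ast)$, and the absorption of the residual discrepancy between $b$ and $b_1+\nu$ through $y'\kappa$, does one obtain a single $c$ fulfilling both requirements simultaneously.
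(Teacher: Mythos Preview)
Your argument is correct in both directions. For $(1)\Rightarrow(2)$ it is essentially the paper's proof, differing only cosmetically in the choice of split $\mathbb{E}$-triangles used as targets for {\rm (ET3)} and {\rm (ET3)$^{\mathrm{op}}$}.

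For $(2)\Rightarrow(1)$, your route works but is longer than necessary, and the paper's path is worth comparing. The paper begins on the other side: from $x'_{\ast}a_{\ast}\delta=b_{\ast}x_{\ast}\delta=0$ and exactness of sequence~(ii) for $\delta'$ at $\mathbb{E}(C,A')$, one obtains directly some $c'\in\mathscr{C}(C,C')$ with $c'^{\ast}\delta'=a_{\ast}\delta$; realizing $(a,c')$ by $(a,b',c')$, the residual $b-b'$ kills $x$, so exactness of~(i) at $\mathscr{C}(B,B')$ produces $c''$ with $c''y=b-b'$, and $c=c'+y'c''$ finishes in a single correction. In your version, once you have obtained $\gamma$ with $\gamma^{\ast}\delta'=\theta$, the pair $(a,\,c_1+\gamma)$ is already a morphism of extensions (since $(c_1+\gamma)^{\ast}\delta'=c_1^{\ast}\delta'+\theta=a_{\ast}\delta$), and you could realize it directly and proceed exactly as the paper does. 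Your separate lift $\phi$ via sequence~(i), the two realizations $(a-\phi,b_1,c_1)$ and $(\phi,\nu,\gamma)$, and their summation are therefore redundant: starting from commutativity ($c_1$) rather than from the extension equation ($c'$) buys nothing, since one realization followed by one $y'\kappa$-type absorption suffices either way.

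One technical remark: in the $(2)\Rightarrow(1)$ direction you invoke Lemma~\ref{LemZero} for identities such as $y'^{\ast}\delta'=0$, $x_{\ast}\delta=0$, and $y'x'=0$, but that lemma is stated under the hypotheses {\rm (ET3)} and {\rm (ET3)$^{\mathrm{op}}$}, which are exactly what you are proving. No circularity actually occurs, because each of these identities is an immediate consequence of the exactness hypothesis~(2) (the vanishing of consecutive compositions in~(i) and~(ii), read off via Yoneda); you should simply cite~(2) rather than the lemma at those points.
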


\begin{remark}
In the above {\rm (i)}, the category $\mathrm{Mod}(\mathscr{C})$ is not locally small in general. The ``exactness" of the sequence in {\rm (i)} simply means that
\[ \mathscr{C}(C,X)\overset{\mathscr{C}(y,X)}{\longrightarrow}\mathscr{C}(B,X)\overset{\mathscr{C}(x,X)}{\longrightarrow}\mathscr{C}(A,X)\overset{\delta^\sharp_X}{\longrightarrow}\mathbb{E}(C,X)\overset{\mathbb{E}(y,X)}{\longrightarrow}\mathbb{E}(B,X) \]
is exact in $\mathit{Ab}$ for any $X\in\mathscr{C}$. Similarly for {\rm (ii)}.
\end{remark}

\begin{proof}[Proof of Proposition~\ref{PropExact1}]
First we assume {\rm (1)}. We only show the exactness of {\rm (i)}, since {\rm (ii)} can be shown dually. By Lemma~\ref{LemZero}, composition of any consecutive morphisms in {\rm (i)} is equal to $0$. Let us show the exactness of
\[ \mathscr{C}(C,X)\overset{\mathscr{C}(y,X)}{\longrightarrow}\mathscr{C}(B,X)\overset{\mathscr{C}(x,X)}{\longrightarrow}\mathscr{C}(A,X)\overset{\delta^\sharp_X}{\longrightarrow}\mathbb{E}(C,X)\overset{\mathbb{E}(y,X)}{\longrightarrow}\mathbb{E}(B,X) \]
for any $X\in\mathscr{C}$.


\noindent\underline{Exactness at $\mathscr{C}(B,X)$}

Let $b\in\mathscr{C}(B,X)$ be any morphism satisfying $\mathscr{C}(x,X)(b)=b\circ x=0$. Applying {\rm (ET3)} to
\[
\xy
(-12,6)*+{A}="0";
(0,6)*+{B}="2";
(12,6)*+{C}="4";
(-12,-6)*+{0}="10";
(0,-6)*+{X}="12";
(12,-6)*+{X}="14";
{\ar^{x} "0";"2"};
{\ar^{y} "2";"4"};
{\ar_{0} "0";"10"};
{\ar^{b} "2";"12"};
{\ar_{} "10";"12"};
{\ar_{\mathrm{id}_X} "12";"14"};
{\ar@{}|\circlearrowright "0";"12"};
\endxy,
\]
we obtain a morphism $c\in\mathscr{C}(C,X)$ satisfying $b=c\circ y=\mathscr{C}(y,X)(c)$.


\noindent\underline{Exactness at $\mathscr{C}(A,X)$}

Let $a\in\mathscr{C}(A,X)$ be any morphism satisfying $\delta^\sharp_X(a)=a_{\ast}\delta=0$. This means that $(a,0)\colon\delta\to0$ is a morphism of $\mathbb{E}$-extensions.
Since $\mathfrak{s}$ realizes $\mathbb{E}$, there exists $b\in\mathscr{C}(B,X)$ which gives the following morphism of $\mathbb{E}$-triangles.
\[
\xy
(-12,6)*+{A}="0";
(0,6)*+{B}="2";
(12,6)*+{C}="4";
(24,6)*+{}="6";
(-12,-6)*+{X}="10";
(0,-6)*+{X}="12";
(12,-6)*+{0}="14";
(24,-6)*+{}="16";
{\ar^{x} "0";"2"};
{\ar^{y} "2";"4"};
{\ar@{-->}^{\delta} "4";"6"};
{\ar_{a} "0";"10"};
{\ar^{b} "2";"12"};
{\ar "4";"14"};
{\ar_{\mathrm{id}_X} "10";"12"};
{\ar "12";"14"};
{\ar@{-->}_{0} "14";"16"};
{\ar@{}|\circlearrowright "0";"12"};
{\ar@{}|\circlearrowright "2";"14"};
\endxy
\]
Especially we have $a=b\circ x=\mathscr{C}(x,X)(b)$.


\noindent\underline{Exactness at $\mathbb{E}(C,X)$}

Let $\theta\in\mathbb{E}(C,X)$ be any $\mathbb{E}$-extension satisfying $\mathbb{E}(y,X)(\theta)=y^{\ast}\theta=0$. Realize them as $\mathbb{E}$-triangles
\[ X\overset{f}{\longrightarrow}Y\overset{g}{\longrightarrow}C\overset{\theta}{\dashrightarrow}\ \ \text{and}\ \ X\overset{m}{\longrightarrow}Z\overset{e}{\longrightarrow}B\overset{y^{\ast}\theta}{\dashrightarrow}. \]
Then the morphism $(\mathrm{id}_X,y)\colon y^{\ast}\theta\to\theta$ can be realized by
\[
\xy
(-12,6)*+{X}="0";
(0,6)*+{Z}="2";
(12,6)*+{B}="4";
(24,6)*+{}="6";
(-12,-6)*+{X}="10";
(0,-6)*+{Y}="12";
(12,-6)*+{C}="14";
(24,-6)*+{}="16";
{\ar^{m} "0";"2"};
{\ar^{e} "2";"4"};
{\ar@{-->}^{y^{\ast}\theta} "4";"6"};
{\ar@{=} "0";"10"};
{\ar^{e^{\prime}} "2";"12"};
{\ar^{y} "4";"14"};
{\ar_{f} "10";"12"};
{\ar_{g} "12";"14"};
{\ar@{-->}_{\theta} "14";"16"};
{\ar@{}|\circlearrowright "0";"12"};
{\ar@{}|\circlearrowright "2";"14"};
\endxy
\]
with some $e^{\prime}\in\mathscr{C}(Z,Y)$. Since $y^{\ast}\theta$ splits by assumption, $e$ has a section $s$. Applying {\rm (ET3)$^{\mathrm{op}}$} to
\[
\xy
(-12,6)*+{A}="0";
(0,6)*+{B}="2";
(12,6)*+{C}="4";
(24,6)*+{}="6";
(-12,-6)*+{X}="10";
(0,-6)*+{Y}="12";
(12,-6)*+{C}="14";
(24,-6)*+{}="16";
{\ar^{x} "0";"2"};
{\ar^{y} "2";"4"};
{\ar@{-->}^{\delta} "4";"6"};
{\ar_{e^{\prime}\circ s} "2";"12"};
{\ar@{=} "4";"14"};
{\ar_{f} "10";"12"};
{\ar_{g} "12";"14"};
{\ar@{-->}_{\theta} "14";"16"};
{\ar@{}|\circlearrowright "2";"14"};
\endxy,
\]
we obtain $a\in\mathscr{C}(A,X)$ which gives a morphism $(a,\mathrm{id}_C)\colon\delta\to\theta$. This means $\theta=a_{\ast}\delta=\delta^\sharp a$.


Conversely, let us assume {\rm (2)} and show {\rm (ET3)}. Let $A\overset{x}{\longrightarrow}B\overset{y}{\longrightarrow}C\overset{\delta}{\dashrightarrow}$ and $A^{\prime}\overset{x^{\prime}}{\longrightarrow}B^{\prime}\overset{y^{\prime}}{\longrightarrow}C^{\prime}\overset{\delta^{\prime}}{\dashrightarrow}$ be any pair of $\mathbb{E}$-triangles. Suppose that we are given a commutative diagram
\[
\xy
(-12,6)*+{A}="0";
(0,6)*+{B}="2";
(12,6)*+{C}="4";
(-12,-6)*+{A^{\prime}}="10";
(0,-6)*+{B^{\prime}}="12";
(12,-6)*+{C^{\prime}}="14";
{\ar^{x} "0";"2"};
{\ar^{y} "2";"4"};
{\ar_{a} "0";"10"};
{\ar^{b} "2";"12"};
{\ar_{x^{\prime}} "10";"12"};
{\ar_{y^{\prime}} "12";"14"};
{\ar@{}|\circlearrowright "0";"12"};
\endxy
\]
in $\mathscr{C}$. Remark that $\mathbb{E}(-,x)\circ\delta_\sharp=0$ is equivalent to $x_{\ast}\delta=0$ by Yoneda's lemma. Similarly, we have $y^{\prime\ast}\delta^{\prime}=0$.

By the exactness of
\[ \mathscr{C}(C,C^{\prime})\overset{(\delta^{\prime}_\sharp)_C}{\longrightarrow}\mathbb{E}(C,A^{\prime})\overset{\mathbb{E}(C,x^{\prime})}{\longrightarrow}\mathbb{E}(C,B^{\prime}) \]
and the equality
\[ \mathbb{E}(C,x^{\prime})(a_{\ast}\delta)=x^{\prime}_{\ast} a_{\ast}\delta=b_{\ast} x_{\ast}\delta=0, \]
there is $c^{\prime}\in\mathscr{C}(C,C^{\prime})$ satisfying $a_{\ast}\delta=\delta^{\prime}_\sharp c^{\prime}=c^{\prime\ast}\delta^{\prime}$.
Thus $(a,c)\colon\delta\to\delta^{\prime}$ is a morphism of $\mathbb{E}$-extensions. Take its realization as follows.
\[
\xy
(-12,6)*+{A}="0";
(0,6)*+{B}="2";
(12,6)*+{C}="4";
(24,6)*+{}="6";
(-12,-6)*+{A^{\prime}}="10";
(0,-6)*+{B^{\prime}}="12";
(12,-6)*+{C^{\prime}}="14";
(24,-6)*+{}="16";
{\ar^{x} "0";"2"};
{\ar^{y} "2";"4"};
{\ar@{-->}^{\delta} "4";"6"};
{\ar_{a} "0";"10"};
{\ar^{b^{\prime}} "2";"12"};
{\ar^{c^{\prime}} "4";"14"};
{\ar@{-->}_{\delta^{\prime}} "14";"16"};
{\ar_{x^{\prime}} "10";"12"};
{\ar_{y^{\prime}} "12";"14"};
{\ar@{}|\circlearrowright "0";"12"};
{\ar@{}|\circlearrowright "2";"14"};
\endxy
\]
Then by the exactness of
\[ \mathscr{C}(C,B^{\prime})\overset{\mathscr{C}(y,B^{\prime})}{\longrightarrow}\mathscr{C}(B,B^{\prime})\overset{\mathscr{C}(x,B^{\prime})}{\longrightarrow}\mathscr{C}(A,B^{\prime}) \]
and the equality
\[ (b-b^{\prime})\circ x=x^{\prime}\circ a-x^{\prime}\circ a=0, \]
there exists $c^{\prime\prime}\in\mathscr{C}(C,B^{\prime})$ satisfying $c^{\prime\prime}\circ y=b-b^{\prime}$. If we put $c=c^{\prime}+y^{\prime}\circ c^{\prime\prime}$, this satisfies
\begin{eqnarray*}
&c\circ y=c^{\prime}\circ y+y^{\prime}\circ c^{\prime\prime}\circ y=y^{\prime}\circ b^{\prime}+(y^{\prime}\circ b-y^{\prime}\circ b^{\prime})=y^{\prime}\circ b,&\\
&c^{\ast}\delta^{\prime}=c^{\prime\ast}\delta^{\prime}+c^{\prime\prime\ast}y^{\prime\ast}\delta^{\prime}=a_{\ast}\delta.&
\end{eqnarray*}
Dually, {\rm (2)} implies {\rm (ET3)$^{\mathrm{op}}$}.
\end{proof}

\begin{corollary}\label{CorExact0}
Assume $(\mathscr{C},\mathbb{E},\mathfrak{s})$ satisfies {\rm (ET1),(ET2),(ET3),(ET3)$^{\mathrm{op}}$}. Let
\begin{equation}\label{Diag_CorEx}
\xy
(-12,6)*+{A}="0";
(0,6)*+{B}="2";
(12,6)*+{C}="4";
(24,6)*+{}="6";
(-12,-6)*+{A^{\prime}}="10";
(0,-6)*+{B^{\prime}}="12";
(12,-6)*+{C^{\prime}}="14";
(24,-6)*+{}="16";
{\ar^{x} "0";"2"};
{\ar^{y} "2";"4"};
{\ar@{-->}^{\delta} "4";"6"};
{\ar_{a} "0";"10"};
{\ar^{b} "2";"12"};
{\ar^{c} "4";"14"};
{\ar_{x^{\prime}} "10";"12"};
{\ar_{y^{\prime}} "12";"14"};
{\ar@{-->}_{\delta^{\prime}} "14";"16"};
{\ar@{}|\circlearrowright "0";"12"};
{\ar@{}|\circlearrowright "2";"14"};
\endxy
\end{equation}
be any morphism of $\mathbb{E}$-triangles. Then the following are equivalent.
\begin{enumerate}
\item[(1)] $a$ factors through $x$.
\item[(2)] $a_{\ast}\delta=c^{\ast}\delta^{\prime}=0$.
\item[(3)] $c$ factors through $y^{\prime}$.
\end{enumerate}
In particular, in the case $\delta=\delta^{\prime}$ and $(a,b,c)=(\mathrm{id},\mathrm{id},\mathrm{id})$, we obtain
\[ x\ \text{has a retraction}\ \Leftrightarrow\ \delta\ \text{splits}\ \Leftrightarrow\ y\ \text{has a section}. \]
\end{corollary}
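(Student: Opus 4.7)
The plan is to deduce all three equivalences directly from the exactness statements in Proposition~\ref{PropExact1}, combined with the defining equation $a_\ast\delta = c^\ast\delta'$ of a morphism of $\mathbb{E}$-extensions underlying (\ref{Diag_CorEx}).

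First, observe that since $(a,b,c)$ is a morphism of $\mathbb{E}$-triangles, Definition~\ref{DefMorphExt} gives $a_\ast\delta = c^\ast\delta'$, so the two equalities in (2) are automatic once either holds. Next, for the equivalence (1)$\Leftrightarrow$(2), I would apply Proposition~\ref{PropExact1}(i) to the $\mathbb{E}$-triangle $A\xrightarrow{x}B\xrightarrow{y}C\overset{\delta}{\dashrightarrow}$ evaluated at $X = A'$, which yields the exact sequence
\[
\mathscr{C}(B,A')\xrightarrow{\mathscr{C}(x,A')}\mathscr{C}(A,A')\xrightarrow{\delta^\sharp_{A'}}\mathbb{E}(C,A').
\]
Since $\delta^\sharp_{A'}(a) = a_\ast\delta$, the condition $a_\ast\delta = 0$ is exactly the statement that $a$ lies in the image of $\mathscr{C}(x,A')$, i.e.\ factors through $x$. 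Dually, for (2)$\Leftrightarrow$(3), I would apply Proposition~\ref{PropExact1}(ii) to the $\mathbb{E}$-triangle $A'\xrightarrow{x'}B'\xrightarrow{y'}C'\overset{\delta'}{\dashrightarrow}$ evaluated at $C$, giving
\[
\mathscr{C}(C,B')\xrightarrow{\mathscr{C}(C,y')}\mathscr{C}(C,C')\xrightarrow{(\delta'_\sharp)_C}\mathbb{E}(C,A'),
\]
so that $c^\ast\delta' = (\delta'_\sharp)_C(c) = 0$ is equivalent to $c$ factoring through $y'$.

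For the "in particular" clause, I take $\delta = \delta'$ and $(a,b,c) = (\mathrm{id}_A,\mathrm{id}_B,\mathrm{id}_C)$. Then condition (1) becomes: $\mathrm{id}_A$ factors through $x$, i.e.\ $x$ has a retraction; condition (2) becomes $\delta = \mathrm{id}_{A\ast}\delta = 0$, i.e.\ $\delta$ splits; condition (3) becomes: $\mathrm{id}_C$ factors through $y$, i.e.\ $y$ has a section.

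No step looks genuinely hard: the whole argument is a direct reading of Proposition~\ref{PropExact1} at the appropriate objects, together with the identification $\delta^\sharp a = a_\ast\delta$ and $\delta_\sharp c = c^\ast\delta$ from Definition~\ref{DefYoneda}. The only minor point to keep track of is making sure the equality $a_\ast\delta = c^\ast\delta'$ is invoked to bridge between the two exact sequences, so that the single vanishing condition in (2) can be read off in either variable as needed.
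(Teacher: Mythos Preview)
Your proof is correct and follows essentially the same approach as the paper: both argue $(1)\Leftrightarrow(2)$ via the exactness of $\mathscr{C}(B,A')\to\mathscr{C}(A,A')\to\mathbb{E}(C,A')$ from Proposition~\ref{PropExact1}, together with $\delta^\sharp_{A'}(a)=a_\ast\delta$, and then handle $(2)\Leftrightarrow(3)$ dually. The paper simply says ``dually'' for the second equivalence, whereas you spell out the dual exact sequence explicitly; otherwise the arguments are identical.
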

\begin{proof}
By the definition of $\delta^\sharp$, it satisfies $\delta^\sharp_{A^{\prime}}(a)=a_{\ast}\delta$. Thus $(1)\Leftrightarrow(2)$ follows from the exactness of
\[ \mathscr{C}(B,A^{\prime})\overset{\mathscr{C}(x,A^{\prime})}{\longrightarrow}\mathscr{C}(A,A^{\prime})\overset{\delta^\sharp_{A^{\prime}}}{\longrightarrow}\mathbb{E}(C,A^{\prime}). \]
$(2)\Leftrightarrow(3)$ can be shown dually.
\end{proof}

\begin{corollary}\label{CorExact1}
Assume $(\mathscr{C},\mathbb{E},\mathfrak{s})$ satisfies {\rm (ET1),(ET2),(ET3),(ET3)$^{\mathrm{op}}$}. Then the following holds for any morphism of $\mathbb{E}$-triangles.
\begin{enumerate}
\item[(1)] If $a$ and $c$ are isomorphisms in $\mathscr{C}$ $($equivalently, if $(a,c)$ is an isomorphism in $\mathbb{E}\text{-}\mathrm{Ext}(\mathscr{C})$ in Definition~\ref{DefMorphExt}$)$, then so is $b$.
\item[(2)] If $a$ and $b$ are isomorphisms in $\mathscr{C}$, then so is $c$.
\item[(3)] If $b$ and $c$ are isomorphisms in $\mathscr{C}$, then so is $a$.
\end{enumerate}
\end{corollary}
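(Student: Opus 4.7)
The plan is to deduce all three parts from Proposition~\ref{PropExact1} together with Yoneda's lemma, by running the classical five- and four-lemmas on two natural ladders of $5$-term exact sequences attached to the morphism $(a,b,c)$ of $\mathbb{E}$-triangles. The morphism $(a,b,c)$ will induce a commutative ladder between the covariant sequence
\[ \mathscr{C}(-,A)\to\mathscr{C}(-,B)\to\mathscr{C}(-,C)\overset{\delta_\sharp}{\to}\mathbb{E}(-,A)\to\mathbb{E}(-,B) \]
of Proposition~\ref{PropExact1}(ii) for $\delta$ and its primed counterpart for $\delta'$, with vertical arrows $a_\ast,b_\ast,c_\ast,a_\ast,b_\ast$, and an analogous contravariant ladder from Proposition~\ref{PropExact1}(i) with vertical arrows $c^\ast,b^\ast,a^\ast,c^\ast,b^\ast$. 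Commutativity of the squares involving $\delta_\sharp$ or $\delta^\sharp$ will follow from Yoneda-style naturality together with the morphism identity $a_\ast\delta=c^\ast\delta'$; the other squares are obvious from functoriality of $\mathscr{C}(-,?)$ and $\mathbb{E}(-,?)$.

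Parts (2) and (3) will then be immediate consequences of the short five-lemma. For (2), assuming $a$ and $b$ are isomorphisms makes the outer four vertical maps of the covariant ladder into isomorphisms, so the middle map $c_\ast\colon\mathscr{C}(-,C)\to\mathscr{C}(-,C')$ must be a natural isomorphism of functors on $\mathscr{C}^{\mathrm{op}}$, and Yoneda forces $c$ to be an isomorphism. Part (3) is the formal dual: the five-lemma applied to the contravariant ladder under the assumption that $b,c$ are isomorphisms forces $a^\ast\colon\mathscr{C}(A',-)\to\mathscr{C}(A,-)$ to be a natural isomorphism, hence $a$ is an isomorphism.

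Part (1) is the main obstacle, because in each ladder the $b$-induced maps occupy the second and fifth columns, never the middle, so the five-lemma does not apply directly. I will instead apply the classical four-lemma (``$f_1$ epi, $f_3$ epi, $f_4$ mono $\Rightarrow f_2$ epi'') to the initial 4-term segment of each ladder. On the covariant side this will yield that $b_\ast\colon\mathscr{C}(-,B)\to\mathscr{C}(-,B')$ is pointwise surjective, and evaluating at $X=B'$ will produce $s\in\mathscr{C}(B',B)$ with $b\circ s=\mathrm{id}_{B'}$. On the contravariant side the same argument will produce $r\in\mathscr{C}(B',B)$ with $r\circ b=\mathrm{id}_B$. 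The computation $r=r\circ b\circ s=s$ will then exhibit $r=s$ as a two-sided inverse of $b$, completing the proof.
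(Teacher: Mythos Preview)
Your proposal is correct and follows essentially the same approach as the paper. For (2) and (3) the paper also applies the five-lemma to the covariant and contravariant ladders from Proposition~\ref{PropExact1} and concludes via Yoneda; for (1) the paper writes out the first four terms of each ladder evaluated at the specific objects $B'$ and $B$ (rather than phrasing it as the four-lemma at the level of natural transformations) to conclude that $\mathscr{C}(B',b)$ and $\mathscr{C}(b,B)$ are surjective, obtaining a right and a left inverse exactly as you do.
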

\begin{proof}
{\rm (1)} By Proposition~\ref{PropExact1}, we have morphisms of exact sequences of abelian groups
\begin{equation}\label{Diag_CorExact1A}
\xy
(-40,7)*+{\mathscr{C}(B^{\prime},A)}="0";
(-14,7)*+{\mathscr{C}(B^{\prime},B)}="2";
(12,7)*+{\mathscr{C}(B^{\prime},C)}="4";
(36,7)*+{\mathbb{E}(B^{\prime},A)}="6";
(-40,-7)*+{\mathscr{C}(B^{\prime},A^{\prime})}="10";
(-14,-7)*+{\mathscr{C}(B^{\prime},B^{\prime})}="12";
(12,-7)*+{\mathscr{C}(B^{\prime},C^{\prime})}="14";
(36,-7)*+{\mathbb{E}(B^{\prime},A^{\prime})}="16";
{\ar^{\mathscr{C}(B^{\prime},x)} "0";"2"};
{\ar^{\mathscr{C}(B^{\prime},y)} "2";"4"};
{\ar^{(\delta_\sharp)_{B^{\prime}}} "4";"6"};
{\ar^{\cong}_{\mathscr{C}(B^{\prime},a)} "0";"10"};
{\ar^{\mathscr{C}(B^{\prime},b)} "2";"12"};
{\ar_{\cong}^{\mathscr{C}(B^{\prime},c)} "4";"14"};
{\ar_{\cong}^{\mathbb{E}(B^{\prime},a)} "6";"16"};
{\ar_{\mathscr{C}(B^{\prime},x^{\prime})} "10";"12"};
{\ar_{\mathscr{C}(B^{\prime},y^{\prime})} "12";"14"};
{\ar_{(\delta^{\prime}_{\sharp})_{B^{\prime}}} "14";"16"};
{\ar@{}|\circlearrowright "0";"12"};
{\ar@{}|\circlearrowright "2";"14"};
{\ar@{}|\circlearrowright "4";"16"};
\endxy
\end{equation}
and
\begin{equation}\label{Diag_CorExact1B}
\xy
(-40,7)*+{\mathscr{C}(C^{\prime},B)}="0";
(-14,7)*+{\mathscr{C}(B^{\prime},B)}="2";
(12,7)*+{\mathscr{C}(A^{\prime},B)}="4";
(36,7)*+{\mathbb{E}(C^{\prime},B)}="6";
(-40,-7)*+{\mathscr{C}(C,B)}="10";
(-14,-7)*+{\mathscr{C}(B,B)}="12";
(12,-7)*+{\mathscr{C}(A,B)}="14";
(36,-7)*+{\mathbb{E}(C,B)}="16";
{\ar^{\mathscr{C}(y^{\prime},B)} "0";"2"};
{\ar^{\mathscr{C}(x^{\prime},B)} "2";"4"};
{\ar^{\delta^{\prime\sharp}_B} "4";"6"};
{\ar^{\cong}_{\mathscr{C}(c,B)} "0";"10"};
{\ar^{\mathscr{C}(b,B)} "2";"12"};
{\ar_{\cong}^{\mathscr{C}(a,B)} "4";"14"};
{\ar_{\cong}^{\mathbb{E}(c,B)} "6";"16"};
{\ar_{\mathscr{C}(y,B)} "10";"12"};
{\ar_{\mathscr{C}(x,B)} "12";"14"};
{\ar_{\delta^\sharp_B} "14";"16"};
{\ar@{}|\circlearrowright "0";"12"};
{\ar@{}|\circlearrowright "2";"14"};
{\ar@{}|\circlearrowright "4";"16"};
\endxy.
\end{equation}
Diagram $(\ref{Diag_CorExact1A})$ shows the surjectivity of $\mathscr{C}(B^{\prime},b)$. Thus $b$ has a right inverse in $\mathscr{C}$. Dually, $(\ref{Diag_CorExact1B})$ shows the surjectivity of $\mathscr{C}(b,B)$, and thus $b$ also has a left inverse.


{\rm (2)} By Yoneda's lemma, this follows from the five lemma applied to
\[
\xy
(-44,7)*+{\mathscr{C}(-,A)}="0";
(-22,7)*+{\mathscr{C}(-,B)}="2";
(0,7)*+{\mathscr{C}(-,C)}="4";
(22,7)*+{\mathbb{E}(-,A)}="6";
(44,7)*+{\mathbb{E}(-,B)}="8";
(-44,-7)*+{\mathscr{C}(-,A^{\prime})}="10";
(-22,-7)*+{\mathscr{C}(-,B^{\prime})}="12";
(0,-7)*+{\mathscr{C}(-,C^{\prime})}="14";
(22,-7)*+{\mathbb{E}(-,A^{\prime})}="16";
(44,-7)*+{\mathbb{E}(-,B^{\prime})}="18";
{\ar@{=>} "0";"2"};
{\ar@{=>} "2";"4"};
{\ar@{=>} "4";"6"};
{\ar@{=>} "6";"8"};
{\ar@{=>}_{\cong} "0";"10"};
{\ar@{=>}_{\cong} "2";"12"};
{\ar@{=>}^{\mathscr{C}(-,c)} "4";"14"};
{\ar@{=>}_{\cong} "6";"16"};
{\ar@{=>}^{\cong} "8";"18"};
{\ar@{=>} "10";"12"};
{\ar@{=>} "12";"14"};
{\ar@{=>} "14";"16"};
{\ar@{=>} "16";"18"};
{\ar@{}|\circlearrowright "0";"12"};
{\ar@{}|\circlearrowright "2";"14"};
{\ar@{}|\circlearrowright "4";"16"};
{\ar@{}|\circlearrowright "6";"18"};
\endxy.
\]
{\rm (3)} is dual to {\rm (2)}.
\end{proof}

\begin{proposition}\label{PropModif}
Assume $(\mathscr{C},\mathbb{E},\mathfrak{s})$ satisfies {\rm (ET1),(ET2),(ET3),(ET3)$^{\mathrm{op}}$}. Let $A\overset{x}{\longrightarrow}B\overset{y}{\longrightarrow}C\overset{\delta}{\dashrightarrow}$ be any $\mathbb{E}$-triangle. If $a\in\mathscr{C}(A,A^{\prime})$ and $c\in\mathscr{C}(C^{\prime},C)$ are isomorphisms, then
\[ A^{\prime}\overset{x\circ a^{-1}}{\longrightarrow}B\overset{c^{-1}\circ y}{\longrightarrow}C^{\prime}\overset{a_{\ast} c^{\ast}\delta}{\dashrightarrow} \]
becomes again an $\mathbb{E}$-triangle.
\end{proposition}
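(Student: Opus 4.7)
The plan is to produce the claimed $\mathbb{E}$-triangle by transporting the given one along the two isomorphisms via the realization condition $(\ast)$, and then checking that the resulting sequence is equivalent (in the sense of Definition~\ref{DefSqEquiv}) to the one written in the statement.

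First I would set $\delta' := a_\ast c^\ast \delta$ and pick some realization
\[
\mathfrak{s}(\delta') = [A' \xrightarrow{x'} B' \xrightarrow{y'} C'].
\]
The key preliminary observation is that the pair $(a, c^{-1})$ defines a morphism of $\mathbb{E}$-extensions $\delta \to \delta'$: indeed
\[
(c^{-1})^\ast \delta' = (c^{-1})^\ast a_\ast c^\ast \delta = a_\ast (c \circ c^{-1})^\ast \delta = a_\ast \delta,
\]
so the equality required in Definition~\ref{DefMorphExt} holds. By condition $(\ast)$ in Definition~\ref{DefRealization} there exists $b \in \mathscr{C}(B, B')$ such that $(a, b, c^{-1})$ realizes $(a, c^{-1})$, i.e.\ $b \circ x = x' \circ a$ and $y' \circ b = c^{-1} \circ y$.

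Next I invoke Corollary~\ref{CorExact1}(1): since the outer vertical morphisms $a$ and $c^{-1}$ of this morphism of $\mathbb{E}$-triangles are isomorphisms, so is $b$. Using the two commutativity relations, I rewrite $x' = b \circ x \circ a^{-1}$ and $y' = c^{-1} \circ y \circ b^{-1}$. Then the diagram
\[
\xy
(-16,0)*+{A'}="0";
(0,8)*+{B'}="2";
(0,-8)*+{B}="4";
(16,0)*+{C'}="6";
{\ar^{x'} "0";"2"};
{\ar^{y'} "2";"6"};
{\ar_{x \circ a^{-1}} "0";"4"};
{\ar_{c^{-1} \circ y} "4";"6"};
{\ar^{b^{-1}}_{\cong} "2";"4"};
\endxy
\]
commutes, so the sequences $A' \xrightarrow{x'} B' \xrightarrow{y'} C'$ and $A' \xrightarrow{x \circ a^{-1}} B \xrightarrow{c^{-1} \circ y} C'$ are equivalent in the sense of Definition~\ref{DefSqEquiv}. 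Hence they represent the same class, and so
\[
\mathfrak{s}(a_\ast c^\ast \delta) = \bigl[A' \xrightarrow{x \circ a^{-1}} B \xrightarrow{c^{-1} \circ y} C'\bigr],
\]
which is exactly the assertion that $A' \xrightarrow{x \circ a^{-1}} B \xrightarrow{c^{-1} \circ y} C' \overset{a_\ast c^\ast \delta}{\dashrightarrow}$ is an $\mathbb{E}$-triangle.

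There is no single hard step here; the only subtlety is being careful with variances (using $(a, c^{-1})$ rather than $(a, c)$ so the sides of the morphism-of-extensions equation match) and ensuring the invoked tools---condition $(\ast)$ to build $b$, and Corollary~\ref{CorExact1}(1) to promote $b$ to an isomorphism---are applicable under only the assumptions (ET1)--(ET3)$^{\mathrm{op}}$ that those results require, which they are.
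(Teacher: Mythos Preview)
Your proof is correct and essentially identical to the paper's own argument: set $\delta'=a_\ast c^\ast\delta$, realize it, observe that $(a,c^{-1})\colon\delta\to\delta'$ is a morphism of $\mathbb{E}$-extensions, lift it to a morphism $(a,b,c^{-1})$ of $\mathbb{E}$-triangles via condition~$(\ast)$, apply Corollary~\ref{CorExact1}(1) to conclude $b$ is an isomorphism, and deduce the equivalence of sequences. The only cosmetic difference is that the paper draws the final equivalence diagram using $b$ rather than $b^{-1}$.
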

\begin{proof}
Put $\mathfrak{s}(a_{\ast} c^{\ast}\delta)=[A^{\prime}\overset{x^{\prime}}{\longrightarrow}B^{\prime}\overset{y^{\prime}}{\longrightarrow}C^{\prime}]$.
Since
\[ (c^{-1})^{\ast}\big(a_{\ast} c^{\ast}\delta\big)=(c\circ c^{-1})^{\ast} a_{\ast}\delta=a_{\ast}\delta, \]
we see that $(a,c^{-1})\colon\delta\to c^{\ast} a_{\ast} \delta$ is a morphism of $\mathbb{E}$-extensions.
Take a morphism of $\mathbb{E}$-triangles $(a,b,c^{-1})$ realizing $(a,c^{-1})$. Then $b$ is an isomorphism by Corollary~\ref{CorExact1}. Since
\[
\xy
(-16,0)*+{A^{\prime}}="0";
(3,0)*+{}="1";
(0,8)*+{B}="2";
(0,-8)*+{B^{\prime}}="4";
(-3,0)*+{}="5";
(16,0)*+{C^{\prime}}="6";
{\ar^{x\circ a^{-1}} "0";"2"};
{\ar^{c^{-1}\circ y} "2";"6"};
{\ar_{x^{\prime}} "0";"4"};
{\ar_{y^{\prime}} "4";"6"};
{\ar^{b}_{\cong} "2";"4"};
{\ar@{}|\circlearrowright "0";"1"};
{\ar@{}|\circlearrowright "5";"6"};
\endxy
\]
is commutative, it follows that $[A^{\prime}\overset{x^{\prime}}{\longrightarrow}B^{\prime}\overset{y^{\prime}}{\longrightarrow}C^{\prime}]=[A^{\prime}\overset{x\circ a^{-1}}{\longrightarrow}B\overset{c^{-1}\circ y}{\longrightarrow}C^{\prime}]$.
\end{proof}

\begin{corollary}\label{CorModif}
Assume $(\mathscr{C},\mathbb{E},\mathfrak{s})$ satisfies {\rm (ET1),(ET2),(ET3),(ET3)$^{\mathrm{op}}$}. Let $A\overset{x}{\longrightarrow}B\overset{y}{\longrightarrow}C\overset{\delta}{\dashrightarrow}$ be any $\mathbb{E}$-triangle. Then for any $\delta^{\prime}\in\mathbb{E}(C,A)$, the following are equivalent.
\begin{enumerate}
\item[(1)] $\mathfrak{s}(\delta)=\mathfrak{s}(\delta^{\prime})$.
\item[(2)] $\delta^{\prime}=a_{\ast}\delta$ for some automorphism $a\in\mathscr{C}(A,A)$ satisfying $x\circ a=x$.
\item[(3)] $\delta^{\prime}=c^{\ast}\delta$ for some automorphism $c\in\mathscr{C}(C,C)$ satisfying $c\circ y=y$.
\item[(4)] $\delta^{\prime}=a_{\ast} c^{\ast}\delta$ for some pair of automorphisms $a\in\mathscr{C}(A,A),\, c\in\mathscr{C}(C,C)$ satisfying $x\circ a=x$ and $c\circ y=y$.
\end{enumerate}
\end{corollary}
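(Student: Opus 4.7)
The plan is to prove the equivalences via the two cycles $(1) \Rightarrow (3) \Rightarrow (4) \Rightarrow (1)$ and $(1) \Rightarrow (2) \Rightarrow (4) \Rightarrow (1)$. Among these, $(4) \Rightarrow (1)$ is a direct application of Proposition~\ref{PropModif}: given automorphisms $a$ of $A$ and $c$ of $C$ with $x \circ a = x$ and $c \circ y = y$, that proposition realizes $a_{\ast} c^{\ast}\delta$ by $A \overset{x \circ a^{-1}}{\longrightarrow} B \overset{c^{-1} \circ y}{\longrightarrow} C$, and the hypotheses force $x \circ a^{-1} = x$ and $c^{-1} \circ y = y$, so this equivalence class coincides with $\mathfrak{s}(\delta)$. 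The implications $(2) \Rightarrow (4)$ and $(3) \Rightarrow (4)$ are trivial, obtained by inserting $\mathrm{id}_C$ or $\mathrm{id}_A$ respectively.

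The main work is $(1) \Rightarrow (2)$ and $(1) \Rightarrow (3)$. Starting from $\mathfrak{s}(\delta) = \mathfrak{s}(\delta^{\prime})$, I realize $\delta^{\prime}$ as $A \overset{x^{\prime}}{\longrightarrow} B^{\prime} \overset{y^{\prime}}{\longrightarrow} C \overset{\delta^{\prime}}{\dashrightarrow}$; by Definition~\ref{DefSqEquiv} there is an isomorphism $b \colon B \to B^{\prime}$ with $b \circ x = x^{\prime}$ and $y^{\prime} \circ b = y$. For $(1) \Rightarrow (3)$, I apply \textrm{(ET3)} to the commutative left square with vertical morphisms $\mathrm{id}_A$ and $b$, obtaining $c \colon C \to C$ such that $(\mathrm{id}_A, b, c) \colon \delta \to \delta^{\prime}$ is a morphism of $\mathbb{E}$-triangles; this yields $c \circ y = y^{\prime} \circ b = y$ together with $(\mathrm{id}_A)_{\ast}\delta = c^{\ast}\delta^{\prime}$, i.e.\ $\delta = c^{\ast}\delta^{\prime}$. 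Corollary~\ref{CorExact1}(2) makes $c$ an automorphism, so $c^{-1}$ satisfies $c^{-1} \circ y = y$ and $\delta^{\prime} = (c^{-1})^{\ast}\delta$, giving (3).

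For $(1) \Rightarrow (2)$, I apply \textrm{(ET3)$^{\mathrm{op}}$} to the right-commuting square with vertical morphisms $b$ and $\mathrm{id}_C$, obtaining $a \colon A \to A$ making $(a, b, \mathrm{id}_C) \colon \delta \to \delta^{\prime}$ a morphism of $\mathbb{E}$-triangles. This gives $a_{\ast}\delta = \delta^{\prime}$ directly, and from $b \circ x \circ a = x^{\prime} \circ a = b \circ x$ together with the fact that $b$ is monic (being an isomorphism), one deduces $x \circ a = x$. Corollary~\ref{CorExact1}(3) then ensures $a$ is an automorphism.

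The only point of friction I anticipate is that \textrm{(ET3)} naturally produces $\delta = c^{\ast}\delta^{\prime}$ rather than $\delta^{\prime} = c^{\ast}\delta$, so one final inversion step is needed; the observation that the relations $x \circ a = x$ and $c \circ y = y$ are preserved under passing to $a^{-1}$ and $c^{-1}$ makes this harmless, and the remaining manipulations are routine.
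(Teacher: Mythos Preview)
Your proof is correct and follows essentially the same route as the paper: the cycle $(2)\Rightarrow(4)\Rightarrow(1)\Rightarrow(2)$ (and its dual) with $(4)\Rightarrow(1)$ via Proposition~\ref{PropModif} and $(1)\Rightarrow(2)$ via {\rm (ET3)$^{\mathrm{op}}$} plus Corollary~\ref{CorExact1}. The only cosmetic difference is that the paper exploits $\mathfrak{s}(\delta')=[A\overset{x}{\longrightarrow}B\overset{y}{\longrightarrow}C]$ to realize $\delta'$ by the \emph{same} sequence as $\delta$, so it can take $b=\mathrm{id}_B$ from the start; this makes $x\circ a=x$ immediate and avoids your inversion step, but the substance is identical.
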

\begin{proof}
$(2)\Rightarrow(4)$ is trivial. Similarly for $(3)\Rightarrow(4)$. Proposition~\ref{PropModif} shows $(4)\Rightarrow(1)$.

Let us show $(1)\Rightarrow(2)$. Suppose $\delta^{\prime}$ satisfies $\mathfrak{s}(\delta^{\prime})=\mathfrak{s}(\delta)=[A\overset{x}{\longrightarrow}B\overset{y}{\longrightarrow}C]$. Applying {\rm (ET3)$^{\mathrm{op}}$} to
\[
\xy
(-12,6)*+{A}="0";
(0,6)*+{B}="2";
(12,6)*+{C}="4";
(24,6)*+{}="6";
(-12,-6)*+{A}="10";
(0,-6)*+{B}="12";
(12,-6)*+{C}="14";
(24,-6)*+{}="16";
{\ar^{x} "0";"2"};
{\ar^{y} "2";"4"};
{\ar@{-->}^{\delta} "4";"6"};
{\ar@{=} "2";"12"};
{\ar@{=} "4";"14"};
{\ar_{x} "10";"12"};
{\ar_{y} "12";"14"};
{\ar@{-->}_{\delta^{\prime}} "14";"16"};
{\ar@{}|\circlearrowright "2";"14"};
\endxy,
\]
we obtain $a\in\mathscr{C}(A,A)$ with which $(a,\mathrm{id}_B,\mathrm{id}_C)$ gives a morphism between the above $\mathbb{E}$-triangles. By Corollary~\ref{CorExact1}, this $a$ is an isomorphism.
$(1)\Rightarrow(3)$ can be shown in a similar way.
\end{proof}

For simplicity, we use the following notations.
\begin{definition}\label{DefConeCocone}
Assume $(\mathscr{C},\mathbb{E},\mathfrak{s})$ satisfies {\rm (ET1),(ET2),(ET3),(ET3)$^{\mathrm{op}}$}.
\begin{enumerate}
\item[(1)] For an inflation $f\in\mathscr{C}(A,B)$, take a conflation $A\overset{f}{\longrightarrow}B\to C$, and denote this $C$ by $\mathrm{Cone}(f)$.
\item[(2)] For a deflation $f\in\mathscr{C}(A,B)$, take a conflation $K\to A\overset{f}{\longrightarrow}B$. We denote this $K$ by $\mathrm{CoCone}(f)$.
\end{enumerate}
\end{definition}

$\mathrm{Cone}(f)$ is determined uniquely up to isomorphism by the following remark. They are not functorial in general, as the case of triangulated category suggests. Dually for $\mathrm{CoCone}(f)$.
\begin{remark}\label{RemConeCocone}
Let $f\in\mathscr{C}(A,B)$ be an inflation, and suppose
\[ A\overset{f}{\longrightarrow}B\overset{g}{\longrightarrow}C\overset{\delta}{\dashrightarrow},\quad A\overset{f}{\longrightarrow}B\overset{g^{\prime}}{\longrightarrow}C^{\prime}\overset{\delta^{\prime}}{\dashrightarrow} \]
are $\mathbb{E}$-triangles. Then by {\rm (ET3)} applied to
\[
\xy
(-12,6)*+{A}="0";
(0,6)*+{B}="2";
(12,6)*+{C}="4";
(24,6)*+{}="6";
(-12,-6)*+{A}="10";
(0,-6)*+{B}="12";
(12,-6)*+{C^{\prime}}="14";
(24,-6)*+{}="16";
{\ar^{f} "0";"2"};
{\ar^{g} "2";"4"};
{\ar@{-->}^{\delta} "4";"6"};
{\ar@{=} "0";"10"};
{\ar@{=} "2";"12"};
{\ar_{f} "10";"12"};
{\ar_{g^{\prime}} "12";"14"};
{\ar@{-->}_{\delta^{\prime}} "14";"16"};
{\ar@{}|\circlearrowright "0";"12"};
\endxy,
\]
there exists $c\in\mathscr{C}(C,C^{\prime})$ which gives a morphism of $\mathbb{E}$-triangles $(\mathrm{id}_A,\mathrm{id}_B,c)$. By Corollary~\ref{CorExact1}, this $c$ is an isomorphism.
\end{remark}

\begin{proposition}\label{PropExact2}
Assume $(\mathscr{C},\mathbb{E},\mathfrak{s})$ satisfies {\rm (ET1),(ET2),(ET3),(ET3)$^\mathrm{op}$}.
Let $A\overset{x}{\longrightarrow}B\overset{y}{\longrightarrow}C\overset{\delta}{\dashrightarrow}$ be any $\mathbb{E}$-triangle. Then, we have the following.
\begin{enumerate}
\item[(1)] If $(\mathscr{C},\mathbb{E},\mathfrak{s})$ satisfies {\rm (ET4)}, then
\[ \mathbb{E}(-,A)\overset{\mathbb{E}(-,x)}{\Longrightarrow}\mathbb{E}(-,B)\overset{\mathbb{E}(-,y)}{\Longrightarrow}\mathbb{E}(-,C) \]
is exact.
\item[(2)] If $(\mathscr{C},\mathbb{E},\mathfrak{s})$ satisfies {\rm (ET4)$^\mathrm{op}$}, then
\[ \mathbb{E}(C,-)\overset{\mathbb{E}(y,-)}{\Longrightarrow}\mathbb{E}(B,-)\overset{\mathbb{E}(x,-)}{\Longrightarrow}\mathbb{E}(A,-) \]
is exact.
\end{enumerate}
\end{proposition}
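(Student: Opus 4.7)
The plan is to prove part~(2)---exactness of
\[ \mathbb{E}(C,-)\overset{\mathbb{E}(y,-)}{\Longrightarrow}\mathbb{E}(B,-)\overset{\mathbb{E}(x,-)}{\Longrightarrow}\mathbb{E}(A,-) \]
under hypothesis {\rm (ET4)$^{\mathrm{op}}$}---and then to invoke the self-duality exchanging {\rm (ET4)} and {\rm (ET4)$^{\mathrm{op}}$} to obtain part~(1). The composition is zero because $y\circ x=0$ by Lemma~\ref{LemZero}(1) together with the bifunctoriality of $\mathbb{E}$. The substance is to show that any $\eta\in\mathbb{E}(B,X)$ with $x^{\ast}\eta=0$ lifts to some $\eta^{\prime}\in\mathbb{E}(C,X)$ with $y^{\ast}\eta^{\prime}=\eta$.

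Given such $\eta$, I would realize it by an $\mathbb{E}$-triangle $X\overset{p}{\longrightarrow}Y\overset{q}{\longrightarrow}B\overset{\eta}{\dashrightarrow}$ and apply {\rm (ET4)$^{\mathrm{op}}$} in the form of Remark~\ref{RemET4op} to this triangle together with the given $\mathbb{E}$-triangle $A\overset{x}{\longrightarrow}B\overset{y}{\longrightarrow}C\overset{\delta}{\dashrightarrow}$ as the two input triangles. This produces an $\mathbb{E}$-triangle $E\overset{h^{\prime}}{\longrightarrow}Y\overset{h}{\longrightarrow}C\overset{\delta^{\prime\prime}}{\dashrightarrow}$ and, crucially, an auxiliary $\mathbb{E}$-triangle $X\overset{d}{\longrightarrow}E\overset{e}{\longrightarrow}A\overset{x^{\ast}\eta}{\dashrightarrow}$ whose realizing extension is \emph{exactly} $x^{\ast}\eta$; compatibility~(iii) of Remark~\ref{RemET4op} then yields $d_{\ast}\eta=y^{\ast}\delta^{\prime\prime}$.

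Because $x^{\ast}\eta=0$, the auxiliary triangle splits, and Corollary~\ref{CorExact0} furnishes a retraction $r\in\mathscr{C}(E,X)$ of $d$. Setting $\eta^{\prime}:=r_{\ast}\delta^{\prime\prime}\in\mathbb{E}(C,X)$, the bifunctoriality of $\mathbb{E}$ gives
\[ y^{\ast}\eta^{\prime}=y^{\ast}r_{\ast}\delta^{\prime\prime}=r_{\ast}y^{\ast}\delta^{\prime\prime}=r_{\ast}d_{\ast}\eta=(r\circ d)_{\ast}\eta=\eta, \]
as required. Part~(1) will follow by the dual argument: realize an $\eta\in\mathbb{E}(X,B)$ with $y_{\ast}\eta=0$, chain it with the given triangle via {\rm (ET4)}, and extract a section from the dual splitting.

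The main obstacle is to arrange the input pair for {\rm (ET4)$^{\mathrm{op}}$} so that the auxiliary triangle produced by the axiom has $x^{\ast}\eta$ as its extension (and not merely some equivalent extension); only then does compatibility~(iii) supply the crucial identity $d_{\ast}\eta=y^{\ast}\delta^{\prime\prime}$ that converts the hypothesis $x^{\ast}\eta=0$ into the desired preimage under $y^{\ast}$. Once the two triangles are lined up correctly, the argument reduces to transporting the splitting along the compatibility relation, which is a routine diagram chase using bifunctoriality.
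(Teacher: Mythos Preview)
Your proof is correct and is essentially the dual of the paper's own argument: the paper proves part~(1) directly using {\rm (ET4)} (realizing $\theta\in\mathbb{E}(X,B)$, applying {\rm (ET4)} to obtain the analogous diagram, and using the section of the split auxiliary triangle to produce a preimage under $x_{\ast}$), then declares part~(2) dual, whereas you prove part~(2) directly via {\rm (ET4)$^{\mathrm{op}}$} and declare part~(1) dual. The concern you raise about the auxiliary extension being \emph{exactly} $x^{\ast}\eta$ is not an obstacle: this is part of the statement of {\rm (ET4)$^{\mathrm{op}}$} itself (Remark~\ref{RemET4op}~(i) says the new triangle realizes $g^{\prime\ast}\delta$, which in your matching is $x^{\ast}\eta$), so no extra work is needed there.
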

\begin{proof}
{\rm (1)} $\mathbb{E}(-,y)\circ\mathbb{E}(-,x)=0$ follows from Lemma~\ref{LemZero}. Let $X\in\mathscr{C}$ be any object. Let $\theta\in\mathbb{E}(X,B)$ be any $\mathbb{E}$-extension, realized by an $\mathbb{E}$-triangle $B\overset{f}{\longrightarrow}Y\overset{g}{\longrightarrow}X\overset{\theta}{\dashrightarrow}$. By {\rm (ET4)}, there exist $E\in\mathscr{C},\theta^{\prime}\in\mathbb{E}(E,A)$ and a commutative diagram
\[
\xy
(-21,7)*+{A}="0";
(-7,7)*+{B}="2";
(7,7)*+{C}="4";
(-21,-7)*+{A}="10";
(-7,-7)*+{Y}="12";
(7,-7)*+{E}="14";
(-7,-21)*+{X}="22";
(7,-21)*+{X}="24";
{\ar^{x} "0";"2"};
{\ar^{y} "2";"4"};
{\ar@{=} "0";"10"};
{\ar_{f} "2";"12"};
{\ar^{d} "4";"14"};
{\ar_{h} "10";"12"};
{\ar_{h^{\prime}} "12";"14"};
{\ar_{g} "12";"22"};
{\ar^{e} "14";"24"};
{\ar@{=} "22";"24"};
{\ar@{}|\circlearrowright "0";"12"};
{\ar@{}|\circlearrowright "2";"14"};
{\ar@{}|\circlearrowright "12";"24"};
\endxy
\]
which satisfy
\begin{eqnarray*}
&\mathfrak{s}(y_{\ast}\theta)=[C\overset{d}{\longrightarrow}E\overset{e}{\longrightarrow}X],&\\
&\mathfrak{s}(\theta^{\prime})=[A\overset{h}{\longrightarrow}Y\overset{h^{\prime}}{\longrightarrow}E],&\\
&x_{\ast}\theta^{\prime}=e^{\ast}\theta.&
\end{eqnarray*}
Thus if $\mathbb{E}(X,y)(\theta)=y_{\ast}\theta=0$, then $e$ has a section $s\in\mathscr{C}(X,E)$. If we put $\rho=s^{\ast}\theta^{\prime}$, then this satisfies
\[ \mathbb{E}(x,X)(\rho)=x_{\ast} s^{\ast}\theta^{\prime}=s^{\ast} x_{\ast}\theta^{\prime}=s^{\ast} e^{\ast}\theta=\theta. \]

{\rm (2)} is dual to {\rm (1)}.
\end{proof}

\begin{corollary}\label{ExactToShow}
Let $(\mathscr{C},\mathbb{E},\mathfrak{s})$ be an extriangulated category. For any $\mathbb{E}$-triangle $A\overset{x}{\longrightarrow}B\overset{y}{\longrightarrow}C\overset{\delta}{\dashrightarrow}$, the following sequences of natural transformations are exact.
\[ \mathscr{C}(C,-)\!\overset{\mathscr{C}(y,-)}{\Longrightarrow}\!\mathscr{C}(B,-)\!\overset{\mathscr{C}(x,-)}{\Longrightarrow}\!\mathscr{C}(A,-)\!\overset{\delta^\sharp}{\Longrightarrow}\!\mathbb{E}(C,-)\!\overset{\mathbb{E}(y,-)}{\Longrightarrow}\!\mathbb{E}(B,-)\!\overset{\mathbb{E}(x,-)}{\Longrightarrow}\!\mathbb{E}(A,-), \]
\[ \mathscr{C}(-,A)\!\overset{\mathscr{C}(-,x)}{\Longrightarrow}\!\mathscr{C}(-,B)\!\overset{\mathscr{C}(-,y)}{\Longrightarrow}\!\mathscr{C}(-,C)\!\overset{\delta_\sharp}{\Longrightarrow}\!\mathbb{E}(-,A)\!\overset{\mathbb{E}(-,x)}{\Longrightarrow}\!\mathbb{E}(-,B)\!\overset{\mathbb{E}(-,y)}{\Longrightarrow}\!\mathbb{E}(-,C). \]
\end{corollary}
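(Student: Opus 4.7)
The plan is simply to piece together the two propositions that immediately precede the corollary, since together they cover exactness at every position of the long sequences. An extriangulated category by definition satisfies (ET1), (ET2), (ET3), (ET3)$^{\mathrm{op}}$, (ET4) and (ET4)$^{\mathrm{op}}$, so both propositions apply.

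First I would invoke Proposition~\ref{PropExact1}. Since (ET1)--(ET3)$^{\mathrm{op}}$ hold, part~(i) of that proposition gives exactness of
\[ \mathscr{C}(C,-)\overset{\mathscr{C}(y,-)}{\Longrightarrow}\mathscr{C}(B,-)\overset{\mathscr{C}(x,-)}{\Longrightarrow}\mathscr{C}(A,-)\overset{\delta^\sharp}{\Longrightarrow}\mathbb{E}(C,-)\overset{\mathbb{E}(y,-)}{\Longrightarrow}\mathbb{E}(B,-), \]
handling exactness at $\mathscr{C}(B,-)$, $\mathscr{C}(A,-)$, and $\mathbb{E}(C,-)$. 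Part~(ii) dually handles the analogous three positions in the contravariant sequence. The only positions not yet covered are exactness at $\mathbb{E}(B,-)$ (for the covariant sequence) and at $\mathbb{E}(-,B)$ (for the contravariant sequence).

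These remaining two positions are exactly what Proposition~\ref{PropExact2} delivers: since (ET4)$^{\mathrm{op}}$ holds, part~(2) of that proposition gives exactness of $\mathbb{E}(C,-)\overset{\mathbb{E}(y,-)}{\Longrightarrow}\mathbb{E}(B,-)\overset{\mathbb{E}(x,-)}{\Longrightarrow}\mathbb{E}(A,-)$, which in particular gives exactness at $\mathbb{E}(B,-)$; and dually, (ET4) yields exactness of $\mathbb{E}(-,A)\overset{\mathbb{E}(-,x)}{\Longrightarrow}\mathbb{E}(-,B)\overset{\mathbb{E}(-,y)}{\Longrightarrow}\mathbb{E}(-,C)$, providing exactness at $\mathbb{E}(-,B)$. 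Splicing these with the earlier portions from Proposition~\ref{PropExact1} gives the full six-term exact sequences as stated.

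There is essentially no obstacle here: the corollary is a bookkeeping assembly of the preceding two propositions, and the only thing to verify is that the middle arrows agree at the splice points (which they do tautologically, being the same natural transformations $\mathbb{E}(y,-)$ and $\mathbb{E}(-,x)$ in both propositions). Thus the proof reduces to a one-line citation of Propositions~\ref{PropExact1} and~\ref{PropExact2}.
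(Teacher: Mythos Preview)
Your proposal is correct and matches the paper's own proof exactly: the paper simply writes ``This immediately follows from Propositions~\ref{PropExact1} and~\ref{PropExact2},'' and your elaboration of which proposition covers which position is precisely the intended unpacking of that one-line citation.
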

\begin{proof}
This immediately follows from Propositions~\ref{PropExact1} and \ref{PropExact2}.
\end{proof}

The following lemma shows that the upper-right square of Diagram $(\ref{DiagET4})$ obtained by {\rm (ET4)} is a weak pushout.
\begin{lemma}\label{LemHomotPush}
Let $(\ref{DiagET4})$ be a commutative diagram in $\mathscr{C}$, where
\begin{eqnarray*}
A\overset{f}{\longrightarrow}B\overset{f^{\prime}}{\longrightarrow}D\overset{d^{\ast}\delta^{\prime\prime}}{\dashrightarrow}&,&\ \ B\overset{g}{\longrightarrow}C\overset{g^{\prime}}{\longrightarrow}F\overset{\delta^{\prime}}{\dashrightarrow}\\
A\overset{h}{\longrightarrow}C\overset{h^{\prime}}{\longrightarrow}E\overset{\delta^{\prime\prime}}{\dashrightarrow}&,&\ \ D\overset{d}{\longrightarrow}E\overset{e}{\longrightarrow}F\overset{f^{\prime}_{\ast}\delta^{\prime}}{\dashrightarrow}
\end{eqnarray*}
are $\mathbb{E}$-triangles, which satisfy $e^{\ast}\delta^{\prime}=f_{\ast}\delta^{\prime\prime}$.

Suppose we are given a commutative square
\[
\xy
(-7,6)*+{B}="0";
(7,6)*+{D}="2";
(-7,-6)*+{C}="4";
(7,-6)*+{Y}="6";
{\ar^{f^{\prime}} "0";"2"};
{\ar_{g} "0";"4"};
{\ar^{y} "2";"6"};
{\ar_{x} "4";"6"};
{\ar@{}|\circlearrowright "0";"6"};
\endxy
\]
in $\mathscr{C}$. Then there exists $z\in\mathscr{C}(E,Y)$ which makes the following diagram commutative.
\[
\xy
(-7,7)*+{B}="0";
(7,7)*+{D}="2";
(-7,-7)*+{C}="4";
(3,-15)*+{}="5";
(7,-7)*+{E}="6";
(15,-3)*+{}="7";
(18,-18)*+{Y}="8";
{\ar^{f^{\prime}} "0";"2"};
{\ar_{g} "0";"4"};
{\ar^{d} "2";"6"};
{\ar_{h^{\prime}} "4";"6"};
{\ar_{z} "6";"8"};
{\ar@/^0.60pc/^{y} "2";"8"};
{\ar@/_0.60pc/_{x} "4";"8"};
{\ar@{}|\circlearrowright "0";"6"};
{\ar@{}|\circlearrowright "5";"6"};
{\ar@{}|\circlearrowright "6";"7"};
\endxy
\]
\end{lemma}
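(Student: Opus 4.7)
The plan is to construct $z$ by first producing a map that satisfies $z \circ h' = x$ and then modifying it so that $z \circ d = y$ holds as well, without destroying the first equality. The workhorse throughout is the exactness of the long sequences of natural transformations attached to each $\mathbb{E}$-triangle, provided by Corollary~\ref{ExactToShow}.

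First, I apply the exact sequence for the $\mathbb{E}$-triangle $A \overset{h}{\longrightarrow} C \overset{h'}{\longrightarrow} E \overset{\delta''}{\dashrightarrow}$, evaluated at $Y$. Since $x \circ h = x \circ g \circ f = y \circ f' \circ f = 0$ (using $h = g \circ f$ from the (ET4) diagram and $f' \circ f = 0$ from Lemma~\ref{LemZero}), the element $x \in \mathscr{C}(C,Y)$ lies in the image of $\mathscr{C}(h',Y)$, so I obtain $z_0 \in \mathscr{C}(E,Y)$ with $z_0 \circ h' = x$. Put $u = z_0 \circ d - y \in \mathscr{C}(D,Y)$. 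The commutativity $d \circ f' = h' \circ g$ from (ET4) forces $u \circ f' = z_0 \circ h' \circ g - x \circ g = 0$, hence $u_\ast (f'_\ast \delta') = (u \circ f')_\ast \delta' = 0$. Applying the exact sequence for $D \overset{d}{\longrightarrow} E \overset{e}{\longrightarrow} F \overset{f'_\ast \delta'}{\dashrightarrow}$, I extract $w \in \mathscr{C}(E,Y)$ with $w \circ d = u$.

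At this point $z_0 - w$ already satisfies the $d$-equation, but $w \circ h'$ need not vanish. However, $w \circ h' \circ g = w \circ d \circ f' = u \circ f' = 0$, so the exact sequence for $B \overset{g}{\longrightarrow} C \overset{g'}{\longrightarrow} F \overset{\delta'}{\dashrightarrow}$ yields $w' \in \mathscr{C}(F,Y)$ with $w \circ h' = w' \circ g'$. I then set $z = z_0 - w + w' \circ e$. Using the relations $e \circ h' = g'$ (from the (ET4) diagram) and $e \circ d = 0$ (Lemma~\ref{LemZero}), direct verification gives $z \circ h' = x - w' \circ g' + w' \circ g' = x$ and $z \circ d = z_0 \circ d - u + 0 = y$.

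The main subtlety is arranging the correction $w' \circ e$ so that it precisely cancels the residual error $w \circ h'$ (thanks to the factorization $g' = e \circ h'$) while leaving the already-correct $d$-component untouched (thanks to $e \circ d = 0$). Once this coordination is set up, the rest is a careful assembly of the three exact sequences provided by the axioms; no further appeal to (ET3), (ET3)$^{\mathrm{op}}$, or (ET4) is needed.
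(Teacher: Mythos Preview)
Your proof is correct. Both your argument and the paper's use the exact sequences from Corollary~\ref{ExactToShow} to produce an initial candidate and then correct it by a term of the form $(\text{something})\circ e$, exploiting $e\circ d=0$ and $e\circ h'=g'$. The difference is the order: the paper first arranges $z_1\circ d=y$ via the $D\to E\to F$ triangle, then corrects with $z_2\circ e$ to fix the $h'$-equation, requiring only two exact-sequence arguments. You instead first arrange $z_0\circ h'=x$ via the $A\to C\to E$ triangle, which forces an extra intermediate correction $w$ before the final $w'\circ e$ term, because any correction of the form $(-)\circ e$ automatically preserves the $d$-equation but not the $h'$-equation. So the paper's choice of starting point aligns better with the available correction mechanism and yields a shorter proof; your route works just as well but takes one more step.
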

\begin{proof}

By $(f^{\prime}_{\ast}\delta^{\prime})^\sharp(y)=y_{\ast} f^{\prime}_{\ast}\delta^{\prime}=x_{\ast} g_{\ast}\delta^{\prime}=0$ and the exactness of
\[ \mathscr{C}(E,Y)\overset{\mathscr{C}(d,Y)}{\longrightarrow}\mathscr{C}(D,Y)\overset{(f^{\prime}_{\ast}\delta^{\prime})^\sharp_Y}{\longrightarrow}\mathbb{E}(F,Y)\to0, \]
there exists $z_1\in\mathscr{C}(E,Y)$ satisfying $z_1\circ d=y.$ Then by
\[ (x-z_1\circ h^{\prime})\circ g=y\circ f^{\prime}-z_1\circ d\circ f^{\prime}=0 \]
and the exactness of
\[ \mathscr{C}(F,Y)\overset{\mathscr{C}(g^{\prime},Y)}{\longrightarrow}\mathscr{C}(C,Y)\overset{\mathscr{C}(g,Y)}{\longrightarrow}\mathscr{C}(B,Y), \]
there exists $z_2\in\mathscr{C}(F,Y)$ satisfying $z_2\circ g^{\prime}=x-z_1\circ h^{\prime}$.
If we put $z=z_1+z_2\circ e$, this satisfies the desired commutativity.
\end{proof}

\subsection{Shifted octahedrons}
Condition {\rm (ET4)} in Definition~\ref{DefExtCat} is an analog of the octahedron axiom {\rm (TR4)} for a triangulated category. As in the case of a triangulated category, we can make it slightly more rigid as follows.
\begin{lemma}\label{LemOctaRigid}
Let $(\mathscr{C},\mathbb{E},\mathfrak{s})$ be an extriangulated category. Let
\begin{eqnarray*}
&A\overset{f}{\longrightarrow}B\overset{f^{\prime}}{\longrightarrow}D\overset{\delta_f}{\dashrightarrow},&\\
&B\overset{g}{\longrightarrow}C\overset{g^{\prime}}{\longrightarrow}F\overset{\delta_g}{\dashrightarrow},&\\
&A\overset{h}{\longrightarrow}C\overset{h_0}{\longrightarrow}E_0\overset{\delta_h}{\dashrightarrow}&
\end{eqnarray*}
be any triplet of $\mathbb{E}$-triangles satisfying $h=g\circ f$.
Then there are morphisms $d_0,e_0$ in $\mathscr{C}$ which make the diagram
\begin{equation}\label{DiagET4Rigid}
\xy
(-21,7)*+{A}="0";
(-7,7)*+{B}="2";
(7,7)*+{D}="4";
(-21,-7)*+{A}="10";
(-7,-7)*+{C}="12";
(7,-7)*+{E_0}="14";
(-7,-21)*+{F}="22";
(7,-21)*+{F}="24";
{\ar^{f} "0";"2"};
{\ar^{f^{\prime}} "2";"4"};
{\ar@{=} "0";"10"};
{\ar_{g} "2";"12"};
{\ar^{d_0} "4";"14"};
{\ar_{h} "10";"12"};
{\ar_{h_0} "12";"14"};
{\ar_{g^{\prime}} "12";"22"};
{\ar^{e_0} "14";"24"};
{\ar@{=} "22";"24"};
{\ar@{}|\circlearrowright "0";"12"};
{\ar@{}|\circlearrowright "2";"14"};
{\ar@{}|\circlearrowright "12";"24"};
\endxy
\end{equation}
commutative, and satisfy the following compatibilities.
\begin{itemize}
\item[{\rm (i)}] $D\overset{d_0}{\longrightarrow}E_0\overset{e_0}{\longrightarrow}F\overset{f^{\prime}_{\ast}(\delta_g)}{\dashrightarrow}$ is an $\mathbb{E}$-triangle,
\item[{\rm (ii)}] $d_0^{\ast}(\delta_h)=\delta_f$,

\item[{\rm (iii)}] $f_{\ast}(\delta_h)=e_0^{\ast}(\delta_g)$. 
\end{itemize}
\end{lemma}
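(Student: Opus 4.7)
The strategy is to apply (ET4) to the given triangles for $f$ and $g$, producing some $\mathbb{E}$-triangle completing $h = g \circ f$, and then to transport the resulting diagram along the canonical isomorphism that identifies it with the prescribed $\mathbb{E}$-triangle $A \overset{h}{\longrightarrow} C \overset{h_0}{\longrightarrow} E_0 \overset{\delta_h}{\dashrightarrow}$.

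Applying (ET4) to $A \overset{f}{\longrightarrow} B \overset{f'}{\longrightarrow} D \overset{\delta_f}{\dashrightarrow}$ and $B \overset{g}{\longrightarrow} C \overset{g'}{\longrightarrow} F \overset{\delta_g}{\dashrightarrow}$ yields an object $E$, morphisms $h' \colon C \to E$, $d \colon D \to E$, $e \colon E \to F$, and an $\mathbb{E}$-extension $\delta'' \in \mathbb{E}(E,A)$ realized by $A \overset{h}{\longrightarrow} C \overset{h'}{\longrightarrow} E$ (the top arrow is forced to be $g \circ f = h$ by commutativity of the upper-left square), satisfying $\mathfrak{s}(f'_\ast \delta_g) = [D \overset{d}{\longrightarrow} E \overset{e}{\longrightarrow} F]$, $d^\ast \delta'' = \delta_f$, and $f_\ast \delta'' = e^\ast \delta_g$.

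The key step is to identify this output with the prescribed triangle on $h$. Since both $A \overset{h}{\longrightarrow} C \overset{h'}{\longrightarrow} E \overset{\delta''}{\dashrightarrow}$ and $A \overset{h}{\longrightarrow} C \overset{h_0}{\longrightarrow} E_0 \overset{\delta_h}{\dashrightarrow}$ complete the inflation $h$, Remark~\ref{RemConeCocone} supplies an isomorphism $c \colon E \overset{\cong}{\longrightarrow} E_0$ for which $(\mathrm{id}_A, \mathrm{id}_C, c)$ is a morphism of $\mathbb{E}$-triangles; in particular $c \circ h' = h_0$ and $\delta'' = c^\ast \delta_h$. I then set $d_0 := c \circ d$ and $e_0 := e \circ c^{-1}$.

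Commutativity of diagram (\ref{DiagET4Rigid}) is now immediate: $d_0 \circ f' = c \circ h' \circ g = h_0 \circ g$ and $e_0 \circ h_0 = e \circ h' = g'$. For (i), the sequence $D \overset{d_0}{\longrightarrow} E_0 \overset{e_0}{\longrightarrow} F$ is equivalent, via the middle isomorphism $c$, to $D \overset{d}{\longrightarrow} E \overset{e}{\longrightarrow} F$, hence realizes the same $\mathbb{E}$-extension $f'_\ast \delta_g$. Conditions (ii) and (iii) follow by functoriality of $\mathbb{E}$: $d_0^\ast \delta_h = d^\ast c^\ast \delta_h = d^\ast \delta'' = \delta_f$, and $e_0^\ast \delta_g = (c^{-1})^\ast e^\ast \delta_g = (c^{-1})^\ast f_\ast \delta'' = f_\ast (c^{-1})^\ast c^\ast \delta_h = f_\ast \delta_h$. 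There is no real obstacle; the content of the lemma is the observation that (ET4) determines the completion of $h$ uniquely up to canonical isomorphism, and this isomorphism lets us replace $E$ by any prescribed $E_0$.
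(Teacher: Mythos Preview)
Your proof is correct and follows essentially the same approach as the paper: apply (ET4) to obtain a completion of $h$, then use Remark~\ref{RemConeCocone} to transport along the isomorphism $c\colon E\overset{\cong}{\to}E_0$ (the paper calls it $u$), setting $d_0=c\circ d$ and $e_0=e\circ c^{-1}$. The paper leaves the verification of (i)--(iii) as ``straightforward'', whereas you spell out the functoriality computations explicitly; otherwise the arguments coincide.
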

\begin{proof}
By {\rm (ET4)}, there exist an object $E\in\mathscr{C}$, a commutative diagram $(\ref{DiagET4})$ in $\mathscr{C}$, and an $\mathbb{E}$-triangle $A\overset{h}{\longrightarrow}C\overset{h^{\prime}}{\longrightarrow}E\overset{\delta^{\prime\prime}}{\dashrightarrow}$, which satisfy the following compatibilities.
\begin{itemize}
\item[{\rm (i$^{\prime}$)}] $D\overset{d}{\longrightarrow}E\overset{e}{\longrightarrow}F\overset{f^{\prime}_{\ast}(\delta_g)}{\dashrightarrow}$ is an $\mathbb{E}$-triangle,
\item[{\rm (ii$^{\prime}$)}] $d^{\ast}(\delta^{\prime\prime})=\delta_f$,
\item[{\rm (iii$^{\prime}$)}] $f_{\ast}(\delta^{\prime\prime})=e^{\ast}(\delta_g)$. 
\end{itemize}
By Remark~\ref{RemConeCocone}, we obtain a morphism of $\mathbb{E}$-triangles
\[
\xy
(-12,6)*+{A}="0";
(0,6)*+{C}="2";
(12,6)*+{E}="4";
(24,6)*+{}="6";
(-12,-6)*+{A}="10";
(0,-6)*+{C}="12";
(12,-6)*+{E_0}="14";
(24,-6)*+{}="16";
{\ar^{h} "0";"2"};
{\ar^{h^{\prime}} "2";"4"};
{\ar@{-->}^{\delta^{\prime\prime}} "4";"6"};
{\ar@{=} "0";"10"};
{\ar@{=} "2";"12"};
{\ar^{u} "4";"14"};
{\ar_{h} "10";"12"};
{\ar_{h_0} "12";"14"};
{\ar@{-->}_{\delta_h} "14";"16"};
{\ar@{}|\circlearrowright "0";"12"};
{\ar@{}|\circlearrowright "2";"14"};
\endxy
\]
in which $u$ is an isomorphism. In particular we have $\delta^{\prime\prime}=u^{\ast}(\delta_h)$.
If we put $d_0=u\circ d$ and $e_0=e\circ u^{-1}$, then the commutativity of $(\ref{DiagET4Rigid})$ follows from that of $(\ref{DiagET4})$.
By the definition of the equivalence relation, we have $[D\overset{d_0}{\longrightarrow}E_0\overset{e_0}{\longrightarrow}F]=[D\overset{d}{\longrightarrow}E\overset{e}{\longrightarrow}F]$. It is straightforward to check that {\rm (i$^{\prime}$),(ii$^{\prime}$),(iii$^{\prime}$)} imply {\rm (i),(ii),(iii)}.
\end{proof}

\begin{proposition}\label{PropBaer}
Let $(\mathscr{C},\mathbb{E},\mathfrak{s})$ be an extriangulated category. Then the following holds.
\begin{enumerate}
\item[(1)] Let $C$ be any object, and let
\[ A_1\overset{x_1}{\longrightarrow}B_1\overset{y_1}{\longrightarrow}C\overset{\delta_1}{\dashrightarrow},\quad A_2\overset{x_2}{\longrightarrow}B_2\overset{y_2}{\longrightarrow}C\overset{\delta_2}{\dashrightarrow} \]
be any pair of $\mathbb{E}$-triangles. Then there is a commutative diagram in $\mathscr{C}$
\begin{equation}\label{Diag_PropBaer}
\xy
(-7,21)*+{A_2}="-12";
(7,21)*+{A_2}="-14";
(-21,7)*+{A_1}="0";
(-7,7)*+{M}="2";
(7,7)*+{B_2}="4";
(-21,-7)*+{A_1}="10";
(-7,-7)*+{B_1}="12";
(7,-7)*+{C}="14";
{\ar@{=} "-12";"-14"};
{\ar_{m_2} "-12";"2"};
{\ar^{x_2} "-14";"4"};
{\ar^{m_1} "0";"2"};
{\ar^{e_1} "2";"4"};
{\ar@{=} "0";"10"};
{\ar_{e_2} "2";"12"};
{\ar^{y_2} "4";"14"};
{\ar_{x_1} "10";"12"};
{\ar_{y_1} "12";"14"};
{\ar@{}|\circlearrowright "-12";"4"};
{\ar@{}|\circlearrowright "0";"12"};
{\ar@{}|\circlearrowright "2";"14"};
\endxy
\end{equation}
which satisfies
\begin{eqnarray*}
&\mathfrak{s}(y_2^{\ast}\delta_1)=[A_1\overset{m_1}{\longrightarrow}M\overset{e_1}{\longrightarrow}B_2],&\\
&\mathfrak{s}(y_1^{\ast}\delta_2)=[A_2\overset{m_2}{\longrightarrow}M\overset{e_2}{\longrightarrow}B_1],&\\
&m_{1\ast}\delta_1+m_{2\ast}\delta_2=0.&
\end{eqnarray*}
\item[(2)] Dual of {\rm (1)}.
\end{enumerate}
\end{proposition}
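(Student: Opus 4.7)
The plan is to realize $M$ via a Baer-sum-type construction, namely by pulling back the direct-sum extension $\delta_1 \oplus \delta_2$ along the diagonal $\Delta_C \colon C \to C \oplus C$.

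By {\rm (ET2)}, $\delta_1 \oplus \delta_2 \in \mathbb{E}(C \oplus C, A_1 \oplus A_2)$ is realized by $A_1 \oplus A_2 \xrightarrow{x_1 \oplus x_2} B_1 \oplus B_2 \xrightarrow{y_1 \oplus y_2} C \oplus C$. Set $\theta := \Delta_C^{\ast}(\delta_1 \oplus \delta_2)$. Since $\Delta_C = \iota_C + \iota_{C^{\prime}}$, biadditivity of $\mathbb{E}$ gives $\theta = (\iota_{A_1})_{\ast}\delta_1 + (\iota_{A_2})_{\ast}\delta_2 \in \mathbb{E}(C, A_1 \oplus A_2)$. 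Realize $\theta$ as $A_1 \oplus A_2 \xrightarrow{\mu} M \xrightarrow{\nu} C \overset{\theta}{\dashrightarrow}$, and set $m_i := \mu \iota_{A_i}$ for $i = 1, 2$. Lemma~\ref{LemZero}(2) gives $\mu_{\ast}\theta = 0$, which unpacks to $m_{1\ast}\delta_1 + m_{2\ast}\delta_2 = 0$, establishing the third asserted identity.

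Apply {\rm (ET4)} to the split $\mathbb{E}$-triangle $A_1 \xrightarrow{\iota_{A_1}} A_1 \oplus A_2 \xrightarrow{\pi_{A_2}} A_2 \overset{0}{\dashrightarrow}$ together with $A_1 \oplus A_2 \xrightarrow{\mu} M \xrightarrow{\nu} C \overset{\theta}{\dashrightarrow}$. This produces an object $E$, an $\mathbb{E}$-triangle $A_1 \xrightarrow{m_1} M \xrightarrow{\psi} E \overset{\delta^E}{\dashrightarrow}$, and a conflation $A_2 \xrightarrow{d} E \xrightarrow{e} C$ realizing $(\pi_{A_2})_{\ast}\theta = \delta_2$. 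Hence $E \cong B_2$ via an isomorphism $\varphi \colon E \to B_2$ with $\varphi d = x_2$ and $y_2 \varphi = e$. Compatibility {\rm (iii)} of {\rm (ET4)} reads $(\iota_{A_1})_{\ast}\delta^E = e^{\ast}\theta$; decomposing the right-hand side and using $e^{\ast}\delta_2 = 0$ (Lemma~\ref{LemZero}(3)) then yields $\delta^E = \varphi^{\ast} y_2^{\ast} \delta_1$. Proposition~\ref{PropModif} now transports this $\mathbb{E}$-triangle along $\varphi$ to $A_1 \xrightarrow{m_1} M \xrightarrow{e_1} B_2 \overset{y_2^{\ast}\delta_1}{\dashrightarrow}$ with $e_1 := \varphi \psi$, and the commutativity built into the {\rm (ET4)}-diagram gives $e_1 m_2 = x_2$ and $y_2 e_1 = \nu$. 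A symmetric application of {\rm (ET4)} with the roles of $A_1$ and $A_2$ swapped produces $e_2 \colon M \to B_1$ such that $A_2 \xrightarrow{m_2} M \xrightarrow{e_2} B_1 \overset{y_1^{\ast}\delta_2}{\dashrightarrow}$ is an $\mathbb{E}$-triangle, with $e_2 m_1 = x_1$ and $y_1 e_2 = \nu$. Together with $e_1 m_1 = 0$ and $e_2 m_2 = 0$ from Lemma~\ref{LemZero}(1), all squares in diagram $(\ref{Diag_PropBaer})$ commute. Part~{\rm (2)} is dual.

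The delicate step is the identification $\delta^E = \varphi^{\ast} y_2^{\ast}\delta_1$: one must decompose $e^{\ast}\theta$ via biadditivity, eliminate the $(\iota_{A_2})_{\ast}e^{\ast}\delta_2$ component using Lemma~\ref{LemZero}(3), and then invoke Proposition~\ref{PropModif} to transport the resulting triangle from $E$ onto the canonical middle term $B_2$. Once this, together with its mirror image on the other side, is set up, the remainder of the proof is a diagram chase inside the two {\rm (ET4)}-outputs.
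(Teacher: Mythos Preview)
Your proof is correct and follows essentially the same strategy as the paper: realize $M$ via $\Delta_C^{\ast}(\delta_1\oplus\delta_2)$, apply {\rm (ET4)} to the split triangle $A_1\to A_1\oplus A_2\to A_2$ composed with the resulting conflation, and transport along the isomorphism $E\cong B_2$ using Proposition~\ref{PropModif}. The only cosmetic difference is how the identification of $\delta^E$ with $y_2^{\ast}\delta_1$ is bookkept---you decompose $e^{\ast}\theta$ directly via biadditivity, while the paper phrases the same computation through $p_{1\ast}\mu=\delta_1$---but the content is identical; your remark about $e_im_i=0$ is extraneous (those are not squares of~\eqref{Diag_PropBaer}) but harmless.
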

\begin{proof}
By the additivity of $\mathfrak{s}$, we have
\[ \mathfrak{s}(\delta_1\oplus\delta_2)=[A_1\oplus A_2\overset{x_1\oplus x_2}{\longrightarrow}B_1\oplus B_2\overset{y_1\oplus y_2}{\longrightarrow}C\oplus C]. \]
Let
\[
\xy
(-18,0)*+{A_1}="-2";
(-15.5,1)*+{}="-10";
(-15.5,-1)*+{}="-11";
(-6.5,1)*+{}="-0";
(-6.5,-1)*+{}="-1";
(0,0)*+{A_1\oplus A_2}="0";
(15.5,1)*+{}="10";
(15.5,-1)*+{}="11";
(6.5,1)*+{}="+0";
(6.5,-1)*+{}="1";
(18,0)*+{A_2}="2";
{\ar^{\iota_1} "-10";"-0"};
{\ar^{p_1} "-1";"-11"};
{\ar@{<-}^{\iota_2} "+0";"10"};
{\ar@{<-}^{p_2} "11";"1"};
\endxy
\]
be a biproduct in $\mathscr{C}$.
Put $\mu=(\Delta_C)^{\ast}(\delta_1\oplus\delta_2)$ and take $\mathfrak{s}(\mu)=[A_1\oplus A_2\overset{j}{\longrightarrow}M\overset{k}{\longrightarrow}C]$. Then $\mu$ satisfies
\begin{equation}\label{Eq_PropBaer}
p_{1\ast}\mu=\delta_1\ \ \text{and}\ \ p_{2\ast}\mu=\delta_2.
\end{equation}
Applying {\rm (ET4)} to $\mathfrak{s}(0)=[A_1\overset{\iota_1}{\longrightarrow}A_1\oplus A_2\overset{p_2}{\longrightarrow}A_2]$ and $\mathfrak{s}(\mu)=[A_1\oplus A_2\overset{j}{\longrightarrow}M\overset{k}{\longrightarrow}C]$, we obtain $B_2^{\prime}\in\mathscr{C}$, $\theta_1\in\mathbb{E}(B_2^{\prime},A_1)$ and a commutative diagram
\[
\xy
(-24,8)*+{A_1}="0";
(-8,8)*+{A_1\oplus A_2}="2";
(8,8)*+{A_2}="4";
(-24,-7)*+{A_1}="10";
(-8,-7)*+{M}="12";
(8,-7)*+{B_2^{\prime}}="14";
(-8,-22)*+{C}="22";
(8,-22)*+{C}="24";
{\ar^(0.4){\iota_1} "0";"2"};
{\ar^(0.6){p_2} "2";"4"};
{\ar@{=} "0";"10"};
{\ar_{j} "2";"12"};
{\ar^{x_2^{\prime}} "4";"14"};
{\ar_{m_1} "10";"12"};
{\ar_{e_1^{\prime}} "12";"14"};
{\ar_{k} "12";"22"};
{\ar^{y_2^{\prime}} "14";"24"};
{\ar@{=} "22";"24"};
{\ar@{}|\circlearrowright "0";"12"};
{\ar@{}|\circlearrowright "2";"14"};
{\ar@{}|\circlearrowright "12";"24"};
\endxy
\]
which satisfy
\begin{itemize}
\item[{\rm (i)}] $\mathfrak{s}(p_{2\ast}\mu)=[A_2\overset{x_2^{\prime}}{\longrightarrow}B_2\overset{y_2^{\prime}}{\longrightarrow}C]$,
\item[{\rm (ii)}] $x_2^{\ast}\theta_1=0$,
\item[{\rm (iii)}] $\mathfrak{s}(\theta_1)=[A_1\overset{m_1}{\longrightarrow}M\overset{e_1^{\prime}}{\longrightarrow}B_2^{\prime}]$,
\item[{\rm (iv)}] $(\iota_1,\mathrm{id}_M,y_2^{\prime})$ is a morphism of $\mathbb{E}$-triangles. Especially, we have $y_2^{\prime\ast}\mu=\iota_{1\ast}\theta_1$.
\end{itemize}

In particular, we have
\[ {[}A_2\overset{x_2^{\prime}}{\longrightarrow}B_2\overset{y_2^{\prime}}{\longrightarrow}C]=\mathfrak{s}(\delta_2)=[A_2\overset{x_2}{\longrightarrow}B_2\overset{y_2}{\longrightarrow}C]. \]
Thus there is an isomorphism $b_2\in\mathscr{C}(B_2,B_2^{\prime})$ satisfying $b_2\circ x_2=x_2^{\prime}$ and $y_2^{\prime}\circ b_2=y_2$. If we put $e_1=b_2^{-1}\circ e_1^{\prime}$, then
\[ \mathfrak{s}(b_2^{\ast}\theta_1)=[A_1\overset{m_1}{\longrightarrow}M\overset{e_1}{\longrightarrow}B_2] \]
by Proposition~\ref{PropModif}. Thus we obtain a commutative diagram
\[
\xy
(-24,8)*+{A_1}="0";
(-8,8)*+{A_1\oplus A_2}="2";
(8,8)*+{A_2}="4";
(-24,-7)*+{A_1}="10";
(-8,-7)*+{M}="12";
(8,-7)*+{B_2}="14";
(-8,-22)*+{C}="22";
(8,-22)*+{C}="24";
{\ar^(0.4){\iota_1} "0";"2"};
{\ar^(0.6){p_2} "2";"4"};
{\ar@{=} "0";"10"};
{\ar_{j} "2";"12"};
{\ar^{x_2} "4";"14"};
{\ar_{m_1} "10";"12"};
{\ar_{e_1} "12";"14"};
{\ar_{k} "12";"22"};
{\ar^{y_2} "14";"24"};
{\ar@{=} "22";"24"};
{\ar@{}|\circlearrowright "0";"12"};
{\ar@{}|\circlearrowright "2";"14"};
{\ar@{}|\circlearrowright "12";"24"};
\endxy
\]
which satisfies
\[ y_2^{\ast}\delta_1=b_2^{\ast} y_2^{\prime\ast}p_{1\ast}\mu=b_2^{\ast} p_{1\ast}y_2^{\prime\ast}\mu=b_2^{\ast} p_{1\ast}\iota_{1\ast}\theta_1=b_2^{\ast}\theta_1. \]
Thus we obtain $\mathfrak{s}(y_2^{\ast}\delta_1)=[A_1\overset{m_1}{\longrightarrow}M\overset{e_1}{\longrightarrow}B_2]$. Similarly, from $\mathfrak{s}(0)=[A_2\overset{\iota_2}{\longrightarrow}A_1\oplus A_2\overset{p_1}{\longrightarrow}A_1]$ and $\mathfrak{s}(\mu)=[A_1\oplus A_2\overset{j}{\longrightarrow}M\overset{k}{\longrightarrow}C]$, we obtain a commutative diagram
\[
\xy
(-24,8)*+{A_2}="0";
(-8,8)*+{A_1\oplus A_2}="2";
(8,8)*+{A_1}="4";
(-24,-7)*+{A_2}="10";
(-8,-7)*+{M}="12";
(8,-7)*+{B_1}="14";
(-8,-22)*+{C}="22";
(8,-22)*+{C}="24";
{\ar^(0.4){\iota_2} "0";"2"};
{\ar^(0.6){p_1} "2";"4"};
{\ar@{=} "0";"10"};
{\ar_{j} "2";"12"};
{\ar^{x_1} "4";"14"};
{\ar_{m_2} "10";"12"};
{\ar_{e_2} "12";"14"};
{\ar_{k} "12";"22"};
{\ar^{y_1} "14";"24"};
{\ar@{=} "22";"24"};
{\ar@{}|\circlearrowright "0";"12"};
{\ar@{}|\circlearrowright "2";"14"};
{\ar@{}|\circlearrowright "12";"24"};
\endxy
\]
which satisfies $\mathfrak{s}(y_1^{\ast}\delta_2)=[A_2\overset{m_2}{\longrightarrow}M\overset{e_2}{\longrightarrow}B_1]$. Since
\begin{eqnarray*}
&e_2\circ m_1=e_2\circ j\circ\iota_1=x_1\circ p_1\circ\iota_1=x_1,&\\
&e_1\circ m_2=e_1\circ j\circ\iota_2=x_2\circ p_2\circ\iota_2=x_2,&\\
&y_2\circ e_1=k=y_1\circ e_2,&
\end{eqnarray*}
diagram $(\ref{Diag_PropBaer})$ is commutative. Moreover, we have
\[ m_{1\ast}\delta_1+m_{2\ast}\delta_2=j_{\ast}\big(\iota_{1\ast}\delta_1+\iota_{2\ast}\delta_2\big)=j_{\ast}\big((\iota_1\circ p_1)_{\ast}+(\iota_2\circ p_2)_{\ast}\big)(\mu)=j_{\ast}\mu=0 \]
by $(\ref{Eq_PropBaer})$ and Lemma~\ref{LemZero}.
\end{proof}

\begin{corollary}\label{CorTrivWIC}
Let $x\in\mathscr{C}(A,B),\, f\in\mathscr{C}(A,D)$ be any pair of morphisms. If $x$ is an inflation, then so is $\Big[\raise1ex\hbox{\leavevmode\vtop{\baselineskip-8ex \lineskip1ex \ialign{#\crcr{$f$}\crcr{$x$}\crcr}}}\Big]\in\mathscr{C}(A,D\oplus B)$. Dually for deflations.
\end{corollary}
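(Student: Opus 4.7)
My plan is to realize $\binom{f}{x}$ as a composition of two inflations, up to a harmless swap of direct summands. For the first inflation, Remark~\ref{RemSplit}(2) applied to $-f\colon A\to D$ produces the split $\mathbb{E}$-triangle
$$A\overset{\binom{1}{f}}{\longrightarrow}A\oplus D\overset{[-f\ 1]}{\longrightarrow}D\overset{0}{\dashrightarrow},$$
so $\binom{1}{f}\colon A\to A\oplus D$ is an inflation. For the second inflation, the hypothesis on $x$ gives some $\mathbb{E}$-triangle $A\overset{x}{\to}B\overset{y}{\to}C\overset{\delta}{\dashrightarrow}$. Summing it with the trivial $\mathbb{E}$-triangle $D\overset{\mathrm{id}_D}{\to}D\to 0\overset{0}{\dashrightarrow}$ (which exists because $\mathfrak{s}(0)=0$) and invoking the additivity of $\mathfrak{s}$ from (ET2) and Definition~\ref{DefAdditiveRealization}, the first morphism $x\oplus\mathrm{id}_D\colon A\oplus D\to B\oplus D$ of the resulting direct-sum $\mathbb{E}$-triangle is itself an inflation. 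By Remark~\ref{RemInfInf}, the composition
$$(x\oplus\mathrm{id}_D)\circ\binom{1}{f}=\binom{x}{f}\colon A\longrightarrow B\oplus D$$
is then again an inflation.

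The last step is to reorder the direct summands. The canonical swap isomorphism $\tau\colon B\oplus D\overset{\cong}{\to}D\oplus B$ satisfies $\tau\circ\binom{x}{f}=\binom{f}{x}$. Given any conflation $A\overset{\binom{x}{f}}{\to}B\oplus D\overset{q}{\to}Q$, the sequence $A\overset{\binom{f}{x}}{\to}D\oplus B\overset{q\circ\tau^{-1}}{\to}Q$ is equivalent to it in the sense of Definition~\ref{DefSqEquiv}, the equivalence being witnessed by the iso $\tau$, and hence is itself a conflation. Therefore $\binom{f}{x}$ is an inflation, and the deflation case is formally dual. I do not expect any real obstacle here; the only slightly non-obvious move is recognizing $\binom{1}{f}$ as a split inflation via Remark~\ref{RemSplit}(2).
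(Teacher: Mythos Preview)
Your proof is correct and takes a genuinely different, more elementary route than the paper. The paper realizes $f_\ast\delta$ by an $\mathbb{E}$-triangle $D\to E\to C$, applies Proposition~\ref{PropBaer}(1) to obtain a commutative square of conflations involving an object $M$, observes that $y^\ast f_\ast\delta=0$ forces $M\cong D\oplus B$, and then conjugates the resulting inflation $m\colon A\to M$ by an explicit automorphism of $D\oplus B$ to exhibit $\binom{f}{x}$ directly. Your argument instead factors $\binom{x}{f}$ as the composite of the split inflation $\binom{1}{f}$ (Remark~\ref{RemSplit}(2)) and the direct-sum inflation $x\oplus\mathrm{id}_D$ (from {\rm (ET2)}), invokes closure of inflations under composition (Remark~\ref{RemInfInf}), and finishes with a swap of summands. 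This is shorter and uses only {\rm (ET2)}, {\rm (ET4)}, and the elementary Remark~\ref{RemSplit}. The paper's longer route does buy something: it identifies the cone of $\binom{f}{x}$ explicitly as the object $E$ realizing $f_\ast\delta$, information that is used later (e.g.\ in Proposition~\ref{PropFactor}). Your argument via {\rm (ET4)} also yields a cone sitting in a conflation $D\to{-}\to C$, but without pinning down the associated $\mathbb{E}$-extension as $f_\ast\delta$.
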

\begin{proof}
Let $A\overset{x}{\longrightarrow}B\overset{y}{\longrightarrow}C\overset{\delta}{\dashrightarrow}$ be an $\mathbb{E}$-triangle. Realize $f_{\ast}\delta$ by an $\mathbb{E}$-triangle
\[ D\overset{d}{\longrightarrow}E\overset{e}{\longrightarrow}C\overset{f_{\ast}\delta}{\dashrightarrow}. \]
By Proposition~\ref{PropBaer} {\rm (1)}, we obtain a commutative diagram made of $\mathbb{E}$-triangles
\[
\xy
(-7,7)*+{A}="2";
(7,7)*+{A}="4";
(-21,-7)*+{D}="10";
(-7,-7)*+{M}="12";
(7,-7)*+{B}="14";
(21,-7)*+{}="16";
(-21,-21)*+{D}="20";
(-7,-21)*+{E}="22";
(7,-21)*+{C}="24";
(21,-21)*+{}="26";
(-7,-35)*+{}="32";
(7,-35)*+{}="34";
{\ar@{=} "2";"4"};
{\ar_{m} "2";"12"};
{\ar^{x} "4";"14"};
{\ar^{k} "10";"12"};
{\ar_{\ell} "12";"14"};
{\ar@{-->}^{y^{\ast} f_{\ast}\delta} "14";"16"};
{\ar@{=} "10";"20"};
{\ar^{e} "12";"22"};
{\ar^{y} "14";"24"};
{\ar_{d} "20";"22"};
{\ar_{e} "22";"24"};
{\ar@{-->}_{f_{\ast}\delta} "24";"26"};
{\ar@{-->}_{e^{\ast}\delta} "22";"32"};
{\ar@{-->}^{\delta} "24";"34"};
{\ar@{}|\circlearrowright "2";"14"};
{\ar@{}|\circlearrowright "10";"22"};
{\ar@{}|\circlearrowright "12";"24"};
\endxy
\]
satisfying $m_{\ast}\delta+k_{\ast} f_{\ast}\delta=0$. Since $y^{\ast} f_{\ast}\delta=f_{\ast} y^{\ast}\delta=0$, we may assume $M=D\oplus B,\ k=\Big[\raise1ex\hbox{\leavevmode\vtop{\baselineskip-8ex \lineskip1ex \ialign{#\crcr{$1$}\crcr{$0$}\crcr}}}\Big],\ \ell=[0\ 1]$, and take $p\in\mathscr{C}(M,D),\, i\in\mathscr{C}(B,M)$ which make
\[
\xy
(-16,0)*+{D}="-2";
(-13.5,1)*+{}="-10";
(-13.5,-1)*+{}="-11";
(-3,1)*+{}="-0";
(-3,-1)*+{}="-1";
(0,0)*+{M}="0";
(13.5,1)*+{}="10";
(13.5,-1)*+{}="11";
(3,1)*+{}="+0";
(3,-1)*+{}="1";
(16,0)*+{B}="2";
{\ar^{k} "-10";"-0"};
{\ar^{p} "-1";"-11"};
{\ar@{<-}^{i} "+0";"10"};
{\ar@{<-}^{\ell} "11";"1"};
\endxy
\]
a biproduct. By the exactness of
\[ \mathscr{C}(B,M)\overset{\mathscr{C}(x,M)}{\longrightarrow}\mathscr{C}(A,M)\overset{\delta^\sharp}{\longrightarrow}\mathbb{E}(C,M) \]
and the equality $\delta^\sharp(m+k\circ f)=m_{\ast}\delta+k_{\ast} f_{\ast}\delta=0$, there exists $b\in\mathscr{C}(B,M)$ satisfying $b\circ x=m+k\circ f$.

Modifying $A\overset{m}{\longrightarrow}M\overset{e}{\longrightarrow}E$ by the automorphism
\[ n=\left[\begin{array}{cc}-1&p\circ b\\0&1\end{array}\right]=\left[\begin{array}{cc}-1&0\\0&1\end{array}\right]\circ(\mathrm{id}_M-k\circ p\circ b\circ \ell)\colon M\overset{\cong}{\longrightarrow}M, \]
we obtain a conflation
\[ A\overset{n\circ m}{\longrightarrow}D\oplus B\overset{e\circ n^{-1}}{\longrightarrow}E. \]
Then, since
\begin{eqnarray*}
p\circ(n\circ m)&=&-p\circ(\mathrm{id}_M-k\circ p\circ b\circ \ell)\circ m\ =\ p\circ k\circ p\circ b\circ \ell\circ m-p\circ m\\
&=&p\circ b\circ x-p\circ m\ =\ p\circ k\circ f\ =\ f
\end{eqnarray*}
and
\[ \ell\circ(n\circ m)=\ell\circ(\mathrm{id}_M-k\circ p\circ b\circ \ell)\circ m=\ell\circ m=x, \]
we have $n\circ m=\Big[\raise1ex\hbox{\leavevmode\vtop{\baselineskip-8ex \lineskip1ex \ialign{#\crcr{$f$}\crcr{$x$}\crcr}}}\Big]$.
\end{proof}

\begin{proposition}\label{PropShiftOctahedron}
Suppose we are given $\mathbb{E}$-triangles
\begin{eqnarray*}
&D\overset{f}{\longrightarrow}A\overset{f^{\prime}}{\longrightarrow}C\overset{\delta_f}{\dashrightarrow},&\\
&A\overset{g}{\longrightarrow}B\overset{g^{\prime}}{\longrightarrow}F\overset{\delta_g}{\dashrightarrow},&\\
&E\overset{h}{\longrightarrow}B\overset{h^{\prime}}{\longrightarrow}C\overset{\delta_h}{\dashrightarrow}&
\end{eqnarray*}
satisfying $h^{\prime}\circ g=f^{\prime}$. Then there is an $\mathbb{E}$-triangle
\[ D\overset{d}{\longrightarrow}E\overset{e}{\longrightarrow}F\overset{\theta}{\dashrightarrow} \]
which makes
\begin{equation}\label{OctaShift}
\xy
(-7,7)*+{D}="2";
(7,7)*+{A}="4";
(21,7)*+{C}="6";
(-7,-7)*+{E}="12";
(7,-7)*+{B}="14";
(21,-7)*+{C}="16";
(-7,-21)*+{F}="22";
(7,-21)*+{F}="24";
{\ar^{f} "2";"4"};
{\ar^{f^{\prime}} "4";"6"};
{\ar_{d} "2";"12"};
{\ar^{g} "4";"14"};
{\ar@{=} "6";"16"};
{\ar^{h} "12";"14"};
{\ar^{h^{\prime}} "14";"16"};
{\ar_{e} "12";"22"};
{\ar^{g^{\prime}} "14";"24"};
{\ar@{=} "22";"24"};
{\ar@{}|\circlearrowright "2";"14"};
{\ar@{}|\circlearrowright "4";"16"};
{\ar@{}|\circlearrowright "12";"24"};
\endxy
\end{equation}
commutative in $\mathscr{C}$, and satisfy the following equalities.
\begin{itemize}
\item[{\rm (i)}] $d_{\ast}(\delta_f)=\delta_h$.
\item[{\rm (ii)}] $f_{\ast}(\theta)=\delta_g$.
\item[{\rm (iii)}] $g^{\prime\ast}(\theta)+h^{\prime\ast}(\delta_f)=0$.
\end{itemize}
\end{proposition}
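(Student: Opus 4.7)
The plan is to apply the octahedral axiom twice: first {\rm (ET4)} to the composable inflations $f$ and $g$, then {\rm (ET4)$^{\mathrm{op}}$} after splitting off a direct summand, and finally {\rm (ET3)$^{\mathrm{op}}$} to transport the result along an isomorphism.

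First, apply {\rm (ET4)} to $\delta_f$ and $\delta_g$ to obtain an $\mathbb{E}$-triangle $D \overset{g \circ f}{\longrightarrow} B \overset{p}{\longrightarrow} P \overset{\delta_p}{\dashrightarrow}$, an $\mathbb{E}$-triangle $C \overset{d_0}{\longrightarrow} P \overset{e_0}{\longrightarrow} F \overset{f'_{\ast}\delta_g}{\dashrightarrow}$, compatibilities $d_0^{\ast}\delta_p = \delta_f$ and $f_{\ast}\delta_p = e_0^{\ast}\delta_g$, and a commutative diagram of type $(\ref{DiagET4})$ in which $p \circ g = d_0 \circ f'$ and $e_0 \circ p = g'$. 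The hypothesis $h' \circ g = f'$ combined with Lemma~\ref{LemZero} yields $f'_{\ast}\delta_g = h'_{\ast}g_{\ast}\delta_g = 0$, so the triangle $C \to P \to F$ splits. Lemma~\ref{LemHomotPush} applied to the upper-right square of $(\ref{DiagET4})$ (a weak pushout) and to the square $h' \circ g = \mathrm{id}_C \circ f'$ produces $z \colon P \to C$ with $z \circ d_0 = \mathrm{id}_C$ and $z \circ p = h'$. By Remark~\ref{RemSplit}~(1), the morphism with components $z$ and $e_0$ is an isomorphism $P \cong C \oplus F$ identifying $d_0$ with $\iota_C$, $e_0$ with $\pi_F$, and $p$ with the morphism having components $h'$ and $g'$.

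Next, apply {\rm (ET4)$^{\mathrm{op}}$} (in the form of Remark~\ref{RemET4op}) to the composable deflations $p \colon B \to C \oplus F$ and $\pi_C \colon C \oplus F \to C$, whose composition is $h'$. This yields an $\mathbb{E}$-triangle $\tilde E \overset{h_1}{\longrightarrow} B \overset{h'}{\longrightarrow} C \overset{\delta'_h}{\dashrightarrow}$, an $\mathbb{E}$-triangle $D \overset{d'}{\longrightarrow} \tilde E \overset{e'}{\longrightarrow} F \overset{\iota_F^{\ast}\delta_p}{\dashrightarrow}$, the compatibility $(d')_{\ast}\delta_p = \pi_C^{\ast}\delta'_h$, and a commutative diagram giving $h_1 \circ d' = g \circ f$ and $g' \circ h_1 = e'$. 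Since both $\tilde E \to B \to C$ and $E \to B \to C$ are $\mathbb{E}$-triangles sharing the deflation $h'$, {\rm (ET3)$^{\mathrm{op}}$} applied to $(\mathrm{id}_B, \mathrm{id}_C)$ produces a morphism $u \colon \tilde E \to E$ of $\mathbb{E}$-triangles with $h \circ u = h_1$ and $u_{\ast}\delta'_h = \delta_h$, which is an isomorphism by Corollary~\ref{CorExact1}~(3).

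Set $d := u \circ d'$, $e := e' \circ u^{-1}$ and $\theta := \iota_F^{\ast}\delta_p$. The sequence $D \overset{d}{\longrightarrow} E \overset{e}{\longrightarrow} F$ is equivalent via $u$ to $D \overset{d'}{\longrightarrow} \tilde E \overset{e'}{\longrightarrow} F$, hence realizes $\theta$; the commutativities $h \circ d = g \circ f$ and $e = g' \circ h$ follow at once. The three equalities reduce to routine functoriality of $\mathbb{E}$: {\rm (i)} $d_{\ast}\delta_f = u_{\ast}(d')_{\ast}\iota_C^{\ast}\delta_p = u_{\ast}\iota_C^{\ast}\pi_C^{\ast}\delta'_h = u_{\ast}\delta'_h = \delta_h$; {\rm (ii)} $f_{\ast}\theta = \iota_F^{\ast}f_{\ast}\delta_p = \iota_F^{\ast}\pi_F^{\ast}\delta_g = \delta_g$; and {\rm (iii)} $g'^{\ast}\theta + h'^{\ast}\delta_f = (\iota_F \circ g' + \iota_C \circ h')^{\ast}\delta_p = p^{\ast}\delta_p = 0$ by Lemma~\ref{LemZero}. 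The main obstacle is obtaining the identification $P \cong C \oplus F$ cleanly via Lemma~\ref{LemHomotPush}; once in place, the rest is bookkeeping.
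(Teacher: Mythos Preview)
Your proof is correct and follows the same overall strategy as the paper: apply {\rm (ET4)} to $\delta_f$ and $\delta_g$, observe that the resulting triangle $C\to P\to F$ splits because $f'_{\ast}\delta_g=h'_{\ast}g_{\ast}\delta_g=0$, and then apply (a form of) {\rm (ET4)$^{\mathrm{op}}$} to extract $D\to E\to F$. The execution differs in one pleasant way: the paper takes an arbitrary splitting $G\cong C\oplus F$, under which $p$ has first component some $a_1$ satisfying only $a_1\circ g=f'$, and then corrects by an explicit automorphism built from $z$ with $z\circ g'=h'-a_1$ before invoking the dual of Lemma~\ref{LemOctaRigid}; you instead use the weak-pushout Lemma~\ref{LemHomotPush} to produce a retraction $z$ of $d_0$ with $z\circ p=h'$ directly, so that the identification already sends $p$ to $\begin{pmatrix}h'\\g'\end{pmatrix}$, making the subsequent verifications of {\rm (i)--(iii)} pure bifunctoriality. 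Your use of plain {\rm (ET4)$^{\mathrm{op}}$} followed by {\rm (ET3)$^{\mathrm{op}}$} and Corollary~\ref{CorExact1}(3) to transport from $\tilde E$ to the given $E$ is equivalent to the paper's direct appeal to the rigidified dual octahedron.

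One small quibble: the claim that $\begin{pmatrix}z\\e_0\end{pmatrix}\colon P\to C\oplus F$ is an isomorphism is not literally the content of Remark~\ref{RemSplit}(1), which only asserts the existence of \emph{some} retraction making this work. What you actually need is Corollary~\ref{CorExact1}(1), applied to the morphism of $\mathbb{E}$-triangles $(\mathrm{id}_C,\begin{pmatrix}z\\e_0\end{pmatrix},\mathrm{id}_F)$ between the two realizations of $0\in\mathbb{E}(F,C)$.
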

\begin{proof}
By {\rm (ET4)}, we have $\mathbb{E}$-triangles
\[ D\overset{g\circ f}{\longrightarrow}B\overset{a}{\longrightarrow}G\overset{\mu}{\dashrightarrow}\ \ \text{and}\ \ C\overset{b}{\longrightarrow}G\overset{c}{\longrightarrow}F\overset{\nu}{\dashrightarrow} \]
which make the following diagram commutative in $\mathscr{C}$,
\begin{equation}\label{Comm_PropShiftOctahedron}
\xy
(-21,7)*+{D}="0";
(-7,7)*+{A}="2";
(7,7)*+{C}="4";
(-21,-7)*+{D}="10";
(-7,-7)*+{B}="12";
(7,-7)*+{G}="14";
(-7,-21)*+{F}="22";
(7,-21)*+{F}="24";
{\ar^{f} "0";"2"};
{\ar^{f^{\prime}} "2";"4"};
{\ar@{=} "0";"10"};
{\ar_{g} "2";"12"};
{\ar^{b} "4";"14"};
{\ar_{g\circ f} "10";"12"};
{\ar_{a} "12";"14"};
{\ar_{g^{\prime}} "12";"22"};
{\ar^{c} "14";"24"};
{\ar@{=} "22";"24"};
{\ar@{}|\circlearrowright "0";"12"};
{\ar@{}|\circlearrowright "2";"14"};
{\ar@{}|\circlearrowright "12";"24"};
\endxy
\end{equation}
and satisfy
\[ f^{\prime}_{\ast}(\delta_g)=\nu,\ \ b^{\ast}\mu=\delta_f,\ \ c^{\ast}(\delta_g)=f_{\ast}\mu. \]
It follows from Lemma~\ref{LemZero} that $\nu=f^{\prime}_{\ast}(\delta_g)=h^{\prime}_{\ast} g_{\ast}(\delta_g)=0$. Thus, up to equivalence, we may assume
\[ G=C\oplus F,\ b=\Big[\raise1ex\hbox{\leavevmode\vtop{\baselineskip-8ex \lineskip1ex \ialign{#\crcr{$1$}\crcr{$0$}\crcr}}}\Big],\ c=[0\ 1] \]
from the start. Then $a=\Big[\raise1ex\hbox{\leavevmode\vtop{\baselineskip-8ex \lineskip1ex \ialign{#\crcr{$a_1$}\crcr{$a_2$}\crcr}}}\Big]\colon B\to G=C\oplus F$ should satisfy
\[ a_1\circ g=f^{\prime},\ a_2=g^{\prime} \]
by the commutativity of $(\ref{Comm_PropShiftOctahedron})$. Since $h^{\prime}-a_1\in\mathscr{C}(B,C)$ satisfies $(h^{\prime}-a_1)\circ g=f^{\prime}-f^{\prime}=0$, there exists $z\in\mathscr{C}(F,C)$ satisfying $z\circ g^{\prime}=h^{\prime}-a_1$.

Put $z^{\prime}=\Big[\raise1ex\hbox{\leavevmode\vtop{\baselineskip-8ex \lineskip1ex \ialign{#\crcr{$-z$}\crcr{$\,\,1\,$}\crcr}}}\Big]$. Applying the dual of Lemma~\ref{LemOctaRigid} to the following diagram made of $\mathbb{E}$-triangles,
\[
\xy
(-7,7)*+{E}="2";
(7,7)*+{F}="4";
(-21,-7)*+{D}="10";
(-7,-7)*+{B}="12";
(7,-7)*+{G}="14";
(21,-7)*+{}="16";
(-7,-21)*+{C}="22";
(7,-21)*+{C}="24";
(-7,-34)*+{}="32";
(7,-34)*+{}="34";
{\ar_{h} "2";"12"};
{\ar^{z^{\prime}} "4";"14"};
{\ar_{g\circ f} "10";"12"};
{\ar^{a} "12";"14"};
{\ar@{-->}^{\mu} "14";"16"};
{\ar_{h^{\prime}} "12";"22"};
{\ar^{[1\ z]} "14";"24"};
{\ar@{=} "22";"24"};
{\ar@{-->}_{\delta_h} "22";"32"};
{\ar@{-->}^{0} "24";"34"};
{\ar@{}|\circlearrowright "12";"24"};
\endxy
\]
we obtain an $\mathbb{E}$-triangle
\[ D\overset{d}{\longrightarrow}E\overset{e}{\longrightarrow}F\overset{\theta}{\dashrightarrow} \]
which makes the following diagram commutative,
\[
\xy
(-21,7)*+{D}="0";
(-7,7)*+{E}="2";
(7,7)*+{F}="4";
(-21,-7)*+{D}="10";
(-7,-7)*+{B}="12";
(7,-7)*+{G}="14";
(-7,-21)*+{C}="22";
(7,-21)*+{C}="24";
{\ar^{d} "0";"2"};
{\ar^{e} "2";"4"};
{\ar@{=} "0";"10"};
{\ar_{h} "2";"12"};
{\ar^{z^{\prime}} "4";"14"};
{\ar_{g\circ f} "10";"12"};
{\ar_{a} "12";"14"};
{\ar_{h^{\prime}} "12";"22"};
{\ar^{[1\ z]} "14";"24"};
{\ar@{=} "22";"24"};
{\ar@{}|\circlearrowright "0";"12"};
{\ar@{}|\circlearrowright "2";"14"};
{\ar@{}|\circlearrowright "12";"24"};
\endxy
\]
and satisfies
\[ \theta=z^{\prime\ast}\mu,\ \ d_{\ast}\mu=[1\ z]^{\ast}(\delta_h). \]
Then the commutativity of $(\ref{OctaShift})$ can be checked in a straightforward way. Let us show the equalities {\rm (i),(ii),(iii)}.

{\rm (i)} follows from
\[ d_{\ast}(\delta_f)=d_{\ast} b^{\ast}\mu=b^{\ast} [1\ z]^{\ast}(\delta_h)=([1\ z]\circ b)^{\ast}(\delta_h)=\delta_h. \]
{\rm (ii)} follows from the injectivity of $\mathbb{E}(c,A)=c^{\ast}$ and
\begin{eqnarray*}
c^{\ast} f_{\ast}(\theta)&=&c^{\ast} f_{\ast} z^{\prime\ast}\mu\ =\ f_{\ast} (z^{\prime}\circ c)^{\ast}\mu\\
&=&f_{\ast} \left[\begin{array}{cc}0&-z\\ 0&1\end{array}\right]^{\ast}\mu%
\ =\ f_{\ast} (1-b\circ [1\ z])^{\ast}\mu\\
&=&f_{\ast}\mu-f_{\ast} [1\ z]^{\ast}(\delta_f)\ =\ f_{\ast}\mu-[1\ z]^{\ast} f_{\ast} (\delta_f)\ =\ f_{\ast}\mu\ =\ c^{\ast}(\delta_g).
\end{eqnarray*}
{\rm (iii)} follows from
\begin{eqnarray*}
g^{\prime\ast}(\theta)+h^{\prime\ast}(\delta_f)&=&g^{\prime\ast}z^{\prime\ast}\mu+h^{\prime\ast} b^{\ast}\mu\ =\ \Big(\Big[\raise1.3ex\hbox{\leavevmode\vtop{\baselineskip-8ex \lineskip0.4ex \ialign{#\crcr{$-z\circ g^{\prime}$}\crcr{$\ g^{\prime}\ $}\crcr}}}\Big]+\Big[\raise1ex\hbox{\leavevmode\vtop{\baselineskip-8ex \lineskip1ex \ialign{#\crcr{$h^{\prime}$}\crcr{$0$}\crcr}}}\Big]\Big)^{\ast}\mu\\
&=&\Big[\raise1.3ex\hbox{\leavevmode\vtop{\baselineskip-8ex \lineskip0.4ex \ialign{#\crcr{$a_1$}\crcr{$g^{\prime}$}\crcr}}}\Big]^{\ast}\mu\ =\ a^{\ast}\mu\ =\ 0.
\end{eqnarray*}
\end{proof}

As in Example~\ref{Example1}, an exact category (with some smallness assumption) can be regarded as an extriangulated category, whose inflations are monomorphic and whose deflations are epimorphic. Conversely, the following holds.
\begin{corollary}\label{CorExTraExact}
Let $(\mathscr{C},\mathbb{E},\mathfrak{s})$ be an extriangulated category, in which any inflation is monomorphic, and any deflation is epimorphic. If we let $\mathscr{S}$ be the class of conflations given by the $\mathbb{E}$-triangles (see Definition~\ref{DefTermExact1}), then $(\mathscr{C},\mathscr{S})$ is an exact category in the sense of \cite{Bu}.
\end{corollary}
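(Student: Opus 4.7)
The plan is to verify the six Quillen axioms for an exact structure, with $\mathscr{S}$ as the class of short exact sequences, and to check that each member of $\mathscr{S}$ is a kernel-cokernel pair. For the latter, given a conflation $A\overset{x}{\longrightarrow}B\overset{y}{\longrightarrow}C$, Lemma~\ref{LemZero} gives $y\circ x=0$; for any $z\colon T\to B$ with $y\circ z=0$, the exact sequence of Corollary~\ref{ExactToShow} provides $z'\colon T\to A$ with $x\circ z'=z$, unique because $x$ is monomorphic by hypothesis, so $x$ is a kernel of $y$. Dually, $y$ is a cokernel of $x$. Closure of $\mathscr{S}$ under isomorphism is Corollary~\ref{CorModif}. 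Axioms [E0] and [E0$^{\mathrm{op}}$] follow from (ET2): the split extension $0\in\mathbb{E}(0,A)$ is realized by $A\overset{1_A}{\longrightarrow}A\longrightarrow 0$, so $1_A$ is simultaneously an inflation and a deflation. Axioms [E1] and [E1$^{\mathrm{op}}$] are exactly Remark~\ref{RemInfInf}.

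The only nontrivial axiom is [E2]: the pushout of an inflation along an arbitrary morphism must exist and be an inflation. Given an inflation $x\colon A\to B$ fitting in an $\mathbb{E}$-triangle $A\overset{x}{\longrightarrow}B\overset{y}{\longrightarrow}C\overset{\delta}{\dashrightarrow}$ and an arbitrary $a\colon A\to A'$, I would apply Corollary~\ref{CorTrivWIC} with $f=-a$ to obtain a conflation
\[ A \overset{\left[\begin{smallmatrix}-a\\ x\end{smallmatrix}\right]}{\longrightarrow} A'\oplus B \overset{[\alpha\ \beta]}{\longrightarrow} E. \]
Unfolding the proof of Corollary~\ref{CorTrivWIC} (which rests on Proposition~\ref{PropBaer}), the object $E$ sits as the middle term of an $\mathbb{E}$-triangle $A'\to E\to C$ realizing $a_\ast\delta$ up to the sign isomorphism $-1_{A'}$, absorbed by Proposition~\ref{PropModif}; in particular $\alpha\colon A'\to E$ is an inflation. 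The identity $[\alpha\ \beta]\circ\left[\begin{smallmatrix}-a\\ x\end{smallmatrix}\right]=0$ yields $\beta\circ x=\alpha\circ a$, so the square commutes. By the first paragraph $[\alpha\ \beta]$ is the cokernel of $\left[\begin{smallmatrix}-a\\ x\end{smallmatrix}\right]$, and this cokernel universal property is precisely the pushout universal property of $x$ along $a$. Axiom [E2$^{\mathrm{op}}$] is dual, using the cone of $[a\ {-y}]\colon A\oplus B\to C$ in place of the above.

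The main obstacle is [E2], but its entire substance is already encoded in Corollary~\ref{CorTrivWIC}: that corollary simultaneously exhibits $E$ as the cone of the relevant column matrix (hence as the pushout) and identifies the structural map $A'\to E$ as an inflation coming from a conflation with $A'$ on the left. Everything else reduces to bookkeeping between cokernels and pushouts, which is formal once inflations are monomorphisms and deflations are epimorphisms.
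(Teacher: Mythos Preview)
Your argument is correct and follows the same route as the paper: both establish kernel--cokernel pairs from Corollary~\ref{ExactToShow} plus the mono/epi hypothesis, get {\rm [E0]}, {\rm [E1]} from {\rm (ET2)}, {\rm (ET4)}, and handle {\rm [E2]} via Corollary~\ref{CorTrivWIC}. The only substantive difference is in the last step of {\rm [E2]}: you unpack the proof of Corollary~\ref{CorTrivWIC} to see that $E$ already arises as the middle term of an $\mathbb{E}$-triangle $A'\to E\to C$ (realizing $(-a)_{\ast}\delta$), hence $\alpha$ is an inflation; the paper instead treats $B'$ as a black box and re-derives the conflation $A'\to B'\to C$ by applying the dual of Proposition~\ref{PropShiftOctahedron} to the three conflations $A'\hookrightarrow B\oplus A'$, $A\to B\oplus A'\to B'$, $A\to B\to C$. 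Your shortcut is legitimate (tracing the automorphism $n$ in that proof gives $\alpha=-d$), but it relies on the internals of a proof rather than just a statement; the paper's version is more modular at the cost of one extra citation.

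One small slip: closure of $\mathscr{S}$ under isomorphism is Proposition~\ref{PropModif}, not Corollary~\ref{CorModif} (the latter characterizes which $\delta'$ share a given realization, which is a different question).
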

\begin{proof}
By the exact sequences obtained in Proposition~\ref{PropExact1}, for any conflation $A\overset{x}{\longrightarrow}B\overset{y}{\longrightarrow}C$, the pair $(A,x)$ gives a weak kernel of $y$. Since $x$ is monomorphic by assumption, it is a kernel of $y$. Dually $(C,y)$ gives a cokernel of $x$, and $A\overset{x}{\longrightarrow}B\overset{y}{\longrightarrow}C$ becomes a kernel-cokernel pair. 

Thus $\mathscr{S}$ consists of some kernel-cokernel pairs. Moreover, it is closed under isomorphisms. Indeed, let $A\overset{x}{\longrightarrow}B\overset{y}{\longrightarrow}C\overset{\delta}{\dashrightarrow}$ be any $\mathbb{E}$-triangle, let $A^{\prime}\overset{x^{\prime}}{\longrightarrow}B^{\prime}\overset{y^{\prime}}{\longrightarrow}C^{\prime}$ be a kernel-cokernel pair, and suppose there are isomorphisms $a\in\mathscr{C}(A,A^{\prime}),b\in\mathscr{C}(B,B^{\prime}),c\in\mathscr{C}(C,C^{\prime})$ satisfying $x^{\prime}\circ a=b\circ x$ and $y^{\prime}\circ b=c\circ y$.
By Proposition~\ref{PropModif}, we obtain an $\mathbb{E}$-triangle $A^{\prime}\overset{x\circ a^{-1}}{\longrightarrow}B\overset{c\circ y}{\longrightarrow}C^{\prime}\overset{(c^{-1})^{\ast} a_{\ast}\delta}{\dashrightarrow}$.
Since
\[
\xy
(-16,0)*+{A^{\prime}}="0";
(3,0)*+{}="1";
(0,8)*+{B}="2";
(0,-8)*+{B^{\prime}}="4";
(-3,0)*+{}="5";
(16,0)*+{C^{\prime}}="6";
{\ar^{x\circ a^{-1}} "0";"2"};
{\ar^{c\circ y} "2";"6"};
{\ar_{x^{\prime}} "0";"4"};
{\ar_{y^{\prime}} "4";"6"};
{\ar^{b}_{\cong} "2";"4"};
{\ar@{}|\circlearrowright "0";"1"};
{\ar@{}|\circlearrowright "5";"6"};
\endxy
\]
is commutative in $\mathscr{C}$, this gives $\mathfrak{s}(\delta)=[A^{\prime}\overset{x\circ a^{-1}}{\longrightarrow}B\overset{c\circ y}{\longrightarrow}C^{\prime}]=[A^{\prime}\overset{x^{\prime}}{\longrightarrow}B^{\prime}\overset{y^{\prime}}{\longrightarrow}C^{\prime}]$, which means that $A^{\prime}\overset{x^{\prime}}{\longrightarrow}B^{\prime}\overset{y^{\prime}}{\longrightarrow}C^{\prime}$ belongs to $\mathscr{S}$.

Let us confirm conditions {\rm [E0],[E1],[E2]} in \cite[Definition 2.1]{Bu}. Since our assumptions are self-dual, the other conditions {\rm [E0$^{\mathrm{op}}$],[E1$^{\mathrm{op}}$],[E2$^{\mathrm{op}}$]} can be shown dually.


{\rm [E0]} For any object $A\in\mathscr{C}$,  the split sequence $A\overset{\mathrm{id}_A}{\longrightarrow}A\to0$ belongs to $\mathscr{S}$ by {\rm (ET2)}.


{\rm [E1]} The class of inflations (= admissible monics) is closed under composition by {\rm (ET4)}, as in Remark~\ref{RemInfInf}.


{\rm [E2]} Let $A\overset{x}{\longrightarrow}B\overset{y}{\longrightarrow}C\overset{\delta}{\dashrightarrow}$ be any $\mathbb{E}$-triangle, and let $a\in\mathscr{C}(A,A^{\prime})$ be any morphism. By Corollary~\ref{CorTrivWIC}, there exists a conflation
\[ A\overset{s}{\longrightarrow}B\oplus A^{\prime}\overset{{}^{\exists}[b\ x^{\prime}]}{\longrightarrow}{}^{\exists}B^{\prime}, \]
where $s=\Big[\raise1ex\hbox{\leavevmode\vtop{\baselineskip-8ex \lineskip1ex \ialign{#\crcr{$\,\,x\,$}\crcr{$-a$}\crcr}}}\Big]$. Since it becomes a kernel-cokernel pair by the above argument, it follows that
\[
\xy
(-6,6)*+{A}="0";
(6,6)*+{B}="2";
(-6,-6)*+{A^{\prime}}="4";
(6,-6)*+{B^{\prime}}="6";
%
%
{\ar^{x} "0";"2"};
{\ar_{a} "0";"4"};
{\ar^{b} "2";"6"};
{\ar_{x^{\prime}} "4";"6"};
{\ar@{}|\circlearrowright "0";"6"};
\endxy
\]
is a pushout square. By the dual of Proposition~\ref{PropShiftOctahedron}, we obtain the following commutative diagram made of conflations,
\[
\xy
(-9,24)*+{A^{\prime}}="2";
(9,24)*+{A^{\prime}}="4";
(-27,8)*+{A}="10";
(-9,8)*+{B\oplus A^{\prime}}="12";
(9,8)*+{B^{\prime}}="14";
%
(-27,-8)*+{A}="20";
(-9,-8)*+{B}="22";
(9,-8)*+{C}="24";
%
%
{\ar@{=} "2";"4"};
(-11.6,16)*+{\Big[\ \Big]}="-1";
(-11.6,17.6)*+{\scriptstyle{0}};
(-11.6,14.4)*+{\scriptstyle{1}};
%
{\ar "2";"12"};
{\ar^{x^{\prime}} "4";"14"};
{\ar^(0.4){s} "10";"12"};
{\ar_(0.56){[b\ x^{\prime}]} "12";"14"};
%
{\ar@{=} "10";"20"};
{\ar^{[1\ 0]} "12";"22"};
{\ar^{{}^{\exists}y^{\prime}} "14";"24"};
{\ar_{x} "20";"22"};
{\ar_{y} "22";"24"};
%
%
{\ar@{}|\circlearrowright "2";"14"};
{\ar@{}|\circlearrowright "10";"22"};
{\ar@{}|\circlearrowright "12";"24"};
\endxy
\]
which shows that $x^{\prime}$ is an inflation.
\end{proof}

\subsection{Relation with triangulated categories}

In this section, let $\mathscr{C}$ be an additive category equipped with an equivalence $[1]\colon\mathscr{C}\overset{\simeq}{\longrightarrow}\mathscr{C}$, and let $\mathbf{E}^1\colon\mathscr{C}^{\mathrm{op}}\times\mathscr{C}\to\mathit{Ab}$ be the bifunctor defined by $\mathbf{E}^1=\mathrm{Ext}^1(-,-)=\mathscr{C}(-,-[1])$.
\begin{remark}
As usual, we use notations like $X[1]$ and $f[1]$ for objects $X$ and morphisms $f$ in $\mathscr{C}$. The $n$-times composition of $[1]$ is denoted by $[n]$.
\end{remark}

We will show that, to give a triangulation of $\mathscr{C}$ with shift functor $[1]$, is equivalent to give an $\mathbf{E}^1$-triangulation of $\mathscr{C}$ (Proposition~\ref{PropTriaExt}).

\begin{remark}\label{RenTriaExt}
Let $\mathscr{C},[1],\mathbf{E}^1$ be as above. Then for any $\delta\in\mathbf{E}^1(C,A)=\mathscr{C}(C,A[1])$, we have the following.
\begin{enumerate}
\item[(1)] $\delta_\sharp=\mathscr{C}(-,\delta)\colon\mathscr{C}(-,C)\Rightarrow\mathscr{C}(-,A[1])$.
\item[(2)] $\delta^\sharp$ is given by
\[ \delta^\sharp_X\colon\mathscr{C}(A,X)\to\mathscr{C}(C,X[1])\ ;\ f\mapsto (f[1])\circ\delta \]
for any $X\in\mathscr{C}$.
\end{enumerate}
\end{remark}

\begin{lemma}\label{LemTriaExt}
Let $\mathscr{C},[1],\mathbf{E}^1$ be as above. Suppose that $\mathfrak{s}$ is an $\mathbf{E}^1$-triangulation of $\mathscr{C}$.
Then for any $A\in\mathscr{C}$, the $\mathbf{E}^1$-extension $\mathbf{1}=\mathrm{id}_{A[1]}\in\mathbf{E}^1(A[1],A)=\mathscr{C}(A[1],A[1])$ can be realized as
\[ \mathfrak{s}(\mathbf{1})=[A\to0\to A[1]]. \]
Namely, $A\to0\to A[1]\overset{\mathbf{1}}{\dashrightarrow}$ is an $\mathbf{E}^1$-triangle.
\end{lemma}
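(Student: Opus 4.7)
The plan is to take an arbitrary realization $\mathfrak{s}(\mathbf{1})=[A\overset{x}{\to}B\overset{y}{\to}A[1]]$ and show that the middle term is forced to be zero. Thus we have to extract information about $x$, $y$, and $B$ from the single identity $\mathbf{1}=\mathrm{id}_{A[1]}$.

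First I would pin down $x$ and $y$ using Lemma~\ref{LemZero}. Under the canonical identifications from Remark~\ref{RenTriaExt}, the pushforward on $\mathbf{E}^1$-extensions reads $x_\ast=x[1]\circ(-)$ and the pullback reads $y^\ast=(-)\circ y$. Lemma~\ref{LemZero}(2) gives
\[ 0=x_\ast\mathbf{1}=x[1]\circ\mathrm{id}_{A[1]}=x[1], \]
and since $[1]$ is an equivalence and therefore faithful, this forces $x=0$. Dually, Lemma~\ref{LemZero}(3) gives $0=y^\ast\mathbf{1}=\mathrm{id}_{A[1]}\circ y=y$, so $y=0$.

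Next I would kill $B$ by invoking Corollary~\ref{ExactToShow}. For every $X\in\mathscr{C}$ we have an exact sequence in $\mathit{Ab}$:
\[ \mathscr{C}(A[1],X)\xrightarrow{-\circ y}\mathscr{C}(B,X)\xrightarrow{-\circ x}\mathscr{C}(A,X). \]
Having just shown $x=0$ and $y=0$, both maps above are zero, so by exactness
\[ \mathscr{C}(B,X)=\ker(-\circ x)=\operatorname{im}(-\circ y)=0. \]
Specializing to $X=B$ yields $\mathrm{id}_B=0$, hence $B\cong 0$.

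Finally, with $B\cong 0$, an isomorphism $b\colon B\overset{\cong}{\to}0$ gives a commutative diagram in the sense of Definition~\ref{DefSqEquiv} comparing $A\overset{x}{\to}B\overset{y}{\to}A[1]$ with $A\to 0\to A[1]$ (all vertical and slanted squares commute because every arrow involving $0$ or $B$ is zero). Hence
\[ \mathfrak{s}(\mathbf{1})=[A\overset{x}{\to}B\overset{y}{\to}A[1]]=[A\to 0\to A[1]], \]
as required. There is no real obstacle here: once one recognizes that $\mathbf{1}$ is just the identity under the Yoneda identification, the vanishing statements of Lemma~\ref{LemZero} immediately localize $x$ and $y$, and the exact sequences from Proposition~\ref{PropExact1} do the rest.
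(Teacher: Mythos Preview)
Your proof is correct and follows essentially the same route as the paper's: both show $x=0$, $y=0$, and then $B\cong 0$ via the exact sequences of Proposition~\ref{PropExact1}. The only cosmetic difference is that the paper reads $y=0$ and $x[1]=0$ directly off the five-term exact sequence (using that $\mathbf{1}_\sharp=\mathrm{id}$ is a bijection), whereas you obtain them from Lemma~\ref{LemZero} together with the identification $\mathbf{E}^1=\mathscr{C}(-,-[1])$.
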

\begin{proof}
Put $\mathfrak{s}(\mathbf{1})=[A\overset{x}{\longrightarrow}X\overset{y}{\longrightarrow}A[1]]$. By Proposition~\ref{PropExact1},
\[ \mathscr{C}(-,A)\overset{\mathscr{C}(-,x)}{\Longrightarrow}\mathscr{C}(-,X)\overset{\mathscr{C}(-,y)}{\Longrightarrow}\mathscr{C}(-,A[1])\overset{\mathbf{1}_\sharp=\mathrm{id}}{\Longrightarrow}\mathscr{C}(-,A[1])\overset{\mathscr{C}(-,x[1])}{\Longrightarrow}\mathscr{C}(-,X[1]) \]
is exact. In particular $\mathrm{id}_X\in\mathscr{C}(X,X)$ satisfies $y=(\mathbf{1}_\sharp)_X\circ\mathscr{C}(X,y)(\mathrm{id}_X)=0$. Similarly $x[1]=0$ implies $x=0$. Thus
\[ 0\Rightarrow\mathscr{C}(-,X)\Rightarrow0 \]
becomes exact, which shows $X=0$.
\end{proof}

\begin{proposition}\label{PropTriaExt}
As before, let $\mathscr{C}$ be an additive category equipped with an auto-equivalence $[1]$, and put $\mathbf{E}^1=\mathscr{C}(-,-[1])$. Then we have the following.
\begin{enumerate}
\item[(1)] Suppose $\mathscr{C}$ is a triangulated category with shift functor $[1]$. For any $\delta\in\mathbf{E}^1(C,A)=\mathscr{C}(C,A[1])$, take a distinguished triangle
\[ A\overset{x}{\longrightarrow}B\overset{y}{\longrightarrow}C\overset{\delta}{\longrightarrow}A[1] \]
and define as $\mathfrak{s}(\delta)=[A\overset{x}{\longrightarrow}B\overset{y}{\longrightarrow}C]$. Remark that this $\mathfrak{s}(\delta)$ does not depend on the choice of the distinguished triangle above. With this definition, $(\mathscr{C},\mathbf{E}^1,\mathfrak{s})$ becomes an extriangulated category.
\item[(2)] Suppose we are given an $\mathbf{E}^1$-triangulation $\mathfrak{s}$ of $\mathscr{C}$. Define that $A\overset{x}{\longrightarrow}B\overset{y}{\longrightarrow}C\overset{\delta}{\longrightarrow}A[1]$ is a distinguished triangle if and only if it satisfies $\mathfrak{s}(\delta)=[A\overset{x}{\longrightarrow}B\overset{y}{\longrightarrow}C]$. With this class of distinguished triangles, $\mathscr{C}$ becomes a triangulated category.
\end{enumerate}
By construction, distinguished triangles correspond to $\mathbf{E}^1$-triangles by the above {\rm (1)} and {\rm (2)}.
\end{proposition}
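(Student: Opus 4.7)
The plan is to verify each direction of the equivalence separately, using the axioms on each side to derive those on the other.

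For direction (1), given the triangulation, the biadditivity of $\mathbf{E}^1=\mathscr{C}(-,-[1])$ is automatic, and $\mathfrak{s}(\delta)=[A\to B\to C]$ is well-defined: applying (TR3) with $\mathrm{id}_A,\mathrm{id}_C$ to two distinguished triangles sharing the boundary $\delta$ produces an isomorphism on the middle object, which is precisely the equivalence of Definition~\ref{DefSqEquiv}. Additivity of $\mathfrak{s}$ reduces to the classical facts that the split triangle $A\to A\oplus C\to C\xrightarrow{0}$ is distinguished and that distinguished triangles are closed under direct sums. Conditions (ET3) and (ET3)$^{\mathrm{op}}$ are immediate reformulations of (TR3); conditions (ET4) and (ET4)$^{\mathrm{op}}$ are reformulations of the octahedron axiom (TR4), with the compatibilities (i)--(iii) unpacking to standard identities among the connecting morphisms of three composable distinguished triangles.

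For direction (2), I would declare $A\xrightarrow{x}B\xrightarrow{y}C\xrightarrow{\delta}A[1]$ distinguished iff $\mathfrak{s}(\delta)=[A\xrightarrow{x}B\xrightarrow{y}C]$, and check (TR1)--(TR4). Closure under isomorphism of triangles follows from Proposition~\ref{PropModif} applied to the inverses of the outer isomorphisms; the trivial triangle $A\xrightarrow{\mathrm{id}}A\to 0\xrightarrow{0}$ is distinguished by (ET2)(i); the morphism axiom (TR3) is a direct consequence of (ET3); and the octahedron (TR4) is a direct consequence of (ET4). The substantive points are the rotation axiom (TR2) and the existence of a distinguished triangle on an arbitrary morphism (TR1)(c).

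For rotation, apply (ET4) to the given triangle $A\xrightarrow{x}B\xrightarrow{y}C\xrightarrow{\delta}$ and to the canonical $\mathbf{E}^1$-triangle $B\to 0\to B[1]\xrightarrow{\mathbf{1}}$ from Lemma~\ref{LemTriaExt}. The octahedron produces a cone $E$ fitting in an $\mathbf{E}^1$-triangle $A\to 0\to E$, and by uniqueness of cones (Remark~\ref{RemConeCocone}) there is a canonical isomorphism $u\colon E\xrightarrow{\sim}A[1]$. Compatibility (ii) of (ET4) forces $d\colon C\to E$ to correspond to $\delta\colon C\to A[1]$ under $u$, and compatibility (iii) forces $e\colon E\to B[1]$ to correspond to $x[1]$, so the second $\mathbf{E}^1$-triangle produced by (ET4) becomes $C\xrightarrow{\delta}A[1]\xrightarrow{x[1]}B[1]\xrightarrow{y[1]}$, the doubly rotated version of the original. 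Single rotation then follows, up to the conventional signs of (TR2), by combining this with the compatibility of $\mathfrak{s}$ with the shift $[1]$ (to be established along the way). For existence, given $x\colon A\to B$, realize $x[1]\in\mathbf{E}^1(A[1],B)=\mathscr{C}(A[1],B[1])$ as an $\mathbf{E}^1$-triangle $B\to Z\to A[1]\xrightarrow{x[1]}$ and invoke the rotation just established to extract a distinguished triangle extending $x$. The main obstacle is the rotation step itself: pinning down the isomorphism $u$, verifying the identifications $d\leftrightarrow\delta$ and $e\leftrightarrow x[1]$ through $u$, and handling both the sign convention of (TR2) and the shift-compatibility of $\mathfrak{s}$; once this is settled, existence and the remaining axioms follow quickly.
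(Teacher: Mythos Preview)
Your treatment of (1) and of the easy parts of (2) matches the paper. The substantive issue is (TR2), and here your argument has a gap.

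Applying (ET4) to $A\xrightarrow{x}B\xrightarrow{y}C\xrightarrow{\delta}$ and $B\to 0\to B[1]\xrightarrow{\mathbf{1}}$ does work exactly as you say, but it produces the \emph{doubly} rotated triangle $C\xrightarrow{\delta}A[1]\xrightarrow{x[1]}B[1]\xrightarrow{y[1]}$, not the singly rotated one. Your proposed reduction to single rotation via ``compatibility of $\mathfrak{s}$ with the shift $[1]$'' is the problem: that compatibility is not available a priori, and establishing it independently of rotation is essentially as hard as (TR2) itself. Iterating your double rotation three times only yields the $[2]$-shift of the original, so there is no shortcut here. This step needs a different idea.

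The paper avoids the detour entirely by applying Proposition~\ref{PropBaer}\,(2) (itself a packaged consequence of (ET4)) to the pair $A\to 0\to A[1]\xrightarrow{\mathbf{1}}$ and $A\xrightarrow{x}B\xrightarrow{y}C\xrightarrow{\delta}$, two $\mathbb{E}$-triangles sharing the \emph{first} term $A$ rather than the middle term $B$. The resulting diagram directly exhibits $\mathfrak{s}(x[1])=[B\xrightarrow{y}C\xrightarrow{-\delta}A[1]]$, which is the singly rotated triangle up to the usual sign isomorphism. So the right move is to feed (ET4) the canonical triangle on $A$, not on $B$.

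A smaller point: your plan for (TR1)(c) routes through rotation, which is unnecessarily indirect. Once Lemma~\ref{LemTriaExt} gives the conflation $A\to 0\to A[1]$, Corollary~\ref{CorTrivWIC} shows that every morphism $f\colon A\to B$ is an inflation (take $x=0\colon A\to 0$), so a conflation $A\xrightarrow{f}B\to C$ and hence a distinguished triangle on $f$ exist immediately. This is presumably what the paper means by ``easily confirmed''.
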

\begin{proof}
{\rm (1)} is straightforward. For {\rm (2)}, all the axioms except for {\rm (TR2)} are easily confirmed.
Let us show {\rm (TR2)}. 

Let $A\overset{x}{\longrightarrow}B\overset{y}{\longrightarrow}C\overset{\delta}{\dashrightarrow}$ be any $\mathbf{E}^1$-triangle. Applying Proposition~\ref{PropBaer} {\rm (2)} to $A\to0\to A[1]\overset{\mathbf{1}}{\dashrightarrow}$ and $\delta$, we obtain
\[
\xy
(-7,7)*+{A}="2";
(7,7)*+{B}="4";
(21,7)*+{C}="6";
(-7,-7)*+{0}="12";
(7,-7)*+{{}^{\exists}M}="14";
(21,-7)*+{C}="16";
(-7,-21)*+{A[1]}="22";
(7,-21)*+{A[1]}="24";
{\ar^{x} "2";"4"};
{\ar^{y} "4";"6"};
{\ar_{} "2";"12"};
{\ar^{m^{\prime}} "4";"14"};
{\ar@{=} "6";"16"};
{\ar^{} "12";"14"};
{\ar^{e} "14";"16"};
{\ar_{} "12";"22"};
{\ar^{e^{\prime}} "14";"24"};
{\ar@{=} "22";"24"};
{\ar@{}|\circlearrowright "2";"14"};
{\ar@{}|\circlearrowright "4";"16"};
{\ar@{}|\circlearrowright "12";"24"};
\endxy
\]
which satisfies
\begin{itemize}
\item[{\rm (i)}] $[0\to M\overset{e}{\longrightarrow}C]=0_{\ast}\delta=0$,
\item[{\rm (ii)}] $\mathfrak{s}(x[1])=\mathfrak{s}(x_{\ast}\mathbf{1})=[B\overset{m^{\prime}}{\longrightarrow}M\overset{e^{\prime}}{\longrightarrow}A[1]]$,
\item[{\rm (iii)}] $e^{\ast}\delta+e^{\prime\ast}\mathbf{1}=0$.
\end{itemize}
{\rm (i)} shows that $e$ is an isomorphism, by Remark~\ref{RemSplit} {\rm (1)}. {\rm (iii)} means $\delta\circ e+e^{\prime}=0$ in $\mathscr{C}(M,A[1])$, namely $e^{\prime}\circ e^{-1}=-\delta$. Thus we have
\[ \mathfrak{s}(x[1])=[B\overset{y}{\longrightarrow}C\overset{-\delta}{\longrightarrow}A[1]] \]
by {\rm (ii)}, which means $B\overset{y}{\longrightarrow}C\overset{-\delta}{\longrightarrow}A[1]\overset{x[1]}{\longrightarrow}B[1]$ is a distinguished triangle. This is isomorphic to $B\overset{y}{\longrightarrow}C\overset{\delta}{\longrightarrow}A[1]\overset{-x[1]}{\longrightarrow}B[1]$.
\end{proof}

\subsection{Projectives and injectives}
If $(\mathscr{C},\mathbb{E},\mathfrak{s})$ has enough ``{\it projectives}", then the bifunctor $\mathbb{E}$ can be described in terms of them. Throughout this section, let $(\mathscr{C},\mathbb{E},\mathfrak{s})$ be an extriangulated category. Duals of the results in this section hold true for ``{\it injectives}".
\begin{definition}\label{DefProj}
An object $P\in\mathscr{C}$ is called {\it projective} if it satisfies the following condition.
\begin{itemize}
\item For any $\mathbb{E}$-triangle $A\overset{x}{\longrightarrow}B\overset{y}{\longrightarrow}C\overset{\delta}{\dashrightarrow}$ and any morphism $c\in\mathscr{C}(P,C)$, there exists $b\in\mathscr{C}(P,B)$ satisfying $y\circ b=c$.
\end{itemize}
We denote the full subcategory of projective objects in $\mathscr{C}$ by $\mathrm{Proj}(\mathscr{C})$. Dually, the full subcategory of injective objects in $\mathscr{C}$ is denoted by $\mathrm{Inj}(\mathscr{C})$.
\end{definition}

\begin{proposition}\label{PropProj}
An object $P\in\mathscr{C}$ is projective if and only if it satisfies $\mathbb{E}(P,A)=0$ for any $A\in\mathscr{C}$.
\end{proposition}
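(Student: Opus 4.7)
The plan is to use the long exact sequence from Corollary~\ref{ExactToShow} to translate the lifting property defining projectivity into the vanishing of $\mathbb{E}(P,-)$.

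For the ``if'' direction, suppose $\mathbb{E}(P,A)=0$ for every $A\in\mathscr{C}$. Given any $\mathbb{E}$-triangle $A\overset{x}{\longrightarrow}B\overset{y}{\longrightarrow}C\overset{\delta}{\dashrightarrow}$ and any $c\in\mathscr{C}(P,C)$, apply Corollary~\ref{ExactToShow} with $-=P$ to get the exact sequence
\[
\mathscr{C}(P,B)\overset{\mathscr{C}(P,y)}{\longrightarrow}\mathscr{C}(P,C)\overset{(\delta_{\sharp})_P}{\longrightarrow}\mathbb{E}(P,A).
\]
Since $\mathbb{E}(P,A)=0$, the map $\mathscr{C}(P,y)$ is surjective, producing $b\in\mathscr{C}(P,B)$ with $y\circ b=c$. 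Thus $P$ is projective.

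For the ``only if'' direction, suppose $P$ is projective and let $\delta\in\mathbb{E}(P,A)$ be arbitrary. By (ET2), $\mathfrak{s}$ realizes $\delta$ as an $\mathbb{E}$-triangle $A\overset{x}{\longrightarrow}B\overset{y}{\longrightarrow}P\overset{\delta}{\dashrightarrow}$. Applying projectivity to $c=\mathrm{id}_P$ yields a morphism $b\in\mathscr{C}(P,B)$ with $y\circ b=\mathrm{id}_P$, i.e.\ $y$ admits a section. By Corollary~\ref{CorExact0} (the ``splitting'' equivalence in the special case $\delta=\delta^{\prime}$), this forces $\delta=0$. Since $\delta\in\mathbb{E}(P,A)$ was arbitrary, $\mathbb{E}(P,A)=0$, and since $A$ was arbitrary the claim follows.

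No step looks genuinely difficult: everything reduces to invoking the associated long exact sequence and the characterization of split $\mathbb{E}$-triangles via sections. The only subtle point worth flagging is that, in the ``only if'' direction, we need to know that \emph{every} $\delta\in\mathbb{E}(P,A)$ is realizable as an $\mathbb{E}$-triangle with third term $P$, which is exactly the content of Definition~\ref{DefRealization} together with (ET2); this is what lets us apply the lifting property to $\mathrm{id}_P$.
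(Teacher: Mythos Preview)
Your proof is correct and follows essentially the same approach as the paper: the ``if'' direction uses the exact sequence (the paper cites Proposition~\ref{PropExact1} directly rather than the packaged Corollary~\ref{ExactToShow}, but this is the same argument), and the ``only if'' direction obtains a section of $y$ and concludes $\delta=0$. The only cosmetic difference is that the paper derives the splitting by hand via {\rm (ET3)$^{\mathrm{op}}$} rather than invoking Corollary~\ref{CorExact0}, but the underlying idea is identical.
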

\begin{proof}
Suppose $P$ satisfies $\mathbb{E}(P,A)=0$ for any $A\in\mathscr{C}$. Then for any $\mathbb{E}$-triangle $A\overset{x}{\longrightarrow}B\overset{y}{\longrightarrow}C\overset{\delta}{\dashrightarrow}$,
\[ \mathscr{C}(P,B)\overset{\mathscr{C}(P,y)}{\longrightarrow}\mathscr{C}(P,C)\to0 \]
becomes exact by Proposition~\ref{PropExact1}. Thus $P$ is projective.

Conversely, suppose $P$ is projective. Let $A\in\mathscr{C}$ be any object, and let $\delta\in\mathbb{E}(P,A)$ be any element, with $\mathfrak{s}(\delta)=[A\overset{x}{\longrightarrow}M\overset{y}{\longrightarrow}P]$.
Since $P$ is projective, there exists $m\in\mathscr{C}(P,M)$ which makes the following diagram commutative.
\[
\xy
(-12,6)*+{0}="0";
(0,6)*+{P}="2";
(12,6)*+{P}="4";
(-12,-6)*+{A}="10";
(0,-6)*+{M}="12";
(12,-6)*+{P}="14";
{\ar "0";"2"};
{\ar^{\mathrm{id}_P} "2";"4"};
{\ar_{m} "2";"12"};
{\ar^{\mathrm{id}_P} "4";"14"};
{\ar_{x} "10";"12"};
{\ar_{y} "12";"14"};
{\ar@{}|\circlearrowright "2";"14"};
\endxy
\]
By {\rm (ET3)$^{\mathrm{op}}$}, the triplet $(0,m,\mathrm{id}_P)$ realizes the morphism $(0,\mathrm{id}_P)\colon0\to\delta$. Especially we have $\delta=\mathbb{E}(P,0)(0)=0$.
\end{proof}

\begin{definition}\label{DefEnoughProj}
Let $(\mathscr{C},\mathbb{E},\mathfrak{s})$ be an extriangulated category, as before. We say it {\it has enough projectives}, if it satisfies the following condition.
\begin{itemize}
\item For any object $C\in\mathscr{C}$, there exists an $\mathbb{E}$-triangle
\[ A\overset{x}{\longrightarrow}P\overset{y}{\longrightarrow}C\overset{\delta}{\dashrightarrow} \]
satisfying $P\in\mathrm{Proj}(\mathscr{C})$.
\end{itemize}
\end{definition}

\begin{example}
\begin{enumerate}
\item[(1)] If $(\mathscr{C},\mathbb{E},\mathfrak{s})$ is an exact category, then these agree with the usual definitions.
\item[(2)] If $(\mathscr{C},\mathbf{E}^1,\mathfrak{s})$ is a triangulated category as in the previous section, then $\mathrm{Proj}(\mathscr{C})$ consists of zero objects. Moreover it always has enough projectives.
\item[(3)] If $(\mathscr{C},\mathbf{E}^1,\mathfrak{s})$ is a triangulated category with a rigid subcategory $\mathscr{R}$ (i.e. for all $R_1,R_2\in\mathscr{R}, \mathrm{Ext}^1(R_1,R_2) = 0$), let $\mathscr{D}$ be its full subcategory whose objects are those objects $X$ that satisfy $\mathrm{Ext}^1(R,X)=0$
for all $R\in\mathscr{R}$. Then $\mathscr{D}$ is an additive and extension-closed subcategory of $\mathscr{C}$, which is thus extriangulated by Remark~\ref{RemETrExtClosed}. We then have:
\begin{itemize}
 \item[(a)] $\mathscr{R} \subseteq \mathrm{Proj}(\mathscr{D})$;
 \item[(b)] $\mathrm{Proj}(\mathscr{D})=\mathscr{R}$ and $\mathscr{D}$ has enough projectives if and only if $\mathscr{R}$
is contravariantly finite. 
\end{itemize}
\end{enumerate}
\end{example}

\begin{corollary}\label{CorExtIsom}
Assume that $(\mathscr{C},\mathbb{E},\mathfrak{s})$ has enough projectives.
For any object $C\in\mathscr{C}$ and any $\mathbb{E}$-triangle
\[ A\overset{x}{\longrightarrow}P\overset{y}{\longrightarrow}C\overset{\delta}{\dashrightarrow}\quad(P\in\mathrm{Proj}(\mathscr{C})), \]
the sequence
\[ \mathscr{C}(P,-)\overset{\mathscr{C}(x,-)}{\Longrightarrow}\mathscr{C}(A,-)\overset{\delta^\sharp}{\Longrightarrow}\mathbb{E}(C,-)\Rightarrow0 \]
is exact. Namely, we have a natural isomorphism
\begin{equation}\label{Isom_ExtCok}
\mathbb{E}(C,-)\cong\mathrm{Cok}\big(\mathscr{C}(x,-)\big).
\end{equation}
\end{corollary}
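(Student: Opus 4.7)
The strategy is to read off the result directly from the two fundamental tools that have already been established: the long exact sequence associated to an $\mathbb{E}$-triangle (Corollary~\ref{ExactToShow}) and the characterization of projectives in terms of vanishing of $\mathbb{E}$ (Proposition~\ref{PropProj}). No further diagram chasing is required.

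More precisely, the plan is as follows. First, apply Corollary~\ref{ExactToShow} to the given $\mathbb{E}$-triangle $A\overset{x}{\longrightarrow}P\overset{y}{\longrightarrow}C\overset{\delta}{\dashrightarrow}$ to obtain the exact sequence of natural transformations
\[ \mathscr{C}(C,-)\overset{\mathscr{C}(y,-)}{\Longrightarrow}\mathscr{C}(P,-)\overset{\mathscr{C}(x,-)}{\Longrightarrow}\mathscr{C}(A,-)\overset{\delta^\sharp}{\Longrightarrow}\mathbb{E}(C,-)\overset{\mathbb{E}(y,-)}{\Longrightarrow}\mathbb{E}(P,-). \]
Since $P\in\mathrm{Proj}(\mathscr{C})$, Proposition~\ref{PropProj} yields $\mathbb{E}(P,X)=0$ for every object $X\in\mathscr{C}$, so the last term vanishes. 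Truncating at this zero gives the exactness of
\[ \mathscr{C}(P,-)\overset{\mathscr{C}(x,-)}{\Longrightarrow}\mathscr{C}(A,-)\overset{\delta^\sharp}{\Longrightarrow}\mathbb{E}(C,-)\Longrightarrow 0, \]
which is exactly the first assertion.

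Finally, the natural isomorphism $(\ref{Isom_ExtCok})$ is a formal consequence: the exactness at $\mathbb{E}(C,-)$ says $\delta^\sharp$ is an object-wise epimorphism in $\mathit{Ab}$, while the exactness at $\mathscr{C}(A,-)$ identifies $\ker(\delta^\sharp)_X$ with $\mathrm{im}(\mathscr{C}(x,X))$ for each $X\in\mathscr{C}$. Hence $\delta^\sharp$ induces a natural isomorphism $\mathrm{Cok}\big(\mathscr{C}(x,-)\big)\overset{\cong}{\Longrightarrow}\mathbb{E}(C,-)$.

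There is no real obstacle here; the work has all been absorbed into the earlier results. The only point that requires any comment is the set-theoretic one already flagged in the remark following Proposition~\ref{PropExact1}: $\mathrm{Mod}(\mathscr{C})$ need not be locally small, but the exactness statement is interpreted object-wise in $\mathit{Ab}$ and this is enough to form the cokernel and conclude the natural isomorphism.
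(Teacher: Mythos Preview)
Your proof is correct and follows essentially the same approach as the paper, which simply says the result ``immediately follows from Propositions~\ref{PropExact1} and \ref{PropProj}.'' The only cosmetic difference is that you cite Corollary~\ref{ExactToShow} (the full long exact sequence) rather than Proposition~\ref{PropExact1}, but the relevant portion of the sequence is the same, so this is not a genuine difference in method.
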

\begin{proof}
This immediately follows from Propositions~\ref{PropExact1} and \ref{PropProj}.
\end{proof}

The isomorphism $(\ref{Isom_ExtCok})$ in Corollary~\ref{CorExtIsom} is natural in $C$, in the following sense.
\begin{remark}\label{RemExtIsom}
Assume that $(\mathscr{C},\mathbb{E},\mathfrak{s})$ has enough projectives. Let $c\in\mathscr{C}(C,C^{\prime})$ be any morphism, and let
\[ A\overset{x}{\longrightarrow}P\overset{y}{\longrightarrow}C\overset{\delta}{\dashrightarrow},\quad A^{\prime}\overset{x^{\prime}}{\longrightarrow}P^{\prime}\overset{y^{\prime}}{\longrightarrow}C^{\prime}\overset{\delta^{\prime}}{\dashrightarrow} \]
be any pair of $\mathbb{E}$-triangles satisfying $P,P^{\prime}\in\mathrm{Proj}(\mathscr{C})$. By the projectivity of $P$ and {\rm (ET3)$^{\mathrm{op}}$}, we obtain a morphism of $\mathbb{E}$-triangles
\[
\xy
(-12,6)*+{A}="0";
(0,6)*+{P}="2";
(12,6)*+{C}="4";
(24,6)*+{}="6";
(-12,-6)*+{A^{\prime}}="10";
(0,-6)*+{P^{\prime}}="12";
(12,-6)*+{C^{\prime}}="14";
(24,-6)*+{}="16";
{\ar^{x} "0";"2"};
{\ar^{y} "2";"4"};
{\ar@{-->}^{\delta} "4";"6"};
{\ar_{a} "0";"10"};
{\ar^{p} "2";"12"};
{\ar^{c} "4";"14"};
{\ar_{x^{\prime}} "10";"12"};
{\ar_{y^{\prime}} "12";"14"};
{\ar@{-->}_{\delta^{\prime}} "14";"16"};
{\ar@{}|\circlearrowright "0";"12"};
{\ar@{}|\circlearrowright "2";"14"};
\endxy.
\]
This gives the following morphism of exact sequences.
\[
\xy
(-36,7)*+{\mathscr{C}(P,-)}="0";
(-12,7)*+{\mathscr{C}(A,-)}="2";
(11,7)*+{\mathbb{E}(C,-)}="4";
(27,7)*+{0}="6";
(-36,-7)*+{\mathscr{C}(P^{\prime},-)}="10";
(-12,-7)*+{\mathscr{C}(A^{\prime},-)}="12";
(11,-7)*+{\mathbb{E}(C^{\prime},-)}="14";
(27,-7)*+{0}="16";
{\ar@{=>}^{\mathscr{C}(x,-)} "0";"2"};
{\ar@{=>}^{\delta^\sharp} "2";"4"};
{\ar@{=>} "4";"6"};
{\ar@{=>}^{\mathscr{C}(p,-)} "10";"0"};
{\ar@{=>}^{\mathscr{C}(a,-)} "12";"2"};
{\ar@{=>}_{\mathbb{E}(c,-)} "14";"4"};
{\ar@{=>}_{\mathscr{C}(x^{\prime},-)} "10";"12"};
{\ar@{=>}_{\delta^{^{\prime}\sharp}} "12";"14"};
{\ar@{=>} "14";"16"};
{\ar@{}|\circlearrowright "0";"12"};
{\ar@{}|\circlearrowright "2";"14"};
\endxy
\]
\end{remark}

\begin{lemma}\label{LemReplaceI}
Let $A\overset{x}{\longrightarrow}B\overset{y}{\longrightarrow}C\overset{\delta}{\dashrightarrow}$ be any $\mathbb{E}$-triangle, and let $i\in\mathscr{C}(A,I)$ be any morphism to $I\in\mathrm{Inj}(\mathscr{C})$. Then for the projection $p_C\colon C\oplus I\to C$, the $\mathbb{E}$-extension $p_C^{\ast}\delta$ is realized by an $\mathbb{E}$-triangle of the form
\begin{equation}\label{Tria_xIyI}
A\overset{x_I}{\longrightarrow}B\oplus I\overset{y_I}{\longrightarrow}C\oplus I\overset{p_C^{\ast}\delta}{\dashrightarrow},%
\quad(x_I=\Big[\raise1ex\hbox{\leavevmode\vtop{\baselineskip-8ex \lineskip1ex \ialign{#\crcr{$x$}\crcr{$i$}\crcr}}}\Big],\ y_I=\left[\begin{array}{cc} y&\ast\\ \ast&\ast\end{array}\right]).
\end{equation}
\end{lemma}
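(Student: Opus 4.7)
The plan is to first realize $p_C^\ast\delta$ as the direct sum of the given $\mathbb{E}$-triangle with a split one, and then to ``shear'' the middle object by an automorphism supplied by the injectivity of $I$.

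First I would identify $p_C^\ast\delta$ with the sum $\delta\oplus 0_I$, where $0_I\in\mathbb{E}(I,0)$ is the zero extension. By the defining property of $\delta\oplus 0_I$ recalled in Definition~\ref{DefSumExtension} together with the identifications $A\oplus 0=A$, the element $\delta\oplus 0_I\in\mathbb{E}(C\oplus I,A)$ is characterized by $\iota_C^\ast(\delta\oplus 0_I)=\delta$ and $\iota_I^\ast(\delta\oplus 0_I)=0$; since $p_C\circ\iota_C=\mathrm{id}_C$ and $p_C\circ\iota_I=0$, the extension $p_C^\ast\delta$ satisfies exactly the same two identities, so the two must coincide. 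Applying the additivity of $\mathfrak{s}$ to the $\mathbb{E}$-triangle for $\delta$ and to the split $\mathbb{E}$-triangle $0\to I\overset{\mathrm{id}}{\to}I\overset{0_I}{\dashrightarrow}$ (Remark~\ref{RemSplit}~(1) guarantees this last shape up to equivalence) yields the realization
\[
\mathfrak{s}(p_C^\ast\delta)=\bigl[A\xrightarrow{\;\left[\begin{smallmatrix}x\\ 0\end{smallmatrix}\right]\;}B\oplus I\xrightarrow{\;\left[\begin{smallmatrix}y&0\\ 0&\mathrm{id}_I\end{smallmatrix}\right]\;}C\oplus I\bigr].
\]

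Next I use that $I$ is injective. By the dual of Proposition~\ref{PropProj} we have $\mathbb{E}(-,I)=0$, so the long exact sequence coming from the $\mathbb{E}$-triangle $A\to B\to C\overset{\delta}{\dashrightarrow}$ (Corollary~\ref{ExactToShow}) shows that $\mathscr{C}(x,I)\colon\mathscr{C}(B,I)\to\mathscr{C}(A,I)$ is surjective. Thus there exists $i'\in\mathscr{C}(B,I)$ with $i'\circ x=i$. The matrix
\[
b=\begin{bmatrix}\mathrm{id}_B&0\\ i'&\mathrm{id}_I\end{bmatrix}\colon B\oplus I\longrightarrow B\oplus I
\]
is then an automorphism (with inverse obtained by replacing $i'$ by $-i'$), and a direct computation gives $b\circ\left[\begin{smallmatrix}x\\ 0\end{smallmatrix}\right]=\left[\begin{smallmatrix}x\\ i\end{smallmatrix}\right]=x_I$ while $\left[\begin{smallmatrix}y&0\\ 0&\mathrm{id}_I\end{smallmatrix}\right]\circ b^{-1}=\left[\begin{smallmatrix}y&0\\ -i'&\mathrm{id}_I\end{smallmatrix}\right]$, a matrix whose $(1,1)$-entry is $y$. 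Since $b$ exhibits an isomorphism of the two representatives in the sense of Definition~\ref{DefSqEquiv}, they define the same equivalence class, and the latter therefore realizes $p_C^\ast\delta$, giving the desired $\mathbb{E}$-triangle $(\ref{Tria_xIyI})$ with $y_I=\left[\begin{smallmatrix}y&0\\ -i'&\mathrm{id}_I\end{smallmatrix}\right]$.

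The only real subtlety is the first paragraph: one must be convinced that the additive structure in Definition~\ref{DefAdditiveRealization} really produces the block-matrix realization displayed above with the chosen identifications $A\oplus 0=A$, $0\oplus I=I$, and that the identification $p_C^\ast\delta=\delta\oplus 0_I$ is legitimate. Once these bookkeeping points are accepted, the rest is the standard ``injective allows an extension of $i$ along $x$'' trick, and transporting an $\mathbb{E}$-triangle along an isomorphism of the middle term is handled directly by Definition~\ref{DefSqEquiv}; no appeal to the more delicate Proposition~\ref{PropModif} is needed.
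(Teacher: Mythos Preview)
Your argument is correct, and it is genuinely different from (and rather more direct than) the paper's. The paper first invokes Corollary~\ref{CorTrivWIC} to complete $x_I=\left[\begin{smallmatrix}x\\ i\end{smallmatrix}\right]$ into an $\mathbb{E}$-triangle $A\to B\oplus I\to D\dashrightarrow$, then applies the dual of Proposition~\ref{PropShiftOctahedron} to produce a conflation $I\to D\to C$ whose associated extension $\theta$ vanishes by injectivity of $I$; the resulting splitting gives an isomorphism $C\oplus I\cong D$ and hence the required shape of $y_I$. Your route avoids both of these structural results: you use only the additivity of $\mathfrak{s}$ to identify $\mathfrak{s}(p_C^{\ast}\delta)=\mathfrak{s}(\delta)\oplus\mathfrak{s}(0_I)$, and then a single shear automorphism of $B\oplus I$, made available by the surjectivity of $\mathscr{C}(x,I)$. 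The trade-off is that the paper's argument gives slightly less control over the entries of $y_I$ (hence the stars), whereas your approach actually pins down $y_I=\left[\begin{smallmatrix}y&0\\ -i'&1\end{smallmatrix}\right]$ explicitly; on the other hand, the paper's method exercises machinery it has already built and will reuse. Your bookkeeping concern about $A\oplus 0=A$ and $p_C^{\ast}\delta=\delta\oplus 0_I$ is fine: by biadditivity the pair $(\iota_C^{\ast},\iota_I^{\ast})$ already detects elements of $\mathbb{E}(C\oplus I,A)$, so the two non-trivial identities you check are enough.
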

\begin{proof}
By Corollary~\ref{CorTrivWIC}, we have an $\mathbb{E}$-triangle $A\overset{x_I}{\longrightarrow}B\oplus I\overset{d}{\longrightarrow}D\overset{\nu}{\dashrightarrow}$. By the dual of Proposition~\ref{PropShiftOctahedron}, we obtain the following commutative diagram made of $\mathbb{E}$-triangles
\[
\xy
(-9,21)*+{I}="2";
(9,21)*+{I}="4";
(-27,7)*+{A}="10";
(-9,7)*+{B\oplus I}="12";
(9,7)*+{D}="14";
(25,7)*+{}="16";
(-27,-7)*+{A}="20";
(-9,-7)*+{B}="22";
(9,-7)*+{C}="24";
(25,-7)*+{}="26";
(-9,-21)*+{}="32";
(9,-21)*+{}="34";
{\ar@{=} "2";"4"};
{\ar "2";"12"};
(-11.6,14)*+{\Big[\ \Big]}="-1";
(-11.6,15.6)*+{\scriptstyle{0}};
(-11.6,12.4)*+{\scriptstyle{1}};
{\ar^{e} "4";"14"};
{\ar^(0.4){x_I} "10";"12"};
{\ar_(0.56){d} "12";"14"};
{\ar@{-->}^{\nu} "14";"16"};
{\ar@{=} "10";"20"};
{\ar^{[1\ 0]} "12";"22"};
{\ar^{f} "14";"24"};
{\ar_{x} "20";"22"};
{\ar_{y} "22";"24"};
{\ar@{-->}_{\delta} "24";"26"};
{\ar@{-->}_{0} "22";"32"};
{\ar@{-->}^{{}^{\exists}\theta} "24";"34"};
{\ar@{}|\circlearrowright "2";"14"};
{\ar@{}|\circlearrowright "10";"22"};
{\ar@{}|\circlearrowright "12";"24"};
\endxy
\]
satisfying $f^{\ast}\delta=\nu$. Since $I\in\mathrm{Inj}(\mathscr{C})$, we have $\theta=0$. Thus there is some isomorphism $n\colon C\oplus I\overset{\cong}{\longrightarrow}D$ satisfying $n\circ\Big[\raise1ex\hbox{\leavevmode\vtop{\baselineskip-8ex \lineskip1ex \ialign{#\crcr{$0$}\crcr{$1$}\crcr}}}\Big]=e$ and $f\circ n=[1\ 0]$. Then for $p_C=[1\ 0]\colon C\oplus I\to C$,
\[
\xy
(-18,6)*+{A}="0";
(0,6)*+{B\oplus I}="2";
(20,6)*+{C\oplus I}="4";
(36,6)*+{}="6";
(-18,-6)*+{A}="10";
(0,-6)*+{B}="12";
(20,-6)*+{C}="14";
(36,-6)*+{}="16";
{\ar^{x_I} "0";"2"};
{\ar^{n^{-1}\circ d} "2";"4"};
{\ar@{-->}^{n^{\ast}\nu} "4";"6"};
{\ar@{=} "0";"10"};
{\ar^{[1\ 0]} "2";"12"};
{\ar^{[1\ 0]=p_C} "4";"14"};
{\ar_{x} "10";"12"};
{\ar_{y} "12";"14"};
{\ar@{-->}_{\delta} "14";"16"};
{\ar@{}|\circlearrowright "0";"12"};
{\ar@{}|\circlearrowright "2";"14"};
\endxy
\]
is a morphism of $\mathbb{E}$-triangles. Then $n^{-1}\circ d$ satisfies $p_C\circ n^{-1}\circ d\circ\Big[\raise1ex\hbox{\leavevmode\vtop{\baselineskip-8ex \lineskip1ex \ialign{#\crcr{$1$}\crcr{$0$}\crcr}}}\Big]=y\circ [1\ 0]\circ\Big[\raise1ex\hbox{\leavevmode\vtop{\baselineskip-8ex \lineskip1ex \ialign{#\crcr{$1$}\crcr{$0$}\crcr}}}\Big]=y$, and thus is of the form $\left[\begin{array}{cc} y&\ast\\ \ast&\ast\end{array}\right]$.
\end{proof}

The following construction gives extriangulated categories which are not exact nor triangulated in general. 
\begin{proposition}\label{PropReduction}
Let $\mathcal{I}\subseteq\mathscr{C}$ be a full additive subcategory, closed under isomorphisms. If $\mathcal{I}$ satisfies $\mathcal{I}\subseteq\mathrm{Proj}(\mathscr{C})\cap\mathrm{Inj}(\mathscr{C})$, then the ideal quotient $\mathscr{C}/\mathcal{I}$ has the structure of an extriangulated category, induced from that of $\mathscr{C}$. In particular, we can associate a ``{\it reduced}'' extriangulated category $\mathscr{C}^{\prime}=\mathscr{C}/(\mathrm{Proj}(\mathscr{C})\cap\mathrm{Inj}(\mathscr{C}))$ satisfying $\mathrm{Proj}(\mathscr{C}^{\prime})\cap\mathrm{Inj}(\mathscr{C}^{\prime})=0$, to any extriangulated category $(\mathscr{C},\mathbb{E},\mathfrak{s})$.
\end{proposition}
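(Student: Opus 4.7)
The plan is to endow $\mathscr{C}/\mathcal{I}$ with a bifunctor $\mathbb{E}'$ that agrees with $\mathbb{E}$ on objects, and a realization $\mathfrak{s}'$ obtained by composing $\mathfrak{s}$ with the canonical functor $\mathscr{C}\to\mathscr{C}/\mathcal{I}$. For $\mathbb{E}'$ to descend to a bifunctor on $(\mathscr{C}/\mathcal{I})^{\mathrm{op}}\times\mathscr{C}/\mathcal{I}$, I would show that $a_\ast\delta=a'_\ast\delta$ whenever $a-a'$ factors through some $I\in\mathcal{I}$: indeed, if $a-a'=j\circ i$ with $i\in\mathscr{C}(A,I)$ and $j\in\mathscr{C}(I,A')$, then $(a-a')_\ast\delta=j_\ast i_\ast\delta=0$ since $i_\ast\delta\in\mathbb{E}(C,I)=0$ by the dual of Proposition~\ref{PropProj} applied to the injective object $I$. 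The contravariant variable is handled symmetrically via $\mathcal{I}\subseteq\mathrm{Proj}(\mathscr{C})$. For $\mathfrak{s}'$ to be well-defined on equivalence classes, any isomorphism $b$ witnessing equivalence in $\mathscr{C}$ remains an isomorphism in $\mathscr{C}/\mathcal{I}$ and preserves commutativity, so the image of $\mathfrak{s}(\delta)$ is a well-defined equivalence class in $\mathscr{C}/\mathcal{I}$. With these definitions, {\rm (ET1)} and {\rm (ET2)}, including the additivity conditions of Definition~\ref{DefAdditiveRealization}, transport directly from $\mathscr{C}$.

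The main content is the verification of {\rm (ET3)} and {\rm (ET4)}, their duals being handled symmetrically. For {\rm (ET3)}, I would start with a commutative square in $\mathscr{C}/\mathcal{I}$ bridging two $\mathbb{E}'$-triangles, choose representative sequences $A\overset{x}{\longrightarrow}B\overset{y}{\longrightarrow}C$ and $A'\overset{x'}{\longrightarrow}B'\overset{y'}{\longrightarrow}C'$ that realize $\delta$ and $\delta'$ in $\mathscr{C}$, and lift $\bar a,\bar b$ to morphisms $a,b$ in $\mathscr{C}$. The commutativity in $\mathscr{C}/\mathcal{I}$ translates into $x'\circ a-b\circ x=j\circ i$ for some $i\in\mathscr{C}(A,I)$ and $j\in\mathscr{C}(I,B')$ with $I\in\mathcal{I}\subseteq\mathrm{Inj}(\mathscr{C})$. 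Since $x$ is an inflation, the defining (dual) property of the injective object $I$ produces $\tilde{i}\in\mathscr{C}(B,I)$ with $\tilde{i}\circ x=i$; replacing $b$ by $b+j\circ\tilde{i}$ yields a representative of $\bar b$ for which the square strictly commutes in $\mathscr{C}$. Applying {\rm (ET3)} in $\mathscr{C}$ then gives the desired $c\in\mathscr{C}(C,C')$, and its image $\bar c$ completes the morphism of $\mathbb{E}'$-triangles. For {\rm (ET4)}, one lifts the two given $\mathbb{E}'$-triangles to $\mathbb{E}$-triangles in $\mathscr{C}$ by choosing representatives, applies {\rm (ET4)} in $\mathscr{C}$ to obtain the octahedral datum $(E,d,e,h,h',\delta'')$, and observes that the commutative diagram together with compatibilities {\rm (i)--(iii)} consists of statements purely about morphisms and elements of $\mathbb{E}=\mathbb{E}'$, hence they pass to $\mathscr{C}/\mathcal{I}$. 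The genuine obstacle is the representative-modification step in {\rm (ET3)} (and its dual), which is the only place where the full hypothesis $\mathcal{I}\subseteq\mathrm{Proj}(\mathscr{C})\cap\mathrm{Inj}(\mathscr{C})$ is actually exploited.

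For the final ``reduced'' assertion, since $\mathbb{E}'$ coincides with $\mathbb{E}$ on objects, Proposition~\ref{PropProj} and its dual yield that $\mathrm{Proj}(\mathscr{C}')\cap\mathrm{Inj}(\mathscr{C}')$ and $\mathrm{Proj}(\mathscr{C})\cap\mathrm{Inj}(\mathscr{C})$ contain the same objects. For any $X$ in this intersection, $\mathrm{id}_X$ trivially factors through $X$ itself regarded as an object of $\mathrm{Proj}(\mathscr{C})\cap\mathrm{Inj}(\mathscr{C})$, hence $\mathrm{id}_X=0$ in $\mathscr{C}'=\mathscr{C}/(\mathrm{Proj}(\mathscr{C})\cap\mathrm{Inj}(\mathscr{C}))$, giving $X\cong 0$ in $\mathscr{C}'$. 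Therefore $\mathrm{Proj}(\mathscr{C}')\cap\mathrm{Inj}(\mathscr{C}')=0$, completing the proposition.
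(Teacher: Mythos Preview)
Your proof is correct and follows the same overall architecture as the paper: define $\overline{\mathbb{E}}$ and $\overline{\mathfrak{s}}$ as the obvious descents, then verify {\rm (ET1)}--{\rm (ET4)} and their duals. The treatments of {\rm (ET1)}, {\rm (ET2)}, and {\rm (ET4)} are essentially identical.

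The one genuine difference is in {\rm (ET3)}. You correct the failure of strict commutativity by directly modifying the lifted morphism: extending $i\colon A\to I$ along the inflation $x$ to $\tilde{i}\colon B\to I$ (using that $I$ is injective) and replacing $b$ by $b+j\circ\tilde{i}$. The paper instead invokes Lemma~\ref{LemReplaceI}, which replaces the whole top $\mathbb{E}$-triangle by an equivalent one $A\to B\oplus I\to C\oplus I$ so that $[b\ j]$ makes the square commute in $\mathscr{C}$. Your route is more elementary---it uses only the defining property of injectives and avoids the shifted-octahedron machinery (Proposition~\ref{PropShiftOctahedron}, Corollary~\ref{CorTrivWIC}) hidden inside Lemma~\ref{LemReplaceI}. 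The paper's route is more structural, producing an explicit isomorphism of $\overline{\mathbb{E}}$-triangles rather than a modified representative; this packaging as a stand-alone lemma may be reusable elsewhere, but for the present proposition your argument is cleaner.

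Your verification of the ``reduced'' assertion is also fine; the paper leaves this implicit.
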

\begin{proof}
Put $\overline{\mathscr{C}}=\mathscr{C}/\mathcal{I}$. Let us confirm conditions {\rm (ET1),(ET2),(ET3),(ET4)}. The other conditions {\rm (ET3)$^{\mathrm{op}}$,(ET4)$^{\mathrm{op}}$} can be shown dually.

{\rm (ET1)} Since $\mathbb{E}(\mathcal{I},\mathscr{C})=\mathbb{E}(\mathscr{C},\mathcal{I})=0$, one can define the biadditive functor $\overline{\mathbb{E}}\colon\overline{\mathscr{C}}^{\mathrm{op}}\times\overline{\mathscr{C}}\to\mathit{Ab}$ given by
\begin{itemize}
\item $\overline{\mathbb{E}}(C,A)=\mathbb{E}(C,A)$\quad$(\forall A,C\in\mathscr{C})$,
\item $\overline{\mathbb{E}}(\overline{c},\overline{a})=\mathbb{E}(c,a)$\quad$(\forall a\in\mathscr{C}(A,A^{\prime}),c\in\mathscr{C}(C,C^{\prime}))$. Here, $\overline{a}$ and $\overline{c}$ denote the images of $a$ and $c$ in $\mathscr{C}/\mathcal{I}$.
\end{itemize}


{\rm (ET2)} For any $\overline{\mathbb{E}}$-extension $\delta\in\overline{\mathbb{E}}(C,A)=\mathbb{E}(C,A)$, define
\[ \overline{\mathfrak{s}}(\delta)=\overline{\mathfrak{s}(\delta)}=[A\overset{\overline{x_0}}{\longrightarrow}B\overset{\overline{y_0}}{\longrightarrow}C], \]
using $\mathfrak{s}(\delta)=[A\overset{x_0}{\longrightarrow}B\overset{y_0}{\longrightarrow}C]$.
Let us show that $\overline{\mathfrak{s}}$ is an additive realization of $\overline{\mathbb{E}}$.

Let $(\overline{a},\overline{c})\colon\delta=(A,\delta,C)\to\delta^{\prime}=(A^{\prime},\delta^{\prime},C^{\prime})$ be any morphism of $\overline{\mathbb{E}}$-extensions. By definition, this is equivalent to that $(a,c)\colon\delta\to\delta^{\prime}$ is a morphism of $\mathbb{E}$-extensions. Put $\overline{\mathfrak{s}}(\delta)=[A\overset{\overline{x}}{\longrightarrow}B\overset{\overline{y}}{\longrightarrow}C]$, $\overline{\mathfrak{s}}(\delta^{\prime})=[A^{\prime}\overset{\overline{x^{\prime}}}{\longrightarrow}B^{\prime}\overset{\overline{y^{\prime}}}{\longrightarrow}C^{\prime}]$. Since the condition in Definition~\ref{DefRealization} does not depend on the representatives of the equivalence classes of sequences in $\overline{\mathscr{C}}$, we may assume $\mathfrak{s}(\delta)=[A\overset{x}{\longrightarrow}B\overset{y}{\longrightarrow}C]$, $\mathfrak{s}(\delta^{\prime})=[A^{\prime}\overset{x^{\prime}}{\longrightarrow}B^{\prime}\overset{y^{\prime}}{\longrightarrow}C^{\prime}]$. Then there is $b\in\mathscr{C}(B,B^{\prime})$ with which $(a,b,c)$ realizes $(a,c)$. It follows that $(\overline{a},\overline{b},\overline{c})$ realizes $(\overline{a},\overline{c})$.

As for the additivity, the equality $\overline{\mathfrak{s}}(0)=0$ is trivially satisfied. Since $\overline{\mathfrak{s}}(\delta)\oplus\overline{\mathfrak{s}}(\delta^{\prime})$ only depends on the equivalence classes $\overline{\mathfrak{s}}(\delta)$ and $\overline{\mathfrak{s}}(\delta^{\prime})$, the equality $\overline{\mathfrak{s}}(\delta\oplus\delta^{\prime})=\overline{\mathfrak{s}}(\delta)\oplus\overline{\mathfrak{s}}(\delta^{\prime})$ follows from $\mathfrak{s}(\delta\oplus\delta^{\prime})=\mathfrak{s}(\delta)\oplus\mathfrak{s}(\delta^{\prime})$.


{\rm (ET3)} Suppose we are given  $\overline{\mathfrak{s}}(\delta)=[A\overset{\overline{x}}{\longrightarrow}B\overset{\overline{y}}{\longrightarrow}C]$, $\overline{\mathfrak{s}}(\delta^{\prime})=[A^{\prime}\overset{\overline{x^{\prime}}}{\longrightarrow}B^{\prime}\overset{\overline{y^{\prime}}}{\longrightarrow}C^{\prime}]$, and morphisms $\overline{a}\in\overline{\mathscr{C}}(A,A^{\prime}), \overline{b}\in\overline{\mathscr{C}}(B,B^{\prime})$ satisfying $\overline{x^{\prime}}\circ\overline{a}=\overline{b}\circ\overline{x}$. As in the proof of {\rm (ET2)}, we may assume $\mathfrak{s}(\delta)=[A\overset{x}{\longrightarrow}B\overset{y}{\longrightarrow}C]$, $\mathfrak{s}(\delta^{\prime})=[A^{\prime}\overset{x^{\prime}}{\longrightarrow}B^{\prime}\overset{y^{\prime}}{\longrightarrow}C^{\prime}]$. By $\overline{x^{\prime}}\circ\overline{a}=\overline{b}\circ\overline{x}$, there exist $I\in\mathcal{I},i\in\mathscr{C}(A,I),j\in\mathscr{C}(I,B^{\prime})$ which satisfy $x^{\prime}\circ a=b\circ x+j\circ i$. By Lemma~\ref{LemReplaceI}, we obtain an $\mathbb{E}$-triangle $(\ref{Tria_xIyI})$. This gives the following isomorphism of $\overline{\mathbb{E}}$-triangles.
\begin{equation}\label{Repl1}
\xy
(-18,6)*+{A}="0";
(0,6)*+{B\oplus I}="2";
(20,6)*+{C\oplus I}="4";
(36,6)*+{}="6";
(-18,-6)*+{A}="10";
(0,-6)*+{B}="12";
(20,-6)*+{C}="14";
(36,-6)*+{}="16";
{\ar^{\overline{x_I}} "0";"2"};
{\ar^{\overline{y_I}} "2";"4"};
{\ar@{-->}^(0.54){p_C^{\ast}\delta} "4";"6"};
{\ar@{=} "0";"10"};
{\ar^{\overline{p_B}} "2";"12"};
{\ar^{\overline{p_C}} "4";"14"};
{\ar_{\overline{x}} "10";"12"};
{\ar_{\overline{y}} "12";"14"};
{\ar@{-->}_{\delta} "14";"16"};
{\ar@{}|\circlearrowright "0";"12"};
{\ar@{}|\circlearrowright "2";"14"};
\endxy
\end{equation}
On the other hand, by {\rm (ET3)} for $(\mathscr{C},\mathbb{E},\mathfrak{s})$, we have a morphism of $\mathbb{E}$-triangles as follows.
\begin{equation}\label{Repl2}
\xy
(-18,6)*+{A}="0";
(0,6)*+{B\oplus I}="2";
(20,6)*+{C\oplus I}="4";
(36,6)*+{}="6";
(-18,-6)*+{A^{\prime}}="10";
(0,-6)*+{B^{\prime}}="12";
(20,-6)*+{C^{\prime}}="14";
(36,-6)*+{}="16";
{\ar^{x_I} "0";"2"};
{\ar^{y_I} "2";"4"};
{\ar@{-->}^(0.54){p_C^{\ast}\delta} "4";"6"};
{\ar@{=} "0";"10"};
{\ar^{[b\ j]} "2";"12"};
{\ar^{c} "4";"14"};
{\ar_{x^{\prime}} "10";"12"};
{\ar_{y^{\prime}} "12";"14"};
{\ar@{-->}_{\delta^{\prime}} "14";"16"};
{\ar@{}|\circlearrowright "0";"12"};
{\ar@{}|\circlearrowright "2";"14"};
\endxy
\end{equation}
From $(\ref{Repl1})$ and $(\ref{Repl2})$, we obtain a morphism of $\overline{\mathbb{E}}$-extensions $(\overline{a},\overline{c}\circ\overline{p_C}^{-1})\colon\delta\to\delta^{\prime}$ which satisfies $(\overline{c}\circ\overline{p_C}^{-1})\circ\overline{y}=\overline{y^{\prime}}\circ\overline{b}$.


{\rm (ET4)} Let $A\overset{\overline{f}}{\longrightarrow}B\overset{\overline{f^{\prime}}}{\longrightarrow}D\overset{\delta}{\dashrightarrow}$ and $B\overset{\overline{g}}{\longrightarrow}C\overset{\overline{g^{\prime}}}{\longrightarrow}F\overset{\delta^{\prime}}{\dashrightarrow}$ be $\overline{\mathbb{E}}$-triangles. As in the above arguments, we may assume $A\overset{f}{\longrightarrow}B\overset{f^{\prime}}{\longrightarrow}D\overset{\delta}{\dashrightarrow}$ and $B\overset{g}{\longrightarrow}C\overset{g^{\prime}}{\longrightarrow}F\overset{\delta^{\prime}}{\dashrightarrow}$ are $\mathbb{E}$-triangles. Then by {\rm (ET4)} for $(\mathscr{C},\mathbb{E},\mathfrak{s})$, we obtain a commutative diagram $(\ref{DiagET4})$ made of conflations, satisfying $\mathfrak{s}(f^{\prime}_{\ast}\delta^{\prime})=[D\overset{d}{\longrightarrow}E\overset{e}{\longrightarrow}F]$, $d^{\ast}\delta^{\prime\prime}=\delta$ and $f_{\ast}\delta^{\prime\prime}=e^{\ast}\delta^{\prime}$. The image of this diagram in $\overline{\mathscr{C}}$ shows {\rm (ET4)} for $(\overline{\mathscr{C}},\overline{\mathbb{E}},\overline{\mathfrak{s}})$.
\end{proof}

\begin{remark}
Proposition~\ref{PropReduction} applied to an exact category, together with Corollary~\ref{CorExTraExact}, gives an another proof\footnote{The first author wishes to thank Professor Osamu Iyama for informing this to him.} of \cite[Theorem 3.5]{DI}.
\end{remark}

\begin{corollary}\label{CorReduction}
Let $\mathcal{I}\subseteq\mathscr{C}$ be a full additive subcategory closed under isomorphisms, satisfying $\mathbb{E}(\mathcal{I},\mathcal{I})=0$. Let $\mathcal{Z}\subseteq\mathscr{C}$ be the full subcategory of those $Z\in\mathscr{C}$ satisfying $\mathbb{E}(Z,\mathcal{I})=\mathbb{E}(\mathcal{I},Z)=0$. Then, $\mathcal{Z}/\mathcal{I}$ is extriangulated.
\end{corollary}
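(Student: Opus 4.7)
The plan is to deduce the corollary directly from Proposition~\ref{PropReduction} by choosing the ambient extriangulated category to be $\mathcal{Z}$ (not $\mathscr{C}$). This requires three observations: (i) $\mathcal{I}$ sits inside $\mathcal{Z}$, (ii) $\mathcal{Z}$ itself is extriangulated, and (iii) every object of $\mathcal{I}$ is both projective and injective in $\mathcal{Z}$.

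First I would note that $\mathcal{I}\subseteq\mathcal{Z}$, which is immediate from the hypothesis $\mathbb{E}(\mathcal{I},\mathcal{I})=0$, and that $\mathcal{Z}$ is a full additive subcategory closed under isomorphisms (using biadditivity of $\mathbb{E}$). Next I would check that $\mathcal{Z}\subseteq\mathscr{C}$ is extension-closed in the sense of Definition~\ref{DefExtClosed}. Given a conflation $A\to B\to C\overset{\delta}{\dashrightarrow}$ with $A,C\in\mathcal{Z}$, Corollary~\ref{ExactToShow} yields exact sequences
\[ \mathbb{E}(I,A)\to\mathbb{E}(I,B)\to\mathbb{E}(I,C)\quad\text{and}\quad\mathbb{E}(C,I)\to\mathbb{E}(B,I)\to\mathbb{E}(A,I) \]
for every $I\in\mathcal{I}$. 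The outer terms vanish by the definition of $\mathcal{Z}$, forcing $\mathbb{E}(I,B)=0$ and $\mathbb{E}(B,I)=0$; hence $B\in\mathcal{Z}$. By Remark~\ref{RemETrExtClosed}, the restriction $(\mathcal{Z},\mathbb{E}_{\mathcal{Z}},\mathfrak{s}_{\mathcal{Z}})$ is an extriangulated category.

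Now I would verify that $\mathcal{I}\subseteq\mathrm{Proj}(\mathcal{Z})\cap\mathrm{Inj}(\mathcal{Z})$. Indeed, for any $I\in\mathcal{I}$ and any $Z\in\mathcal{Z}$, we have $\mathbb{E}_{\mathcal{Z}}(I,Z)=\mathbb{E}(I,Z)=0$ and $\mathbb{E}_{\mathcal{Z}}(Z,I)=\mathbb{E}(Z,I)=0$ by the defining property of $\mathcal{Z}$, so Proposition~\ref{PropProj} (and its dual) apply. Finally, since $\mathcal{I}$ is a full additive subcategory of $\mathcal{Z}$ closed under isomorphisms and contained in $\mathrm{Proj}(\mathcal{Z})\cap\mathrm{Inj}(\mathcal{Z})$, Proposition~\ref{PropReduction} endows $\mathcal{Z}/\mathcal{I}$ with the structure of an extriangulated category, as required.

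There is no real obstacle in this argument; the only thing to double-check is that the exactness output of Corollary~\ref{ExactToShow} is enough to conclude $\mathbb{E}(I,B)=0$ (rather than merely that the connecting maps vanish), which it is because the map $\mathbb{E}(I,B)\to\mathbb{E}(I,C)=0$ has image $0$ and kernel equal to the image of $\mathbb{E}(I,A)=0$, and similarly on the other side.
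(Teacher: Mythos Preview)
Your proof is correct and follows exactly the same approach as the paper, which simply writes ``This follows from Remark~\ref{RemETrExtClosed} and Proposition~\ref{PropReduction}.'' You have merely filled in the details the paper leaves implicit: that $\mathcal{Z}$ is extension-closed (via the long exact sequences of Corollary~\ref{ExactToShow}) and that $\mathcal{I}\subseteq\mathrm{Proj}(\mathcal{Z})\cap\mathrm{Inj}(\mathcal{Z})$ (via Proposition~\ref{PropProj}).
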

\begin{proof}
This follows from Remark~\ref{RemETrExtClosed} and Proposition~\ref{PropReduction}.
\end{proof}

\section{Cotorsion pairs}\label{section_Cot}
In the rest of this article, let $(\mathscr{C},\mathbb{E},\mathfrak{s})$ be an extriangulated category.

\subsection{Cotorsion pairs}

\begin{definition}\label{DefCotors}
Let $\mathcal{U},\mathcal{V}\subseteq\mathscr{C}$ be a pair of full additive subcategories, closed under isomorphisms and direct summands. The pair $(\mathcal{U},\mathcal{V})$ is called a {\it cotorsion pair} on $\mathscr{C}$ if it satisfies the following conditions.
\begin{enumerate}
\item[(1)] $\mathbb{E}(\mathcal{U},\mathcal{V})=0$.
\item[(2)] For any $C\in\mathscr{C}$, there exists a conflation
\[ V^C\to U^C\to C \]
satisfying $U^C\in\mathcal{U},V^C\in\mathcal{V}$.
\item[(3)] For any $C\in\mathscr{C}$, there exists a conflation
\[ C\to V_C\to U_C \]
satisfying $U_C\in\mathcal{U},V_C\in\mathcal{V}$.
\end{enumerate}
\end{definition}

\begin{definition}\label{DefSubCone}
Let $\mathcal{X},\mathcal{Y}\subseteq\mathscr{C}$ be any pair of full subcategories closed under isomorphisms. Define full subcategories $\mathrm{Cone}(\mathcal{X},\mathcal{Y})$ and $\mathrm{CoCone}(\mathcal{X},\mathcal{Y})$ of $\mathscr{C}$ as follows. These are closed under isomorphisms.
\begin{enumerate}
\item[{\rm (i)}] $C\in\mathscr{C}$ belongs to $\mathrm{Cone}(\mathcal{X},\mathcal{Y})$ if and only if it admits a conflation $X\to Y\to C$ satisfying $X\in\mathcal{X},Y\in\mathcal{Y}$.
\item[{\rm (ii)}] $C\in\mathscr{C}$ belongs to $\mathrm{CoCone}(\mathcal{X},\mathcal{Y})$ if and only if it admits a conflation $C\to X\to Y$ satisfying $X\in\mathcal{X},Y\in\mathcal{Y}$.
\end{enumerate}
If $\mathcal{X}$ and $\mathcal{Y}$ are additive subcategories of $\mathscr{C}$, then so are $\mathrm{Cone}(\mathcal{X},\mathcal{Y})$ and $\mathrm{CoCone}(\mathcal{X},\mathcal{Y})$, by condition {\rm (ET2)}.
\end{definition}

\begin{remark}
 In the case of exact categories, cotorsion pairs satisfying (2) and (3) are often called \emph{complete} cotorsion pairs. Since all cotorsion pairs considered in this article are complete, this adjective is omitted.
Also remark that completeness is equivalent to the equalities $\mathscr{C}=\mathrm{Cone}(\mathcal{V},\mathcal{U})=\mathrm{CoCone}(\mathcal{V},\mathcal{U})$, in the notation of Definition~\ref{DefSubCone}.
\end{remark}

\begin{remark}\label{RemCotors}
Let $(\mathcal{U},\mathcal{V})$ be a cotorsion pair on $(\mathscr{C},\mathbb{E},\mathfrak{s})$. By Remark~\ref{RemSplit} {\rm (1)}, the following holds for any $C\in\mathscr{C}$.
\begin{enumerate}
\item[(1)] $C\in\mathcal{U}\Leftrightarrow\mathbb{E}(C,\mathcal{V})=0$.
\item[(2)] $C\in\mathcal{V}\Leftrightarrow\mathbb{E}(\mathcal{U},C)=0$.
\end{enumerate}
\end{remark}

\begin{corollary}\label{CorSectionClosed}
Let $(\mathcal{U},\mathcal{V})$ be a cotorsion pair on $(\mathscr{C},\mathbb{E},\mathfrak{s})$. Let $C\in\mathscr{C}$, $U\in\mathcal{U}$ be any pair of objects. If there exists a section $C\to U$ or a retraction $U\to C$, then $C$ also belongs to $\mathcal{U}$. Similarly for $\mathcal{V}$.
\end{corollary}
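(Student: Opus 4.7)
The plan is to exploit the characterizations of $\mathcal{U}$ and $\mathcal{V}$ recorded in Remark~\ref{RemCotors}: an object lies in $\mathcal{U}$ precisely when $\mathbb{E}(-,\mathcal{V})$ vanishes on it, and dually for $\mathcal{V}$. In particular, closure under direct summands in the definition of a cotorsion pair will be upgraded to closure under retracts through a direct $\mathbb{E}$-group computation, without needing to split any idempotent.

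First I would unify the two hypotheses: in either case we obtain morphisms $s\in\mathscr{C}(C,U)$ and $r\in\mathscr{C}(U,C)$ with $r\circ s=\mathrm{id}_C$. Fix an arbitrary $V\in\mathcal{V}$ and look at the two induced maps
\[
r^{\ast}\colon\mathbb{E}(C,V)\to\mathbb{E}(U,V),\qquad s^{\ast}\colon\mathbb{E}(U,V)\to\mathbb{E}(C,V).
\]
Their composite is $s^{\ast}\circ r^{\ast}=(r\circ s)^{\ast}=\mathrm{id}_{\mathbb{E}(C,V)}$ by functoriality of $\mathbb{E}$ in its first argument, so $s^{\ast}$ is a split surjection. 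Since $U\in\mathcal{U}$ and $V\in\mathcal{V}$, axiom (1) of Definition~\ref{DefCotors} gives $\mathbb{E}(U,V)=0$; hence $\mathbb{E}(C,V)=0$. As this holds for every $V\in\mathcal{V}$, Remark~\ref{RemCotors}~(1) forces $C\in\mathcal{U}$.

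The statement for $\mathcal{V}$ is entirely dual: from a section $C\to V$ or retraction $V\to C$ with $V\in\mathcal{V}$, one extracts morphisms $s\in\mathscr{C}(C,V)$, $r\in\mathscr{C}(V,C)$ with $r\circ s=\mathrm{id}_C$, then uses the covariance of $\mathbb{E}$ in its second argument to see that $r_{\ast}\colon\mathbb{E}(U,V)\to\mathbb{E}(U,C)$ is split surjective for any $U\in\mathcal{U}$. Vanishing of $\mathbb{E}(U,V)$ then yields $\mathbb{E}(\mathcal{U},C)=0$, and Remark~\ref{RemCotors}~(2) gives $C\in\mathcal{V}$.

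There is no real obstacle here; the only subtle point to keep in mind is that a retract in an additive category need not literally be a direct summand (unless idempotents split), so one cannot invoke the ``closed under direct summands'' clause of Definition~\ref{DefCotors} verbatim. The $\mathbb{E}$-argument above circumvents this by bypassing idempotent completion entirely and working directly with the bifunctor.
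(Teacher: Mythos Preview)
Your proof is correct and is essentially identical to the paper's own argument: both unify the two hypotheses into morphisms $s,r$ with $r\circ s=\mathrm{id}_C$, use functoriality of $\mathbb{E}$ in the first variable to exhibit $\mathbb{E}(C,V)$ as a retract of $\mathbb{E}(U,V)=0$, and conclude via Remark~\ref{RemCotors}. The only cosmetic difference is that the paper phrases the retraction at the level of natural transformations $\mathbb{E}(r,-),\mathbb{E}(s,-)$ rather than pointwise for each $V$.
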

\begin{proof}
In either case, there are $s\in\mathscr{C}(C,U)$ and $r\in\mathscr{C}(U,C)$ satisfying $r\circ s=\mathrm{id}_C$. This gives the following commutative diagram of natural transformations.
\[
\xy
(-15,-5)*+{\mathbb{E}(C,-)}="0";
(0,6)*+{\mathbb{E}(U,-)}="2";
(15,-5)*+{\mathbb{E}(C,-)}="4";
(0,-12)*+{}="5";
{\ar@{=>}^{\mathbb{E}(r,-)} "0";"2"};
{\ar@{=>}^{\mathbb{E}(s,-)} "2";"4"};
{\ar@{=>}@/_0.80pc/_{\mathbb{E}(\mathrm{id}_C,-)=\mathrm{id}} "0";"4"};
{\ar@{}|\circlearrowright "2";"5"};
\endxy
\]
Thus $\mathbb{E}(U,\mathcal{V})=0$ implies $\mathbb{E}(C,\mathcal{V})=0$, and thus $C\in\mathcal{U}$ by Remark~\ref{RemCotors}.
\end{proof}

\begin{remark}\label{RemExtClosed}
Let $(\mathcal{U},\mathcal{V})$ be a cotorsion pair on $\mathscr{C}$. By Proposition~\ref{PropExact2}, the subcategories $\mathcal{U}$ and $\mathcal{V}$ are extension-closed in $\mathscr{C}$.
\end{remark}

\begin{remark}\label{RemCotorsProj}
By Proposition~\ref{PropProj} and Remark~\ref{RemCotors}, the following are equivalent.
\begin{enumerate}
\item[(1)] $(\mathcal{X},\mathscr{C})$ is a cotorsion pair for some subcategory $\mathcal{X}\subseteq\mathscr{C}$.
\item[(2)] $(\mathrm{Proj}(\mathscr{C}),\mathscr{C})$ is a cotorsion pair.
\item[(3)] $\mathscr{C}$ has enough projectives.
\end{enumerate}
Dually, the following are equivalent.
\begin{enumerate}
\item[(1)] $(\mathscr{C},\mathcal{X})$ is a cotorsion pair for some subcategory $\mathcal{X}\subseteq\mathscr{C}$.
\item[(2)] $(\mathscr{C},\mathrm{Inj}(\mathscr{C}))$ is a cotorsion pair.
\item[(3)] $\mathscr{C}$ has enough injectives.
\end{enumerate}
\end{remark}

\subsection{Associated adjoint functors}

\begin{definition}\label{DefCore}
For a cotorsion pair $(\mathcal{U},\mathcal{V})$ on $\mathscr{C}$, put $\mathcal{I}=\mathcal{U}\cap\mathcal{V}$, and call it the {\it core} of $(\mathcal{U},\mathcal{V})$.
For any full additive subcategory $\mathcal{X}\subseteq\mathscr{C}$ containing $\mathcal{I}$, let $\mathcal{X}/\mathcal{I}$  denote the ideal quotient. The image of a morphism $f$ in the ideal quotient is denoted by $\overline{f}$.
\end{definition}

\begin{lemma}\label{LemUV}
For any cotorsion pair $(\mathcal{U},\mathcal{V})$, we have $(\mathscr{C}/\mathcal{I})(\mathcal{U}/\mathcal{I},\mathcal{V}/\mathcal{I})=0$. Namely, for any $U\in\mathcal{U}$ and $V\in\mathcal{V}$, any morphism $f\in\mathscr{C}(U,V)$ factors through some $I\in\mathcal{I}$.
\end{lemma}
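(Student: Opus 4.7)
The plan is to use the second conflation from the cotorsion pair axioms (Definition~\ref{DefCotors}(3)) to produce an inflation from $U$ into the core, and then apply the exact sequence of Corollary~\ref{ExactToShow} to lift $f$.

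More precisely, given $U \in \mathcal{U}$, I would apply condition (3) of Definition~\ref{DefCotors} to $C = U$ to obtain a conflation
\[ U \overset{u}{\longrightarrow} V_U \longrightarrow U_U \]
with $V_U \in \mathcal{V}$ and $U_U \in \mathcal{U}$. The key observation is that $\mathcal{U}$ is extension-closed (Remark~\ref{RemExtClosed}); since both $U$ and $U_U$ lie in $\mathcal{U}$, the middle term satisfies $V_U \in \mathcal{U}$ as well, and hence $V_U \in \mathcal{U} \cap \mathcal{V} = \mathcal{I}$.

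Next, let $\delta \in \mathbb{E}(U_U, U)$ be any $\mathbb{E}$-extension realizing the above conflation, yielding an $\mathbb{E}$-triangle $U \overset{u}{\longrightarrow} V_U \longrightarrow U_U \overset{\delta}{\dashrightarrow}$. Applying Corollary~\ref{ExactToShow} to this $\mathbb{E}$-triangle gives the exact sequence
\[ \mathscr{C}(V_U, V) \overset{\mathscr{C}(u, V)}{\longrightarrow} \mathscr{C}(U, V) \overset{\delta^{\sharp}_V}{\longrightarrow} \mathbb{E}(U_U, V). \]
Since $U_U \in \mathcal{U}$ and $V \in \mathcal{V}$, the defining property $\mathbb{E}(\mathcal{U}, \mathcal{V}) = 0$ of a cotorsion pair forces $\mathbb{E}(U_U, V) = 0$. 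Therefore $\mathscr{C}(u, V)$ is surjective, so there exists $g \in \mathscr{C}(V_U, V)$ with $g \circ u = f$, exhibiting the desired factorization of $f$ through the object $V_U \in \mathcal{I}$.

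I do not expect any serious obstacle here: everything reduces to combining the defining vanishing $\mathbb{E}(\mathcal{U},\mathcal{V})=0$ with the long exact sequence of Corollary~\ref{ExactToShow}; the only minor point worth highlighting is the use of extension-closedness of $\mathcal{U}$ to ensure that the middle term of the chosen conflation actually lands in the core $\mathcal{I}$ rather than merely in $\mathcal{V}$.
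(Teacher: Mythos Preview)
Your proof is correct and is essentially the dual of the paper's argument: the paper resolves $V$ via condition~(2) of Definition~\ref{DefCotors} and uses extension-closedness of $\mathcal{V}$ to place the middle term in $\mathcal{I}$, whereas you resolve $U$ via condition~(3) and use extension-closedness of $\mathcal{U}$. The structure and key ingredients (vanishing $\mathbb{E}(\mathcal{U},\mathcal{V})=0$ plus the long exact sequence) are identical.
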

\begin{proof}
Resolve $V$ by an $\mathbb{E}$-triangle
\[ V^{\prime}\overset{x}{\longrightarrow}U^{\prime}\overset{y}{\longrightarrow}V\overset{\lambda}{\dashrightarrow} \]
satisfying $U^{\prime}\in\mathcal{U},\, V^{\prime}\in\mathcal{V}$. Since $\mathcal{V}$ is extension-closed, it follows that $U^{\prime}\in\mathcal{U}\cap\mathcal{V}=\mathcal{I}$. By the exactness of
\[ \mathscr{C}(U,U^{\prime})\overset{\mathscr{C}(U,y)}{\longrightarrow}\mathscr{C}(U,V)\overset{(\lambda_\sharp)_U}{\longrightarrow}\mathbb{E}(U,V^{\prime})=0 \]
obtained in Proposition~\ref{PropExact1}, any morphism $f\in\mathscr{C}(U,V)$ factors through $U^{\prime}\in\mathcal{I}$.
\end{proof}

\begin{proposition}\label{PropUC}
Let $C\in\mathscr{C}$ be any object, and let $\lambda$ be an $\mathbb{E}$-extension with
\[ \mathfrak{s}(\lambda)=[V^C\overset{v^C}{\longrightarrow}U^C\overset{u^C}{\longrightarrow}C]\quad(U^C\in\mathcal{U},V^C\in\mathcal{V}). \]
Then the morphism $\overline{u}^C$ has the following property.
\begin{itemize}
\item For any $U\in\mathcal{U}$, the map
\begin{equation}\label{ToShowBij}
\overline{u}^C\circ-\colon(\mathscr{C}/\mathcal{I})(U,U^C)\to(\mathscr{C}/\mathcal{I})(U,C)
\end{equation}
is bijective.
\end{itemize}
\end{proposition}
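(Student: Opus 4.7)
The plan is to deduce this from the long exact sequence of Corollary~\ref{ExactToShow}, together with Lemma~\ref{LemUV}. Applying the contravariant part of that corollary to $\mathfrak{s}(\lambda)=[V^C\overset{v^C}{\to}U^C\overset{u^C}{\to}C]$ and evaluating at $U\in\mathcal{U}$, I get an exact sequence of abelian groups
\[
\mathscr{C}(U,V^C)\overset{v^C\circ-}{\longrightarrow}\mathscr{C}(U,U^C)\overset{u^C\circ-}{\longrightarrow}\mathscr{C}(U,C)\overset{\lambda_\sharp}{\longrightarrow}\mathbb{E}(U,V^C).
\]
Since $U\in\mathcal{U}$ and $V^C\in\mathcal{V}$, the right-hand group $\mathbb{E}(U,V^C)$ vanishes, so $u^C\circ-$ is surjective at the level of $\mathscr{C}$. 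Surjectivity of $(\ref{ToShowBij})$ follows immediately by passing to the ideal quotient.

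For injectivity, I would take $f\in\mathscr{C}(U,U^C)$ such that $\overline{u}^C\circ\overline{f}=0$, i.e.\ there exist $I\in\mathcal{I}$ and morphisms $p\in\mathscr{C}(U,I)$, $q\in\mathscr{C}(I,C)$ with $u^C\circ f=q\circ p$. Since $I\in\mathcal{I}\subseteq\mathcal{U}$, the surjectivity statement just established (applied with $U$ replaced by $I$) produces $g\in\mathscr{C}(I,U^C)$ with $u^C\circ g=q$. Then $u^C\circ(f-g\circ p)=0$, so by exactness there exists $h\in\mathscr{C}(U,V^C)$ with $v^C\circ h=f-g\circ p$. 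Now invoke Lemma~\ref{LemUV}: the morphism $h\colon U\to V^C$ factors through some object $I'\in\mathcal{I}$, say $h=h_2\circ h_1$ with $h_1\in\mathscr{C}(U,I')$, $h_2\in\mathscr{C}(I',V^C)$. Hence
\[
f=g\circ p+v^C\circ h_2\circ h_1
\]
is a sum of two morphisms, each factoring through an object of $\mathcal{I}$ (namely $I$ and $I'$), so $f$ factors through $I\oplus I'\in\mathcal{I}$ and $\overline{f}=0$.

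There is no real obstacle here; the proof is a bookkeeping exercise in the long exact sequence. The only conceptual point to be careful about is that morphisms factoring through $\mathcal{I}$ form a two-sided ideal closed under addition, which is where additivity of $\mathcal{I}$ (inherited from the fact that $\mathcal{U}$ and $\mathcal{V}$ are additive subcategories closed under direct summands) is used, ensuring $I\oplus I'\in\mathcal{I}$.
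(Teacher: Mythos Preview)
Your proof is correct and follows essentially the same approach as the paper's: both use the exact sequence from Proposition~\ref{PropExact1} (or equivalently Corollary~\ref{ExactToShow}) together with $\mathbb{E}(\mathcal{U},\mathcal{V})=0$ to obtain surjectivity, then lift the factorization through $I\in\mathcal{I}$ along $u^C$, apply exactness to get a morphism into $V^C$, and invoke Lemma~\ref{LemUV} to conclude that $f$ factors through $I\oplus I'$. The only cosmetic difference is that you phrase the lifting step as ``applying surjectivity with $U$ replaced by $I$'' while the paper writes out the exact sequence for $I$ explicitly.
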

\begin{proof}
By the exactness of
\[ \mathscr{C}(U,U^C)\overset{\mathscr{C}(U,u^C)}{\longrightarrow}\mathscr{C}(U,C)\overset{(\lambda_\sharp)_U}{\longrightarrow}\mathbb{E}(U,V)=0, \]
the map $\mathscr{C}(U,U^C)\to\mathscr{C}(U,C)$ is surjective. This implies the surjectivity of $(\ref{ToShowBij})$.
Let us show the injectivity of $(\ref{ToShowBij})$. Let $g\in\mathscr{C}(U,U^C)$ be any morphism which satisfies $\overline{u}^C\circ\overline{g}=\overline{u^C\circ g}=0$. By definition, there exist $I\in\mathcal{I}$, $i_1\in\mathscr{C}(U,I)$ and $i_2\in\mathscr{C}(I,C)$ which makes the following diagram commutative.
\[
\xy
(0,6)*+{U}="2";
(12,6)*+{I}="4";
(-12,-6)*+{V^C}="10";
(0,-6)*+{U^C}="12";
(12,-6)*+{C}="14";
{\ar^{i_1} "2";"4"};
{\ar_{g} "2";"12"};
{\ar^{i_2} "4";"14"};
{\ar_{v^C} "10";"12"};
{\ar_{u^C} "12";"14"};
{\ar@{}|\circlearrowright "2";"14"};
\endxy
\]
Since $\mathbb{E}(I,V^C)=0$,
\[ \mathscr{C}(I,V^C)\overset{\mathscr{C}(I,v^C)}{\longrightarrow}\mathscr{C}(I,U^C)\overset{\mathscr{C}(I,u^C)}{\longrightarrow}\mathscr{C}(I,C)\to0 \]
is exact. Thus there exists $j\in\mathscr{C}(I,U^C)$ satisfying $u^C\circ j=i_2$. Then by $u^C\circ(g-j\circ i_1)=0$, we obtain $h\in\mathscr{C}(U,V^C)$ satisfying $v^C\circ h=g-j\circ i_1$.

By Lemma~\ref{LemUV}, this $h$ factors through some $I^{\prime}\in\mathcal{I}$. It follows that $g=v^C\circ h+j\circ i_1$ factors through $I\oplus I^{\prime}\in\mathcal{I}$.
\end{proof}

Proposition~\ref{PropUC} means that $(U^C,\overline{u}^C)$ is coreflection of $C\in\mathscr{C}/\mathcal{I}$ along the inclusion functor $\mathcal{U}/\mathcal{I}\hookrightarrow\mathscr{C}/\mathcal{I}$ (\cite[Definition~3.1.1]{Bo}). As a corollary, we have the following.
\begin{corollary}\label{CorUC}
The inclusion functor $\mathcal{U}/\mathcal{I}\hookrightarrow\mathscr{C}/\mathcal{I}$ has a right adjoint $\omega_{\mathcal{U}}\colon\mathscr{C}/\mathcal{I}\to\mathcal{U}/\mathcal{I}$, which assigns $\omega_{\mathcal{U}}(C)=U^C$ for any $C\in\mathscr{C}/\mathcal{I}$, where
\[ V^C\overset{v^C}{\longrightarrow}U^C\overset{u^C}{\longrightarrow}C\quad(U^C\in\mathcal{U},V^C\in\mathcal{V}) \]
is a conflation. Moreover $\varepsilon_{\mathcal{U}}=\{\overline{u}^C\}_{C\in\mathrm{Ob}(\mathscr{C}/\mathcal{I})}$ gives the counit of this adjoint pair.
\end{corollary}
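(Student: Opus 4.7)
The plan is to deduce the corollary directly from Proposition~\ref{PropUC} by invoking the standard equivalence between pointwise universal arrows and adjoint functors (see e.g.\ \cite[Theorem~3.1.5]{Bo}). First I would, for each $C\in\mathscr{C}$, fix once and for all a conflation $V^C\overset{v^C}{\longrightarrow}U^C\overset{u^C}{\longrightarrow}C$ with $U^C\in\mathcal{U}$, $V^C\in\mathcal{V}$, provided by Definition~\ref{DefCotors}(2). This defines the assignment $\omega_{\mathcal{U}}(C):=U^C$ on objects. Proposition~\ref{PropUC} then says exactly that $\overline{u}^C\colon U^C\to C$ is a coreflection of $C$ along $\mathcal{U}/\mathcal{I}\hookrightarrow\mathscr{C}/\mathcal{I}$, i.e.\ a terminal object in the comma category $(\mathcal{U}/\mathcal{I}\downarrow C)$.

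Next I would extend $\omega_{\mathcal{U}}$ to morphisms. Given $\overline{f}\in(\mathscr{C}/\mathcal{I})(C,C^{\prime})$, the composite $\overline{f}\circ\overline{u}^C\in(\mathscr{C}/\mathcal{I})(U^C,C^{\prime})$ corresponds, via the bijection
\[ \overline{u}^{C^{\prime}}\circ-\colon(\mathscr{C}/\mathcal{I})(U^C,U^{C^{\prime}})\overset{\cong}{\longrightarrow}(\mathscr{C}/\mathcal{I})(U^C,C^{\prime}) \]
of Proposition~\ref{PropUC}, to a unique morphism which I set to be $\omega_{\mathcal{U}}(\overline{f})$. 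It satisfies $\overline{u}^{C^{\prime}}\circ \omega_{\mathcal{U}}(\overline{f})=\overline{f}\circ\overline{u}^C$ by construction. Functoriality (respect of identities and compositions) follows immediately from the uniqueness part of Proposition~\ref{PropUC}: both $\omega_{\mathcal{U}}(\mathrm{id})$ and $\mathrm{id}$ are preimages of $\overline{u}^C$, and both $\omega_{\mathcal{U}}(\overline{g}\circ \overline{f})$ and $\omega_{\mathcal{U}}(\overline{g})\circ\omega_{\mathcal{U}}(\overline{f})$ are preimages of $\overline{g}\circ\overline{f}\circ\overline{u}^C$.

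Finally, the adjunction $\omega_{\mathcal{U}}$ being right adjoint to the inclusion is encoded by the natural bijection
\[ (\mathscr{C}/\mathcal{I})(U,C)\cong(\mathcal{U}/\mathcal{I})(U,\omega_{\mathcal{U}}(C))\quad(U\in\mathcal{U},\,C\in\mathscr{C}), \]
which is exactly the inverse of the bijection in Proposition~\ref{PropUC}. Naturality in $U$ is trivial; naturality in $C$ follows from the compatibility $\overline{u}^{C^{\prime}}\circ \omega_{\mathcal{U}}(\overline{f})=\overline{f}\circ\overline{u}^C$. This same identity, read as a square over $\overline{f}$, shows that the family $\varepsilon_{\mathcal{U}}=\{\overline{u}^C\}_{C\in\mathscr{C}/\mathcal{I}}$ is a natural transformation from the inclusion of $\omega_{\mathcal{U}}$ to the identity functor, and the universal property of Proposition~\ref{PropUC} identifies it with the counit of the adjunction. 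There is no real obstacle here: all the work has already been done in Proposition~\ref{PropUC}; the only mild subtlety is to check that the choice of representative conflation $(v^C,u^C)$ does not affect the functor up to canonical isomorphism, which again follows from the uniqueness in that proposition.
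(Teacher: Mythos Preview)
Your proposal is correct and follows essentially the same approach as the paper: the paper simply remarks that Proposition~\ref{PropUC} exhibits $(U^C,\overline{u}^C)$ as a coreflection of $C$ along the inclusion (citing \cite[Definition~3.1.1]{Bo}) and states the corollary without further argument, while you spell out the standard passage from pointwise universal arrows to a right adjoint in detail. The only difference is the level of explicitness; the underlying idea is identical.
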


\subsection{Concentric twin cotorsion pairs}

\begin{definition}\label{DefTCP}
Let $(\mathcal{S},\mathcal{T})$ and $(\mathcal{U},\mathcal{V})$ be cotorsion pairs on $\mathscr{C}$. Then the pair $\mathcal{P}=((\mathcal{S},\mathcal{T}),(\mathcal{U},\mathcal{V}))$ is called a {\it twin cotorsion pair} if it satisfies $\mathbb{E}(\mathcal{S},\mathcal{V})=0$. (Pairs of cotorsion pairs are considered in abelian/exact categories in \cite{Ho1,Ho2}, and in triangulated categories in \cite{Na2,Na3}.)

If moreover it satisfies $\mathcal{S}\cap\mathcal{T}=\mathcal{U}\cap\mathcal{V}(=\mathcal{I})$, then $\mathcal{P}$ is called a {\it concentric} twin cotorsion pair similarly as in the triangulated case \cite{Na4}. In this case, we put $\mathcal{Z}=\mathcal{T}\cap\mathcal{U}$.
\end{definition}

\begin{remark}\label{RemTCP}
Let $\mathcal{P}=((\mathcal{S},\mathcal{T}),(\mathcal{U},\mathcal{V}))$ be a concentric twin cotorsion pair. By Corollary~\ref{CorUC}, the inclusion $\mathcal{U}/\mathcal{I}\hookrightarrow\mathscr{C}/\mathcal{I}$ has a right adjoint $\omega_{\mathcal{U}}$. Dually the inclusion $\mathcal{T}/\mathcal{I}\hookrightarrow\mathscr{C}/\mathcal{I}$ has a left adjoint $\sigma_{\mathcal{T}}$.

These restrict to yield the following.
\begin{itemize}
\item[-] Left adjoint $\sigma$ of the inclusion $\mathcal{Z}/\mathcal{I}\hookrightarrow\mathcal{U}/\mathcal{I}$.
\item[-] Right adjoint $\omega$ of the inclusion $\mathcal{Z}/\mathcal{I}\hookrightarrow\mathcal{T}/\mathcal{I}$.
\end{itemize}
\end{remark}

\begin{definition}\label{DefNiNf}
Let $\mathcal{P}=((\mathcal{S},\mathcal{T}),(\mathcal{U},\mathcal{V}))$ be a twin cotorsion pair on $\mathscr{C}$. We define full subcategories $\mathcal{N}^i,\mathcal{N}^f$of $\mathscr{C}$ as follows.
\begin{enumerate}
\item[(1)] $\mathcal{N}^i=\mathrm{Cone}(\mathcal{V},\mathcal{S})$.
\item[(2)] $\mathcal{N}^f=\mathrm{CoCone}(\mathcal{V},\mathcal{S})$.
\end{enumerate}
\end{definition}

\begin{remark}
The notation $\mathcal{N}^i$, $\mathcal{N}^f$ is motivated by Section~\ref{section: model}. Morally, an object $X$ belongs to $\mathcal{N}^i$ if and only if the morphism $0\rightarrow X$ from the initial object is a weak equivalence. For a more precise statement, see Proposition~\ref{LemRelationNandW}.
\end{remark}

\begin{remark}
If $\mathcal{P}$ is concentric, then for any $C\in\mathscr{C}$, we have
\begin{enumerate}
\item[(1)] $C\in\mathcal{N}^i\Leftrightarrow\omega_{\mathcal{U}}(C)\in\mathcal{S}/\mathcal{I}$,
\item[(2)] $C\in\mathcal{N}^f\Leftrightarrow\sigma_{\mathcal{T}}(C)\in\mathcal{V}/\mathcal{I}$.
\end{enumerate}
\end{remark}

\begin{remark}\label{RemNiNf}
Let $\mathcal{P}=((\mathcal{S},\mathcal{T}),(\mathcal{U},\mathcal{V}))$ be a twin cotorsion pair. Then the following holds.
\begin{enumerate}
\item[(1)] $\mathcal{S}\subseteq\mathcal{N}^i$, $\mathcal{V}\subseteq\mathcal{N}^f$.
\item[(2)] $\mathcal{U}\cap\mathcal{N}^i=\mathcal{S}$, $\mathcal{T}\cap\mathcal{N}^f=\mathcal{V}$.
\item[(3)] If $\mathcal{P}$ is concentric, $\mathcal{S}\subseteq\mathcal{N}^f$, $\mathcal{V}\subseteq\mathcal{N}^i$.
\end{enumerate}
\end{remark}

\begin{lemma}\label{LemConcentric1}
Let $\mathcal{P}=((\mathcal{S},\mathcal{T}),(\mathcal{U},\mathcal{V}))$ be a concentric twin cotorsion pair. Then the following holds.
\begin{enumerate}
\item[(1)] $\mathrm{Cone}(\mathcal{V},\mathcal{N}^i)\subseteq\mathcal{N}^i$.
\item[(2)] $\mathrm{CoCone}(\mathcal{N}^f,\mathcal{S})\subseteq\mathcal{N}^f$.
\end{enumerate}
\end{lemma}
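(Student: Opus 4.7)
The plan is to reduce each statement to a single octahedron. For (1), I first unfold the definitions: an object $X \in \mathrm{Cone}(\mathcal{V}, \mathcal{N}^i)$ fits into a conflation $V \to Y \to X$ with $V \in \mathcal{V}$, and the assumption $Y \in \mathcal{N}^i$ provides a second conflation $V' \to S \to Y$ with $V' \in \mathcal{V}$ and $S \in \mathcal{S}$. The deflations $S \to Y$ and $Y \to X$ are composable, so I would invoke {\rm (ET4)$^{\mathrm{op}}$} (in the Remark~\ref{RemET4op} form) on this pair. The octahedron produces an object $E$ together with two conflations $E \to S \to X$ and $V' \to E \to V$. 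Since $\mathcal{V}$ is extension-closed by Remark~\ref{RemExtClosed}, the second conflation forces $E \in \mathcal{V}$, and the first conflation then exhibits $X$ as an object of $\mathrm{Cone}(\mathcal{V}, \mathcal{S}) = \mathcal{N}^i$.

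For (2), the strategy is dual. Starting with a conflation $X \to Y \to S$ in $\mathrm{CoCone}(\mathcal{N}^f, \mathcal{S})$ and a further conflation $Y \to V \to S'$ coming from $Y \in \mathcal{N}^f = \mathrm{CoCone}(\mathcal{V}, \mathcal{S})$, I would feed the composable pair of inflations $X \to Y \to V$ into {\rm (ET4)}. This yields conflations $X \to V \to E$ and $S \to E \to S'$. Extension-closedness of $\mathcal{S}$ (again by Remark~\ref{RemExtClosed}) then gives $E \in \mathcal{S}$, so $X \in \mathrm{CoCone}(\mathcal{V}, \mathcal{S}) = \mathcal{N}^f$.

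The only point requiring care is to match the direction of {\rm (ET4)} versus {\rm (ET4)$^{\mathrm{op}}$} with the composable pair at hand, and to read off which side of each octahedron is the desired new conflation and which side is the auxiliary extension of two objects of $\mathcal{V}$ (respectively $\mathcal{S}$). No deeper obstacle is anticipated; in particular, the \emph{concentric} hypothesis on $\mathcal{P}$ does not seem to enter the argument, since only the extension-closedness of the cotorsion halves $\mathcal{V}$ and $\mathcal{S}$ is used.
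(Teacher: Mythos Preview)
Your proof is correct and follows exactly the same route as the paper: apply {\rm (ET4)$^{\mathrm{op}}$} to the composable deflations in (1) (and {\rm (ET4)} dually in (2)), then invoke extension-closedness of $\mathcal{V}$ (respectively $\mathcal{S}$) from Remark~\ref{RemExtClosed}. Your observation that the concentric hypothesis is never used is also accurate; the paper's proof likewise only relies on extension-closedness of the cotorsion halves.
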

\begin{proof}
{\rm (1)} By definition, $C\in\mathrm{Cone}(\mathcal{V},\mathcal{N}^i)$ admits a conflation
\[ V\to N\to C\quad(V\in\mathcal{V},N\in\mathcal{N}^i). \]
Resolve $N$ by a conflation
\[ V^N\to S^N\to N\quad(S^N\in\mathcal{S},V^N\in\mathcal{V}). \]
By {\rm (ET4)$^{\mathrm{op}}$}, we obtain a commutative diagram in $\mathscr{C}$
\[
\xy
(-6,18)*+{V^N}="-12";
(6,18)*+{V^N}="-14";
(-6,6)*+{{}^{\exists}E}="2";
(6,6)*+{S^N}="4";
(18,6)*+{C}="6";
(-6,-6)*+{V}="12";
(6,-6)*+{N}="14";
(18,-6)*+{C}="16";
{\ar@{=} "-12";"-14"};
{\ar "-12";"2"};
{\ar "-14";"4"};
{\ar "2";"4"};
{\ar "4";"6"};
{\ar "2";"12"};
{\ar "4";"14"};
{\ar@{=} "6";"16"};
{\ar "12";"14"};
{\ar "14";"16"};
{\ar@{}|\circlearrowright "-12";"4"};
{\ar@{}|\circlearrowright "2";"14"};
{\ar@{}|\circlearrowright "4";"16"};
\endxy
\]
in which $V^N\to E\to V$ and $E\to S^N\to C$ are conflations. Since $\mathcal{V}\subseteq\mathscr{C}$ is extension-closed, it follows that $E\in\mathcal{V}$. This means $C\in\mathcal{N}^i$. {\rm (2)} is dual to {\rm (1)}.
\end{proof}

\begin{lemma}\label{LemConcentric2}
Let $\mathcal{P}=((\mathcal{S},\mathcal{T}),(\mathcal{U},\mathcal{V}))$ be a concentric twin cotorsion pair. Let $U\in\mathcal{U}$ be any object. Assume there is a conflation
\[ M\to U\to S \]
satisfying $M\in\mathcal{N}^f$ and $S\in\mathcal{S}$. Then $U$ belongs to $\mathcal{N}^f$.

Dually, if $T\in\mathcal{T}$ appears in a conflation $V\to T\to N$ satisfying $V\in\mathcal{V}, N\in\mathcal{N}^i$, then $T$ belongs to $\mathcal{N}^i$.
\end{lemma}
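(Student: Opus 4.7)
The plan is to construct a pushout-like object via the dual of Proposition~\ref{PropBaer}(1), exploit the vanishing $\mathbb{E}(\mathcal{S},\mathcal{V})=0$ to force a splitting, and conclude by invoking Lemma~\ref{LemConcentric1}(2).

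First, since $M\in\mathcal{N}^f=\mathrm{CoCone}(\mathcal{V},\mathcal{S})$, I would fix a conflation $M\to V^M\to S^M$ with $V^M\in\mathcal{V}$ and $S^M\in\mathcal{S}$. Applying (the dual of) Proposition~\ref{PropBaer}(1) to the two $\mathbb{E}$-triangles $M\to U\to S$ and $M\to V^M\to S^M$, both sharing the first term $M$, yields an object $N$ fitting into conflations $V^M\to N\to S$ and $U\to N\to S^M$.

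The decisive observation is that the $\mathbb{E}$-extension of $V^M\to N\to S$ lies in $\mathbb{E}(S,V^M)=0$ by the twin cotorsion pair condition $\mathbb{E}(\mathcal{S},\mathcal{V})=0$. By Corollary~\ref{CorExact0}, this conflation splits and $N\cong V^M\oplus S$. The crux of the argument is then recognizing that both summands lie in $\mathcal{N}^f$: the inclusion $\mathcal{V}\subseteq\mathcal{N}^f$ is Remark~\ref{RemNiNf}(1), while $\mathcal{S}\subseteq\mathcal{N}^f$ is Remark~\ref{RemNiNf}(3), where the concentric hypothesis enters essentially. Since $\mathcal{N}^f$ is closed under finite direct sums (the termwise sum of conflations is a conflation by additivity of $\mathfrak{s}$, and $\mathcal{V}$, $\mathcal{S}$ are closed under direct sums), it follows that $N\in\mathcal{N}^f$.

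Finally, the conflation $U\to N\to S^M$ exhibits $U$ as an object of $\mathrm{CoCone}(\mathcal{N}^f,\mathcal{S})$, and Lemma~\ref{LemConcentric1}(2) delivers $U\in\mathcal{N}^f$. The dual statement, concerning $T\in\mathcal{T}$ appearing in a conflation $V\to T\to N'$ with $V\in\mathcal{V}$ and $N'\in\mathcal{N}^i$, is proved by dualizing the whole argument: one uses Proposition~\ref{PropBaer}(1) in place of its dual, the inclusion $\mathcal{V}\subseteq\mathcal{N}^i$ from Remark~\ref{RemNiNf}(3), and Lemma~\ref{LemConcentric1}(1). I expect the only nontrivial bookkeeping to be in identifying $N$ as a direct sum of objects in $\mathcal{N}^f$; the rest is a direct application of the cited results.
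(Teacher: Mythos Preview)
Your proof is correct and follows the same overall architecture as the paper: build the auxiliary object (your $N$, the paper's $X$) via Proposition~\ref{PropBaer}(2), use the vanishing $\mathbb{E}(\mathcal{S},\mathcal{V})=0$ to split the conflation $V^M\to N\to S$, and conclude with Lemma~\ref{LemConcentric1}(2).

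The one genuine difference lies in how you place $N$ inside $\mathcal{N}^f$. The paper uses the hypothesis $U\in\mathcal{U}$: from the conflation $U\to X\to S_M$ and extension-closedness of $\mathcal{U}$ it gets $X\in\mathcal{U}$, whence the summand $V_M$ lands in $\mathcal{U}\cap\mathcal{V}=\mathcal{I}$, and then extension-closedness of $\mathcal{S}$ forces $X\in\mathcal{S}$. You instead observe directly that $N\cong V^M\oplus S$ is a direct sum of objects of $\mathcal{N}^f$ (using Remark~\ref{RemNiNf}(1),(3) and additivity of $\mathcal{N}^f$ from Definition~\ref{DefSubCone}). Your route is shorter and, notably, never invokes $U\in\mathcal{U}$, so it actually proves the slightly stronger statement that any $U$ fitting into such a conflation lies in $\mathcal{N}^f$. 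The paper's argument buys the sharper intermediate conclusion $X\in\mathcal{S}$, but this extra precision is not needed for the lemma as stated.
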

\begin{proof}
By definition, $M$ admits a conflation
\[ M\to V_M\to S_M\quad (V_M\in\mathcal{V},S_M\in\mathcal{S}). \]
By Proposition~\ref{PropBaer} {\rm (2)}, we obtain a commutative diagram in $\mathscr{C}$
\[
\xy
(-6,6)*+{M}="2";
(6,6)*+{U}="4";
(18,6)*+{S}="6";
(-6,-6)*+{V_M}="12";
(6,-6)*+{{}^{\exists}X}="14";
(18,-6)*+{S}="16";
(-6,-18)*+{S_M}="22";
(6,-18)*+{S_M}="24";
{\ar^{} "2";"4"};
{\ar^{} "4";"6"};
{\ar_{} "2";"12"};
{\ar^{} "4";"14"};
{\ar@{=} "6";"16"};
{\ar^{} "12";"14"};
{\ar^{} "14";"16"};
{\ar_{} "12";"22"};
{\ar^{} "14";"24"};
{\ar@{=} "22";"24"};
{\ar@{}|\circlearrowright "2";"14"};
{\ar@{}|\circlearrowright "4";"16"};
{\ar@{}|\circlearrowright "12";"24"};
\endxy
\]
consisting of conflations. Since $\mathcal{U}$ is extension-closed, it follows that $X\in\mathcal{U}$. Since $\mathbb{E}(S,V_M)=0$, the $\mathbb{E}$-extension realized by $V_M\to X\to S$ splits. Especially $V_M$ is a direct summand of $X$, and thus it follows that $V_M\in\mathcal{U}\cap\mathcal{V}=\mathcal{I}$. By the extension-closedness of $\mathcal{S}$, we obtain $X\in\mathcal{S}$. Thus $U\in\mathcal{N}^f$ follows from Remark~\ref{RemNiNf} {\rm (3)} and Lemma~\ref{LemConcentric1} {\rm (2)}.
\end{proof}

\begin{lemma}\label{LemX}
Let $\mathcal{P}$ be as in Lemma~\ref{LemConcentric2}. Let $T\in\mathcal{T}, M\in\mathcal{N}^f$ be any pair of objects.
If there is a section $T\to M$
 or a retraction $M\to T$, then $T$ belongs to $\mathcal{V}$.
\end{lemma}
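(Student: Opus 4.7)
The plan is to prove directly that $\mathbb{E}(U,T)=0$ for every $U\in\mathcal{U}$; by Remark~\ref{RemCotors}(2) this will give $T\in\mathcal{V}$. Neither concentricity nor anything about $\mathcal{N}^f$ beyond its definition enters the argument -- only the orthogonalities $\mathbb{E}(\mathcal{U},\mathcal{V})=0$ and $\mathbb{E}(\mathcal{S},\mathcal{T})=0$ built into the two cotorsion pairs.

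First I would unify the two forms of the hypothesis by fixing $i\in\mathscr{C}(T,M)$ and $p\in\mathscr{C}(M,T)$ with $p\circ i=\mathrm{id}_T$ (available in either case), together with an $\mathbb{E}$-triangle
\[ M\overset{m}{\longrightarrow}V_M\overset{n}{\longrightarrow}S_M\overset{\nu_M}{\dashrightarrow}\qquad(V_M\in\mathcal{V},\ S_M\in\mathcal{S}) \]
supplied by $M\in\mathcal{N}^f=\mathrm{CoCone}(\mathcal{V},\mathcal{S})$. Given $\delta\in\mathbb{E}(U,T)$, the goal is to chase its image $i_{\ast}\delta\in\mathbb{E}(U,M)$ through this $\mathbb{E}$-triangle. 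The long exact sequence of Corollary~\ref{ExactToShow} evaluated at $U$ degenerates, using $\mathbb{E}(U,V_M)=0$, into the surjection
\[ (\nu_M)_\sharp\colon\mathscr{C}(U,S_M)\twoheadrightarrow\mathbb{E}(U,M),\quad\alpha\mapsto\alpha^{\ast}\nu_M, \]
so $i_{\ast}\delta=\alpha^{\ast}\nu_M$ for some $\alpha\in\mathscr{C}(U,S_M)$. Applying $p_{\ast}$ and using both $p\circ i=\mathrm{id}_T$ and the bifunctoriality commutation $p_{\ast}\alpha^{\ast}=\alpha^{\ast}p_{\ast}$ gives
\[ \delta=(p\circ i)_{\ast}\delta=p_{\ast}\alpha^{\ast}\nu_M=\alpha^{\ast}(p_{\ast}\nu_M). \]
The last factor $p_{\ast}\nu_M$ lies in $\mathbb{E}(S_M,T)$, which vanishes because $S_M\in\mathcal{S}$ and $T\in\mathcal{T}$. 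Hence $\delta=0$.

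I do not expect any real obstacle: the proof is a short extension-chase, with both essential vanishings ($\mathbb{E}(\mathcal{U},\mathcal{V})=0$ on the way down the long exact sequence for $M$, and $\mathbb{E}(\mathcal{S},\mathcal{T})=0$ at the end) encoded in the cotorsion pair axioms. The only point worth being careful about is terminological, namely that ``section $T\to M$'' and ``retraction $M\to T$'' both amount to $T$ being a retract of $M$ and therefore supply the pair $(i,p)$ with $p\circ i=\mathrm{id}_T$ in exactly the same way.
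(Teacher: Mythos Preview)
Your proof is correct and uses the same two orthogonalities as the paper, but packaged differently. The paper argues as follows: from the conflation $M\overset{v}{\longrightarrow}V\to S$ and $\mathbb{E}(S,T)=0$, the retraction $r\colon M\to T$ factors through $v$, so $v\circ s\colon T\to V$ is a section into an object of $\mathcal{V}$, and Corollary~\ref{CorSectionClosed} gives $T\in\mathcal{V}$. Your argument instead verifies $\mathbb{E}(U,T)=0$ directly by pushing $i_{\ast}\delta$ into the long exact sequence for $M$ and killing it via $\mathbb{E}(S_M,T)=0$. Both routes are one-step applications of the long exact sequence followed by one orthogonality; yours avoids the intermediate geometric step of producing a section into $\mathcal{V}$, while the paper's version isolates that step into the reusable Corollary~\ref{CorSectionClosed}. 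Your remark that concentricity is not actually used is correct and holds for the paper's proof as well.
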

\begin{proof}
In either case, we have morphisms $s\in\mathscr{C}(T,M)$ and $r\in\mathscr{C}(M,T)$ satisfying $r\circ s=\mathrm{id}$. By definition, $M$ admits a conflation
\[ M\overset{v}{\longrightarrow}V\to S. \]
By $\mathbb{E}(S,T)=0$, the morphism $r$ factors through $v$. Then $v\circ s\in\mathscr{C}(T,V)$ becomes a section, and thus it follows from Corollary~\ref{CorSectionClosed} that $T\in\mathcal{V}$.
\end{proof}

\section{Bijective correspondence with model structures}\label{section: model}

In the rest, let $(\mathscr{C},\mathbb{E},\mathfrak{s})$ be an extriangulated category.

In this section, we give a bijective correspondence between {\it Hovey twin cotorsion pairs} and {\it admissible model structures} which we will soon define. This gives a unification of the following preceding works.
\begin{itemize}
\item For an abelian category, Hovey has shown their correspondence in \cite{Ho1,Ho2} ({\it abelian model structure}). This has been generalized to an exact category by Gillespie \cite{G} ({\it exact model structure}), and investigated by \v{S}\v{t}ov\'{\i}\v{c}ek \cite{S}.
\item For a triangulated category, Yang \cite{Y} has introduced an analogous notion of {\it triangulated model structure} and showed its correspondence with cotorsion pairs.
\end{itemize}

\subsection{Hovey twin cotorsion pair}
We recall that $\mathcal{N}^i$ (resp. $\mathcal{N}^f$) is the collection of all objects $X\in\mathscr{C}$ which are part of a conflation $V\longrightarrow S\longrightarrow X$ (resp. $X\longrightarrow V\longrightarrow S$), for some $V\in\mathcal{V}$ and $S\in \mathcal{S}$.

\begin{definition}\label{DefHTCP}
Let $\mathcal{P}=((\mathcal{S},\mathcal{T}),(\mathcal{U},\mathcal{V}))$ be a twin cotorsion pair. We call $\mathcal{P}$ a {\it Hovey twin cotorsion pair} if it satisfies $\mathcal{N}^f=\mathcal{N}^i$. We denote this subcategory by $\mathcal{N}$.
\end{definition}

\begin{remark}\label{RemHTCPConcentric}
Any Hovey twin cotorsion pair is concentric. In fact, we have $\mathcal{U}\cap\mathcal{V}=\mathcal{U}\cap(\mathcal{N}^f\cap\mathcal{T})=(\mathcal{U}\cap\mathcal{N}^i)\cap\mathcal{T}=\mathcal{S}\cap\mathcal{T}$ by Remark~\ref{RemNiNf} {\rm (2)}.
\end{remark}

For any Hovey twin cotorsion pair, the subcategory $\mathcal{N}\subseteq\mathscr{C}$ is extension-closed. More strongly, it satisfies the following.
\begin{proposition}\label{PropThick}
Let $\mathcal{P}=((\mathcal{S},\mathcal{T}),(\mathcal{U},\mathcal{V}))$ be a Hovey twin cotorsion pair. For any conflation $A\overset{x}{\longrightarrow}B\overset{y}{\longrightarrow}C$, if two out of $A,B,C$ belong to $\mathcal{N}$, then so does the third.
Namely, we have the following.
\begin{enumerate}
\item[(1)] $A,C\in\mathcal{N}\Rightarrow B\in\mathcal{N}$.
\item[(2)] $A,B\in\mathcal{N}\Rightarrow C\in\mathcal{N}$.
\item[(3)] $B,C\in\mathcal{N}\Rightarrow A\in\mathcal{N}$.
\end{enumerate}
\end{proposition}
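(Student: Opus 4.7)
The plan is to prove (3) directly by combining Proposition~\ref{PropBaer}(1) with Lemma~\ref{LemConcentric1}; to prove (1) dually using Proposition~\ref{PropBaer}(2); and to deduce (2) as the self-dual counterpart of (3). The key observation is that the Hovey condition $\mathcal{N}^i=\mathcal{N}^f$ lets us present an object of $\mathcal{N}$ using whichever of $\mathrm{Cone}(\mathcal{V},\mathcal{S})$ or $\mathrm{CoCone}(\mathcal{V},\mathcal{S})$ is more convenient, and Lemma~\ref{LemConcentric1} allows us to chain two such presentations.

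For (3), since $C\in\mathcal{N}=\mathcal{N}^i$, I fix a conflation $V_C\to S_C\to C$ with $V_C\in\mathcal{V}$ and $S_C\in\mathcal{S}$. Applying Proposition~\ref{PropBaer}(1) to this conflation and to the given $A\to B\to C$ (both ending at $C$) produces an intermediate object $M$ together with conflations $V_C\to M\to B$ and $A\to M\to S_C$. Using $V_C\in\mathcal{V}$ and $B\in\mathcal{N}^i$, Lemma~\ref{LemConcentric1}(1) yields $M\in\mathcal{N}^i=\mathcal{N}$, hence also $M\in\mathcal{N}^f$; then Lemma~\ref{LemConcentric1}(2) applied to $A\to M\to S_C$ gives $A\in\mathcal{N}^f=\mathcal{N}$.

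For (1), I would dually use $A\in\mathcal{N}^f$ to fix a conflation $A\to V_A\to S_A$ with $V_A\in\mathcal{V}$ and $S_A\in\mathcal{S}$, and then apply Proposition~\ref{PropBaer}(2) to this and $A\to B\to C$ (both starting at $A$), obtaining $M$ with conflations $V_A\to M\to C$ and $B\to M\to S_A$. Lemma~\ref{LemConcentric1}(1) applied to the first (using $C\in\mathcal{N}^i$) gives $M\in\mathcal{N}$, and Lemma~\ref{LemConcentric1}(2) applied to the second (using $M\in\mathcal{N}^f$ and $S_A\in\mathcal{S}$) gives $B\in\mathcal{N}$. Implication (2) is obtained as the dual of (3): on $\mathscr{C}^\mathrm{op}$ a Hovey twin cotorsion pair is $((\mathcal{V},\mathcal{U}),(\mathcal{T},\mathcal{S}))$, the roles of $\mathcal{N}^i$ and $\mathcal{N}^f$ are interchanged but $\mathcal{N}$ is preserved, and (3) applied in $\mathscr{C}^\mathrm{op}$ translates to (2) in $\mathscr{C}$.

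The only real subtlety is choosing, for each hypothesized object of $\mathcal{N}$, the correct presentation ($\mathcal{N}^i$ or $\mathcal{N}^f$) so that the two conflations produced by Proposition~\ref{PropBaer} fall exactly into the hypotheses of Lemma~\ref{LemConcentric1}(1) and (2), respectively. Once these presentations are correctly selected, no further octahedral argument is needed.
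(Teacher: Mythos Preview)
There is a genuine gap: in both your argument for (3) and your argument for (1), you apply Lemma~\ref{LemConcentric1}(1) in the wrong direction. That lemma says $\mathrm{Cone}(\mathcal{V},\mathcal{N}^i)\subseteq\mathcal{N}^i$, i.e.\ from a conflation $V\to N\to C$ with $V\in\mathcal{V}$ and $N\in\mathcal{N}^i$ one concludes $C\in\mathcal{N}^i$. In your proof of (3), you have the conflation $V_C\to M\to B$ with $V_C\in\mathcal{V}$ and $B\in\mathcal{N}^i$, and you claim $M\in\mathcal{N}^i$; but the lemma goes from the middle term to the last term, not the reverse. The same error occurs in your proof of (1) with $V_A\to M\to C$: knowing $V_A\in\mathcal{V}$ and $C\in\mathcal{N}^i$ does not let you conclude $M\in\mathcal{N}^i$ via Lemma~\ref{LemConcentric1}(1).

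What that step would actually require is the extension-closedness of $\mathcal{N}$ (since $V_C,B\in\mathcal{N}$), which is exactly statement (1). So your strategy of proving (3) (or (1)) directly from Lemma~\ref{LemConcentric1} alone is circular. The paper handles this by proving (1) first with a genuinely harder argument: it resolves both $A$ and $C$, applies Proposition~\ref{PropBaer}(1) and then Proposition~\ref{PropBaer}(2), and uses the splitting $\mathbb{E}(S^C,V_A)=0$ to identify an auxiliary object as $V_A\oplus S^C\in\mathcal{N}$. Only after (1) is established does the paper deduce (2) (and dually (3)) by the kind of one-resolution argument you sketch. Your second steps (the applications of Lemma~\ref{LemConcentric1}(2)) are correct, but they rely on $M\in\mathcal{N}$, which you have not shown.
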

\begin{proof}
{\rm (1)} Resolve $A,C$ by conflations
\[ A\to V_A\to S_A\ \ \text{and}\ \ V^C\to S^C\to C\quad(S_A,S^C\in\mathcal{S},\, V_A,V^C\in\mathcal{V}) \]
respectively. By Proposition~\ref{PropBaer} {\rm (1),(2)}, we obtain commutative diagrams
\[
\xy
(-6,12)*+{V^C}="-12";
(6,12)*+{V^C}="-14";
(-18,0)*+{A}="0";
(-6,0)*+{{}^{\exists}X}="2";
(6,0)*+{S^C}="4";
(-18,-12)*+{A}="10";
(-6,-12)*+{B}="12";
(6,-12)*+{C}="14";
{\ar@{=} "-12";"-14"};
{\ar_{} "-12";"2"};
{\ar^{} "-14";"4"};
{\ar^{} "0";"2"};
{\ar^{} "2";"4"};
{\ar@{=} "0";"10"};
{\ar_{} "2";"12"};
{\ar^{} "4";"14"};
{\ar_{} "10";"12"};
{\ar_{} "12";"14"};
{\ar@{}|\circlearrowright "-12";"4"};
{\ar@{}|\circlearrowright "0";"12"};
{\ar@{}|\circlearrowright "2";"14"};
\endxy\ \ ,\ \ 
\xy
(-6,12)*+{A}="2";
(6,12)*+{X}="4";
(18,12)*+{S^C}="6";
(-6,0)*+{V_A}="12";
(6,0)*+{{}^{\exists}Y}="14";
(18,0)*+{S^C}="16";
(-6,-12)*+{S_A}="22";
(6,-12)*+{S_A}="24";
{\ar^{} "2";"4"};
{\ar^{} "4";"6"};
{\ar_{} "2";"12"};
{\ar^{} "4";"14"};
{\ar@{=} "6";"16"};
{\ar^{} "12";"14"};
{\ar^{} "14";"16"};
{\ar_{} "12";"22"};
{\ar^{} "14";"24"};
{\ar@{=} "22";"24"};
{\ar@{}|\circlearrowright "2";"14"};
{\ar@{}|\circlearrowright "4";"16"};
{\ar@{}|\circlearrowright "12";"24"};
\endxy
\]
which are made of conflations. Since $\mathbb{E}(S^C,V_A)=0$, the conflation $V_A\to Y\to S^C$ realizes the split $\mathbb{E}$-extension. It follows that $Y\cong V_A\oplus S^C\in\mathcal{N}$. We obtain $X\in\mathcal{N}$ by Lemma~\ref{LemConcentric1} {\rm (2)}, and thus $B\in\mathcal{N}$ by Lemma~\ref{LemConcentric1} {\rm (1)}.


{\rm (2)} Resolve $A$ by a conflation
\[ A\to V_A\to S_A\quad(S_A\in\mathcal{S},\, V_A\in\mathcal{V}). \]
Then by Proposition~\ref{PropBaer} {\rm (2)}, we have a commutative diagram
\[
\xy
(-6,12)*+{A}="2";
(6,12)*+{B}="4";
(18,12)*+{C}="6";
(-6,0)*+{V_A}="12";
(6,0)*+{{}^{\exists}G}="14";
(18,0)*+{C}="16";
(-6,-12)*+{S_A}="22";
(6,-12)*+{S_A}="24";
{\ar^{} "2";"4"};
{\ar^{} "4";"6"};
{\ar_{} "2";"12"};
{\ar^{} "4";"14"};
{\ar@{=} "6";"16"};
{\ar^{} "12";"14"};
{\ar^{} "14";"16"};
{\ar_{} "12";"22"};
{\ar^{} "14";"24"};
{\ar@{=} "22";"24"};
{\ar@{}|\circlearrowright "2";"14"};
{\ar@{}|\circlearrowright "4";"16"};
{\ar@{}|\circlearrowright "12";"24"};
\endxy
\]
made of conflations. Since $B,S_A\in\mathcal{N}$, we have $G\in\mathcal{N}$ by {\rm (1)}. From Lemma~\ref{LemConcentric1} {\rm (1)}, it follows that $C\in\mathcal{N}$.

{\rm (3)} is dual to {\rm (2)}.
\end{proof}

\begin{remark}
As a corollary, $\mathcal{N}$ becomes an extriangulated category by Remark~\ref{RemETrExtClosed}. Almost by definition of $\mathcal{N}$, the pair $(\mathcal{S},\mathcal{V})$ gives a cotorsion pair on $\mathcal{N}$.
\end{remark}

\subsection{From admissible model structure to Hovey twin cotorsion pair}
Throughout this section, let $\mathcal{M}=(\mathit{Fib},\mathit{Cof},\mathbb{W})$ be a model structure on $\mathscr{C}$, where $\mathit{Fib},\mathit{Cof},\mathbb{W}$ are the classes of fibrations, cofibrations, and weak equivalences. Let $w\mathit{Fib}=\mathit{Fib}\cap\mathbb{W}$ and $w\mathit{Cof}=\mathit{Cof}\cap\mathbb{W}$ denote the classes of acyclic fibrations and acyclic cofibrations, respectively. Associate full subcategories $\mathcal{S},\mathcal{T},\mathcal{U},\mathcal{V}\subseteq\mathscr{C}$ as follows.
\begin{eqnarray*}
&C\in\mathcal{S}\Leftrightarrow(0\to C)\in w\mathit{Cof},&\\
&C\in\mathcal{T}\Leftrightarrow(C\to0)\in\mathit{Fib},&\\
&C\in\mathcal{U}\Leftrightarrow(0\to C)\in\mathit{Cof},&\\
&C\in\mathcal{V}\Leftrightarrow(C\to0)\in w\mathit{Fib}.&
\end{eqnarray*}
Remark that these are full additive subcategories of $\mathscr{C}$, closed under isomorphisms and direct summands. In particular, the definition below makes sense.

\begin{definition}\label{DefExactModel}
$\mathcal{M}$ is called an {\it admissible model structure} if it satisfies the following conditions for any morphism $f\in\mathscr{C}(A,B)$.
\begin{enumerate}
\item[(1)] $f\in w\mathit{Cof}$ if and only if it is an inflation with $\mathrm{Cone}(f)\in\mathcal{S}$.
\item[(2)] $f\in\mathit{Fib}$ if and only if it is a deflation with $\mathrm{CoCone}(f)\in\mathcal{T}$.
\item[(3)] $f\in\mathit{Cof}$ if and only if it is an inflation with $\mathrm{Cone}(f)\in\mathcal{U}$.
\item[(4)] $f\in w\mathit{Fib}$ if and only if it is a deflation with $\mathrm{CoCone}(f)\in\mathcal{V}$.
\end{enumerate}
\end{definition}

We note that the model structures which might appear in \cite{Pal} are not admissible.

\begin{proposition}\label{PropFromModelToTCP}
Let $\mathcal{M}$ be an admissible model structure. Then $\mathcal{P}=((\mathcal{S},\mathcal{T}),(\mathcal{U},\mathcal{V}))$ is a twin cotorsion pair on $(\mathscr{C},\mathbb{E},\mathfrak{s})$.
\end{proposition}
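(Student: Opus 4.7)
The plan is to translate the model-theoretic data into the cotorsion-pair axioms, using left/right lifting to obtain the Ext-vanishing and the two factorization axioms to obtain the required resolutions. Concretely, I need to verify three things: that each of $\mathcal{S},\mathcal{T},\mathcal{U},\mathcal{V}$ is a full additive subcategory closed under isomorphisms and direct summands; that $(\mathcal{S},\mathcal{T})$ and $(\mathcal{U},\mathcal{V})$ are cotorsion pairs; and that $\mathbb{E}(\mathcal{S},\mathcal{V})=0$.

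First I verify the closure properties. Containment of isomorphisms in every class of a model structure gives closure under isomorphisms; closure under retracts follows from the standard retract closure of $\mathit{Cof}$, $\mathit{Fib}$ and $\mathbb{W}$ applied to arrows of the form $0\to C$ or $C\to 0$, which yields closure under direct summands. Additivity reduces to the observation that classes defined by lifting properties are stable under finite coproducts (resp.\ products), so $0\to A\oplus B$ belongs to the appropriate class whenever $0\to A$ and $0\to B$ do.

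Next I establish $\mathbb{E}(\mathcal{U},\mathcal{V})=0$. Given an $\mathbb{E}$-triangle $V\overset{x}{\longrightarrow}X\overset{y}{\longrightarrow}U\overset{\delta}{\dashrightarrow}$ with $V\in\mathcal{V}$ and $U\in\mathcal{U}$, condition (4) of admissibility shows that $y$ is an acyclic fibration, being a deflation with cocone $V\in\mathcal{V}$, while $0\to U$ is a cofibration by definition of $\mathcal{U}$. Solving the lifting square with top edge $0\to X$, bottom edge $\mathrm{id}_U$, left edge $0\to U$ and right edge $y$ produces a section of $y$, so $\delta$ splits by Corollary~\ref{CorExact0}. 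The symmetric argument, lifting an acyclic cofibration $0\to S$ against a fibration $X\to S$, gives $\mathbb{E}(\mathcal{S},\mathcal{T})=0$. Since acyclic (co)fibrations are (co)fibrations we have $\mathcal{S}\subseteq\mathcal{U}$ and $\mathcal{V}\subseteq\mathcal{T}$, so the compatibility $\mathbb{E}(\mathcal{S},\mathcal{V})=0$ is then automatic.

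For the resolutions I invoke the factorization axiom. Applying the $(\mathit{Cof},w\mathit{Fib})$-factorization to $0\to C$ gives $0\to U^C\to C$ with $U^C\in\mathcal{U}$ and $U^C\to C$ an acyclic fibration, hence a deflation with cocone $V^C\in\mathcal{V}$; this is the conflation $V^C\to U^C\to C$. Applying the same factorization to $C\to 0$ gives $C\to V_C\to 0$ with $V_C\in\mathcal{V}$ (from the $w\mathit{Fib}$ on the right) and $C\to V_C$ a cofibration, hence an inflation with cone $U_C\in\mathcal{U}$; this yields the conflation $C\to V_C\to U_C$. The analogous $(w\mathit{Cof},\mathit{Fib})$-factorizations of $0\to C$ and $C\to 0$ provide the resolutions needed for $(\mathcal{S},\mathcal{T})$. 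The main potential obstacle is just keeping the fourfold bookkeeping straight and consistently matching each of the four admissibility conditions to the appropriate conflation and lifting problem; no individual step is deep, because admissibility was engineered precisely so that ``deflation with cocone in $\mathcal{V}$'' coincides with ``acyclic fibration'' (and the three analogous translations), making every passage between the two languages immediate.
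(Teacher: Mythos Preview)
Your proof is correct and follows essentially the same approach as the paper: lifting against an acyclic fibration to split extensions and obtain the $\mathrm{Ext}$-vanishing, and the two factorization axioms applied to $0\to C$ and $C\to 0$ to obtain the resolutions. The only cosmetic difference is that the paper, when factoring $0\to C$ as $0\overset{i}{\to}D\overset{u}{\to}C$, invokes admissibility (3) to get a conflation $0\to D\to U$ with $U\in\mathcal{U}$ and then argues $D\cong U$, whereas you more directly observe that $(0\to D)\in\mathit{Cof}$ already means $D\in\mathcal{U}$ by definition; both are fine.
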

\begin{proof}
$\mathcal{S}\subseteq\mathcal{U}$ is obvious from the definition.
Since a similar argument works for $(\mathcal{S},\mathcal{T})$, we show that $(\mathcal{U},\mathcal{V})$ is a cotorsion pair. Let us confirm the conditions {\rm (1),(2),(3)} in Definition~\ref{DefCotors}.

{\rm (1)} Let $U\in\mathcal{U},V\in\mathcal{V}$ be any pair of objects, and let $\delta\in\mathbb{E}(U,V)$ be any element. Realize it as an $\mathbb{E}$-triangle
\[ V\overset{v}{\longrightarrow}B\overset{u}{\longrightarrow}U\overset{\delta}{\dashrightarrow}. \]
Since $U\in\mathcal{U}$ and $u\in w\mathit{Fib}$, there exists a section $s\in\mathscr{C}(U,B)$ of $u$. Thus $\delta$ splits by Corollary~\ref{CorExact0}.

{\rm (2)} Let $C\in\mathscr{C}$ be any object. Factorize the zero morphism $0\colon 0\to C$ as follows.
\[
\xy
(-10,6)*+{0}="0";
(10,6)*+{C}="2";
(0,-8)*+{D}="4";
(0,10)*+{}="5";
{\ar^{} "0";"2"};
{\ar_(0.4){i} "0";"4"};
{\ar@{<-}^(0.4){u} "2";"4"};
{\ar@{}|\circlearrowright "4";"5"};
\endxy
\quad\begin{array}{c}i\in\mathit{Cof},\\ u\in w\mathit{Fib}\end{array}
\]
Since $\mathcal{M}$ is admissible, we have conflations
\[ 0\overset{i}{\longrightarrow}D\overset{j}{\longrightarrow}U,\ \ \text{and}\ \ V\to D\overset{u}{\longrightarrow}C \]
with $U\in\mathcal{U},V\in\mathcal{V}$. This shows that $j$ is an isomorphism, and thus we obtain a conflation $V\to U\to C$.

{\rm (3)} is dual to {\rm (2)}.
\end{proof}

\begin{proposition}\label{LemRelationNandW}
Let $\mathcal{M}$ be an admissible model structure as above. Then the associated twin cotorsion pair $\mathcal{P}$ obtained in Proposition~\ref{PropFromModelToTCP} is a Hovey twin cotorsion pair.

Indeed, if we let $\mathcal{N}^i,\mathcal{N}^f\subseteq\mathscr{C}$ be as in Definition~\ref{DefNiNf}, then the following are equivalent for any object $N\in\mathscr{C}$.
\begin{enumerate}
\item[(1)] $N\in\mathcal{N}^i$.
\item[(2)] $(0\to N)\in\mathbb{W}$.
\item[(3)] $(N\to 0)\in\mathbb{W}$.
\item[(4)] $N\in\mathcal{N}^f$.
\end{enumerate}
\end{proposition}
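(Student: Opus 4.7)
The plan is to establish the chain of equivalences (1) $\Leftrightarrow$ (2) $\Leftrightarrow$ (3) $\Leftrightarrow$ (4) directly; the coincidence (1) $\Leftrightarrow$ (4) is exactly $\mathcal{N}^i = \mathcal{N}^f$ and hence confirms that $\mathcal{P}$ is a Hovey twin cotorsion pair. The only model-theoretic inputs required are the 2-out-of-3 property for $\mathbb{W}$ (together with the fact that identities lie in $\mathbb{W}$) and the two functorial factorizations ($w\mathit{Cof}$ followed by $\mathit{Fib}$, and $\mathit{Cof}$ followed by $w\mathit{Fib}$), combined with the admissibility dictionary that reads off conflations from (acyclic) fibrations and (acyclic) cofibrations.

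The equivalence (2) $\Leftrightarrow$ (3) is purely formal: $\mathrm{id}_0\in\mathbb{W}$ factors as $0\to N\to 0$, so 2-out-of-3 forces one factor to be a weak equivalence as soon as the other is. For (1) $\Rightarrow$ (2), given a conflation $V\to S\to N$ with $V\in\mathcal{V}$ and $S\in\mathcal{S}$, admissibility says $0\to S\in w\mathit{Cof}$ (by the definition of $\mathcal{S}$) and $S\to N\in w\mathit{Fib}$ (as a deflation with cocone $V\in\mathcal{V}$); their composite is the morphism $0\to N$, which therefore lies in $\mathbb{W}$.

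The main direction is (2) $\Rightarrow$ (1). I would factor $0\to N$ as $0\xrightarrow{\iota}Y\xrightarrow{p}N$ with $\iota\in w\mathit{Cof}$ and $p\in\mathit{Fib}$. Then $Y\in\mathcal{S}$ by the definition of $\mathcal{S}$. Applying 2-out-of-3 to this factorization, from $\iota\in\mathbb{W}$ and $(0\to N)\in\mathbb{W}$ I deduce $p\in\mathbb{W}$, so $p\in w\mathit{Fib}$. By admissibility, $p$ is then a deflation whose cocone $V=\mathrm{CoCone}(p)$ lies in $\mathcal{V}$, producing a conflation $V\to Y\to N$ with $V\in\mathcal{V}$ and $Y\in\mathcal{S}$; this is precisely the witness $N\in\mathcal{N}^i$. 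The implication (3) $\Rightarrow$ (4) is strictly dual: factor $N\to 0$ as $N\xrightarrow{j}Z\xrightarrow{q}0$ with $j\in\mathit{Cof}$ and $q\in w\mathit{Fib}$, so that $Z\in\mathcal{V}$, then use 2-out-of-3 to promote $j$ to $w\mathit{Cof}$, and read off $\mathrm{Cone}(j)\in\mathcal{S}$ from admissibility.

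I do not anticipate any real obstacle: the whole argument is the standard ``cotorsion pairs out of a model structure" calculation transplanted into the extriangulated setting. The only bit of care is to use the correct factorization on each side so that admissibility delivers an object of $\mathcal{V}$ on the conflation ending at $N$ and an object of $\mathcal{S}$ on the conflation starting from $N$; once that bookkeeping is right, 2-out-of-3 takes care of the acyclic condition on the remaining factor.
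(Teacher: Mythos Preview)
Your proposal is correct and follows essentially the same route as the paper's proof. The only cosmetic difference is that in $(2)\Rightarrow(1)$ the paper writes the factorization directly as $w\mathit{Cof}$ followed by $w\mathit{Fib}$ (tacitly using that a weak equivalence factored as $w\mathit{Cof}\circ\mathit{Fib}$ has its $\mathit{Fib}$ part acyclic by 2-out-of-3), whereas you spell out that 2-out-of-3 step explicitly; the remaining implications match exactly.
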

\begin{proof}
$(1)\Rightarrow (2)$ If $N\in\mathcal{N}^i$, there is a conflation
\[ V\to S\overset{s}{\longrightarrow}N\quad(V\in\mathcal{V},S\in\mathcal{S}) \]
by definition. Thus $0\to N$ can be factorized as follows.
\[
\xy
(-10,6)*+{0}="0";
(10,6)*+{N}="2";
(0,-8)*+{S}="4";
(0,10)*+{}="5";
{\ar^{0} "0";"2"};
{\ar_(0.4){} "0";"4"};
{\ar@{<-}^(0.4){s} "2";"4"};
{\ar@{}|\circlearrowright "4";"5"};
\endxy\quad\begin{array}{c}s\in w\mathit{Fib},\\ (0\to S)\in w\mathit{Cof}\end{array}
\]
It follows that $(0\to N)\in w\mathit{Fib}\circ w\mathit{Cof}=\mathbb{W}$.

$(2)\Rightarrow (1)$ Factorize $0\to N$ as follows.
\[
\xy
(-10,6)*+{0}="0";
(10,6)*+{N}="2";
(0,-8)*+{D}="4";
(0,10)*+{}="5";
{\ar^{0} "0";"2"};
{\ar_(0.4){i} "0";"4"};
{\ar@{<-}^(0.4){u} "2";"4"};
{\ar@{}|\circlearrowright "4";"5"};
\endxy\quad\begin{array}{c}i\in w\mathit{Cof},\\ u\in w\mathit{Fib}\end{array}
\]
A similar argument as in the proof {\rm (2)} of Proposition~\ref{PropFromModelToTCP} gives a conflation $V\to S\to N$.

$(2)\Leftrightarrow(3)$ follows from the 2-out-of-3 property of $\mathbb{W}$.

$(3)\Leftrightarrow(4)$ is dual to $(1)\Leftrightarrow(2)$.
\end{proof}

\subsection{From Hovey twin cotorsion pair to admissible model structure}
Throughout this section, let $\mathcal{P}=((\mathcal{S},\mathcal{T}),(\mathcal{U},\mathcal{V}))$ be a Hovey twin cotorsion pair on $(\mathscr{C},\mathbb{E},\mathfrak{s})$. In addition, we assume the following condition, analogous to the weak idempotent completeness (\cite[Proposition~7.6]{Bu}).
\begin{condition}[WIC]
Let $(\mathscr{C},\mathbb{E},\mathfrak{s})$ be an extriangulated category. Consider the following conditions.
\begin{enumerate}
\item[(1)] Let $f\in\mathscr{C}(A,B),\, g\in\mathscr{C}(B,C)$ be any composable pair of morphisms. If $g\circ f$ is an inflation, then so is $f$.
\item[(2)] Let $f\in\mathscr{C}(A,B),\, g\in\mathscr{C}(B,C)$ be any composable pair of morphisms. If $g\circ f$ is a deflation, then so is $g$.
\end{enumerate}
\end{condition}

With the assumption of Condition~(WIC), we have the following analog of the nine lemma.
\begin{lemma}\label{LemNine}
Assume $(\mathscr{C},\mathbb{E},\mathfrak{s})$ is an extriangulated category satisfying Condition~(WIC). Let
\[
\xy
(-7,21)*+{K}="-12";
(7,21)*+{K^{\prime}}="-14";
(-7,7)*+{A}="2";
(7,7)*+{B}="4";
(21,7)*+{C}="6";
(35,7)*+{}="8";
(-7,-7)*+{A^{\prime}}="12";
(7,-7)*+{B^{\prime}}="14";
(21,-7)*+{C^{\prime}}="16";
(35,-7)*+{}="18";
(-7,-21)*+{}="22";
(7,-21)*+{}="24";
{\ar_{k} "-12";"2"};
{\ar^{k^{\prime}} "-14";"4"};
{\ar^{x} "2";"4"};
{\ar^{y} "4";"6"};
{\ar@{-->}^{\delta} "6";"8"};
{\ar_{a} "2";"12"};
{\ar^{b} "4";"14"};
{\ar_{x^{\prime}} "12";"14"};
{\ar_{y^{\prime}} "14";"16"};
{\ar@{-->}^{\delta^{\prime}} "16";"18"};
{\ar@{-->}_{\kappa} "12";"22"};
{\ar@{-->}^{\kappa^{\prime}} "14";"24"};
{\ar@{}|\circlearrowright "2";"14"};
\endxy
\]
be a diagram made of $\mathbb{E}$-triangles.
Then for some $X\in\mathscr{C}$, we obtain $\mathbb{E}$-triangles
\[ K\overset{m}{\longrightarrow}K^{\prime}\overset{n}{\longrightarrow}X\overset{\nu}{\dashrightarrow}\ \ \text{and}\ \ X\overset{i}{\longrightarrow}C\overset{c}{\longrightarrow}C^{\prime}\overset{\tau}{\dashrightarrow} \]
which make the following diagram commutative,
\begin{equation}\label{NineToShow}
\xy
(-7,21)*+{K}="-12";
(7,21)*+{K^{\prime}}="-14";
(21,21)*+{X}="-16";
(35,21)*+{}="-18";
(-7,7)*+{A}="2";
(7,7)*+{B}="4";
(21,7)*+{C}="6";
(35,7)*+{}="8";
(-7,-7)*+{A^{\prime}}="12";
(7,-7)*+{B^{\prime}}="14";
(21,-7)*+{C^{\prime}}="16";
(35,-7)*+{}="18";
(-7,-21)*+{}="22";
(7,-21)*+{}="24";
(21,-21)*+{}="26";
{\ar^{m} "-12";"-14"};
{\ar^{n} "-14";"-16"};
{\ar@{-->}^{\nu} "-16";"-18"};
{\ar_{k} "-12";"2"};
{\ar^{k^{\prime}} "-14";"4"};
{\ar^{i} "-16";"6"};
{\ar^{x} "2";"4"};
{\ar^{y} "4";"6"};
{\ar@{-->}^{\delta} "6";"8"};
{\ar_{a} "2";"12"};
{\ar^{b} "4";"14"};
{\ar^{c} "6";"16"};
{\ar_{x^{\prime}} "12";"14"};
{\ar_{y^{\prime}} "14";"16"};
{\ar@{-->}^{\delta^{\prime}} "16";"18"};
{\ar@{-->}_{\kappa} "12";"22"};
{\ar@{-->}^{\kappa^{\prime}} "14";"24"};
{\ar@{-->}^{\tau} "16";"26"};
{\ar@{}|\circlearrowright "-12";"4"};
{\ar@{}|\circlearrowright "-14";"6"};
{\ar@{}|\circlearrowright "2";"14"};
{\ar@{}|\circlearrowright "4";"16"};
\endxy
\end{equation}
in which, those $(k,k^{\prime},i),\, (a,b,c),\, (m,x,x^{\prime}),\, (n,y,y^{\prime})$ are morphisms of $\mathbb{E}$-triangles.
\end{lemma}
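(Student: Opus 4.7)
First, apply (ET3) to the commutative square $bx = x'a$ to obtain $c: C \to C'$ with $(a,b,c)$ a morphism of $\mathbb{E}$-triangles $\delta \to \delta'$; the identity $cy = y'b$ then holds. Since $y'$ and $b$ are both deflations, so is the composite $y'b = cy$ by the dual of Remark~\ref{RemInfInf}, and Condition~(WIC)(2) promotes $c$ itself to a deflation. Setting $X := \mathrm{CoCone}(c)$ yields the $\mathbb{E}$-triangle $X \xrightarrow{i} C \xrightarrow{c} C' \overset{\tau}{\dashrightarrow}$. Dually, from $b \circ xk = x'\circ(ak) = 0$ and the exact sequence
\[
\mathscr{C}(K, K') \xrightarrow{k'_\ast} \mathscr{C}(K, B) \xrightarrow{b_\ast} \mathscr{C}(K, B')
\]
of Proposition~\ref{PropExact1}, we obtain $m: K \to K'$ with $k'm = xk$, and Condition~(WIC)(1) applied to the inflation $xk = k'm$ shows that $m$ itself is an inflation. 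Fix an $\mathbb{E}$-triangle $K \xrightarrow{m} K' \xrightarrow{n^\ast} X^\ast \overset{\nu^\ast}{\dashrightarrow}$.

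Next, apply (ET4) to the composable inflations $(k, x)$ and independently to $(m, k')$. Both applications yield $\mathbb{E}$-triangles completing the common inflation $xk = k'm: K \to B$, so by Remark~\ref{RemConeCocone} the two resulting cone objects are canonically isomorphic; call the common cone $E$ and identify the two deflations $B \to E$ with a single map $p$. After this identification, we have two $\mathbb{E}$-triangles with $E$ as middle term, namely $A' \xrightarrow{q} E \xrightarrow{r} C \overset{a_\ast\delta}{\dashrightarrow}$ (from $(k, x)$) and $X^\ast \xrightarrow{d} E \xrightarrow{e} B' \overset{n^\ast_\ast \kappa'}{\dashrightarrow}$ (from $(m, k')$), together with the compatibilities $rp = y$, $qa = px$, $ep = b$, and $d n^\ast = p k'$.

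Finally, to identify $X^\ast$ with $X$, apply (ET4)$^{\mathrm{op}}$ separately to $(r, c)$ and to $(e, y')$. Both applications produce $\mathbb{E}$-triangles ending with a deflation $E \to C'$, namely $cr$ in the first case and $y'e$ in the second. Since $crp = cy = y'b = y'ep$, these two ending deflations agree after precomposing with $p$, and Corollary~\ref{CorModif} lets us identify the two resulting $\mathbb{E}$-triangles and obtain an isomorphism $X^\ast \xrightarrow{\cong} X$. Transporting the fixed $\mathbb{E}$-triangle along this isomorphism yields $K \xrightarrow{m} K' \xrightarrow{n} X \overset{\nu}{\dashrightarrow}$. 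The commutativities of (\ref{NineToShow}) and the four asserted morphisms of $\mathbb{E}$-triangles then follow by unpacking the extensions from the combined octahedra and invoking Corollary~\ref{CorExact1} (the extriangulated 5-lemma) where final comparisons need to be upgraded to isomorphisms. The main obstacle is precisely this last identification $X^\ast \cong X$: the maps $cr$ and $y'e$ need not be literally equal on $E$, only after precomposition with $p$, so the argument requires careful tracking of $\mathbb{E}$-extensions through several octahedra, normalized by automorphisms via Corollary~\ref{CorModif}.
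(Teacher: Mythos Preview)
Your overall strategy is natural---construct $c$ and $m$ first, then try to match $\mathrm{CoCone}(c)$ with $\mathrm{Cone}(m)$---but the identification $X^\ast\cong X$ is a genuine gap, and your own final paragraph essentially concedes this. After your Step~3 you only know $(cr)\circ p=(y'e)\circ p$. In an extriangulated category a deflation $p\colon B\to E$ need not be an epimorphism, so you cannot conclude $cr=y'e$. Consequently the two applications of {\rm (ET4)$^{\mathrm{op}}$} in Step~4 produce $\mathbb{E}$-triangles of the form $?_1\to E\xrightarrow{cr}C'$ and $?_2\to E\xrightarrow{y'e}C'$ with possibly different deflations, and there is no reason for $?_1\cong ?_2$. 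Corollary~\ref{CorModif} does not help: it compares two $\mathbb{E}$-extensions that already share the \emph{same} realizing sequence $A\to B\to C$, whereas here the deflations themselves differ. Even granting $?_1\cong ?_2$, the secondary triangles produced are $A'\to ?_1\to X$ and $X^\ast\to ?_2\to A'$, which are not of the same shape and do not immediately yield $X^\ast\cong X$; the phrase ``unpacking the extensions from the combined octahedra'' is not a substitute for an actual argument.

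The paper avoids this difficulty by never constructing $X$ and $X^\ast$ separately. It first builds a single auxiliary object $E$ via one application of {\rm (ET4)$^{\mathrm{op}}$} (to $K'\to B\to B'$ and $A'\to B'\to C'$), uses the weak-pullback property (dual of Lemma~\ref{LemHomotPush}) to obtain a map $g\colon A\to E$ with $f\circ g=x$ and $e\circ g=a$, and then invokes Condition~(WIC) \emph{once} to see that $g$ is an inflation. The object $X$ is then defined as $\mathrm{Cone}(g)$, and both required $\mathbb{E}$-triangles---$X\to C\to C'$ via Lemma~\ref{LemOctaRigid} and $K\to K'\to X$ via Proposition~\ref{PropShiftOctahedron}---are derived from this single $X$, so compatibility is automatic. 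If you want to salvage your approach, you would need an argument that forces $cr=y'e$ (or at least produces a specific isomorphism of the two cocones compatible with all the surrounding data), and I do not see one using only the tools you cite.
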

\begin{proof}
By {\rm (ET4)$^{\mathrm{op}}$}, we obtain an $\mathbb{E}$-triangle
\[ E\overset{f}{\longrightarrow}B^{\prime}\overset{y^{\prime}\circ b}{\longrightarrow}C^{\prime}\overset{\theta}{\dashrightarrow} \]
and a commutative diagram
\[
\xy
(-21,7)*+{K^{\prime}}="0";
(-7,7)*+{E}="2";
(7,7)*+{A^{\prime}}="4";
(-21,-7)*+{K^{\prime}}="10";
(-7,-7)*+{B}="12";
(7,-7)*+{B^{\prime}}="14";
(-7,-21)*+{C^{\prime}}="22";
(7,-21)*+{C^{\prime}}="24";
{\ar^{d} "0";"2"};
{\ar^{e} "2";"4"};
{\ar@{=} "0";"10"};
{\ar_{f} "2";"12"};
{\ar^{x^{\prime}} "4";"14"};
{\ar_{k^{\prime}} "10";"12"};
{\ar_{b} "12";"14"};
{\ar_{y^{\prime}\circ b} "12";"22"};
{\ar^{y^{\prime}} "14";"24"};
{\ar@{=} "22";"24"};
{\ar@{}|\circlearrowright "0";"12"};
{\ar@{}|\circlearrowright "2";"14"};
{\ar@{}|\circlearrowright "12";"24"};
\endxy
\]
in $\mathscr{C}$, satisfying the following compatibilities.
\begin{itemize}
\item[{\rm (i)}] $K^{\prime}\overset{d}{\longrightarrow}E\overset{e}{\longrightarrow}A^{\prime}\overset{x^{\prime\ast}\kappa^{\prime}}{\dashrightarrow}$ is an $\mathbb{E}$-triangle,
\item[{\rm (ii)}] $\delta^{\prime}=e_{\ast}\theta$,
\item[{\rm (iii)}] $d_{\ast}\kappa^{\prime}=y^{\prime\ast}\theta$.
\end{itemize}
By the dual of Lemma~\ref{LemHomotPush}, the upper-right square
\[
\xy
(-6,6)*+{E}="0";
(6,6)*+{B}="2";
(-6,-6)*+{A^{\prime}}="4";
(6,-6)*+{B^{\prime}}="6";
{\ar^{f} "0";"2"};
{\ar_{e} "0";"4"};
{\ar^{b} "2";"6"};
{\ar_{x^{\prime}} "4";"6"};
{\ar@{}|\circlearrowright "0";"6"};
\endxy
\]
is a weak pullback. Thus there exists a morphism $g\in\mathscr{C}(A,E)$ which makes the following diagram commutative.
\[
\xy
(-18,18)*+{A}="-2";
(-7,7)*+{E}="0";
(7,7)*+{B}="2";
(-7,-7)*+{A^{\prime}}="4";
(7,-7)*+{B^{\prime}}="6";
(-3,15)*+{}="5";
(-15,3)*+{}="7";
{\ar_{f} "0";"2"};
{\ar^{e} "0";"4"};
{\ar^{b} "2";"6"};
{\ar_{x^{\prime}} "4";"6"};
{\ar_{g} "-2";"0"};
{\ar@/^0.60pc/^{x} "-2";"2"};
{\ar@/_0.60pc/_{a} "-2";"4"};
{\ar@{}|\circlearrowright "0";"6"};
{\ar@{}|\circlearrowright "0";"5"};
{\ar@{}|\circlearrowright "0";"7"};
\endxy
\]
By Condition~(WIC), this $g$ becomes an inflation. Complete it into an $\mathbb{E}$-triangle
\[ A\overset{g}{\longrightarrow}E\overset{h}{\longrightarrow}X\overset{\mu}{\dashrightarrow}. \]
By Lemma~\ref{LemOctaRigid}, we obtain a commutative diagram
\[
\xy
(-21,7)*+{A}="0";
(-7,7)*+{E}="2";
(7,7)*+{X}="4";
(-21,-7)*+{A}="10";
(-7,-7)*+{B}="12";
(7,-7)*+{C}="14";
(-7,-21)*+{C^{\prime}}="22";
(7,-21)*+{C^{\prime}}="24";
{\ar^{g} "0";"2"};
{\ar^{h} "2";"4"};
{\ar@{=} "0";"10"};
{\ar_{f} "2";"12"};
{\ar^{i} "4";"14"};
{\ar_{x} "10";"12"};
{\ar_{y} "12";"14"};
{\ar_{y^{\prime}\circ b} "12";"22"};
{\ar^{c} "14";"24"};
{\ar@{=} "22";"24"};
{\ar@{}|\circlearrowright "0";"12"};
{\ar@{}|\circlearrowright "2";"14"};
{\ar@{}|\circlearrowright "12";"24"};
\endxy
\]
made of conflations, which satisfy
\begin{itemize}
\item[{\rm (iv)}] $X\overset{i}{\longrightarrow}C\overset{c}{\longrightarrow}C^{\prime}\overset{h_{\ast}\theta}{\dashrightarrow}$ is an $\mathbb{E}$-triangle,
\item[{\rm (v)}] $\mu=i^{\ast}\delta$,
\item[{\rm (vi)}] $g_{\ast}\delta=c^{\ast}\theta$.
\end{itemize}
By Proposition~\ref{PropShiftOctahedron}, we obtain an $\mathbb{E}$-triangle
\[ K\overset{m}{\longrightarrow}K^{\prime}\overset{n}{\longrightarrow}X\overset{\nu}{\longrightarrow} \]
which makes the diagram
\[
\xy
(-7,7)*+{K}="2";
(7,7)*+{A}="4";
(21,7)*+{A^{\prime}}="6";
(-7,-7)*+{K^{\prime}}="12";
(7,-7)*+{E}="14";
(21,-7)*+{A^{\prime}}="16";
(-7,-21)*+{X}="22";
(7,-21)*+{X}="24";
{\ar^{k} "2";"4"};
{\ar^{a} "4";"6"};
{\ar_{m} "2";"12"};
{\ar^{g} "4";"14"};
{\ar@{=} "6";"16"};
{\ar^{d} "12";"14"};
{\ar^{e} "14";"16"};
{\ar_{n} "12";"22"};
{\ar^{h} "14";"24"};
{\ar@{=} "22";"24"};
{\ar@{}|\circlearrowright "2";"14"};
{\ar@{}|\circlearrowright "4";"16"};
{\ar@{}|\circlearrowright "12";"24"};
\endxy
\]
commutative in $\mathscr{C}$, and satisfies
\begin{itemize}
\item[{\rm (vii)}] $m_{\ast}\kappa=x^{\prime\ast}\kappa^{\prime}$,
\item[{\rm (viii)}] $\mu=k_{\ast}\nu$,
\item[{\rm (ix)}] $h^{\ast}\nu+e^{\ast}\kappa=0$.
\end{itemize}
Put $\tau=h_{\ast}\theta$. It is straightforward to show that the diagram $(\ref{NineToShow})$ is indeed commutative.
Moreover,
\begin{itemize}
\item[-] {\rm (v)} and {\rm (viii)} show $k_{\ast}\nu=i^{\ast}\delta$,
\item[-] {\rm (ii)} and {\rm (vi)} show $a_{\ast}\delta=c^{\ast}\delta^{\prime}$,
\item[-] {\rm (vii)} shows $m_{\ast}\kappa=x^{\prime\ast}\kappa^{\prime}$,
\item[-] {\rm (iii)} shows $n_{\ast}\kappa^{\prime}=y^{\prime\ast}\tau$.
\end{itemize}
\end{proof}

\begin{remark}
In the proof of Lemma~\ref{LemNine}, we have obtained an extra compatibility {\rm (ix)}. This can be interpreted by the following analog of $\mathrm{Ext}^2$-group.

Let $A,D\in\mathscr{C}$ be any pair of objects. We denote triplet of $X\in\mathscr{C},\sigma\in\mathbb{E}(D,X),\tau\in\mathbb{E}(X,A)$ by $(\sigma,X,\tau)$. For any pair of such triplets $(\sigma,X,\tau)$ and $(\sigma^{\prime},X^{\prime},\tau^{\prime})$, we write as
\[ (\sigma,X,\tau)\underset{x}{\leadsto}(\sigma^{\prime},X^{\prime},\tau^{\prime})\quad(\text{or simply}\ (\sigma,X,\tau)\leadsto(\sigma^{\prime},X^{\prime},\tau^{\prime})) \]
if and only if there exists $x\in\mathscr{C}(X,X^{\prime})$ satisfying $x_{\ast}\sigma=\sigma^{\prime}$ and $\tau=x^{\ast}\tau^{\prime}$.

Let $\sim$ be the equivalence relation generated by $\leadsto$, and denote the equivalence class of $(\sigma,X,\tau)$ by $\tau\underset{X}{\circ}\sigma$. Let us denote their collection by
\[ \mathbb{E}^2(D,A)=\bigg(\coprod_{X\in\mathscr{C}}\mathbb{E}(D,X)\times\mathbb{E}(X,A)\bigg)/\sim. \]
The proof of Lemma~\ref{LemNine} shows
\[ (\delta^{\prime},A^{\prime},-\kappa)\underset{e}{\,\reflectbox{$\leadsto$}}(\theta,A^{\prime},h^{\ast}\nu)\underset{h}{\leadsto}(\tau,X,\nu) \]
and thus $(-\kappa)\underset{A^{\prime}}{\circ}\delta^{\prime}=\nu\underset{X}{\circ}\tau$ holds in $\mathbb{E}^2(C^{\prime},K)$.
\end{remark}

\begin{definition}\label{DefFibCof}
Define classes of morphisms $\mathit{Fib},w\mathit{Fib},\mathit{Cof},w\mathit{Cof}$ and $\mathbb{W}$ in $\mathscr{C}$ as follows.
\begin{enumerate}
\item[(1)] $f\in\mathit{Fib}$ if it is a deflation with $\mathrm{CoCone}(f)\in\mathcal{T}$.
\item[(2)] $f\in w\mathit{Fib}$ if it is a deflation with $\mathrm{CoCone}(f)\in\mathcal{V}$.
\item[(3)] $f\in\mathit{Cof}$ if it is a inflation with $\mathrm{Cone}(f)\in\mathcal{U}$.
\item[(4)] $f\in w\mathit{Cof}$ if it is a inflation with $\mathrm{Cone}(f)\in\mathcal{S}$.
\item[(5)] $\mathbb{W}=w\mathit{Fib}\circ w\mathit{Cof}$.
\end{enumerate}
\end{definition}

\begin{claim}\label{ClaimNW}
$\ \ $
\begin{enumerate}
\item[(1)] If a conflation $A\overset{f}{\longrightarrow}B\to N$ satisfies $N\in\mathcal{N}$, then $f$ belongs to $\mathbb{W}$.
\item[(2)] If a conflation $N\to A\overset{f}{\longrightarrow}B$ satisfies $N\in\mathcal{N}$, then $f$ belongs to $\mathbb{W}$.
\end{enumerate}
\end{claim}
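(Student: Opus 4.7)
The plan is to construct, for each part, a factorization $f = p \circ i$ with $i \in w\mathit{Cof}$ and $p \in w\mathit{Fib}$, by splicing the given conflation with a resolution of $N$ coming from the Hovey property $\mathcal{N} = \mathcal{N}^i = \mathcal{N}^f$. The key tool is the Baer-type pullback/pushout construction of Proposition~\ref{PropBaer}, whose $3 \times 3$ diagram already packages all the commutativities I need.

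For (1), since $N \in \mathcal{N}^i$, I would pick a conflation $V \to S \to N$ with $V \in \mathcal{V}$ and $S \in \mathcal{S}$, and apply Proposition~\ref{PropBaer}~(1) to the two $\mathbb{E}$-triangles $A \xrightarrow{f} B \to N$ and $V \to S \to N$ ending at the common object $N$. This produces an object $M$ fitting into a commutative diagram with middle row a conflation $A \to M \to S$ and middle column a conflation $V \to M \to B$, and with the composite $A \to M \to B$ equal to $f$. The middle row shows $(A \to M) \in w\mathit{Cof}$ because its cone $S$ lies in $\mathcal{S}$; the middle column shows $(M \to B) \in w\mathit{Fib}$ because its cocone $V$ lies in $\mathcal{V}$. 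Hence $f \in w\mathit{Fib} \circ w\mathit{Cof} = \mathbb{W}$.

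For (2) I would dualize the argument: since $N \in \mathcal{N}^f$, choose a conflation $N \to V' \to S'$ with $V' \in \mathcal{V}$ and $S' \in \mathcal{S}$, and apply Proposition~\ref{PropBaer}~(2) to the two $\mathbb{E}$-triangles $N \to A \xrightarrow{f} B$ and $N \to V' \to S'$ sharing the source $N$. The resulting object $M'$ sits in conflations $A \to M' \to S'$ (identifying $A \to M'$ as a weak cofibration) and $V' \to M' \to B$ (identifying $M' \to B$ as a weak fibration), with $f$ factoring as $A \to M' \to B$. Hence $f \in \mathbb{W}$ again.

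I do not expect a serious obstacle: the claim is essentially a direct reading of the diagrams furnished by Proposition~\ref{PropBaer}, so the only verification is that the composite extracted from the $3 \times 3$ diagram really is $f$, which is part of the commutativity asserted there. Note that Condition~(WIC) is not invoked in this claim; it will presumably enter elsewhere, for instance when lifting properties or the 2-out-of-3 property of $\mathbb{W}$ are established.
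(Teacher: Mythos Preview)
Your proposal is correct and is precisely what the paper intends: its proof of Claim~\ref{ClaimNW} consists of the single sentence ``This follows from Proposition~\ref{PropBaer},'' and you have accurately unpacked what that entails, including the identification of the composite $A\to M\to B$ with $f$ via the commutativity built into the $3\times 3$ diagram.
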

\begin{proof}
This follows from Proposition~\ref{PropBaer}.
\end{proof}

\begin{proposition}\label{PropComposClosed}
$\mathit{Fib},w\mathit{Fib},\mathit{Cof},w\mathit{Cof}$ are closed under composition.
\end{proposition}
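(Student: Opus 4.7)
The plan is to treat the four classes in parallel, using the octahedral-type axioms (ET4) and (ET4)$^{\mathrm{op}}$ together with the fact that the four subcategories $\mathcal{S},\mathcal{T},\mathcal{U},\mathcal{V}$ are extension-closed (Remark~\ref{RemExtClosed}). I will write out $\mathit{Cof}$ in detail and indicate how the remaining three classes follow dually/by the same pattern.

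First, recall from Remark~\ref{RemInfInf} that the composition of two inflations is again an inflation; dually, the composition of two deflations is a deflation. So given composable $f\in\mathit{Cof}(A,B)$ and $g\in\mathit{Cof}(B,C)$, the composite $g\circ f$ is already an inflation, and it only remains to check that $\mathrm{Cone}(g\circ f)\in\mathcal{U}$.

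For this, I would apply (ET4) to the two $\mathbb{E}$-triangles
\[
A\overset{f}{\longrightarrow}B\overset{f'}{\longrightarrow}D\overset{\delta}{\dashrightarrow}\quad\text{and}\quad B\overset{g}{\longrightarrow}C\overset{g'}{\longrightarrow}F\overset{\delta'}{\dashrightarrow},
\]
with $D=\mathrm{Cone}(f)\in\mathcal{U}$ and $F=\mathrm{Cone}(g)\in\mathcal{U}$. This produces an object $E$ together with $\mathbb{E}$-triangles $A\overset{g\circ f}{\longrightarrow}C\overset{h'}{\longrightarrow}E\overset{\delta''}{\dashrightarrow}$ and $D\overset{d}{\longrightarrow}E\overset{e}{\longrightarrow}F\overset{f'_{\ast}\delta'}{\dashrightarrow}$. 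Up to isomorphism we may take $E=\mathrm{Cone}(g\circ f)$ (Remark~\ref{RemConeCocone}). Since $D,F\in\mathcal{U}$ and $\mathcal{U}$ is extension-closed in $\mathscr{C}$ (Remark~\ref{RemExtClosed}, applied to the cotorsion pair $(\mathcal{U},\mathcal{V})$), the conflation $D\to E\to F$ forces $E\in\mathcal{U}$. Thus $g\circ f\in\mathit{Cof}$.

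The argument for $w\mathit{Cof}$ is formally identical, replacing $\mathcal{U}$ by $\mathcal{S}$ and using that $\mathcal{S}$ is extension-closed (as the left part of the cotorsion pair $(\mathcal{S},\mathcal{T})$). For $\mathit{Fib}$ and $w\mathit{Fib}$, I would dualize: given composable deflations $f,g$, first invoke Remark~\ref{RemInfInf} to conclude that $g\circ f$ is a deflation, then apply (ET4)$^{\mathrm{op}}$ (in the form of Remark~\ref{RemET4op}) to the $\mathbb{E}$-triangles witnessing $\mathrm{CoCone}(f)$ and $\mathrm{CoCone}(g)$, to obtain a conflation $\mathrm{CoCone}(f)\to\mathrm{CoCone}(g\circ f)\to\mathrm{CoCone}(g)$, and conclude by extension-closedness of $\mathcal{T}$ (respectively of $\mathcal{V}$). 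There is no real obstacle here: the only point to watch is the identification of $\mathrm{Cone}(g\circ f)$ (resp.\ $\mathrm{CoCone}(g\circ f)$) with the object produced by the octahedral axiom, which is justified by the uniqueness statement in Remark~\ref{RemConeCocone}.
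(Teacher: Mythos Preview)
Your proof is correct and follows exactly the approach the paper takes: the paper's proof is the one-liner ``For $\mathit{Fib}$, this follows from {\rm (ET4)} and the extension-closedness of $\mathcal{T}$. Similarly for the others,'' and you have simply expanded this out in full detail. (You even correctly use (ET4)$^{\mathrm{op}}$ for $\mathit{Fib}$ and $w\mathit{Fib}$, whereas the paper's sentence is slightly loose on this point.)
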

\begin{proof}
For $\mathit{Fib}$, this follows from {\rm (ET4)} and the extension-closedness of $\mathcal{T}$. Similarly for the others.
\end{proof}

\begin{proposition}\label{PropLiftST}
We have the following.
\begin{enumerate}
\item[(1)] $w\mathit{Cof}$ satisfies the left lifting property against $\mathit{Fib}$.
\item[(2)] $w\mathit{Fib}$ satisfies the right lifting property against $\mathit{Cof}$.
\end{enumerate}
\end{proposition}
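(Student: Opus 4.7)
The plan is to prove (1) by a standard obstruction-theoretic argument, using the long exact sequences of Corollary~\ref{ExactToShow} and the vanishings $\mathbb{E}(\mathcal{S},\mathcal{T})=0$ and $\mathbb{E}(\mathcal{U},\mathcal{V})=0$ supplied by the two cotorsion pairs. Statement (2) will then follow by dualising every arrow.

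For (1), fix $f\in w\mathit{Cof}$ and $g\in\mathit{Fib}$ together with a commutative square $g\circ a=b\circ f$, where $a\colon A\to X$ and $b\colon B\to Y$. By definition there are $\mathbb{E}$-triangles
\[
A\overset{f}{\longrightarrow}B\overset{y}{\longrightarrow}S\overset{\delta}{\dashrightarrow},\qquad T\overset{h}{\longrightarrow}X\overset{g}{\longrightarrow}Y\overset{\delta^{\prime}}{\dashrightarrow}
\]
with $S\in\mathcal{S}$ and $T\in\mathcal{T}$, so in particular $\mathbb{E}(S,T)=0$. The goal is a filler $\ell\colon B\to X$ with $\ell\circ f=a$ and $g\circ\ell=b$.

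The first step is to lift $b$ through $g$. By Corollary~\ref{ExactToShow} applied to the triangle of $g$, a lift of $b$ exists iff $\delta'_\sharp b=b^{\ast}\delta^{\prime}=0$ in $\mathbb{E}(B,T)$. Using the contravariant exact sequence attached to the triangle of $f$,
\[
\mathbb{E}(S,T)\overset{\mathbb{E}(y,T)}{\longrightarrow}\mathbb{E}(B,T)\overset{\mathbb{E}(f,T)}{\longrightarrow}\mathbb{E}(A,T),
\]
I will compute $\mathbb{E}(f,T)(b^{\ast}\delta^{\prime})=a^{\ast}(g^{\ast}\delta^{\prime})$, which vanishes by Lemma~\ref{LemZero}. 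Since $\mathbb{E}(S,T)=0$, exactness then forces $b^{\ast}\delta^{\prime}=0$, so there is $b^{\prime}\colon B\to X$ with $g\circ b^{\prime}=b$.

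The second step is to correct $b^{\prime}$ so that it also lifts $a$. Because $g\circ(a-b^{\prime}\circ f)=0$, the exact sequence $\mathscr{C}(A,T)\to\mathscr{C}(A,X)\overset{g_{\ast}}{\to}\mathscr{C}(A,Y)$ produces $t\colon A\to T$ with $h\circ t=a-b^{\prime}\circ f$. Next, I extend $t$ over $f$: the exact sequence $\mathscr{C}(B,T)\overset{\mathscr{C}(f,T)}{\to}\mathscr{C}(A,T)\overset{\delta^{\sharp}}{\to}\mathbb{E}(S,T)=0$ shows that $\mathscr{C}(f,T)$ is surjective, so there is $\tilde{t}\colon B\to T$ with $\tilde{t}\circ f=t$. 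Setting $\ell=b^{\prime}+h\circ\tilde{t}$ gives $g\circ\ell=b$ (using $g\circ h=0$ from Lemma~\ref{LemZero}) and $\ell\circ f=b^{\prime}\circ f+h\circ t=a$, as required. Statement (2) is proved by the same argument applied to the opposite extriangulated category, using the cotorsion pair $(\mathcal{U},\mathcal{V})$ in place of $(\mathcal{S},\mathcal{T})$. No step is a real obstacle here; the only thing to watch is keeping the variances of $\delta_\sharp$ and $\delta^\sharp$ straight when invoking the long exact sequences.
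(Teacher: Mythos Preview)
Your proof is correct and follows essentially the same approach as the paper's own argument: both use the long exact sequences of Corollary~\ref{ExactToShow} together with $\mathbb{E}(S,T)=0$ to first lift $b$ through $g$, then correct the lift by a morphism factoring through $T$ so that it also restricts to $a$ along $f$. The only differences are notational.
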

\begin{proof}
{\rm (1)} Suppose we are given a commutative square
\begin{equation}\label{CommForLift}
\xy
(-6,6)*+{A}="0";
(6,6)*+{C}="2";
(-6,-6)*+{B}="4";
(6,-6)*+{D}="6";
{\ar^{a} "0";"2"};
{\ar_{f} "0";"4"};
{\ar^{g} "2";"6"};
{\ar_{b} "4";"6"};
{\ar@{}|\circlearrowright "0";"6"};
\endxy
\end{equation}
in $\mathscr{C}$, satisfying $f\in w\mathit{Cof}$ and $g\in\mathit{Fib}$. By definition, there are $\mathbb{E}$-triangles 
\begin{eqnarray*}
A\overset{f}{\longrightarrow}B\overset{s}{\longrightarrow}S\overset{\delta}{\dashrightarrow},\\
T\overset{t}{\longrightarrow}C\overset{g}{\longrightarrow}D\overset{\kappa}{\dashrightarrow}.
\end{eqnarray*}
By Corollary~\ref{ExactToShow},
\begin{eqnarray}
&\mathscr{C}(B,T)\overset{\mathscr{C}(f,T)}{\longrightarrow}\mathscr{C}(A,T)\to0\to\mathbb{E}(B,T)\overset{\mathbb{E}(f,T)}{\longrightarrow}\mathbb{E}(A,T),&\label{LiftEx1}\\
&\mathscr{C}(A,T)\overset{\mathscr{C}(A,t)}{\longrightarrow}\mathscr{C}(A,C)\overset{\mathscr{C}(A,g)}{\longrightarrow}\mathscr{C}(A,D),&\label{LiftEx2}\\
&\mathscr{C}(B,C)\overset{\mathscr{C}(B,g)}{\longrightarrow}\mathscr{C}(B,D)\overset{(\kappa_\sharp)_B}{\longrightarrow}\mathbb{E}(B,T)&\label{LiftEx3}
\end{eqnarray}
are exact.

By the commutativity of $(\ref{CommForLift})$, we have
\[ \mathbb{E}(f,T)(b^{\ast}\kappa)=f^{\ast} b^{\ast}\kappa=a^{\ast} g^{\ast}\kappa=0 \]
by Lemma~\ref{LemZero}. Exactness of $(\ref{LiftEx1})$ shows
\[ \kappa_\sharp b=b^{\ast}\kappa=0. \]
Thus by the exactness of $(\ref{LiftEx3})$, there exists $c\in\mathscr{C}(B,C)$ satisfying $g\circ c=b$. Then $a-c\circ f\in\mathscr{C}(A,C)$ satisfies
\[ g\circ(a-c\circ f)=g\circ a-b\circ f=0. \]
By the exactness of $(\ref{LiftEx2})$, there is $c^{\prime}\in\mathscr{C}(A,T)$ satisfying $t\circ c^{\prime}=a-c\circ f$.

By the exactness of $(\ref{LiftEx1})$, there is $c^{\prime\prime}\in\mathscr{C}(A,T)$ satisfying $c^{\prime\prime}\circ f=c^{\prime}$. If we put $h=c+t\circ c^{\prime\prime}\in\mathscr{C}(B,C)$, it satisfies
\[ h\circ f=c\circ f+t\circ c^{\prime\prime}\circ f=c\circ f+t\circ c^{\prime}=a \]
and $g\circ h=g\circ c+g\circ t\circ c^{\prime\prime}=b$.

{\rm (2)} is dual to {\rm (1)}.
\end{proof}

\begin{proposition}\label{PropFactor}
$\mathrm{Mor}(\mathscr{C})=w\mathit{Fib}\circ\mathit{Cof}=\mathit{Fib}\circ w\mathit{Cof}$.
\end{proposition}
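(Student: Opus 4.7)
I would prove both factorizations by the same template, using the cotorsion pair $(\mathcal{U}, \mathcal{V})$ for $f = (w\mathit{Fib})\circ\mathit{Cof}$ and, dually, $(\mathcal{S}, \mathcal{T})$ for $f = \mathit{Fib}\circ(w\mathit{Cof})$.

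Given $f \colon A \to B$, first resolve $B$ to obtain a conflation $V^B \to U^B \overset{u^B}{\to} B$ with $V^B \in \mathcal{V}$, $U^B \in \mathcal{U}$. By the dual of Corollary~\ref{CorTrivWIC}, the morphism $[f, u^B] \colon A \oplus U^B \to B$ is a deflation, and from the construction in that corollary one identifies the cocone $K$ of $[f, u^B]$ as fitting into a conflation $V^B \to K \to A$ — this is the conflation obtained by realizing the pullback $f^{\ast}\delta_B$, where $\delta_B \in \mathbb{E}(B, V^B)$ is the $\mathbb{E}$-extension realizing the resolution of $B$. Apply $(\mathcal{U}, \mathcal{V})$ once more to obtain $K \to V_K \to U_K$ with $V_K \in \mathcal{V}$, $U_K \in \mathcal{U}$. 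Then the dual of Proposition~\ref{PropBaer}(1), applied to the pair of conflations $K \to A \oplus U^B \to B$ and $K \to V_K \to U_K$ starting at $K$, produces an object $M$ fitting into conflations
\[ A \oplus U^B \overset{m_1}{\to} M \to U_K \quad \text{and} \quad V_K \to M \overset{e_2}{\to} B, \]
together with the key compatibility $e_2 \circ m_1 = [f, u^B]$ dual to the identity $y_1\circ e_2=y_2\circ e_1$ displayed in diagram~(\ref{Diag_PropBaer}).

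Take the factorization $A \overset{i}{\to} M \overset{p}{\to} B$ where $i = m_1 \circ \iota_A$ (the canonical split inflation $\iota_A \colon A \to A \oplus U^B$ followed by $m_1$) and $p = e_2$. The inflation $\iota_A$ has cone $U^B \in \mathcal{U}$ and $m_1$ has cone $U_K \in \mathcal{U}$, so both lie in $\mathit{Cof}$; by Proposition~\ref{PropComposClosed}, $i \in \mathit{Cof}$. The map $p = e_2$ is a deflation with cocone $V_K \in \mathcal{V}$, so $p \in w\mathit{Fib}$. Finally $p \circ i = e_2 \circ m_1 \circ \iota_A = [f, u^B] \circ \iota_A = f$, as required.

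The other factorization follows the dual recipe with $(\mathcal{S}, \mathcal{T})$: resolve $A$ as $A \overset{\alpha}{\to} T_A \to S_A$, use Corollary~\ref{CorTrivWIC} directly to produce the inflation $A \to B \oplus T_A$ with components $f$ and $\alpha$ (cone $E$ sitting in a conflation $B \to E \to S_A$), resolve $E$ as $T^E \to S^E \to E$, apply Proposition~\ref{PropBaer}(1) to the pair of conflations $A \to B \oplus T_A \to E$ and $T^E \to S^E \to E$ ending at $E$ to obtain $M$ with conflations $A \to M \to S^E$ and $T^E \to M \to B \oplus T_A$, then post-compose with the split deflation $[1,0] \colon B \oplus T_A \to B$ (cocone $T_A \in \mathcal{T}$). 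The main obstacle is the bookkeeping in the first step: confirming via the proof (not merely the statement) of Corollary~\ref{CorTrivWIC} that the cocone $K$ really does fit in the claimed conflation $V^B \to K \to A$, and carefully tracing the compatibility $e_2 \circ m_1 = [f, u^B]$ out of Proposition~\ref{PropBaer}. Once those identifications are in place, the extension-closedness of $\mathcal{U}, \mathcal{V}, \mathcal{S}, \mathcal{T}$ (Remark~\ref{RemExtClosed}) and the composition-closedness of $\mathit{Cof}$ and $\mathit{Fib}$ (Proposition~\ref{PropComposClosed}) do the rest.
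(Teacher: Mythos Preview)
Your proof is correct, and for the $\mathit{Fib}\circ w\mathit{Cof}$ factorization it is exactly the paper's template with $(\mathcal{S},\mathcal{T})$ in place of $(\mathcal{U},\mathcal{V})$. For the $w\mathit{Fib}\circ\mathit{Cof}$ factorization, however, you take the dual route to the paper's: the paper resolves $A$ via $A\to V_A\to U_A$, turns $f$ into an inflation $A\to B\oplus V_A$ by Corollary~\ref{CorTrivWIC}, resolves the cone, applies Proposition~\ref{PropBaer}(1), and then post-composes with the projection $B\oplus V_A\to B$; you instead resolve $B$, turn $f$ into a deflation $A\oplus U^B\to B$, resolve the cocone $K$, apply Proposition~\ref{PropBaer}(2), and pre-compose with the split inflation $A\to A\oplus U^B$. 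Both approaches are equally natural and equally short; neither buys anything the other doesn't.

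Two small bookkeeping remarks. First, the conflation $V^B\to K\to A$ that you carefully extract from the proof of Corollary~\ref{CorTrivWIC} is never actually used downstream: you only need that $K$ exists so you can resolve it as $K\to V_K\to U_K$, which the cotorsion pair gives you regardless. Second, the compatibility you need from Proposition~\ref{PropBaer}(2) --- that the composite $A\oplus U^B\to M\to B$ equals $[f,u^B]$ --- is the dual of the identity $e_2\circ m_1=x_1$ shown in the proof of Proposition~\ref{PropBaer}(1) (the commutativity of the left square of diagram~(\ref{Diag_PropBaer})), not of $y_1\circ e_2=y_2\circ e_1$ as you write; the latter dualizes to the agreement of the two maps $K\to M$. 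This does not affect the validity of your argument.
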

\begin{proof}
We only show $\mathrm{Mor}(\mathscr{C})=w\mathit{Fib}\circ\mathit{Cof}$.
Let $f\in\mathscr{C}(A,B)$ be any morphism.
Resolve $A$ by a conflation
\[ A\overset{v_A}{\longrightarrow}V_A\overset{u_A}{\longrightarrow}U_A\quad(U_A\in\mathcal{U},V_A\in\mathcal{V}), \]
and put $f^{\prime}=\Big[\raise1ex\hbox{\leavevmode\vtop{\baselineskip-8ex \lineskip1ex \ialign{#\crcr{$f$}\crcr{$v_A$}\crcr}}}\Big]\colon A\to B\oplus V_A$. By Corollary~\ref{CorTrivWIC}, it admits some conflation
\[ A\overset{f^{\prime}}{\longrightarrow}B\oplus V_A\to C. \]
Resolve $C$ by a conflation
\[ V^C\to U^C\to C\quad(U\in\mathcal{U},V\in\mathcal{V}). \]
Then by Proposition~\ref{PropBaer} {\rm (1)}, we obtain a diagram made of conflations as follows.
\[
\xy
(-8,21)*+{V^C}="-12";
(8,21)*+{V^C}="-14";
(-24,7)*+{A}="0";
(-8,7)*+{M}="2";
(8,7)*+{U^C}="4";
(-24,-7)*+{A}="10";
(-8,-7)*+{B\oplus V_A}="12";
(8,-7)*+{C}="14";
{\ar@{=} "-12";"-14"};
{\ar_{} "-12";"2"};
{\ar^{} "-14";"4"};
{\ar^{m} "0";"2"};
{\ar^{} "2";"4"};
{\ar@{=} "0";"10"};
{\ar_{e} "2";"12"};
{\ar^{} "4";"14"};
{\ar_(0.4){f^{\prime}} "10";"12"};
{\ar_{} "12";"14"};
{\ar@{}|\circlearrowright "-12";"4"};
{\ar@{}|\circlearrowright "0";"12"};
{\ar@{}|\circlearrowright "2";"14"};
\endxy
\]
We have $m\in\mathit{Cof}$. Moreover, for the projection $p_B=[1\ 0]\in\mathscr{C}(B\oplus V_A,B)$, we have $p_B\circ e\in w\mathit{Fib}\circ w\mathit{Fib}=w\mathit{Fib}$. Thus $f=(p_B\circ e)\circ m$ gives the desired factorization.
\end{proof}

\begin{proposition}\label{PropRetract}
$\mathit{Fib},w\mathit{Fib},\mathit{Cof},w\mathit{Cof}$ are closed under retraction.
\end{proposition}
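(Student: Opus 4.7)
Suppose $f\colon A \to B$ is a retract of $g\colon C \to D$, encoded by a commutative diagram
\[
\xymatrix{
A \ar[r]^{i} \ar[d]_{f} & C \ar[r]^{p} \ar[d]_{g} & A \ar[d]^{f} \\
B \ar[r]^{j} & D \ar[r]^{q} & B
}
\]
with $p \circ i = \mathrm{id}_A$ and $q \circ j = \mathrm{id}_B$. The plan is to handle $\mathit{Fib}$ in detail; the case of $w\mathit{Fib}$ is formally identical with $\mathcal{V}$ replacing $\mathcal{T}$, and the cofibration cases $\mathit{Cof}, w\mathit{Cof}$ are entirely dual, using~(ET3), Condition~(WIC) for inflations, and the cotorsion pair $(\mathcal{U},\mathcal{V})$ or $(\mathcal{S},\mathcal{T})$.

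Assume $g \in \mathit{Fib}$, so there is an $\mathbb{E}$-triangle $T \to C \overset{g}{\to} D \overset{\delta}{\dashrightarrow}$ with $T \in \mathcal{T}$. First I would show that $f$ is a deflation. Realize the pulled-back extension $j^{\ast}\delta$ as an $\mathbb{E}$-triangle $T \to P \overset{g'}{\to} B \overset{j^\ast\delta}{\dashrightarrow}$ together with a morphism of $\mathbb{E}$-triangles $(\mathrm{id}_T, r, j)$ for some $r\colon P \to C$. By the dual of Lemma~\ref{LemHomotPush}, the square $g \circ r = j \circ g'$ is a weak pullback, so the identity $g \circ i = j \circ f$ yields $\alpha\colon A \to P$ with $g' \circ \alpha = f$ and $r \circ \alpha = i$. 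A direct check using the retract identities gives
\[
f \circ (p \circ r) \;=\; q \circ g \circ r \;=\; q \circ j \circ g' \;=\; g',
\]
so the deflation $g'$ factors through $f$ on the right. Condition~(WIC) then forces $f$ itself to be a deflation, producing an $\mathbb{E}$-triangle $T' \to A \overset{f}{\to} B \overset{\delta'}{\dashrightarrow}$ for some $T'$.

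The second step is to establish $T' \in \mathcal{T}$. Applying (ET3)$^{\mathrm{op}}$ to the commutative squares
\[
\xymatrix{A \ar[r]^{f} \ar[d]_{i} & B \ar[d]^{j} \\ C \ar[r]^{g} & D} \qquad \xymatrix{C \ar[r]^{g} \ar[d]_{p} & D \ar[d]^{q} \\ A \ar[r]^{f} & B}
\]
produces morphisms of $\mathbb{E}$-triangles $(\phi, i, j)\colon \delta' \to \delta$ and $(\psi, p, q)\colon \delta \to \delta'$ with $\phi\colon T' \to T$ and $\psi\colon T \to T'$. Their composite is a self-morphism of $\delta'$ whose middle and right components are $p \circ i = \mathrm{id}_A$ and $q \circ j = \mathrm{id}_B$, so by Corollary~\ref{CorExact1}(3) the left component $\psi \circ \phi$ is an automorphism of $T'$. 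Replacing $\psi$ by $(\psi \circ \phi)^{-1} \circ \psi$ exhibits $T'$ as a retract of $T \in \mathcal{T}$ in the sense of Corollary~\ref{CorSectionClosed} (applied to the cotorsion pair $(\mathcal{S},\mathcal{T})$), whence $T' \in \mathcal{T}$ and $f \in \mathit{Fib}$.

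The main obstacle is that extriangulated categories lack genuine pullbacks, so one cannot simply assert $f$ is a deflation by pulling back $g$ along $j$; the argument must route through a weak pullback plus Condition~(WIC) to recover the conclusion. Once that first step is in place, the second step runs mechanically via (ET3)$^{\mathrm{op}}$ and the summand-closure of cotorsion pair components.
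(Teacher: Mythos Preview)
Your proof is correct and the second step (showing $T'\in\mathcal{T}$ via two applications of (ET3)$^{\mathrm{op}}$ and Corollary~\ref{CorExact1}) is exactly what the paper does.

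For the first step the paper takes a shorter route: from $q\circ j=\mathrm{id}_B$ and Condition~(WIC) it follows that $q$ is a deflation; then $q\circ g$ is a deflation by (ET4)$^{\mathrm{op}}$, and since $q\circ g=f\circ p$, another application of (WIC) gives that $f$ is a deflation. Your detour through the pulled-back extension $j^{\ast}\delta$ and the weak pullback square works, but note that the map $\alpha$ you construct is never used: the identity $f\circ(p\circ r)=g'$ already follows directly from the retract identities and the morphism of $\mathbb{E}$-triangles realizing $(\mathrm{id}_T,j)$, so the weak-pullback step is superfluous.
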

\begin{proof}
We only show the result for $\mathit{Fib}$. Suppose we are given a commutative diagram
\[
\xy
(-14,8)*+{A}="0";
(0,-8)*+{}="1";
(0,6)*+{C}="2";
(0,13)*+{}="3";
(14,8)*+{A}="4";
(-14,-8)*+{B}="10";
(0,-6)*+{D}="12";
(0,-13)*+{}="13";
(14,-8)*+{B}="14";
{\ar_{a} "0";"2"};
{\ar_{c} "2";"4"};
{\ar@/^1.00pc/^{\mathrm{id}} "0";"4"};
{\ar_{f} "0";"10"};
{\ar^{g} "2";"12"};
{\ar^{f} "4";"14"};
{\ar^{b} "10";"12"};
{\ar^{d} "12";"14"};
{\ar@/_1.00pc/_{\mathrm{id}} "10";"14"};
{\ar@{}|\circlearrowright "0";"1"};
{\ar@{}|\circlearrowright "4";"1"};
{\ar@{}|\circlearrowright "2";"3"};
{\ar@{}|\circlearrowright "12";"13"};
\endxy
\]
in $\mathscr{C}$, satisfying $g\in\mathit{Fib}$. By definition, there is an $\mathbb{E}$-triangle
\[ T\overset{t}{\longrightarrow}C\overset{g}{\longrightarrow}D\overset{\theta}{\dashrightarrow}\quad(T\in\mathcal{T}). \]

By Condition~(WIC), $d\circ b=\mathrm{id}$ implies that $d$ is a deflation. Thus $d\circ g$ becomes a deflation by {\rm (ET4)$^{\mathrm{op}}$}. Again by Condition~(WIC), it follows that $f$ is a deflation. Thus there exists an $\mathbb{E}$-triangle $X\overset{x}{\longrightarrow}A\overset{f}{\longrightarrow}B\overset{\delta}{\dashrightarrow}$.
By {\rm (ET3)$^{\mathrm{op}}$}, we obtain the following morphisms of $\mathbb{E}$-triangles.
\[
\xy
(-12,6)*+{X}="0";
(0,6)*+{A}="2";
(12,6)*+{B}="4";
(24,6)*+{}="6";
(-12,-6)*+{T}="10";
(0,-6)*+{C}="12";
(12,-6)*+{D}="14";
(24,-6)*+{}="16";
{\ar^{x} "0";"2"};
{\ar^{f} "2";"4"};
{\ar@{-->}^{\delta} "4";"6"};
{\ar_{k} "0";"10"};
{\ar^{a} "2";"12"};
{\ar^{b} "4";"14"};
{\ar_{t} "10";"12"};
{\ar_{g} "12";"14"};
{\ar@{-->}_{\theta} "14";"16"};
{\ar@{}|\circlearrowright "0";"12"};
{\ar@{}|\circlearrowright "2";"14"};
\endxy\ ,\ \ 
\xy
(-12,6)*+{T}="0";
(0,6)*+{C}="2";
(12,6)*+{D}="4";
(24,6)*+{}="6";
(-12,-6)*+{X}="10";
(0,-6)*+{A}="12";
(12,-6)*+{B}="14";
(24,-6)*+{}="16";
{\ar^{t} "0";"2"};
{\ar^{g} "2";"4"};
{\ar@{-->}^{\theta} "4";"6"};
{\ar_{\ell} "0";"10"};
{\ar^{c} "2";"12"};
{\ar^{d} "4";"14"};
{\ar_{x} "10";"12"};
{\ar_{f} "12";"14"};
{\ar@{-->}_{\delta} "14";"16"};
{\ar@{}|\circlearrowright "0";"12"};
{\ar@{}|\circlearrowright "2";"14"};
\endxy
\]
Composing them, we obtain a morphism
\[
\xy
(-12,6)*+{X}="0";
(0,6)*+{A}="2";
(12,6)*+{B}="4";
(24,6)*+{}="6";
(-12,-6)*+{X}="10";
(0,-6)*+{A}="12";
(12,-6)*+{B}="14";
(24,-6)*+{}="16";
{\ar^{x} "0";"2"};
{\ar^{f} "2";"4"};
{\ar@{-->}^{\delta} "4";"6"};
{\ar_{\ell\circ k} "0";"10"};
{\ar^{\mathrm{id}} "2";"12"};
{\ar^{\mathrm{id}} "4";"14"};
{\ar_{x} "10";"12"};
{\ar_{f} "12";"14"};
{\ar@{-->}_{\delta} "14";"16"};
{\ar@{}|\circlearrowright "0";"12"};
{\ar@{}|\circlearrowright "2";"14"};
\endxy
\]
of $\mathbb{E}$-triangles. By Corollary~\ref{CorExact1}, it follows that $\ell\circ k$ is an isomorphism. Especially $k$ is a section, and thus $X\in\mathcal{T}$. This means that $f$ belongs to $\mathit{Fib}$.
\end{proof}

\begin{lemma}\label{LemY}
Suppose that a commutative diagram in $\mathscr{C}$
\[
\xy
(-10,-6)*+{A}="0";
(0,5)*+{B}="2";
(10,-6)*+{C}="4";
(0,-9.5)*+{}="3";
{\ar^{f} "0";"2"};
{\ar^{g} "2";"4"};
{\ar_{h} "0";"4"};
{\ar@{}|\circlearrowright "2";"3"};
\endxy
\]
satisfies $f\in w\mathit{Cof},g\in\mathit{Fib}$ and $h\in w\mathit{Cof}$. Then $g$ belongs to $w\mathit{Fib}$.
\end{lemma}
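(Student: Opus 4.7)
The plan is to reduce the lemma to showing $T:=\mathrm{CoCone}(g)\in\mathcal{V}$, and then to produce a conflation $T\to S_f\to S_h$ via an application of the nine lemma, so that $T$ inherits membership in $\mathcal{N}$ from $S_f,S_h$.

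Since $g\in\mathit{Fib}$ is already a deflation with $T\in\mathcal{T}$, and since for a Hovey twin cotorsion pair $\mathcal{N}^f=\mathcal{N}^i=:\mathcal{N}$, Remark~\ref{RemNiNf}(2) gives $\mathcal{V}=\mathcal{T}\cap\mathcal{N}$. So it suffices to prove $T\in\mathcal{N}$. I would fix $\mathbb{E}$-triangles
\[A\overset{f}{\longrightarrow}B\overset{b_f}{\longrightarrow}S_f\dashrightarrow,\quad T\overset{t}{\longrightarrow}B\overset{g}{\longrightarrow}C\dashrightarrow,\quad A\overset{h}{\longrightarrow}C\overset{k}{\longrightarrow}S_h\dashrightarrow\]
witnessing $f\in w\mathit{Cof}$, $g\in\mathit{Fib}$, and $h\in w\mathit{Cof}$ respectively, with $S_f,S_h\in\mathcal{S}$.

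Next I would apply Lemma~\ref{LemNine} to these data: take the top horizontal triangle to be that of $f$, the bottom horizontal to be that of $h$, the left vertical to be the degenerate triangle $0\to A\overset{\mathrm{id}_A}{\longrightarrow}A\dashrightarrow$ (so $K=0$), and the middle vertical to be the triangle of $g$ (so $K'=T$). The required commutativity $h\circ\mathrm{id}_A=g\circ f$ is exactly the given relation $h=g\circ f$, and Condition~(WIC) is in force as a standing hypothesis of this subsection. The nine lemma then produces an object $X\in\mathscr{C}$ together with $\mathbb{E}$-triangles
\[0\to T\overset{n}{\longrightarrow}X\overset{\nu}{\dashrightarrow}\quad\text{and}\quad X\overset{i}{\longrightarrow}S_f\overset{c}{\longrightarrow}S_h\dashrightarrow.\]
Since $\mathbb{E}(X,0)=0$, the extension $\nu$ is split, so by Remark~\ref{RemSplit}(1) the map $n$ is an isomorphism. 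Transporting $i$ along $n^{-1}$ yields a conflation $T\to S_f\to S_h$ in $\mathscr{C}$.

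Finally, $S_f,S_h\in\mathcal{S}\subseteq\mathcal{N}^i=\mathcal{N}$ by Remark~\ref{RemNiNf}(1), so Proposition~\ref{PropThick}(3) applied to the conflation just obtained gives $T\in\mathcal{N}$. Combined with $T\in\mathcal{T}$ this yields $T\in\mathcal{V}$, hence $g\in w\mathit{Fib}$. The main obstacle is the bookkeeping of the nine lemma application: one must verify that the degenerate left column with $K=0$ is a legitimate input and track the labels to recover the desired two-step conflation in the correct orientation. Everything else is a direct combination of Remarks~\ref{RemNiNf}--\ref{RemSplit} with Proposition~\ref{PropThick}.
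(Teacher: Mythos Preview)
Your proof is correct and follows essentially the same strategy as the paper: obtain a conflation $T\to S_f\to S_h$ and conclude $T\in\mathcal{T}\cap\mathcal{N}=\mathcal{V}$. The only difference is in the tool used to produce that conflation. The paper applies the dual of Proposition~\ref{PropShiftOctahedron} directly to the three given $\mathbb{E}$-triangles, which yields the conflation $T\to S_1\to S_2$ in one step; you instead invoke the nine lemma (Lemma~\ref{LemNine}) with the degenerate left column $0\to A\overset{\mathrm{id}}{\to}A$, then collapse $0\to T\to X$ to an isomorphism $T\cong X$. Since Lemma~\ref{LemNine} is itself proved using Proposition~\ref{PropShiftOctahedron}, your route is a mild detour through a heavier statement, but it is perfectly valid (and the degenerate column is a legitimate $\mathbb{E}$-triangle by {\rm (ET2)}). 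For the final step the paper cites Lemma~\ref{LemConcentric1}(2) rather than Proposition~\ref{PropThick}(3); either reference suffices here.
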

\begin{proof}
By assumption, there are conflations
\begin{eqnarray*}
A\overset{f}{\longrightarrow}B\overset{s_1}{\longrightarrow}S_1&&(S_1\in\mathcal{S}),\\
T\overset{t}{\longrightarrow}B\overset{g}{\longrightarrow}C&&(T\in\mathcal{T}),\\
A\overset{h}{\longrightarrow}C\overset{s_2}{\longrightarrow}S_2&&(S_2\in\mathcal{S}).
\end{eqnarray*}
By the dual of Lemma~\ref{PropShiftOctahedron}, we obtain the following commutative diagram made of conflations.
\[
\xy
(-6,18)*+{T}="-12";
(6,18)*+{T}="-14";
(-18,6)*+{A}="0";
(-6,6)*+{B}="2";
(6,6)*+{S_1}="4";
(-18,-6)*+{A}="10";
(-6,-6)*+{C}="12";
(6,-6)*+{S_2}="14";
{\ar@{=} "-12";"-14"};
{\ar_{t} "-12";"2"};
{\ar^{} "-14";"4"};
{\ar^{f} "0";"2"};
{\ar^{s_1} "2";"4"};
{\ar@{=} "0";"10"};
{\ar_{g} "2";"12"};
{\ar^{} "4";"14"};
{\ar_{h} "10";"12"};
{\ar_{s_2} "12";"14"};
{\ar@{}|\circlearrowright "-12";"4"};
{\ar@{}|\circlearrowright "0";"12"};
{\ar@{}|\circlearrowright "2";"14"};
\endxy
\]
By Lemma~\ref{LemConcentric1} {\rm (2)} and Remark~\ref{RemNiNf} {\rm (2)}, we obtain $T\in\mathcal{T}\cap\mathcal{N}=\mathcal{V}$. This means $g\in w\mathit{Fib}$.
\end{proof}

\begin{proposition}\label{PropComposW}
$\mathbb{W}$ is closed under composition.
\end{proposition}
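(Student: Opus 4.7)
Write $f=v_1 u_1$ and $g=v_2 u_2$ via the definition of $\mathbb{W}$, with $u_i\in w\mathit{Cof}$ and $v_i\in w\mathit{Fib}$, giving the chain
\[ A\overset{u_1}{\longrightarrow}X\overset{v_1}{\longrightarrow}B\overset{u_2}{\longrightarrow}Y\overset{v_2}{\longrightarrow}C. \]
The task is to rewrite $gf=v_2 u_2 v_1 u_1$ as a composition in $w\mathit{Fib}\circ w\mathit{Cof}$; the obstruction is the middle composite $u_2 v_1\colon X\to Y$, which in general is neither an inflation nor a deflation.

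My plan is to deal with this by applying Proposition~\ref{PropFactor} to factor $u_2v_1=\phi\circ\psi$ with $\psi\colon X\to W$ in $w\mathit{Cof}$ and $\phi\colon W\to Y$ in $\mathit{Fib}$. Then
\[ gf=(v_2\phi)\circ(\psi u_1), \]
and $\psi u_1\in w\mathit{Cof}$ by Proposition~\ref{PropComposClosed}. Thus it remains to promote $v_2\phi\in\mathit{Fib}$ to $w\mathit{Fib}$.

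For this step, I would apply Remark~\ref{RemET4op} to the composable deflations $W\overset{\phi}{\to}Y\overset{v_2}{\to}C$ to obtain a conflation $\mathrm{CoCone}(\phi)\to\mathrm{CoCone}(v_2\phi)\to V_2$, with $V_2=\mathrm{CoCone}(v_2)\in\mathcal{V}$ and $\mathrm{CoCone}(\phi)\in\mathcal{T}$. Extension-closedness of $\mathcal{T}$ places $\mathrm{CoCone}(v_2\phi)$ in $\mathcal{T}$, and Proposition~\ref{PropThick}, combined with $V_2\in\mathcal{V}\subseteq\mathcal{N}$, reduces the problem to showing $\mathrm{CoCone}(\phi)\in\mathcal{N}$, i.e.\ $\phi\in w\mathit{Fib}$.

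The main obstacle --- and the technical heart of the proof --- is precisely the verification that $\phi\in w\mathit{Fib}$. The naive approach of applying Lemma~\ref{LemY} to the factorization $\phi\psi=u_2v_1$ fails, because $u_2v_1$ is not itself in $w\mathit{Cof}$: a composition of an inflation after a deflation need not be an inflation at all. I expect the intended resolution to be a diagram chase that uses the two conflations $V_1\to X\overset{v_1}{\to}B$ (with $V_1\in\mathcal{V}$) and $B\overset{u_2}{\to}Y\to S_2$ (with $S_2\in\mathcal{S}$) simultaneously. Namely, combining Proposition~\ref{PropBaer} (to produce a suitable intermediate Baer-sum object governing the interaction of $v_1$ and $u_2$) with the nine lemma (Lemma~\ref{LemNine}) applied to the resulting morphism of $\mathbb{E}$-triangles, one should obtain a conflation $\mathrm{CoCone}(\phi)\to V\to S$ with $V\in\mathcal{V}$ and $S\in\mathcal{S}$. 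By Lemma~\ref{LemConcentric1} (or directly from the definition of $\mathcal{N}^f$), this places $\mathrm{CoCone}(\phi)$ in $\mathcal{N}^f=\mathcal{N}$, hence in $\mathcal{V}=\mathcal{T}\cap\mathcal{N}$, which is exactly $\phi\in w\mathit{Fib}$. Putting everything together, $v_2\phi\in w\mathit{Fib}$ and $gf=(v_2\phi)(\psi u_1)\in w\mathit{Fib}\circ w\mathit{Cof}=\mathbb{W}$.
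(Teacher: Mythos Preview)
Your overall strategy is correct and is essentially the paper's proof: reduce to showing that the $\mathit{Fib}$ part $\phi$ of a $\mathit{Fib}\circ w\mathit{Cof}$ factorization of $u_2v_1$ is actually in $w\mathit{Fib}$, and do this via the nine lemma (Lemma~\ref{LemNine}). Two remarks and one genuine correction.

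First, the paper streamlines the reduction: since $w\mathit{Fib}$ and $w\mathit{Cof}$ are each closed under composition, it suffices to show $w\mathit{Cof}\circ w\mathit{Fib}\subseteq\mathbb{W}$. Your extra step proving $v_2\phi\in w\mathit{Fib}$ from $\phi\in w\mathit{Fib}$ is correct but unnecessary once you make this observation. Also, Proposition~\ref{PropBaer} is not needed here; the nine lemma applies directly to the commuting square $\phi\psi=u_2v_1$.

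The genuine correction concerns your last paragraph. The nine lemma, applied with columns the deflations $v_1,\phi$ and rows the inflations $\psi,u_2$, does \emph{not} produce a conflation $\mathrm{CoCone}(\phi)\to V\to S$. It produces an object $\Xi$ together with conflations
\[
V_1\longrightarrow T\longrightarrow \Xi
\qquad\text{and}\qquad
\Xi\longrightarrow S_\psi\longrightarrow S_2,
\]
where $T=\mathrm{CoCone}(\phi)\in\mathcal{T}$, $V_1=\mathrm{CoCone}(v_1)\in\mathcal{V}$, and $S_\psi=\mathrm{Cone}(\psi),\,S_2=\mathrm{Cone}(u_2)\in\mathcal{S}$. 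So $T$ sits in the \emph{middle} of the first conflation, and you cannot read off $T\in\mathcal{N}^f$ directly. The second conflation and Lemma~\ref{LemConcentric1}(2) give $\Xi\in\mathcal{N}$; then the first conflation, with $V_1\in\mathcal{V}$, $\Xi\in\mathcal{N}^i$ and $T\in\mathcal{T}$, forces $T\in\mathcal{N}^i$ by the \emph{dual of Lemma~\ref{LemConcentric2}} (this is the missing ingredient in your plan). Hence $T\in\mathcal{T}\cap\mathcal{N}=\mathcal{V}$ and $\phi\in w\mathit{Fib}$, as required.
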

\begin{proof}
It suffices to show that $w\mathit{Cof}\circ w\mathit{Fib}\subseteq\mathbb{W}$.
Let $a\in w\mathit{Fib}$ and $x^{\prime}\in w\mathit{Cof}$. By Proposition~\ref{PropFactor}, there are some $x\in w\mathit{Cof}$ and $b\in\mathit{Fib}$ such that $b\circ x=x^{\prime}\circ a$. It is thus enough to show that $b$ belongs to $w\mathit{Fib}$. By definition, there is a commutative diagram of $\mathbb{E}$-triangles:
\[
\xy
(-7,21)*+{V}="-12";
(7,21)*+{T}="-14";
(-7,7)*+{A}="2";
(7,7)*+{B}="4";
(21,7)*+{S}="6";
(35,7)*+{}="8";
(-7,-7)*+{A^{\prime}}="12";
(7,-7)*+{B^{\prime}}="14";
(21,-7)*+{S^{\prime}}="16";
(35,-7)*+{}="18";
(-7,-21)*+{}="22";
(7,-21)*+{}="24";
{\ar_{k} "-12";"2"};
{\ar^{k^{\prime}} "-14";"4"};
{\ar^{x} "2";"4"};
{\ar^{y} "4";"6"};
{\ar@{-->}^{\delta} "6";"8"};
{\ar_{a} "2";"12"};
{\ar^{b} "4";"14"};
{\ar_{x^{\prime}} "12";"14"};
{\ar_{y^{\prime}} "14";"16"};
{\ar@{-->}^{\delta^{\prime}} "16";"18"};
{\ar@{-->}_{\kappa} "12";"22"};
{\ar@{-->}^{\kappa^{\prime}} "14";"24"};
{\ar@{}|\circlearrowright "2";"14"};
\endxy
\]
with $V\in\mathcal{V}$, $T\in\mathcal{T}$ and $S,S^{\prime}\in\mathcal{S}$. Applying Lemma~\ref{LemNine} gives some $X\in\mathscr{C}$ and two conflations $X\overset{i}{\longrightarrow}S\overset{c}{\longrightarrow}S^{\prime}$ and $V\overset{m}{\longrightarrow}T\overset{n}{\longrightarrow}X$. The existence of the first conflation (and Lemma~\ref{LemConcentric1}(2)) shows that $X$ belongs to $\mathcal{N}$; that of the latter conflation and the dual of Lemma~\ref{LemConcentric2} imply that $T$ belongs to $\mathcal{V}$, and therefore that $b\in w\mathit{Fib}$.
\end{proof}

\begin{lemma}\label{LemWmodif}
Suppose that a commutative diagram in $\mathscr{C}$
\[
\xy
(-10,-6)*+{A}="0";
(0,5)*+{B}="2";
(10,-6)*+{C}="4";
(0,-9.5)*+{}="3";
{\ar^{f} "0";"2"};
{\ar^{g} "2";"4"};
{\ar_{h} "0";"4"};
{\ar@{}|\circlearrowright "2";"3"};
\endxy
\]
satisfies $f\in w\mathit{Cof},g\in \mathit{Fib}$ and $h\in w\mathit{Fib}$. Then $g$ belongs to $w\mathit{Fib}$.
\end{lemma}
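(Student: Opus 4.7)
The setup parallels the proof of Lemma~\ref{LemY}, but since now the ``common'' composite $h$ is a weak fibration rather than a weak cofibration, I will apply Proposition~\ref{PropShiftOctahedron} directly (instead of its dual). By definition we have three $\mathbb{E}$-triangles
\begin{eqnarray*}
V\overset{v}{\longrightarrow} A\overset{h}{\longrightarrow} C\overset{}{\dashrightarrow}&&(V\in\mathcal{V}),\\
A\overset{f}{\longrightarrow} B\overset{s}{\longrightarrow} S\overset{}{\dashrightarrow}&&(S\in\mathcal{S}),\\
T\overset{t}{\longrightarrow} B\overset{g}{\longrightarrow} C\overset{}{\dashrightarrow}&&(T\in\mathcal{T}),
\end{eqnarray*}
and, by hypothesis, $g\circ f=h$. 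This is precisely the compatibility required in Proposition~\ref{PropShiftOctahedron} (with $D=V$, $F=S$, $E=T$, and matching the maps in the obvious way).

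Applying Proposition~\ref{PropShiftOctahedron} produces an $\mathbb{E}$-triangle
\[
V\overset{d}{\longrightarrow} T\overset{e}{\longrightarrow} S\overset{\theta}{\dashrightarrow}
\]
fitting into the expected $3\times 3$ commutative diagram, all of whose rows and columns are $\mathbb{E}$-triangles. The key consequence is simply the existence of this conflation, which relates $T$ to $V\in\mathcal{V}$ and $S\in\mathcal{S}$.

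Since $\mathcal{P}$ is a Hovey twin cotorsion pair, it is concentric (Remark~\ref{RemHTCPConcentric}), so by Remark~\ref{RemNiNf} we have $\mathcal{V}\subseteq\mathcal{N}$ and $\mathcal{S}\subseteq\mathcal{N}$. Hence in the conflation $V\to T\to S$ both outer terms lie in $\mathcal{N}$, and Proposition~\ref{PropThick}~(1) forces $T\in\mathcal{N}=\mathcal{N}^f$. Combining this with $T\in\mathcal{T}$ and Remark~\ref{RemNiNf}~(2), we conclude $T\in\mathcal{T}\cap\mathcal{N}^f=\mathcal{V}$. Thus $g$ is a deflation with $\mathrm{CoCone}(g)=T\in\mathcal{V}$, i.e.\ $g\in w\mathit{Fib}$.

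The only nonroutine point is matching the hypotheses of Proposition~\ref{PropShiftOctahedron} correctly so that the promised fourth conflation has $V\in\mathcal{V}$ on the left and $S\in\mathcal{S}$ on the right; once this is arranged, the two-out-of-three property of $\mathcal{N}$ closes the argument immediately.
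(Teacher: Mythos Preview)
Your proof is correct and follows essentially the same route as the paper: both apply Proposition~\ref{PropShiftOctahedron} to the three given $\mathbb{E}$-triangles to produce a conflation $V\to T\to S$, then use Proposition~\ref{PropThick}~(1) together with Remark~\ref{RemNiNf}~(2) to conclude $T\in\mathcal{T}\cap\mathcal{N}=\mathcal{V}$.
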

\begin{proof}
By assumption, there are conflations
\begin{eqnarray*}
A\overset{f}{\longrightarrow}B\overset{s}{\longrightarrow}S&&(S\in\mathcal{S}),\\
T\overset{t}{\longrightarrow}B\overset{g}{\longrightarrow}C&&(T\in\mathcal{T}),\\
V\overset{v}{\longrightarrow}A\overset{h}{\longrightarrow}C&&(V\in\mathcal{V}).
\end{eqnarray*}
By Proposition~\ref{PropShiftOctahedron}, we obtain a conflation $V\to T\to S$. Thus from Proposition~\ref{PropThick} {\rm (1)} and Remark~\ref{RemNiNf} {\rm (2)}, it follows $T\in\mathcal{T}\cap\mathcal{N}=\mathcal{V}$. This means $g\in w\mathit{Fib}$.
\end{proof}

\begin{lemma}\label{LemW}
Suppose that a commutative diagram in $\mathscr{C}$
\[
\xy
(-10,-6)*+{A}="0";
(0,5)*+{B}="2";
(10,-6)*+{C}="4";
(0,-9.5)*+{}="3";
{\ar^{f} "0";"2"};
{\ar^{g} "2";"4"};
{\ar_{h} "0";"4"};
{\ar@{}|\circlearrowright "2";"3"};
\endxy
\]
satisfies $f\in w\mathit{Fib},g\in\mathit{Fib}$ and $h\in w\mathit{Fib}$. Then $g$ belongs to $w\mathit{Fib}$.
\end{lemma}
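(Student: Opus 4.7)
The plan is to exploit the octahedral-type axiom (ET4)$^{\mathrm{op}}$ to produce a conflation relating the three cocones provided by the hypotheses, and then to conclude using the two-out-of-three property of $\mathcal{N}$ that comes from the Hovey condition.

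Concretely, the hypotheses yield conflations $V\to A\overset{f}{\longrightarrow}B$ with $V\in\mathcal{V}$, $T\overset{t}{\longrightarrow}B\overset{g}{\longrightarrow}C$ with $T\in\mathcal{T}$, and $V'\to A\overset{h}{\longrightarrow}C$ with $V'\in\mathcal{V}$. Applying (ET4)$^{\mathrm{op}}$ (in the form paraphrased in Remark~\ref{RemET4op}) to the two composable deflations $f$ and $g$ produces an object $E$ fitting into an $\mathbb{E}$-triangle $E\to A\overset{h}{\longrightarrow}C$ together with an $\mathbb{E}$-triangle $V\to E\to T$. Since $E$ is then a cocone of $h$, the dual of Remark~\ref{RemConeCocone} (uniqueness of cocones up to isomorphism) identifies $E$ with $V'$, yielding a conflation $V\to V'\to T$.

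Now $V,V'\in\mathcal{V}$; since any Hovey twin cotorsion pair is automatically concentric (Remark~\ref{RemHTCPConcentric}), we have $\mathcal{V}\subseteq\mathcal{N}$ by Remark~\ref{RemNiNf}(3). The two-out-of-three property on conflations (Proposition~\ref{PropThick}(2)) then forces $T\in\mathcal{N}$. Combined with the hypothesis $T\in\mathcal{T}$ and the identification $\mathcal{T}\cap\mathcal{N}=\mathcal{V}$ provided by Remark~\ref{RemNiNf}(2), this gives $T\in\mathcal{V}$, so that $g\in w\mathit{Fib}$ by definition.

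The argument closely mirrors the proof of Lemma~\ref{LemWmodif}, with the appeal to Proposition~\ref{PropShiftOctahedron} replaced by (ET4)$^{\mathrm{op}}$ because here the two composable arrows $f$ and $g$ are both deflations rather than a mixed inflation-deflation pair. Consequently no genuine obstacle is anticipated; the only point that requires care is matching the cocone $E$ produced by (ET4)$^{\mathrm{op}}$ with the given $V'$, which is immediate from the uniqueness of cocones.
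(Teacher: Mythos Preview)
Your proof is correct and follows essentially the same route as the paper: both apply (ET4)$^{\mathrm{op}}$ to the composable deflations $f,g$ to obtain a conflation $V\to V'\to T$ (the paper suppresses the intermediate object $E$ and its identification with $V'$, which you spell out), and then conclude $T\in\mathcal{T}\cap\mathcal{N}=\mathcal{V}$. The only cosmetic difference is that the paper invokes Lemma~\ref{LemConcentric1}(1) rather than Proposition~\ref{PropThick}(2) to deduce $T\in\mathcal{N}$ from $V,V'\in\mathcal{V}$, but either works.
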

\begin{proof}
By assumption, there are conflations
\begin{eqnarray*}
V_f\to A\overset{f}{\longrightarrow}B&&(V_f\in\mathcal{V}),\\
T\to B\overset{g}{\longrightarrow}C&&(T\in\mathcal{T}),\\
V_h\to A\overset{h}{\longrightarrow}C&&(V_h\in\mathcal{V}).
\end{eqnarray*}
By {\rm (ET4)$^{\mathrm{op}}$}, we obtain a conflation $V_f\to V_h\to T$. Thus from Lemma~\ref{LemConcentric1} {\rm (1)} and Remark~\ref{RemNiNf} {\rm (2)}, it follows that $T\in\mathcal{T}\cap\mathcal{N}=\mathcal{V}$. This means $g\in w\mathit{Fib}$.
\end{proof}

\begin{proposition}\label{PropW23}
Suppose that a commutative diagram in $\mathscr{C}$
\[
\xy
(-10,-6)*+{A}="0";
(0,5)*+{B}="2";
(10,-6)*+{C}="4";
(0,-9.5)*+{}="3";
{\ar^{f} "0";"2"};
{\ar^{g} "2";"4"};
{\ar_{h} "0";"4"};
{\ar@{}|\circlearrowright "2";"3"};
\endxy
\]
satisfies $f,h\in\mathbb{W}$. Then $g$ also belongs to $\mathbb{W}$.
\end{proposition}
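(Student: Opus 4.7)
The approach is to factor $g$ and promote the fibration part to an acyclic fibration. By Proposition~\ref{PropFactor}, write $g = p \circ i$ with $i \in w\mathit{Cof}$ and $p \in \mathit{Fib}$; since $i \in \mathbb{W}$ and $\mathbb{W}$ is closed under composition (Proposition~\ref{PropComposW}), it suffices to prove $p \in w\mathit{Fib}$, which then yields $g \in w\mathit{Fib} \circ w\mathit{Cof} = \mathbb{W}$.

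A useful preliminary is the identity $\mathit{Fib} \cap \mathbb{W} = w\mathit{Fib}$. Given $q \in \mathit{Fib} \cap \mathbb{W}$, factor $q = u \circ v$ with $v \in w\mathit{Cof}$ and $u \in w\mathit{Fib}$; applying Proposition~\ref{PropLiftST}(1) to the square with $\mathrm{id}$ along the top, $v$ on the left, $u$ on the bottom, and $q$ on the right produces a retraction $r$ of $v$ satisfying $q \circ r = u$, exhibiting $q$ as a retract of $u \in w\mathit{Fib}$. Proposition~\ref{PropRetract} then yields $q \in w\mathit{Fib}$.

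For the main argument, set $f' = i \circ f \in \mathbb{W}$ (Proposition~\ref{PropComposW}), so that $p \circ f' = h \in \mathbb{W}$ by hypothesis. Factor $f' = u_1 \circ v_1$ and $h = u_2 \circ v_2$ into acyclic cofibrations followed by acyclic fibrations. The commutative square with legs $v_1$ (left), $v_2$ (top), $u_2$ (right), $p \circ u_1$ (bottom) admits a diagonal lift $\ell$ by Proposition~\ref{PropLiftST}(1), satisfying $\ell \circ v_1 = v_2$ and $u_2 \circ \ell = p \circ u_1$. Factor $\ell = p_\ell \circ i_\ell$ with $i_\ell \in w\mathit{Cof}$ and $p_\ell \in \mathit{Fib}$. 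Then $i_\ell \circ v_1 \in w\mathit{Cof}$ (Proposition~\ref{PropComposClosed}), $p_\ell \in \mathit{Fib}$, and their composite is $\ell \circ v_1 = v_2 \in w\mathit{Cof}$; Lemma~\ref{LemY} forces $p_\ell \in w\mathit{Fib}$. Consequently $p \circ u_1 = u_2 \circ p_\ell \circ i_\ell$ lies in $w\mathit{Fib} \circ w\mathit{Cof} = \mathbb{W}$, while also $p \circ u_1 \in \mathit{Fib}$ (Proposition~\ref{PropComposClosed}); the preliminary identity upgrades this to $p \circ u_1 \in w\mathit{Fib}$. Finally, Lemma~\ref{LemW} applied to the triangle $Z_1 \xrightarrow{u_1} Y \xrightarrow{p} C$ (with $u_1 \in w\mathit{Fib}$, $p \in \mathit{Fib}$, composite in $w\mathit{Fib}$) delivers $p \in w\mathit{Fib}$.

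The main organizational hurdle is choreographing the two factorizations and the lift so that the resulting auxiliary composite $p \circ u_1$ becomes accessible to Lemma~\ref{LemW}; once the identity $\mathit{Fib} \cap \mathbb{W} = w\mathit{Fib}$ is in hand, the argument reduces to a single application each of Lemma~\ref{LemY} (to factor $\ell$ through an acyclic fibration) and Lemma~\ref{LemW} (to finish), with no risk of circularity since the preliminary identity uses only the lifting property and the retract stability already established.
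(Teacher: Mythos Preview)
Your argument is correct. The overall architecture matches the paper's proof: factor the morphisms, use the lifting property of Proposition~\ref{PropLiftST} to produce a diagonal, factor the diagonal, apply Lemma~\ref{LemY} to upgrade its fibration part to an acyclic fibration, and then finish with Lemma~\ref{LemW}. The paper also begins by absorbing the acyclic cofibration half of $g$'s factorization into the picture (there via the extra factorization of $g_1\circ f_2$; you do it more cleanly by replacing $f$ with $f'=i\circ f$).

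The one genuine deviation is your preliminary identity $\mathit{Fib}\cap\mathbb{W}=w\mathit{Fib}$, obtained by the standard retract trick (lift along $v\in w\mathit{Cof}$ against $q\in\mathit{Fib}$ to exhibit $q$ as a retract of $u\in w\mathit{Fib}$, then invoke Proposition~\ref{PropRetract}). The paper does not isolate this fact; instead it proves and uses Lemma~\ref{LemWmodif} (the ``mixed'' triangle $w\mathit{Cof}$--$\mathit{Fib}$--$w\mathit{Fib}$) at the corresponding step. Your route is arguably more economical: once $\mathit{Fib}\cap\mathbb{W}=w\mathit{Fib}$ is available, the intermediate composite $p\circ u_1$ is immediately in $w\mathit{Fib}$ (being in $\mathit{Fib}$ and, via $u_2\circ p_\ell\circ i_\ell$, in $\mathbb{W}$), so Lemma~\ref{LemWmodif} becomes unnecessary. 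The paper's approach, on the other hand, keeps everything internal to the conflation lemmas and never appeals to retract-closure. Both are valid; yours trades one bespoke lemma for a reusable general fact.
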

\begin{proof}
By definition and the dual of Proposition~\ref{PropFactor}, the morphisms $f,g,h$ can be factorized as
\[
\xy
(-10,6)*+{A}="0";
(10,6)*+{B}="2";
(0,-8)*+{X_f}="4";
(0,10)*+{}="5";
{\ar^{f} "0";"2"};
{\ar_(0.4){f_1} "0";"4"};
{\ar@{<-}^(0.4){f_2} "2";"4"};
{\ar@{}|\circlearrowright "4";"5"};
\endxy,\ \ 
\xy
(-10,6)*+{B}="0";
(10,6)*+{C}="2";
(0,-8)*+{X_g}="4";
(0,10)*+{}="5";
{\ar^{g} "0";"2"};
{\ar_(0.4){g_1} "0";"4"};
{\ar@{<-}^(0.4){g_2} "2";"4"};
{\ar@{}|\circlearrowright "4";"5"};
\endxy,\ \ 
\xy
(-10,6)*+{A}="0";
(10,6)*+{C}="2";
(0,-8)*+{X_h}="4";
(0,10)*+{}="5";
{\ar^{h} "0";"2"};
{\ar_(0.4){h_1} "0";"4"};
{\ar@{<-}^(0.4){h_2} "2";"4"};
{\ar@{}|\circlearrowright "4";"5"};
\endxy,
\]
with $f_1,g_1,h_1\in w\mathit{Cof}$, $f_2,h_2\in w\mathit{Fib}$ and $g_2\in\mathit{Fib}$.

By Proposition~\ref{PropComposW}, the morphism $g_1\circ f_2\in\mathbb{W}$ can be factorized as follows.
\[
\xy
(-10,6)*+{X_f}="0";
(10,6)*+{X_g}="2";
(0,-8)*+{X}="4";
(0,10)*+{}="5";
{\ar^{g_1\circ f_2} "0";"2"};
{\ar_(0.4){w_1} "0";"4"};
{\ar@{<-}^(0.4){w_2} "2";"4"};
{\ar@{}|\circlearrowright "4";"5"};
\endxy
\quad\begin{array}{c}(w_1\in w\mathit{Cof})\\ (w_2\in w\mathit{Fib})\end{array}
\]
By Proposition~\ref{PropLiftST}, there is $k\in\mathscr{C}(X_h,X)$ which makes
\[
\xy
(-8,8)*+{A}="0";
(2,-2)*+{}="1";
(8,8)*+{X}="2";
(-8,-8)*+{X_h}="4";
(-2,2)*+{}="5";
(8,-8)*+{C}="6";
{\ar^{w_1\circ f_1} "0";"2"};
{\ar_{h_1} "0";"4"};
{\ar^{g_2\circ w_2} "2";"6"};
{\ar_{h_2} "4";"6"};
{\ar|*+{_{k}} "4";"2"};
{\ar@{}|\circlearrowright "0";"1"};
{\ar@{}|\circlearrowright "5";"6"};
\endxy
\]
commutative in $\mathscr{C}$. By Proposition~\ref{PropFactor}, we can factorize $k$ as follows
\[
\xy
(-10,6)*+{X_h}="0";
(10,6)*+{X}="2";
(0,-8)*+{X_k}="4";
(0,10)*+{}="5";
{\ar^{k} "0";"2"};
{\ar_(0.4){k_1} "0";"4"};
{\ar@{<-}^(0.4){k_2} "2";"4"};
{\ar@{}|\circlearrowright "4";"5"};
\endxy
\quad\begin{array}{c}(k_1\in w\mathit{Cof})\\ (k_2\in\mathit{Fib})\end{array}
\]
Thus we obtain the following commutative diagram.
\[
\xy
(-20,8)*+{A}="0";
(-8,-2)*+{}="1";
(2,8)*+{X}="2";
(2,-12)*+{}="3";
(12,2)*+{X_g}="4";
(-9,0)*+{X_k}="6";
(-20,-8)*+{X_h}="7";
(12,-8)*+{C}="8";
{\ar^{w_1\circ f_1} "0";"2"};
{\ar^{w_2} "2";"4"};
{\ar_{h_1} "0";"7"};
{\ar_{k_2} "6";"2"};
{\ar^{g_2} "4";"8"};
{\ar_{k_1} "7";"6"};
{\ar_{h_2} "7";"8"};
{\ar@{}|\circlearrowright "0";"1"};
{\ar@{}|\circlearrowright "2";"3"};
\endxy
\quad\begin{array}{c}(k_1,h_1,w_1\circ f_1\in w\mathit{Cof})\\ (w_2,h_2\in w\mathit{Fib})\\ (k_2,g_2\in\mathit{Fib})\end{array}
\]
Lemma~\ref{LemY} shows $k_2\in w\mathit{Fib}$. On the other hand, Lemma~\ref{LemWmodif} shows $g_2\circ w_2\circ k_2\in w\mathit{Fib}$. Thus Lemma~\ref{LemW} shows $g_2\in w\mathit{Fib}$.
\end{proof}

\begin{corollary}\label{CorW2outof3}
The class $\mathbb{W}$ satisfies the 2-out-of-3 condition.
\end{corollary}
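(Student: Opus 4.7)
The corollary consists of the three implications of the 2-out-of-3 property for the class $\mathbb{W}$. Two of them are already established: the implication $f, g \in \mathbb{W} \Rightarrow g \circ f \in \mathbb{W}$ is precisely Proposition~\ref{PropComposW}, while $f, g \circ f \in \mathbb{W} \Rightarrow g \in \mathbb{W}$ is Proposition~\ref{PropW23}. The only thing that remains is the third implication: $g, g \circ f \in \mathbb{W} \Rightarrow f \in \mathbb{W}$.

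My plan for that last case is to invoke the self-duality of the entire setup. Concretely, the opposite $(\mathscr{C}^{\mathrm{op}},\mathbb{E}^{\mathrm{op}},\mathfrak{s}^{\mathrm{op}})$ is again an extriangulated category; the dual of the Hovey twin cotorsion pair $((\mathcal{S},\mathcal{T}),(\mathcal{U},\mathcal{V}))$ is the Hovey twin cotorsion pair $((\mathcal{V},\mathcal{U}),(\mathcal{T},\mathcal{S}))$ on $\mathscr{C}^{\mathrm{op}}$; and Condition~(WIC) is manifestly self-dual, since its two clauses are interchanged by passage to the opposite category. Under this duality the four classes $\mathit{Fib}, \mathit{Cof}, w\mathit{Fib}, w\mathit{Cof}$ are pairwise swapped with their opposite-category counterparts.

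The one subtle point is that $\mathbb{W}$ is defined as $w\mathit{Fib}\circ w\mathit{Cof}$ while the parallel definition in the dual yields $w\mathit{Cof}\circ w\mathit{Fib}$. However, Proposition~\ref{PropComposW} establishes the inclusion $w\mathit{Cof}\circ w\mathit{Fib}\subseteq\mathbb{W}$, and the same proof applied inside $\mathscr{C}^{\mathrm{op}}$ gives the reverse inclusion. Hence the class of weak equivalences attached to $\mathscr{C}$ and that attached to its dual agree as a set of morphisms in $\mathscr{C}$. Proposition~\ref{PropW23}, now applied in $\mathscr{C}^{\mathrm{op}}$ to the reversed composition $h^{\mathrm{op}} = f^{\mathrm{op}} \cdot g^{\mathrm{op}}$, reads: if $g^{\mathrm{op}}$ and $h^{\mathrm{op}}$ lie in this dual weak-equivalence class, then so does $f^{\mathrm{op}}$; translated back to $\mathscr{C}$ this is exactly the missing implication.

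The substantive content having already been done in Propositions~\ref{PropComposW} and~\ref{PropW23}, the only genuine task is the bookkeeping that verifies the self-duality of the whole setting; I expect this to be routine rather than the main obstacle. An alternative, requiring no appeal to the opposite category, would be to mirror the argument of Proposition~\ref{PropW23} step by step with the roles of $\mathit{Cof}/w\mathit{Cof}$ and $\mathit{Fib}/w\mathit{Fib}$ swapped; the ingredients needed (the factorization of Proposition~\ref{PropFactor}, the lifting of Proposition~\ref{PropLiftST}, and Lemmas~\ref{LemY}, \ref{LemWmodif}, \ref{LemW}) all admit direct dual analogues with identical proofs.
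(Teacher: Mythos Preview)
Your proposal is correct and matches the paper's own proof exactly: the paper simply writes ``This follows from Proposition~\ref{PropComposW}, Proposition~\ref{PropW23} and its dual.'' Your more detailed justification of why the duality goes through (in particular, why the class $\mathbb{W}$ coincides with its dual counterpart via the inclusion $w\mathit{Cof}\circ w\mathit{Fib}\subseteq\mathbb{W}$ and its opposite) is a welcome elaboration of what the paper leaves implicit.
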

\begin{proof}
This follows from Proposition~\ref{PropComposW}, Proposition~\ref{PropW23} and its dual.
\end{proof}

When a category has enough pull-backs or enough push-outs, the fact that weak equivalences are stable under retracts follows from the other axioms (e.g. \cite[Proposition~E.1.3]{Joy}, attributed to Joyal--Tierney). However, that proof does not carry over to the setup of extriangulated categories. The following lemma will thus be used for proving that the class $\mathbb{W}$ is closed under retracts.

\begin{lemma}\label{LemWRetract}
Let $A\overset{x}{\longrightarrow}B\overset{y}{\longrightarrow}C\overset{\delta}{\dashrightarrow}$ and $A\overset{x^{\prime}}{\longrightarrow}B^{\prime}\overset{y^{\prime}}{\longrightarrow}C^{\prime}\overset{\delta^{\prime}}{\dashrightarrow}$ be $\mathbb{E}$-triangles. Suppose that $b\in\mathscr{C}(B,B^{\prime})$ belongs to $\mathbb{W}$ and satisfies $b\circ x=x^{\prime}$. Then there is $c\in\mathscr{C}(C,C^{\prime})$ which belongs to $\mathbb{W}$ and gives a morphism of $\mathbb{E}$-triangles as follows.
\[
\xy
(-12,6)*+{A}="0";
(0,6)*+{B}="2";
(12,6)*+{C}="4";
(24,6)*+{}="6";
(-12,-6)*+{A}="10";
(0,-6)*+{B^{\prime}}="12";
(12,-6)*+{C^{\prime}}="14";
(24,-6)*+{}="16";
{\ar^{x} "0";"2"};
{\ar^{y} "2";"4"};
{\ar@{-->}^{\delta} "4";"6"};
{\ar@{=} "0";"10"};
{\ar^{b} "2";"12"};
{\ar^{c} "4";"14"};
{\ar_{x^{\prime}} "10";"12"};
{\ar_{y^{\prime}} "12";"14"};
{\ar@{-->}_{\delta^{\prime}} "14";"16"};
{\ar@{}|\circlearrowright "0";"12"};
{\ar@{}|\circlearrowright "2";"14"};
\endxy
\]
\end{lemma}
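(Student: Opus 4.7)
The idea is to use the factorization $\mathbb{W} = w\mathit{Fib} \circ w\mathit{Cof}$ to interpolate $b$ through a third $\mathbb{E}$-triangle, then read off $c$ as a composite of an acyclic cofibration and an acyclic fibration. Specifically, I want to exhibit $c$ as $c' \circ d_1$ with $d_1 \in w\mathit{Cof}$ coming from an (ET4) step and $c' \in w\mathit{Fib}$ coming from the nine lemma (Lemma~\ref{LemNine}).

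First, write $b = v \circ u$ with $B \overset{u}{\longrightarrow} D \overset{p}{\longrightarrow} S \dashrightarrow$ a conflation such that $S \in \mathcal{S}$, and $V \overset{m}{\longrightarrow} D \overset{v}{\longrightarrow} B' \dashrightarrow$ a conflation such that $V \in \mathcal{V}$. Apply (ET4) to the pair of $\mathbb{E}$-triangles $A \overset{x}{\longrightarrow} B \overset{y}{\longrightarrow} C \overset{\delta}{\dashrightarrow}$ and $B \overset{u}{\longrightarrow} D \overset{p}{\longrightarrow} S \dashrightarrow$. This produces an $\mathbb{E}$-triangle $A \overset{u \circ x}{\longrightarrow} D \overset{h_1'}{\longrightarrow} E_1 \overset{\delta_1}{\dashrightarrow}$ together with an $\mathbb{E}$-triangle $C \overset{d_1}{\longrightarrow} E_1 \overset{e_1}{\longrightarrow} S \dashrightarrow$, satisfying $h_1' \circ u = d_1 \circ y$ and $d_1^{\ast}\delta_1 = \delta$. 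By construction $d_1 \in w\mathit{Cof}$.

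The crucial step is to extract $c'$ from Lemma~\ref{LemNine}. Since $v \circ (u \circ x) = b \circ x = x'$, there is a commutative square linking $A \overset{u \circ x}{\longrightarrow} D \overset{h_1'}{\longrightarrow} E_1$ (top horizontal) with $A \overset{x'}{\longrightarrow} B' \overset{y'}{\longrightarrow} C'$ (bottom horizontal), whose left and right vertical data come from the conflations $0 \to A \overset{\mathrm{id}_A}{\longrightarrow} A$ (an $\mathbb{E}$-triangle by Remark~\ref{RemSplit}) and $V \overset{m}{\longrightarrow} D \overset{v}{\longrightarrow} B'$. Feeding this into Lemma~\ref{LemNine} gives an object $X$, a morphism $c' : E_1 \to C'$, and conflations $0 \to V \to X$ and $X \to E_1 \overset{c'}{\longrightarrow} C'$, together with a morphism of $\mathbb{E}$-triangles $(\mathrm{id}_A, v, c') : \delta_1 \to \delta'$. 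The first conflation forces $X \cong V$, so $c'$ is a deflation with cocone in $\mathcal{V}$, i.e., $c' \in w\mathit{Fib}$.

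Defining $c := c' \circ d_1$ then lands $c$ in $w\mathit{Fib} \circ w\mathit{Cof} = \mathbb{W}$. The required equations $c \circ y = c' \circ d_1 \circ y = c' \circ h_1' \circ u = y' \circ v \circ u = y' \circ b$ and $c^{\ast}\delta' = d_1^{\ast} c'^{\ast}\delta' = d_1^{\ast}\delta_1 = \delta$ (using the two morphisms of $\mathbb{E}$-triangles constructed above) show that $(\mathrm{id}_A, b, c)$ is a morphism of $\mathbb{E}$-triangles, as required. The main obstacle is recognizing that the nine lemma can be fed the degenerate left column $K = 0$, $a = \mathrm{id}_A$: this triviality is precisely what forces $X \cong V \in \mathcal{V}$ and thereby endows $c'$ with the $w\mathit{Fib}$-property, which is where everything converges.
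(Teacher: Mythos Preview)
Your proof is correct and follows essentially the same route as the paper: factor $b$ as an acyclic cofibration followed by an acyclic fibration, apply {\rm (ET4)} to produce $d_1\in w\mathit{Cof}$, then obtain a map $c'\in w\mathit{Fib}$ completing the diagram, and set $c=c'\circ d_1\in w\mathit{Fib}\circ w\mathit{Cof}=\mathbb{W}$. The only cosmetic difference is that where you invoke Lemma~\ref{LemNine} with the degenerate left column $0\to A\overset{\mathrm{id}}{\longrightarrow}A$ (and then observe that the resulting conflation $0\to V\to X$ forces $X\cong V$), the paper applies the dual of Proposition~\ref{PropShiftOctahedron} directly, which is exactly what the nine lemma collapses to in this degenerate case and avoids the small extra step of identifying $X$.
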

\begin{proof}
By definition, $b$ can be factorized as $b=v\circ s$,
using $\mathbb{E}$-triangles $B\overset{s}{\longrightarrow}P\to S\overset{\theta}{\dashrightarrow}$ and $V\to P\overset{v}{\longrightarrow}B^{\prime}\overset{\tau}{\dashrightarrow}$ with $S\in\mathcal{S},V\in\mathcal{V}$. By {\rm (ET4)}, and then by the dual of Proposition~\ref{PropShiftOctahedron}, we obtain the following commutative diagrams made of $\mathbb{E}$-triangles,
\[
\xy
(-21,14)*+{A}="0";
(-7,14)*+{B}="2";
(7,14)*+{C}="4";
(-21,0)*+{A}="10";
(-7,0)*+{P}="12";
(7,0)*+{{}^{\exists}Q}="14";
(-7,-14)*+{S}="22";
(7,-14)*+{S}="24";
{\ar^{x} "0";"2"};
{\ar^{y} "2";"4"};
{\ar@{-->}^{\delta} "4";(19,14)};
{\ar@{=} "0";"10"};
{\ar_{s} "2";"12"};
{\ar^{{}^{\exists}c_1} "4";"14"};
{\ar_{s\circ x} "10";"12"};
{\ar_{{}^{\exists}p} "12";"14"};
{\ar@{-->}^{{}^{\exists}\nu} "14";(19,0)};
{\ar_{} "12";"22"};
{\ar^{} "14";"24"};
{\ar@{=} "22";"24"};
{\ar@{-->}_{\theta} "22";(-7,-27)};
{\ar@{-->}^{y_{\ast}\theta} "24";(7,-27)};
{\ar@{}|\circlearrowright "0";"12"};
{\ar@{}|\circlearrowright "2";"14"};
{\ar@{}|\circlearrowright "12";"24"};
\endxy
\qquad,\qquad
\xy
(-7,14)*+{A}="2";
(7,14)*+{A}="4";
(-21,0)*+{V}="10";
(-7,0)*+{P}="12";
(7,0)*+{B^{\prime}}="14";
(21,0)*+{}="16";
(-21,-14)*+{V}="20";
(-7,-14)*+{Q}="22";
(7,-14)*+{C^{\prime}}="24";
(21,-14)*+{}="26";
(-7,-28)*+{}="32";
(7,-28)*+{}="34";
{\ar@{=} "2";"4"};
{\ar_{s\circ x} "2";"12"};
{\ar^{x^{\prime}} "4";"14"};
{\ar^{} "10";"12"};
{\ar_{v} "12";"14"};
{\ar@{-->}^{\tau} "14";"16"};
{\ar@{=} "10";"20"};
{\ar^{p} "12";"22"};
{\ar^{y^{\prime}} "14";"24"};
{\ar_{} "20";"22"};
{\ar_{{}^{\exists}c_2} "22";"24"};
{\ar@{-->}_{} "24";"26"};
{\ar@{-->}_{\nu} "22";"32"};
{\ar@{-->}^{\delta^{\prime}} "24";"34"};
{\ar@{}|\circlearrowright "2";"14"};
{\ar@{}|\circlearrowright "10";"22"};
{\ar@{}|\circlearrowright "12";"24"};
\endxy
\]
in which $c_1^{\ast}\nu=\delta$ holds.
Then $c=c_2\circ c_1$ belongs to $\mathit{wCof}\circ\mathit{wFib}\subseteq\mathbb{W}$ by Proposition~\ref{PropComposW}, satisfies $c\circ y=c_2\circ p\circ s=y^{\prime}\circ v\circ s=y^{\prime}\circ b$ and $c^{\ast}\delta^{\prime}=c_1^{\ast}c_2^{\ast}\delta^{\prime}=c_1^{\ast}\nu=\delta$.
\end{proof}

\begin{proposition}\label{PropWRetract}
The class $\mathbb{W}$ is closed under retracts.
\end{proposition}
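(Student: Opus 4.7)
The plan is to reduce closure of $\mathbb{W}$ under retracts to closure under direct summands, by first showing that a retract in an additive extriangulated category satisfying Condition~(WIC) is necessarily a direct summand.

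\textbf{Splitting the retract.} Suppose $f\in\mathscr{C}(A,B)$ is a retract of $g\in\mathscr{C}(C,D)$ via morphisms $i_A\colon A\to C$, $r_A\colon C\to A$, $i_B\colon B\to D$, $r_B\colon D\to B$ with $r_A\circ i_A=\mathrm{id}_A$, $r_B\circ i_B=\mathrm{id}_B$, $g\circ i_A=i_B\circ f$ and $r_B\circ g=f\circ r_A$. Since $\mathrm{id}_A$ is an inflation by (ET2), Condition~(WIC) forces $i_A$ to be an inflation; complete it to a conflation $A\overset{i_A}{\longrightarrow}C\overset{p}{\longrightarrow}K$ realizing some $\delta\in\mathbb{E}(K,A)$. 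The existence of $r_A$ forces $\delta=0$ by Corollary~\ref{CorExact0}, and Remark~\ref{RemSplit}(1) produces a retraction $r'$ of $i_A$ with $(r',p)^T\colon C\to A\oplus K$ an isomorphism. Since $r_A$ and $r'$ are both retractions of $i_A$, Proposition~\ref{PropExact1} yields $u\colon K\to A$ with $r_A-r'=u\circ p$; then $(r_A,p)^T=\bigl(\begin{smallmatrix}1&u\\0&1\end{smallmatrix}\bigr)(r',p)^T$ is again an isomorphism. The same argument gives $D\cong B\oplus L$. Under these identifications, writing $g$ as a $2\times 2$ block matrix and imposing $g\circ i_A=i_B\circ f$ (first column $= (f,0)^T$) together with $r_B\circ g=f\circ r_A$ (first row $= (f,0)$) forces $g=f\oplus h$ for some $h\in\mathscr{C}(K,L)$.

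\textbf{Reducing to a direct sum.} It is therefore enough to show that $f\oplus h\in\mathbb{W}$ implies $f\in\mathbb{W}$. By Proposition~\ref{PropFactor}, factor $f=q\circ j$ and $h=q'\circ j'$ with $j,j'\in w\mathit{Cof}$ and $q,q'\in\mathit{Fib}$. A direct sum of conflations is a conflation (by (ET2)), and $\mathcal{S},\mathcal{T}$ are closed under direct sums, so $j\oplus j'\in w\mathit{Cof}$ and $q\oplus q'\in\mathit{Fib}$. Applying the 2-out-of-3 property (Corollary~\ref{CorW2outof3}) to the factorization $f\oplus h=(q\oplus q')\circ(j\oplus j')$, together with $j\oplus j'\in w\mathit{Cof}\subseteq\mathbb{W}$, yields $q\oplus q'\in\mathbb{W}$.

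\textbf{Upgrading and concluding.} Write $q\oplus q'=\beta\circ\alpha$ with $\alpha\in w\mathit{Cof}$ and $\beta\in w\mathit{Fib}$. By Proposition~\ref{PropLiftST}(1), $\alpha$ has the left lifting property against the fibration $q\oplus q'$; solving the lifting problem with top edge $\mathrm{id}$ and bottom edge $\beta$ exhibits $q\oplus q'$ as a retract of $\beta\in w\mathit{Fib}$. Proposition~\ref{PropRetract} then gives $q\oplus q'\in w\mathit{Fib}$, so $\mathrm{CoCone}(q\oplus q')\cong\mathrm{CoCone}(q)\oplus\mathrm{CoCone}(q')\in\mathcal{V}$. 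Since $\mathcal{V}$ is closed under direct summands (Definition~\ref{DefCotors}), $\mathrm{CoCone}(q)\in\mathcal{V}$, whence $q\in w\mathit{Fib}$ and $f=q\circ j\in w\mathit{Fib}\circ w\mathit{Cof}=\mathbb{W}$.

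The main obstacle is the splitting step: confirming that a retract in the extriangulated setting really yields a direct-sum decomposition compatible with the chosen conflation data requires Condition~(WIC) together with the delicate adjustment of the retraction $r_A$ via Proposition~\ref{PropExact1}. Once the reduction $g=f\oplus h$ is in place, the remainder of the argument follows cleanly from the factorization, lifting, 2-out-of-3 and summand-closure results already established.
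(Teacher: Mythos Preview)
Your proof is correct and takes a genuinely different, more streamlined route than the paper. The paper works directly with the retract diagram: it factors $f$ and $g$, uses lifting to connect the middle objects, and then performs a substantial extriangulated diagram chase involving {\rm (ET4)}, Proposition~\ref{PropShiftOctahedron}, Lemma~\ref{LemWRetract}, and finally the adjoint functor $\sigma\colon\mathcal{U}/\mathcal{I}\to\mathcal{Z}/\mathcal{I}$ from Remark~\ref{RemTCP} to force the relevant cone into $\mathcal{S}$. You instead exploit Condition~(WIC) at the outset to split the retract as a genuine direct summand $g\cong f\oplus h$, reducing everything to the observation that $\mathit{Fib}\cap\mathbb{W}=w\mathit{Fib}$---which you establish via the classical model-categorical retract argument (factor, lift against the fibration, apply Proposition~\ref{PropRetract})---together with summand-closure of $\mathcal{V}$. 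Your approach is shorter and conceptually cleaner, and uses only results already available at this point in the paper; the paper's route avoids the preliminary splitting but pays for it with a considerably heavier construction specific to the extriangulated setting.
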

\begin{proof}
Suppose we are given a commutative diagram in $\mathscr{C}$
\[
\xy
(-12,6)*+{A}="0";
(0,6)*+{C}="2";
(0,12)*+{}="3";
(12,6)*+{A}="4";
(-12,-6)*+{B}="10";
(0,-6)*+{D}="12";
(0,-12)*+{}="13";
(12,-6)*+{B}="14";
{\ar@/^1.20pc/^{\mathrm{id}} "0";"4"};
{\ar_{a} "0";"2"};
{\ar_{c} "2";"4"};
{\ar_{f} "0";"10"};
{\ar^{g} "2";"12"};
{\ar^{f} "4";"14"};
{\ar@/_1.20pc/_{\mathrm{id}} "10";"14"};
{\ar_{b} "10";"12"};
{\ar_{d} "12";"14"};
{\ar@{}|\circlearrowright "2";"3"};
{\ar@{}|\circlearrowright "0";"12"};
{\ar@{}|\circlearrowright "2";"14"};
{\ar@{}|\circlearrowright "12";"13"};
\endxy
\]
in which $g\in\mathbb{W}$. Let us show that $f\in\mathbb{W}$.
If we decompose $f$ and $g$ as
\[
\xy
(-8,5)*+{A}="0";
(8,5)*+{B}="2";
(0,-6)*+{M}="4";
(0,8)*+{}="5";
{\ar^{f} "0";"2"};
{\ar_(0.4){i} "0";"4"};
{\ar@{<-}^(0.4){x} "2";"4"};
{\ar@{}|\circlearrowright "4";"5"};
\endxy\ \ ,\ \ 
\xy
(-8,5)*+{C}="0";
(8,5)*+{D}="2";
(0,-6)*+{N}="4";
(0,8)*+{}="5";
{\ar^{g} "0";"2"};
{\ar_(0.4){j} "0";"4"};
{\ar@{<-}^(0.4){y} "2";"4"};
{\ar@{}|\circlearrowright "4";"5"};
\endxy
\quad\left(\begin{array}{c}i\in\mathit{Cof},\, j\in\mathit{wCof},\\ x,y\in w\mathit{Fib}\end{array}\right)
\]
by Proposition~\ref{PropFactor}, then there exist morphisms $m,n$ which make the following diagram commutative by Proposition~\ref{PropLiftST} .
\[
\xy
(-14,12)*+{A}="2";
(0,12)*+{C}="4";
(14,12)*+{A}="6";
(-14,0)*+{M}="12";
(0,0)*+{N}="14";
(14,0)*+{M}="16";
(-14,-12)*+{B}="22";
(0,-12)*+{D}="24";
(14,-12)*+{B}="26";
{\ar^{a} "2";"4"};
{\ar^{c} "4";"6"};
{\ar_{i} "2";"12"};
{\ar^{j} "4";"14"};
{\ar^{i} "6";"16"};
{\ar_{m} "12";"14"};
{\ar_{n} "14";"16"};
{\ar_{x} "12";"22"};
{\ar^{y} "14";"24"};
{\ar^{x} "16";"26"};
{\ar_{b} "22";"24"};
{\ar_{d} "24";"26"};
{\ar@{}|\circlearrowright "2";"14"};
{\ar@{}|\circlearrowright "4";"16"};
{\ar@{}|\circlearrowright "12";"24"};
{\ar@{}|\circlearrowright "14";"26"};
\endxy
\]
By Corollary~\ref{CorW2outof3} applied to the lower half, it follows $n\circ m\in\mathbb{W}$. By definition of $\mathit{Cof}$ and $\mathit{wCof}$, there are $\mathbb{E}$-triangles
\[ A\overset{i}{\longrightarrow}M\overset{p}{\longrightarrow}U\overset{\rho}{\dashrightarrow},\qquad C\overset{j}{\longrightarrow}N\overset{q}{\longrightarrow}S\overset{\tau}{\dashrightarrow} \]
with $U\in\mathcal{U},S\in\mathcal{S}$. It suffices to show $U\in\mathcal{S}$.

Realize $c_{\ast}\tau$ by an $\mathbb{E}$-triangle $A\overset{j^{\prime}}{\longrightarrow}N^{\prime}\overset{q^{\prime}}{\longrightarrow}S\overset{c_{\ast}\tau}{\dashrightarrow}$. Put $c^{\prime}=\Big[\raise1ex\hbox{\leavevmode\vtop{\baselineskip-8ex \lineskip1ex \ialign{#\crcr{$-c$}\crcr{$\,\, j$}\crcr}}}\Big]\colon C\to A\oplus N$. Then by an argument similar to that of the proof of Corollary~\ref{CorTrivWIC}, we can find a morphism of $\mathbb{E}$-triangles
\[
\xy
(-12,6)*+{C}="0";
(0,6)*+{N}="2";
(12,6)*+{S}="4";
(24,6)*+{}="6";
(-12,-6)*+{A}="10";
(0,-6)*+{N^{\prime}}="12";
(12,-6)*+{S}="14";
(24,-6)*+{}="16";
{\ar^{j} "0";"2"};
{\ar^{q} "2";"4"};
{\ar@{-->}^{\tau} "4";"6"};
{\ar_{c} "0";"10"};
{\ar_{n_1} "2";"12"};
{\ar@{=} "4";"14"};
{\ar_{j^{\prime}} "10";"12"};
{\ar_{q^{\prime}} "12";"14"};
{\ar@{-->}_{c_{\ast}} "14";"16"};
{\ar@{}|\circlearrowright "0";"12"};
{\ar@{}|\circlearrowright "2";"14"};
\endxy
\]
which gives an $\mathbb{E}$-triangle $C\overset{c^{\prime}}{\longrightarrow}A\oplus N\overset{[j^{\prime}\ n_1]}{\longrightarrow}N^{\prime}\overset{q^{\prime\ast}\tau}{\dashrightarrow}$ (cf. \cite[Proposition~1.20]{LN}). Since we have $[i\ n]\circ c^{\prime}=n\circ j-i\circ c=0$, there is $n^{\prime}\in\mathscr{C}(N^{\prime},M)$ which satisfies $n^{\prime}\circ [j^{\prime}\ n_1]=[i\ n]$, namely $n^{\prime}\circ j^{\prime}=i$ and $n^{\prime}\circ n_1=n$.

Put $m^{\prime}=n_1\circ m$. This satisfies $n^{\prime}\circ m^{\prime}=n\circ m\in\mathbb{W}$ and $m^{\prime}\circ i=j^{\prime}$. Resolve $N^{\prime}$ by an $\mathbb{E}$-triangle
\[ T^{\prime}\to S^{\prime}\overset{s^{\prime}}{\longrightarrow}N^{\prime}\overset{\theta}{\dashrightarrow}\quad(S^{\prime}\in\mathcal{S},T^{\prime}\in\mathcal{T}). \]
Then by the dual of Corollary~\ref{CorTrivWIC}, the morphism $[m^{\prime}\ s^{\prime}]\colon M\oplus S^{\prime}\to N^{\prime}$ can be completed into an $\mathbb{E}$-triangle ${}^{\exists}L\to M\oplus S^{\prime}\overset{[m^{\prime}\ s^{\prime}]}{\longrightarrow}N^{\prime}\dashrightarrow$. By the dual of Proposition~\ref{PropShiftOctahedron}, we obtain the following commutative diagram made of conflations.
\begin{equation}\label{DiagLMN}
\xy
(-8,21)*+{A}="-12";
(8,21)*+{A}="-14";
(-24,7)*+{L}="0";
(-8,7)*+{M\oplus S^{\prime}}="2";
(8,7)*+{N^{\prime}}="4";
(-24,-7)*+{L}="10";
(-8,-7)*+{U\oplus S^{\prime}}="12";
(8,-7)*+{S}="14";
{\ar@{=} "-12";"-14"};
{\ar_{} "-12";"2"};
(-11.6,15)*+{\Big[\ \Big]}="-1";
(-11.6,16.6)*+{\scriptstyle{i}};
(-11.6,13.4)*+{\scriptstyle{0}};
{\ar^{j^{\prime}} "-14";"4"};
{\ar^{} "0";"2"};
{\ar^(0.62){[m^{\prime}\ s^{\prime}]} "2";"4"};
{\ar@{=} "0";"10"};
{\ar|*+{_{p\oplus\mathrm{id}}} "2";"12"};
{\ar^{q^{\prime}} "4";"14"};
{\ar_(0.4){{}^{\exists}\ell} "10";"12"};
{\ar_(0.6){{}^{\exists}k} "12";"14"};
{\ar@{}|\circlearrowright "-12";"4"};
{\ar@{}|\circlearrowright "0";"12"};
{\ar@{}|\circlearrowright "2";"14"};
\endxy
\end{equation}
If we put $m_0=n^{\prime}\circ [m^{\prime}\ s^{\prime}]$, then since
\[
\xy
(-10,-6)*+{M\oplus S^{\prime}}="0";
(0,5)*+{M}="2";
(10,-6)*+{M}="4";
(0,-9.5)*+{}="3";
{\ar_{} "2";"0"};
(-7.6,4)*+{\Big[\ \Big]}="-1";
(-7.6,5.6)*+{\scriptstyle{1}};
(-7.6,2.4)*+{\scriptstyle{0}};
{\ar_(0.56){m_0} "0";"4"};
{\ar^{n^{\prime}\circ m^{\prime}} "2";"4"};
{\ar@{}|\circlearrowright "2";"3"};
\endxy
\]
is commutative, $m_0\in\mathbb{W}$ follows from $\Big[\raise1ex\hbox{\leavevmode\vtop{\baselineskip-8ex \lineskip1ex \ialign{#\crcr{$1$}\crcr{$0$}\crcr}}}\Big]\in\mathit{wCof}$ and $n^{\prime}\circ m^{\prime}\in\mathbb{W}$, by Corollary~\ref{CorW2outof3}.

Applying Lemma~\ref{LemWRetract} to
\[
\xy
(-18,7)*+{A}="0";
(0,7)*+{M\oplus S^{\prime}}="2";
(18,7)*+{U\oplus S^{\prime}}="4";
(36,7)*+{}="6";
(-18,-7)*+{A}="10";
(0,-7)*+{M}="12";
(18,-7)*+{U}="14";
(36 ,-7)*+{}="16";
{\ar^{} "0";"2"};
(-11.6,11)*+{\Big[\ \Big]}="-1";
(-11.6,12.6)*+{\scriptstyle{i}};
(-11.6,9.4)*+{\scriptstyle{0}};
{\ar^{p\oplus\mathrm{id}} "2";"4"};
{\ar@{-->}^{} "4";"6"};
{\ar@{=} "0";"10"};
{\ar^{m_0} "2";"12"};
%
{\ar_{i} "10";"12"};
{\ar_{p} "12";"14"};
{\ar@{-->}_{} "14";"16"};
{\ar@{}|\circlearrowright "0";"12"};
\endxy
\]
we obtain $u\in\mathscr{C}(U\oplus S^{\prime},U)$ which belongs to $\mathbb{W}$ satisfying $u\circ(p\oplus\mathrm{id}_{S^{\prime}})=p\circ m_0$. Then since $u\circ\ell=0$, we see that $u$ factors through $S$, in the bottom $\mathbb{E}$-triangle in $(\ref{DiagLMN})$. Thus if we apply the functor $\sigma\colon\mathcal{U}/\mathcal{I}\to\mathcal{Z}/\mathcal{I}$, it follows $\sigma(\underline{u})=0$. On the other hand, it can be easily seen that $u\in\mathbb{W}$ implies that $u$ can be written as composition of $U\oplus S^{\prime}\overset{u^{\prime}}{\longrightarrow}U\oplus I\overset{[1\ 0]}{\longrightarrow}U$, with $u^{\prime}\in\mathit{wCof}$ and $I\in\mathcal{I}$. By {\rm (ET4)}, we can show that $\sigma(\underline{u}^{\prime})$ is an isomorphism, and thus $\sigma(\underline{u})$ is an isomorphism.
This means $\sigma(U)=0$ in $\mathcal{Z}/\mathcal{I}$, which shows $U\in\mathcal{U}\cap\mathrm{CoCone(\mathcal{I},\mathcal{S})}\subseteq\mathcal{U}\cap\mathcal{N}=\mathcal{S}$.
\end{proof}

By the argument so far, admissible model structures and Hovey twin cotorsion pairs on $(\mathscr{C},\mathbb{E},\mathfrak{s})$ correspond bijectively. Remark that, a model structure induces an equivalence
\[ \mathscr{C}_{\mathit{cf}}/_\sim\overset{\simeq}{\longrightarrow}\mathscr{C}[\mathbb{W}^{-1}]. \]
Here, the right hand side is the localization $\ell\colon\mathscr{C}\to\mathscr{C}[\mathbb{W}^{-1}]$. The left hand side is the category of fibrant-cofibrant objects modulo homotopies.
Let us describe it in terms of the corresponding Hovey twin cotorsion pair $\mathcal{P}=((\mathcal{S},\mathcal{T}),(\mathcal{U},\mathcal{V}))$.
\begin{itemize}
\item[-] $X\in\mathscr{C}$ is fibrant if and only if $(X\to0)\in\mathit{Fib}$, if and only if $X\in\mathcal{T}$. Dually, $X$ is cofibrant if and only if $X\in\mathcal{U}$. Thus the full subcategory of fibrant-cofibrant objects in $\mathscr{C}$ agrees with $\mathcal{Z}\subseteq\mathscr{C}$.
\item[-] For any $X,Y\in\mathcal{Z}$, morphisms $f,g\in\mathcal{Z}(X,Y)$ satisfy $f\sim g$ if and only if $f-g$ factors through some object $I\in\mathcal{I}$.
\end{itemize}
Thus we have $\mathscr{C}_{\mathit{cf}}/_\sim=\mathcal{Z}/\mathcal{I}$. In summary, we obtain the following.
This gives an explanation for the equivalence in \cite[Proposition~6.10]{Na4} and \cite[Theorem~4.1]{IYa}.
\begin{corollary}\label{CorHTCP}
Let $(\mathscr{C},\mathbb{E},\mathfrak{s})$ be an extriangulated category, and let $\mathcal{P}=((\mathcal{S},\mathcal{T}),(\mathcal{U},\mathcal{V}))$ be a Hovey twin cotorsion pair. Then for $\mathbb{W}=w\mathit{Fib}\circ w\mathit{Cof}$ defined as above, we have an equivalence $\mathcal{Z}/\mathcal{I}\overset{\simeq}{\longrightarrow}\mathscr{C}[\mathbb{W}^{-1}]$ which makes the following diagram commutative up to natural isomorphism.
\[
\xy
(-10,8)*+{\mathcal{Z}}="0";
(10,8)*+{\mathscr{C}}="2";
(-10,-8)*+{\mathcal{Z}/\mathcal{I}}="4";
(10,-8)*+{\mathscr{C}[\mathbb{W}^{-1}]}="6";
{\ar^{\text{inclusion}} "0";"2"};
{\ar_(0.42){\text{ideal}}_(0.58){\text{quotient}} "0";"4"};
{\ar^{\ell} "2";"6"};
{\ar_(0.46){\simeq} "4";"6"};
{\ar@{}|\circlearrowright "0";"6"};
\endxy
\]
In particular, the map
\[ (\mathcal{Z}/\mathcal{I})(X,Y)\to\mathscr{C}[\mathbb{W}^{-1}](X,Y)\ ;\ \overline{f}\mapsto\ell(f) \]
is an isomorphism for any $X,Y\in\mathcal{Z}$.
\end{corollary}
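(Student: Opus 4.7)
The strategy is to invoke the classical Quillen theorem---for any model category, the canonical functor from the category of fibrant-cofibrant objects modulo the homotopy relation to the localization at the weak equivalences is an equivalence---applied to the admissible model structure $\mathcal{M}=(\mathit{Fib},\mathit{Cof},\mathbb{W})$ associated to $\mathcal{P}$.

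First, verify that $\mathcal{M}$ is a model structure on $\mathscr{C}$. The 2-out-of-3 property for $\mathbb{W}$ is Corollary~\ref{CorW2outof3}; retract-stability of the three classes is covered by Propositions~\ref{PropRetract} and \ref{PropWRetract}; the lifting axioms are Proposition~\ref{PropLiftST} together with its dual; and the factorization axioms are Proposition~\ref{PropFactor} and its dual. Next, identify the fibrant-cofibrant subcategory as $\mathcal{Z}$: by Definition~\ref{DefFibCof}, $X\to 0$ is a fibration iff $X\in\mathcal{T}$, and dually $0\to X$ is a cofibration iff $X\in\mathcal{U}$, so $\mathscr{C}_{\mathit{cf}}=\mathcal{T}\cap\mathcal{U}=\mathcal{Z}$.

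The technical heart of the argument is to identify, for $X,Y\in\mathcal{Z}$, the left (equivalently right) homotopy relation on $\mathcal{Z}(X,Y)$ with the relation ``$f-g$ factors through some $I\in\mathcal{I}$''. I would build a cylinder on $X$ by factoring the fold map $\nabla\colon X\oplus X\to X$ via Proposition~\ref{PropFactor} as $X\oplus X\xrightarrow{j}\mathrm{Cyl}(X)\xrightarrow{p}X$ with $j\in\mathit{Cof}$ and $p\in w\mathit{Fib}$; this yields a conflation $X\oplus X\to\mathrm{Cyl}(X)\to U$ with $U\in\mathcal{U}$ and a conflation $V\to\mathrm{Cyl}(X)\to X$ with $V\in\mathcal{V}$. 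Given a left homotopy $H\colon\mathrm{Cyl}(X)\to Y$ from $f$ to $g$, combining the long exact sequence of Proposition~\ref{PropExact1} for the first conflation with Lemma~\ref{LemUV} allows extracting a factorization of $f-g$ through an object of $\mathcal{I}$. Conversely, given $f-g=\beta\circ\alpha$ with $\alpha\colon X\to I$ and $\beta\colon I\to Y$, the map $[\alpha,0]\colon X\oplus X\to I$ extends along $j$ to some $\gamma\colon\mathrm{Cyl}(X)\to I$ by Proposition~\ref{PropLiftST} (using that $I\to 0$ is an acyclic fibration because $I\in\mathcal{T}\cap\mathcal{V}$), and $H=g\circ p+\beta\circ\gamma$ provides the desired left homotopy.

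The main obstacle is this identification of the homotopy relation, which is the sole step that relies on the specifically extriangulated machinery rather than abstract model-category formalities. Once it is in place, Quillen's theorem yields the equivalence $\mathcal{Z}/\mathcal{I}\xrightarrow{\simeq}\mathscr{C}[\mathbb{W}^{-1}]$, and the commutativity of the displayed diagram up to natural isomorphism is automatic from the construction of the equivalence as the inclusion $\mathcal{Z}\hookrightarrow\mathscr{C}$ followed by the localization $\ell$.
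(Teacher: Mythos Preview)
Your approach is essentially the same as the paper's: both invoke Quillen's theorem for the admissible model structure built in Section~\ref{section: model}, identify $\mathscr{C}_{\mathit{cf}}$ with $\mathcal{Z}$, and identify the homotopy relation with ``$f-g$ factors through $\mathcal{I}$'' (the paper merely asserts the last point, whereas you sketch an argument). One small correction in your sketch: the forward direction of the homotopy identification does not really use Lemma~\ref{LemUV}; the clean argument is that $\mathrm{Cyl}(X)\in\mathcal{U}$ by extension-closedness (from the conflation $X\oplus X\to\mathrm{Cyl}(X)\to U$), so the fibre $V$ of $p$ lies in $\mathcal{U}\cap\mathcal{V}=\mathcal{I}$, and then $p\circ(j_0-j_1)=0$ forces $j_0-j_1$ (hence $f-g=H\circ(j_0-j_1)$) to factor through $V\in\mathcal{I}$.
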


\begin{remark}\label{RemUTMono}
By the generality of a model structure, we can also deduce that
\[ (\mathscr{C}/\mathcal{I})(U,T)\to\mathscr{C}[\mathbb{W}^{-1}](U,T)\ ;\ \overline{f}\mapsto\ell(f) \]
is an isomorphism for any $U\in\mathcal{U}$ and $T\in\mathcal{T}$. (This also follows from the adjoint property given in Remark~\ref{RemTCP}.)
\end{remark}

\begin{remark}
If $\mathscr{C}$ is abelian, then we can show easily that $\mathbb{W}$ agrees with the class of morphisms $f$ satisfying $\mathrm{Ker}(f)\in\mathcal{N}$ and $\mathrm{Cok}(f)\in\mathcal{N}$. Remark that $\mathcal{N}\subseteq\mathscr{C}$ becomes a Serre subcategory only when $\mathcal{N}=\mathscr{C}$. Indeed, if $\mathcal{N}\subseteq\mathscr{C}$ is a Serre subcategory, then the localization $\ell\colon\mathscr{C}\to\mathscr{C}[\mathbb{W}^{-1}]$ becomes an exact functor between abelian categories. Since any $C\in\mathscr{C}$ admits an inflation $C\to V$ to some $V\in\mathcal{V}$, this shows that $C=0$ holds in $\mathscr{C}[\mathbb{W}^{-1}]$, which means $C\in\mathcal{N}$.
\end{remark}

\section{Triangulation of the homotopy category}\label{section_Tria}

In this section, we assume $((\mathcal{S},\mathcal{T}),(\mathcal{U},\mathcal{V}))$ to be a Hovey twin cotorsion pair.
Put $\widetilde{\mathscr{C}}=\mathscr{C}[\mathbb{W}^{-1}]$ and let $\ell\colon\mathscr{C}\to\widetilde{\mathscr{C}}$ be the localization functor. We will show that $\widetilde{\mathscr{C}}$ is triangulated (Theorem~\ref{ThmTriaLoc}).

\begin{remark}
 We note that the category $\mathscr{C}$ is usually not complete and cocomplete, so that the model structure is not stable. However, axiom {\rm (ET4)} gives specific choices of weak bicartesian squares which will compensate for the lack of stability.
\end{remark}

\subsection{Shift functor}
We first aim at defining a shift functor on the category $\widetilde{\mathscr{C}}$. 

\begin{definition}\label{DefShiftLoc}
Let us fix a choice, for any object $A\in\mathscr{C}$, of an $\mathbb{E}$-triangle $A\overset{v_A}{\longrightarrow}V_A\overset{u_A}{\longrightarrow}U_A\overset{\rho_A}{\dashrightarrow}$, with $V_A\in\mathcal{V}$ and $U_A\in\mathcal{U}$. The functor $[1]\colon\mathscr{C}\to\widetilde{\mathscr{C}}$ is defined on objects by $A[1] = U_A$, and on morphisms as follows: Let $A\overset{f}{\longrightarrow}B$ be a morphism in $\mathscr{C}$. Then there exists $u_f\in\mathscr{C}(U_A,U_B)$ which gives a morphism of $\mathbb{E}$-extensions $(f,u_f)\colon\rho_A\to\rho_B$. Indeed, since $\mathbb{E}(U_A,V_B)=0$, one shows, by using the long exact sequence of Proposition~\ref{PropExact1} that there is a morphism $V_A\overset{v_f}{\longrightarrow}V_B$ such that $v_f\circ v_A = v_B\circ f$. There is an induced morphism of $\mathbb{E}$-triangles as follows.
\[
\xy
(-12,6)*+{A}="0";
(0,6)*+{V_A}="2";
(12,6)*+{U_A}="4";
(24,6)*+{}="6";
(-12,-6)*+{B}="10";
(0,-6)*+{V_{B}}="12";
(12,-6)*+{U_{B}}="14";
(24,-6)*+{}="16";
{\ar^{v_A} "0";"2"};
{\ar^{u_A} "2";"4"};
{\ar@{-->}^{\rho_A} "4";"6"};
{\ar_{f} "0";"10"};
{\ar^{v_f} "2";"12"};
{\ar^{u_f} "4";"14"};
{\ar_{v_{B}} "10";"12"};
{\ar_{u_{B}} "12";"14"};
{\ar@{-->}_{\rho_B} "14";"16"};
{\ar@{}|\circlearrowright "0";"12"};
{\ar@{}|\circlearrowright "2";"14"};
\endxy
\]
Define $f[1]$ to be the image $\ell(u_f)$ of $u_f$ in $\widetilde{\mathscr{C}}$. Since $f_\ast\rho_A = u_f^\ast\rho_B$, the morphism $u_f$ is uniquely defined in $\mathscr{C}/\mathcal{V}$ by Corollary~\ref{CorExact0}. This implies that $f[1]=\ell(u_f)$ is well-defined.
\end{definition}

\begin{claim}\label{ClaimNatIso}
 The functor $[1]$ does not essentially depend on the choices made. More precisely, fix any other choice of $\mathbb{E}$-triangles $A\overset{v^{\prime}_A}{\longrightarrow}V^{\prime}_A\overset{u^{\prime}_A}{\longrightarrow}U^{\prime}_A\overset{\rho^{\prime}_A}{\dashrightarrow}$ and let $\{1\}$ be the functor defined as above by means of these $\mathbb{E}$-triangles. Then $[1]$ and $\{1\}$ are naturally isomorphic.
\end{claim}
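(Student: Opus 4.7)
The plan is to construct a natural isomorphism $\eta : [1] \Rightarrow \{1\}$ directly from the data. For each $A \in \mathscr{C}$, applying {\rm (ET3)} to the identity square
\[
\xy
(-14,6)*+{A}="0";
(2,6)*+{V_A}="2";
(-14,-6)*+{A}="10";
(2,-6)*+{V^{\prime}_A}="12";
{\ar^{v_A} "0";"2"};
{\ar@{=} "0";"10"};
{\ar "2";"12"};
{\ar_{v^{\prime}_A} "10";"12"};
{\ar@{}|\circlearrowright "0";"12"};
\endxy
\]
produces a morphism of $\mathbb{E}$-triangles $(\mathrm{id}_A, b_A, \widetilde{\eta}_A) : \rho_A \to \rho^{\prime}_A$ (the middle vertical morphism exists by the long exact sequence of Proposition~\ref{PropExact1}, exactly as in Definition~\ref{DefShiftLoc}). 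I would then set $\eta_A := \ell(\widetilde{\eta}_A)$ as a morphism in $\widetilde{\mathscr{C}}$.

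The crucial well-definedness claim is that $\eta_A$ does not depend on the choice of $\widetilde{\eta}_A$. Given two such lifts $\widetilde{\eta}_A, \widetilde{\eta}^{\prime}_A$, both satisfy $(\widetilde{\eta}_A)^{\ast}\rho^{\prime}_A = \rho_A = (\widetilde{\eta}^{\prime}_A)^{\ast}\rho^{\prime}_A$, so $(\widetilde{\eta}_A - \widetilde{\eta}^{\prime}_A)^{\ast}\rho^{\prime}_A = 0$. Using the exactness of
\[ \mathscr{C}(U_A, V^{\prime}_A) \overset{u^{\prime}_A \circ -}{\longrightarrow} \mathscr{C}(U_A, U^{\prime}_A) \overset{(\rho^{\prime}_A)_\sharp}{\longrightarrow} \mathbb{E}(U_A, A) \]
from Corollary~\ref{ExactToShow}, the difference factors as $\widetilde{\eta}_A - \widetilde{\eta}^{\prime}_A = u^{\prime}_A \circ k$ for some $k : U_A \to V^{\prime}_A$. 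Since $V^{\prime}_A \in \mathcal{V}$, the morphism $V^{\prime}_A \to 0$ lies in $w\mathit{Fib} \subseteq \mathbb{W}$ (Definition~\ref{DefFibCof}), so $\ell(V^{\prime}_A) = 0$ in $\widetilde{\mathscr{C}}$, whence $\ell(u^{\prime}_A \circ k) = 0$.

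Naturality and invertibility then reduce to this same principle. For $f : A \to B$, both $\widetilde{\eta}_B \circ u_f$ and $u^{\prime}_f \circ \widetilde{\eta}_A$ yield morphisms of $\mathbb{E}$-extensions $(f, -) : \rho_A \to \rho^{\prime}_B$, so the argument above (now applied to the triangle $\rho^{\prime}_B$ and morphisms $U_A \to U^{\prime}_B$ differing by something factoring through $V^{\prime}_B \in \mathcal{V}$) shows they coincide after $\ell$. For invertibility, I would swap the roles of the two families of $\mathbb{E}$-triangles to obtain $\widetilde{\eta}^{\prime}_A : U^{\prime}_A \to U_A$; then $\widetilde{\eta}^{\prime}_A \circ \widetilde{\eta}_A$ and $\mathrm{id}_{U_A}$ both realize $(\mathrm{id}_A, -) : \rho_A \to \rho_A$, and well-definedness forces $\ell(\widetilde{\eta}^{\prime}_A \circ \widetilde{\eta}_A) = \mathrm{id}$; symmetrically for the reverse composition.

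The main obstacle is the well-definedness step: once one extracts the observation that every object of $\mathcal{V}$ is isomorphic to $0$ in $\widetilde{\mathscr{C}}$ (immediate from Definition~\ref{DefFibCof} and the fact that $\mathbb{W}$ inverts weak fibrations), the rest of the argument is a uniform application of the standard ambiguity lemma for morphisms of $\mathbb{E}$-triangles via Corollary~\ref{CorExact0} and Corollary~\ref{ExactToShow}.
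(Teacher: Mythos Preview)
Your proof is correct and follows essentially the same approach as the paper's. The paper's argument is terser: it constructs $t_A$ exactly as your $\widetilde{\eta}_A$, appeals back to the well-definedness remark already made in Definition~\ref{DefShiftLoc} (uniqueness in $\mathscr{C}/\mathcal{V}$ via Corollary~\ref{CorExact0}) rather than re-deriving the exact-sequence argument, and handles invertibility and naturality by the same ``two morphisms of $\mathbb{E}$-extensions with the same first component agree after $\ell$'' principle you spell out.
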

\begin{proof}
The identity on $A$ induces two morphisms of $\mathbb{E}$-extensions
\[ (\mathrm{id},{}^{\exists}t_A)\colon\rho_A\to\rho^{\prime}_A\quad\text{and}\quad(\mathrm{id},{}^{\exists}t^{\prime}_A)\colon\rho^{\prime}_A\to\rho_A. \]
Then, since both
\[ (\mathrm{id},t^{\prime}_A\circ t_A),\ (\mathrm{id},\mathrm{id})\colon\rho_A\to\rho_A \]
are morphisms of $\mathbb{E}$-extensions, we have $\ell(t^{\prime}_A\circ t_A)=\ell(\mathrm{id})=\mathrm{id}$ as in the argument in Definition~\ref{DefShiftLoc}. Similarly we have $\ell(t_A\circ t^{\prime}_A)=\mathrm{id}$, and thus $\ell(t_A)$ is an isomorphism.

Put $\tau_A=\ell(t_A)$, and let us show the naturality of
\[ \tau=\{\tau_A\in\widetilde{\mathscr{C}}(A[1],A\{1\})\}_{A\in\mathscr{C}}. \]
Let $f\in\mathscr{C}(A,B)$ be any morphism. By definition, $f[1]=\ell(u)$ and $f\{1\}=\ell(u^{\prime})$ are given by morphisms of $\mathbb{E}$-extensions
\[ (f,u)\colon\rho_A\to\rho_B\quad\text{and}\quad (f^{\prime},u^{\prime})\colon\rho^{\prime}_A\to\rho^{\prime}_B. \]
Then, since both $(f,t_B\circ u)$ and $(f,u^{\prime}\circ t_A)$ are morphisms $\rho_A\to\rho^{\prime}_B$, we obtain $\ell(t_B\circ u)=\ell(u^{\prime}\circ t_A)$, namely $\tau_B\circ f[1]=f\{1\}\circ\tau_A$.
\end{proof}

We would like to show that $[1]\colon\mathscr{C}\to\widetilde{\mathscr{C}}$ induces an endofunctor of $\widetilde{\mathscr{C}}$. For this, it is enough to show that $[1]$ sends weak equivalences to isomorphisms. Since weak equivalences are compositions of a morphism in $w\mathit{Cof}$ followed by a morphism in $w\mathit{Fib}$,
it is enough to show that $[1]$ inverts all morphisms in $w\mathit{Cof}$ and in $w\mathit{Fib}$.

\begin{lemma}\label{LemWFibToIso}
 Let $j\in w\mathit{Cof}$. Then $j[1]$ is an isomorphism in $\widetilde{\mathscr{C}}$.
\end{lemma}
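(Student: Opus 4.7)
The plan is to compute $j[1]$ by means of an alternative $(\mathcal{V},\mathcal{U})$-resolution of $A$, built from the conflation witnessing $j \in w\mathit{Cof}$. Since $j \in w\mathit{Cof}$, there is a conflation $A \overset{j}{\to} B \overset{s}{\to} S \overset{\delta}{\dashrightarrow}$ with $S \in \mathcal{S}$. I would apply {\rm (ET4)} to this conflation together with the chosen $\mathbb{E}$-triangle $B \overset{v_B}{\to} V_B \overset{u_B}{\to} U_B \overset{\rho_B}{\dashrightarrow}$ to produce an object $E$ and $\mathbb{E}$-triangles
\[ A \overset{v_Bj}{\longrightarrow} V_B \overset{h'}{\longrightarrow} E \overset{\delta''}{\dashrightarrow} \qquad\text{and}\qquad S \overset{d}{\longrightarrow} E \overset{e}{\longrightarrow} U_B \dashrightarrow, \]
satisfying in particular the compatibility $j_{\ast}\delta'' = e^{\ast}\rho_B$ from {\rm (ET4)(iii)}. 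Since $\mathcal{S}\subseteq \mathcal{U}$, $U_B\in \mathcal{U}$, and $\mathcal{U}$ is extension-closed, the extension $S\to E\to U_B$ yields $E \in \mathcal{U}$; thus the first $\mathbb{E}$-triangle is a $(\mathcal{V},\mathcal{U})$-resolution of $A$, and the compatibility means precisely that $(j,\mathrm{id}_{V_B},e)$ is a morphism of $\mathbb{E}$-triangles from this new resolution to $\rho_B$. Moreover, the second $\mathbb{E}$-triangle exhibits $e$ as a deflation with cocone $S \in \mathcal{S} \subseteq \mathcal{N}$ (Remark~\ref{RemNiNf}~(1)), so $e \in \mathbb{W}$ by Claim~\ref{ClaimNW}~(2); hence $\ell(e)$ is an isomorphism in $\widetilde{\mathscr{C}}$.

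Next, I would define an alternative shift functor $\{1\}$ following Definition~\ref{DefShiftLoc} verbatim, but with the chosen resolution of the single object $A$ replaced by the newly built $A \to V_B \to E$ (keeping all other resolutions, in particular that of $B$, unchanged). Claim~\ref{ClaimNatIso} then produces a natural isomorphism $\tau : [1] \Rightarrow \{1\}$, with $\tau_B = \mathrm{id}_{U_B}$ since the resolution of $B$ is unchanged. The morphism $(j,\mathrm{id}_{V_B},e)$ identified above is a valid ET3-lift of $j$ with respect to the pair of resolutions used by $\{1\}$, so $j\{1\} = \ell(e)$ is an isomorphism; naturality of $\tau$ at $j$ then yields $j[1] = \tau_B \circ j[1] = j\{1\} \circ \tau_A = \ell(e)\circ \tau_A$, a composition of two isomorphisms in $\widetilde{\mathscr{C}}$. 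The only substantive point is to check that compatibility (iii) of {\rm (ET4)} gives exactly the morphism-of-$\mathbb{E}$-triangles condition for $(j,\mathrm{id}_{V_B},e)$ rather than mere commutativity of the underlying sequences, and this is immediate from the statement of {\rm (ET4)}.
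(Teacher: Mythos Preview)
Your proof is correct and follows essentially the same approach as the paper: apply {\rm (ET4)} to the conflation $A\overset{j}{\to}B\to S$ and the chosen resolution $B\overset{v_B}{\to}V_B\overset{u_B}{\to}U_B$ to obtain an alternative $(\mathcal{V},\mathcal{U})$-resolution of $A$ along with a morphism of $\mathbb{E}$-triangles $(j,\mathrm{id}_{V_B},e)$ to $\rho_B$, observe that $e\in\mathbb{W}$ via Claim~\ref{ClaimNW}, and then invoke Claim~\ref{ClaimNatIso}. The paper leaves the final naturality step (your computation $j[1]=\ell(e)\circ\tau_A$) implicit, but otherwise the arguments coincide.
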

\begin{proof}
Let $A\overset{j}{\longrightarrow}B$ be a morphism in $w\mathit{Cof}$. There is an $\mathbb{E}$-triangle $A\overset{j}{\longrightarrow}B\to S \dashrightarrow$, with $S\in\mathcal{S}$. Then {\rm (ET4)} gives morphisms of $\mathbb{E}$-triangles as follows.
\[
\xy
(-21,7)*+{A}="0";
(-7,7)*+{B}="2";
(7,7)*+{S}="4";
(-21,-7)*+{A}="10";
(-7,-7)*+{V_B}="12";
(7,-7)*+{C}="14";
(-7,-21)*+{U_B}="22";
(7,-21)*+{U_B}="24";
{\ar^{j} "0";"2"};
{\ar "2";"4"};
{\ar@{-->} "4";(19,7)};
{\ar@{=} "0";"10"};
{\ar_{v_B} "2";"12"};
{\ar@[gray] "4";"14"};
{\ar_{v^{\prime}_A} "10";"12"};
{\ar@[gray]_{u^{\prime}_A} "12";"14"};
{\ar@[gray]@{-->}^{\rho^{\prime}} "14";(19,-7)};
{\ar_{u_B} "12";"22"};
{\ar@[gray]^{w} "14";"24"};
{\ar@[gray]@{=} "22";"24"};
{\ar@{-->}^{\rho_B} "22";(-7,-34)};
{\ar@[gray]@{-->} "24";(7,-34)};
{\ar@{}|\circlearrowright "0";"12"};
{\ar@{}|\circlearrowright "2";"14"};
{\ar@{}|\circlearrowright "12";"24"};
\endxy
\]
Since $S$ and $U_B$ belong to $\mathcal{U}$, so does $C$. Moreover, $w$ is a weak equivalence by Claim~\ref{ClaimNW} {\rm (2)}. Claim~\ref{ClaimNatIso} allows to conclude that $j[1]$ is an isomorphism in $\widetilde{\mathscr{C}}$, since
\[
\xy
(-7,6)*+{A}="12";
(7,6)*+{V_B}="14";
(21,6)*+{C}="16";
(-7,-6)*+{B}="22";
(7,-6)*+{V_B}="24";
(21,-6)*+{U_B}="26";
{\ar^{v^{\prime}_A} "12";"14"};
{\ar^{u^{\prime}_A} "14";"16"};
{\ar@{-->}^{\rho^{\prime}_A} "16";(33,6)};
{\ar_{j} "12";"22"};
{\ar@{=} "14";"24"};
{\ar^{w} "16";"26"};
{\ar_{v_B} "22";"24"};
{\ar_{u_B} "24";"26"};
{\ar@{-->}_{\rho_B} "26";(33,-6)};
{\ar@{}|\circlearrowright "12";"24"};
{\ar@{}|\circlearrowright "14";"26"};
\endxy
\]
is a morphism of $\mathbb{E}$-triangles.
\end{proof}

\begin{lemma}\label{LemWCofToIso}
 Let $q\in w\mathit{Fib}$. Then $q[1]$ is an isomorphism in $\widetilde{\mathscr{C}}$.
\end{lemma}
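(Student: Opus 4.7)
The plan is to dualize the strategy of Lemma~\ref{LemWFibToIso}: replace the chosen resolution of $B$ by a cleverly constructed one (adapted to $q$) on which the induced shift of $q$ becomes the identity, and then invoke Claim~\ref{ClaimNatIso} to transfer the conclusion back to the original choice of resolution.

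Since $q \in w\mathit{Fib}$, there is an $\mathbb{E}$-triangle $V \overset{i}{\to} A \overset{q}{\to} B \overset{\delta}{\dashrightarrow}$ with $V \in \mathcal{V}$. I apply {\rm (ET4)} to this $\mathbb{E}$-triangle and to the chosen resolution $A \overset{v_A}{\to} V_A \overset{u_A}{\to} U_A \overset{\rho_A}{\dashrightarrow}$. This produces an object $E \in \mathscr{C}$, morphisms $d : B \to E$, $e : E \to U_A$ and $h^\prime : V_A \to E$, and a commutative diagram of $\mathbb{E}$-triangles
\[
\xy
(-21,7)*+{V}="0";
(-7,7)*+{A}="2";
(7,7)*+{B}="4";

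(-21,-7)*+{V}="10";
(-7,-7)*+{V_A}="12";
(7,-7)*+{E}="14";

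(-7,-21)*+{U_A}="22";
(7,-21)*+{U_A}="24";

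{\ar^{i} "0";"2"};
{\ar^{q} "2";"4"};

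{\ar@{=} "0";"10"};
{\ar_{v_A} "2";"12"};
{\ar^{d} "4";"14"};

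{\ar_{v_A \circ i} "10";"12"};
{\ar_{h^\prime} "12";"14"};

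{\ar_{u_A} "12";"22"};
{\ar^{e} "14";"24"};

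{\ar@{=} "22";"24"};
\endxy
\]
together with an $\mathbb{E}$-triangle $V \to V_A \overset{h^\prime}{\to} E \dashrightarrow$ (middle row) and an $\mathbb{E}$-triangle $B \overset{d}{\to} E \overset{e}{\to} U_A \overset{q_\ast \rho_A}{\dashrightarrow}$ (right column). Because $\mathcal{V}$ is extension-closed (Remark~\ref{RemExtClosed}) and $V, V_A \in \mathcal{V}$, the middle row forces $E \in \mathcal{V}$; combined with $U_A \in \mathcal{U}$, the right column is thus a valid resolution of $B$ of the form required by Definition~\ref{DefShiftLoc}.

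Now let $\{1\}$ denote the functor obtained from Definition~\ref{DefShiftLoc} using the resolution $B \overset{d}{\to} E \overset{e}{\to} U_A$ for $B$ (and the original $\mathbb{E}$-triangles for every other object). By construction of $E$, the lower row of the above diagram, together with the arrows $(q, h^\prime, \mathrm{id}_{U_A})$ and the compatibility $q_\ast \rho_A$ equal to the $\mathbb{E}$-extension of the right column, exhibits $(q, h^\prime, \mathrm{id}_{U_A})$ as a morphism of $\mathbb{E}$-triangles from $A \overset{v_A}{\to} V_A \overset{u_A}{\to} U_A$ to $B \overset{d}{\to} E \overset{e}{\to} U_A$. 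Therefore $q\{1\} = \ell(\mathrm{id}_{U_A}) = \mathrm{id}_{U_A}$ is an isomorphism in $\widetilde{\mathscr{C}}$. By Claim~\ref{ClaimNatIso} there is a natural isomorphism $\tau : [1] \Rightarrow \{1\}$, and the equality $\tau_B \circ q[1] = q\{1\} \circ \tau_A$ forces $q[1]$ to be an isomorphism as well.

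The only real point to check carefully is the third compatibility furnished by {\rm (ET4)}, namely that the $\mathbb{E}$-extension realized by the right column $B \to E \to U_A$ coincides with $q_\ast \rho_A$; this is exactly item~(i) of {\rm (ET4)} applied with our labeling, so $(q, \mathrm{id}_{U_A}) : \rho_A \to q_\ast \rho_A$ is a morphism of $\mathbb{E}$-extensions and the recipe of Definition~\ref{DefShiftLoc} yields $q\{1\} = \mathrm{id}_{U_A}$. I expect this matching of indices and the verification that $E \in \mathcal{V}$ (so that the new datum is a \emph{legitimate} resolution of $B$) to be the only delicate points; everything else is a direct dualization of the argument used for $w\mathit{Cof}$.
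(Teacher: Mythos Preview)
Your argument has a genuine gap at the step where you claim $E\in\mathcal{V}$. In the conflation $V\to V_A\overset{h'}{\to}E$ produced by {\rm (ET4)}, the object $E$ is the \emph{cone} (third term), not the middle term. Extension-closedness of $\mathcal{V}$ (Remark~\ref{RemExtClosed}) says that if the two outer terms of a conflation lie in $\mathcal{V}$ then so does the middle term; it says nothing about the third term when the first two are in $\mathcal{V}$. In general $\mathrm{Cone}(\mathcal{V},\mathcal{V})\not\subseteq\mathcal{V}$: the long exact sequence of Corollary~\ref{ExactToShow} only yields $\mathbb{E}(U,V_A)\to\mathbb{E}(U,E)$ at the end, with no further term to force $\mathbb{E}(U,E)=0$. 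So you cannot conclude $E\in\mathcal{V}$, and hence $B\overset{d}{\to}E\overset{e}{\to}U_A$ is not a legitimate resolution of $B$ in the sense of Definition~\ref{DefShiftLoc}. Without this, Claim~\ref{ClaimNatIso} does not apply (its proof requires $\mathbb{E}(U_A,E)=0$ to produce the comparison map $t_B$), and the whole transfer back to $[1]$ collapses.

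This asymmetry is exactly why the paper's proof of Lemma~\ref{LemWCofToIso} is not a straight dualization of Lemma~\ref{LemWFibToIso}. In Lemma~\ref{LemWFibToIso}, the new object $C$ arises as the \emph{middle} term of a conflation $S\to C\to U_B$ with $S,U_B\in\mathcal{U}$, so extension-closedness gives $C\in\mathcal{U}$ for free. For $w\mathit{Fib}$ no such cheap conflation is available; the paper instead factors the induced map $V_X\to V_Y$ through an intermediate object $B$, uses {\rm (ET4)} and then the nine Lemma~\ref{LemNine} to exhibit $B$ as the middle term of a conflation $V'\to B\to V_Y$ with both ends in $\mathcal{V}$, and only then obtains a valid alternative resolution. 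Your (ET4) diagram alone does not produce such a conflation; you would need an additional step of this kind to repair the argument.
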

\begin{proof}
Let $X\overset{q}{\longrightarrow}Y$ be a morphism in $w\mathit{Fib}$. It induces a morphism of $\mathbb{E}$-triangles:
\[
\xy
(-12,6)*+{X}="0";
(0,6)*+{V_X}="2";
(12,6)*+{U_X}="4";
(24,6)*+{}="6";
(-12,-6)*+{Y}="10";
(0,-6)*+{V_{Y}}="12";
(12,-6)*+{U_{Y}}="14";
(24,-6)*+{}="16";
{\ar^{v_X} "0";"2"};
{\ar^{u_X} "2";"4"};
{\ar@{-->}^{\rho_X} "4";"6"};
{\ar_{q} "0";"10"};
{\ar^{n} "2";"12"};
{\ar^{q[1]} "4";"14"};
{\ar_{v_{Y}} "10";"12"};
{\ar_{u_{Y}} "12";"14"};
{\ar@{-->}_{\rho_Y} "14";"16"};
{\ar@{}|\circlearrowright "0";"12"};
{\ar@{}|\circlearrowright "2";"14"};
\endxy
\]
Since $V_X\to 0$ and $V_Y\to 0$ are acyclic fibrations, in particular they are weak equivalences. By the 2-out-of-3 property, this implies that $V_X\overset{n}{\longrightarrow}V_Y$ is a weak equivalence. Factor $n$ as an acyclic cofibration $j$ followed by an acyclic fibration $p$. Then {\rm (ET4)} gives a diagram of $\mathbb{E}$-triangles:
\[
\xy
(-21,7)*+{X}="0";
(-7,7)*+{V_X}="2";
(7,7)*+{U_X}="4";
(-21,-7)*+{X}="10";
(-7,-7)*+{B}="12";
(7,-7)*+{C}="14";
(-7,-21)*+{S}="22";
(7,-21)*+{S}="24";
{\ar^{v_X} "0";"2"};
{\ar^{u_X} "2";"4"};
{\ar@{=} "0";"10"};
{\ar_{j} "2";"12"};
{\ar@[gray] "4";"14"};
{\ar_{j\circ v_X} "10";"12"};
{\ar@[gray] "12";"14"};
{\ar "12";"22"};
{\ar@[gray] "14";"24"};
{\ar@[gray]@{=} "22";"24"};
{\ar@{}|\circlearrowright "0";"12"};
{\ar@{}|\circlearrowright "2";"14"};
{\ar@{}|\circlearrowright "12";"24"};
\endxy
\]
where $S\in \mathcal{S}$. We have $B\in\mathcal{N}$ (since $V_X,S\in\mathcal{N}$) and $C\in\mathcal{U}$ (since $U_X,S\in\mathcal{U}$). The nine Lemma~\ref{LemNine} gives morphisms of $\mathbb{E}$-triangles:
\[
\xy
(-7,21)*+{V}="-12";
(7,21)*+{V^{\prime}}="-14";
(21,21)*+{D}="-16";
(35,21)*+{}="-18";
(-7,7)*+{X}="2";
(7,7)*+{B}="4";
(21,7)*+{C}="6";
(35,7)*+{}="8";
(-7,-7)*+{Y}="12";
(7,-7)*+{V_Y}="14";
(21,-7)*+{U_Y}="16";
(35,-7)*+{}="18";
(-7,-21)*+{}="22";
(7,-21)*+{}="24";
(21,-21)*+{}="26";
{\ar@[gray] "-12";"-14"};
{\ar@[gray] "-14";"-16"};
{\ar@[gray]@{-->} "-16";"-18"};
{\ar "-12";"2"};
{\ar "-14";"4"};
{\ar@[gray] "-16";"6"};
{\ar^{j\circ v_X} "2";"4"};
{\ar "4";"6"};
{\ar@{-->} "6";"8"};
{\ar_{q} "2";"12"};
{\ar^{p} "4";"14"};
{\ar^{q^{\prime}} "6";"16"};
{\ar^{v_Y} "12";"14"};
{\ar^{u_Y} "14";"16"};
{\ar@{-->}^{\rho_Y} "16";"18"};
{\ar@{-->} "12";"22"};
{\ar@{-->} "14";"24"};
{\ar@[gray]@{-->} "16";"26"};
{\ar@{}|\circlearrowright "-12";"4"};
{\ar@{}|\circlearrowright "-14";"6"};
{\ar@{}|\circlearrowright "2";"14"};
{\ar@{}|\circlearrowright "4";"16"};
\endxy
\]
with $V,V^{\prime}\in\mathcal{V}$, and thus $B\in\mathcal{V}$. This implies $D\in\mathcal{N}$ and thus $q^{\prime}$ is a weak equivalence by Claim~\ref{ClaimNW} {\rm (2)}. We conclude that $q[1]$ is an isomorphism in $\widetilde{\mathscr{C}}$ in the same manner as in the end of the proof of Lemma~\ref{LemWFibToIso}.
\end{proof}

\begin{corollary}
The functor $[1]\colon\mathscr{C}\to\widetilde{\mathscr{C}}$ induces an endofunctor of $\widetilde{\mathscr{C}}$, which we denote by the same symbol $[1]$. Thus $f[1]$ for a morphism $f$ in $\mathscr{C}$ will be denoted also by $\ell(f)[1]$ in the rest.
\end{corollary}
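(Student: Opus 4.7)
The corollary follows directly from the universal property of the localization $\ell\colon \mathscr{C}\to \widetilde{\mathscr{C}}=\mathscr{C}[\mathbb{W}^{-1}]$. Concretely, to obtain the induced endofunctor it suffices to verify that the functor $[1]\colon \mathscr{C}\to \widetilde{\mathscr{C}}$ of Definition~\ref{DefShiftLoc} sends every morphism in $\mathbb{W}$ to an isomorphism in $\widetilde{\mathscr{C}}$. Indeed, once this is established the universal property produces a unique functor $\widetilde{[1]}\colon \widetilde{\mathscr{C}}\to\widetilde{\mathscr{C}}$ with $\widetilde{[1]}\circ\ell=[1]$, and we may then denote it again by $[1]$; in particular, for any $f\in \mathscr{C}$ the definition of $\widetilde{[1]}$ forces $\widetilde{[1]}(\ell(f))=f[1]$, which is the statement $\ell(f)[1]=f[1]$.

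My plan is thus the following. By the very definition $\mathbb{W}=w\mathit{Fib}\circ w\mathit{Cof}$, every weak equivalence $w$ can be written as a composition $w=q\circ j$ with $j\in w\mathit{Cof}$ and $q\in w\mathit{Fib}$. Since $[1]\colon \mathscr{C}\to\widetilde{\mathscr{C}}$ is a functor, we have $w[1]=q[1]\circ j[1]$ in $\widetilde{\mathscr{C}}$. Now Lemma~\ref{LemWFibToIso} shows that $j[1]$ is an isomorphism in $\widetilde{\mathscr{C}}$, and Lemma~\ref{LemWCofToIso} shows that $q[1]$ is an isomorphism in $\widetilde{\mathscr{C}}$. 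Their composition $w[1]$ is therefore an isomorphism, as required.

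The two lemmas cited have just been proved, and the factorization of weak equivalences is immediate from the definition of $\mathbb{W}$, so there is essentially no obstacle: the corollary is a direct consequence of the universal property of the localization applied to the content of Lemmas~\ref{LemWFibToIso} and~\ref{LemWCofToIso}. The only point worth emphasizing in the write-up is that the resulting $\widetilde{[1]}$ does not depend on the auxiliary choices made in Definition~\ref{DefShiftLoc}: this is already handled by Claim~\ref{ClaimNatIso}, which supplies a natural isomorphism between any two such functors on $\mathscr{C}$, and this natural isomorphism descends through $\ell$ to give a natural isomorphism between the induced endofunctors of $\widetilde{\mathscr{C}}$.
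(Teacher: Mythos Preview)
Your proof is correct and follows essentially the same approach as the paper: decompose each weak equivalence as $w\mathit{Fib}\circ w\mathit{Cof}$, invoke Lemmas~\ref{LemWFibToIso} and~\ref{LemWCofToIso} to show $[1]$ inverts each factor, and then apply the universal property of the localization. The paper's proof is just a terser version of what you wrote, and your additional remark about independence of choices via Claim~\ref{ClaimNatIso} is a welcome clarification though not strictly needed for the statement.
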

\begin{proof}
By Lemma~\ref{LemWFibToIso} and Lemma~\ref{LemWCofToIso}, the functor $[1]\colon\mathscr{C}\to\widetilde{\mathscr{C}}$ inverts all weak equivalences. By the universal property of the localization of a category, it induces a functor $[1]\colon\widetilde{\mathscr{C}}\to\widetilde{\mathscr{C}}$.
\end{proof}

\begin{definition}
By using $\mathbb{E}$-triangles $T^C\overset{t^C}{\longrightarrow}S^C\overset{s^C}{\longrightarrow}C\overset{\lambda^C}{\dashrightarrow}$ one can define dually a functor $[-1]\colon\mathscr{C}\to\widetilde{\mathscr{C}}$ with $C[-1]=T^C$, which induces an endofunctor $[-1]$ of $\widetilde{\mathscr{C}}$.
\end{definition}

\subsection{Connecting morphism}
Define the bifunctor
\[ \widetilde{\mathbb{E}}\colon\widetilde{\mathscr{C}}^{\mathrm{op}}\times\widetilde{\mathscr{C}}\to\widetilde{\mathscr{C}} \]
by $\widetilde{\mathbb{E}}=\widetilde{\mathscr{C}}(-,-[1])$. In this section, we will construct a homomorphism
\[ \widetilde{\ell}=\widetilde{\ell}_{C,A}\colon\mathbb{E}(C,A)\to\widetilde{\mathbb{E}}(C,A) \]
for each pair $A,C\in\mathscr{C}$.
For any $A\in\mathscr{C}$, we continue to use the $\mathbb{E}$-triangle
\begin{equation}\label{AUchosen}
A\overset{v_A}{\longrightarrow}V_A\overset{u_A}{\longrightarrow}U_A\overset{\rho_A}{\dashrightarrow}
\end{equation}
chosen to define the shift functor $[1]$.

\begin{lemma}\label{Lem50_X1}
Let $A,C\in\mathscr{C}$ be any pair of objects. If $f_1,f_2\in\mathscr{C}(X,U_A)$ satisfy
\[ f_1^{\ast}\rho_A=f_2^{\ast}\rho_A, \]
then $\ell(f_1)=\ell(f_2)$ holds.
\end{lemma}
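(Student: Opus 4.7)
The plan is to reduce the claim to the statement that $\ell(u_A)=0$ in $\widetilde{\mathscr{C}}$, which will follow immediately from the fact that $V_A\in\mathcal{V}$ becomes a zero object after localization.

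First, I would use biadditivity of $\mathbb{E}$ to rewrite the hypothesis as $(f_1-f_2)^\ast\rho_A=0$. Then, applying the exact sequence of Proposition~\ref{PropExact1}(ii) associated to the $\mathbb{E}$-triangle $A\overset{v_A}{\longrightarrow}V_A\overset{u_A}{\longrightarrow}U_A\overset{\rho_A}{\dashrightarrow}$ and evaluated at the object $X$,
\[
\mathscr{C}(X,V_A)\overset{\mathscr{C}(X,u_A)}{\longrightarrow}\mathscr{C}(X,U_A)\overset{(\rho_A)_\sharp}{\longrightarrow}\mathbb{E}(X,A),
\]
yields a morphism $g\in\mathscr{C}(X,V_A)$ with $u_A\circ g=f_1-f_2$. (This is exactly the equivalence $(1)\Leftrightarrow(2)$ of Corollary~\ref{CorExact0} applied to the zero morphism of $\mathbb{E}$-triangles from $\rho_A$ to itself with the middle component $(f_1-f_2)$ and outer components $0$.)

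Next, I would observe that $V_A\in\mathcal{V}$, so by Definition~\ref{DefFibCof}(2) the canonical morphism $V_A\to 0$ lies in $w\mathit{Fib}\subseteq\mathbb{W}$. In particular, $\ell(V_A)$ is a zero object of $\widetilde{\mathscr{C}}$, so every morphism out of $V_A$ becomes zero in $\widetilde{\mathscr{C}}$; in particular $\ell(u_A)=0$. Since the localization functor $\ell$ is additive (the category $\widetilde{\mathscr{C}}\simeq\mathcal{Z}/\mathcal{I}$ being additive by Corollary~\ref{CorHTCP}), I then conclude
\[
\ell(f_1)-\ell(f_2)=\ell(f_1-f_2)=\ell(u_A)\circ\ell(g)=0,
\]
which gives the desired equality $\ell(f_1)=\ell(f_2)$.

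There is no serious obstacle here; the only subtle point is to make sure one is allowed to treat $\ell$ as additive, which is justified by the concrete description of $\widetilde{\mathscr{C}}$ as an ideal quotient $\mathcal{Z}/\mathcal{I}$ obtained in the previous section. Alternatively, one can avoid additivity entirely by noting that the equality to be proved is equivalent to $\ell(f_1-f_2)=0$ which, since $f_1-f_2=u_A\circ g$ factors through $\ell(V_A)=0$, holds immediately.
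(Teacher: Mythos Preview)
Your proof is correct and follows essentially the same route as the paper's: both use the exact sequence from Proposition~\ref{PropExact1} to factor $f_1-f_2$ through $u_A$, and then conclude since $V_A\in\mathcal{V}$ becomes zero in $\widetilde{\mathscr{C}}$. The paper compresses all of this into the single phrase ``immediately follows from the exactness of $\mathscr{C}(X,V_A)\to\mathscr{C}(X,U_A)\to\mathbb{E}(X,A)$,'' whereas you spell out the factorization, the vanishing of $\ell(V_A)$, and the additivity of $\ell$ explicitly.
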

\begin{proof}
This immediately follows from the exactness of
\[ \mathscr{C}(X,V_A)\overset{u_A\circ-}{\longrightarrow}\mathscr{C}(X,U_A)\overset{(\rho_A)_\sharp}{\longrightarrow}\mathbb{E}(X,A) \]
shown in Proposition~\ref{PropExact1}.
\end{proof}

\begin{definition}\label{Def50_X2}
For any $\mathbb{E}$-extension $\delta\in\mathbb{E}(C,A)$, define $\widetilde{\ell}(\delta)\in\widetilde{\mathbb{E}}(C,A)$ by the following.
\begin{itemize}
\item Take a span of morphisms $(C\overset{w}{\longleftarrow}D\overset{d}{\longrightarrow}U_A)$ from some $D\in\mathscr{C}$, which satisfy
\begin{equation}\label{Eq_Ast_50}
w\in w\mathit{Fib}\quad\text{and}\quad w^{\ast}\delta=d^{\ast}\rho_A.
\end{equation}
Then, define as $\widetilde{\ell}(\delta)=\ell(d)\circ\ell(w)^{-1}.$
\end{itemize}
\end{definition}

\begin{claim}\label{Claim50_X3}
For any $\delta\in\mathbb{E}(C,A)$, the morphism $\widetilde{\ell}(\delta)$ in Definition~\ref{Def50_X2} is well-defined. More precisely, the following holds.
\begin{enumerate}
\item[(1)] Take any $D\in\mathscr{C},\, w\in\mathscr{C}(D,C)$. If both $d_1,d_2\in\mathscr{C}(D,U_A)$ satisfy
\[ w^{\ast}\delta=d_i^{\ast}\rho_A\quad(i=1,2), \]
then $\ell(d_1)=\ell(d_2)$ holds.
\item[(2)] If both spans $(C\overset{w_1}{\longleftarrow}D_1\overset{d_1}{\longrightarrow}U_A)$ and $(C\overset{w_2}{\longleftarrow}D_2\overset{d_2}{\longrightarrow}U_A)$ satisfy $(\ref{Eq_Ast_50})$, then
\[ \ell(d_1)\circ\ell(w_1)^{-1}=\ell(d_2)\circ\ell(w_2)^{-1} \]
holds.
\item[(3)] There exists at least one span $(C\overset{w}{\longleftarrow}D\overset{d}{\longrightarrow}U_A)$ satisfying $(\ref{Eq_Ast_50})$.
\end{enumerate}
\end{claim}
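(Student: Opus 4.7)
My approach is to prove the three parts in the order $(1)$, $(3)$, $(2)$, since $(1)$ is immediate, $(3)$ exhibits the construction, and $(2)$ reduces to $(1)$ via a common refinement built from $(3)$'s ingredients.

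For $(1)$, the hypothesis gives $d_1^{\ast}\rho_A = w^{\ast}\delta = d_2^{\ast}\rho_A$, so Lemma~\ref{Lem50_X1} applied with $X = D$ directly yields $\ell(d_1) = \ell(d_2)$.

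For $(3)$, I will use the cotorsion pair $(\mathcal{U},\mathcal{V})$ to resolve $C$. Take a conflation $V^C \to U^C \overset{w}{\longrightarrow} C$ with $V^C \in \mathcal{V}$, $U^C \in \mathcal{U}$; by construction $w \in w\mathit{Fib}$. Since $U^C \in \mathcal{U}$ and $V_A \in \mathcal{V}$, we have $\mathbb{E}(U^C, V_A) = 0$. Applying Corollary~\ref{ExactToShow} to the $\mathbb{E}$-triangle $A \overset{v_A}{\longrightarrow} V_A \overset{u_A}{\longrightarrow} U_A \overset{\rho_A}{\dashrightarrow}$ gives an exact sequence
\[
\mathscr{C}(U^C, U_A) \overset{(\rho_A)_\sharp}{\longrightarrow} \mathbb{E}(U^C, A) \overset{\mathbb{E}(U^C, v_A)}{\longrightarrow} \mathbb{E}(U^C, V_A) = 0,
\]
so $(\rho_A)_\sharp$ is surjective at $U^C$. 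Hence there exists $d \in \mathscr{C}(U^C, U_A)$ with $d^{\ast}\rho_A = w^{\ast}\delta$, giving the required span with $D = U^C$.

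For $(2)$, the key step is constructing a common refinement of the two spans by an object $D_3$ with maps to $D_1$ and $D_2$ that are themselves in $w\mathit{Fib}$. By definition of $w\mathit{Fib}$ we have $\mathbb{E}$-triangles $V_i \to D_i \overset{w_i}{\longrightarrow} C \overset{\eta_i}{\dashrightarrow}$ with $V_i \in \mathcal{V}$ ($i = 1, 2$). Applying Proposition~\ref{PropBaer}(1) to this pair of $\mathbb{E}$-triangles (sharing the target $C$) produces an object $M$ together with $\mathbb{E}$-triangles
\[
V_1 \overset{m_1}{\longrightarrow} M \overset{e_1}{\longrightarrow} D_2 \overset{w_2^{\ast}\eta_1}{\dashrightarrow}, \quad
V_2 \overset{m_2}{\longrightarrow} M \overset{e_2}{\longrightarrow} D_1 \overset{w_1^{\ast}\eta_2}{\dashrightarrow},
\]
in which the square formed by $e_1, e_2, w_1, w_2$ commutes, i.e.\ $w_1 \circ e_2 = w_2 \circ e_1$. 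Since $V_1, V_2 \in \mathcal{V}$, both $e_1$ and $e_2$ are in $w\mathit{Fib}$. Setting $p_1 = e_2$, $p_2 = e_1$ and $D_3 = M$ we compute, for $i = 1, 2$,
\[
(w_i \circ p_i)^{\ast}\delta = p_i^{\ast}(w_i^{\ast}\delta) = p_i^{\ast}(d_i^{\ast}\rho_A) = (d_i \circ p_i)^{\ast}\rho_A,
\]
so by part $(1)$ applied to $d_1 \circ p_1$ and $d_2 \circ p_2$ (both over $D_3$, with common $w_1 \circ p_1 = w_2 \circ p_2$), we get $\ell(d_1 \circ p_1) = \ell(d_2 \circ p_2)$. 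Since each $p_i$ lies in $w\mathit{Fib} \subseteq \mathbb{W}$, its image $\ell(p_i)$ is invertible in $\widetilde{\mathscr{C}}$, and the equality rearranges to $\ell(d_1)\circ\ell(w_1)^{-1} = \ell(d_2)\circ\ell(w_2)^{-1}$, as required.

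The main conceptual point is the construction of the common refinement via Proposition~\ref{PropBaer}: once $D_3 = M$ is available with projections in $w\mathit{Fib}$ rendering the outer square commutative, part $(2)$ reduces mechanically to part $(1)$. I expect no serious obstacle; the only thing to be careful about is the bookkeeping of the equalities and the use of $\mathcal{V}$-closure under $\mathbb{E}$-extensions (which gives $V_1, V_2 \in \mathcal{V}$ being preserved in the conflations produced by Proposition~\ref{PropBaer}, so that $e_1, e_2$ are genuinely acyclic fibrations).
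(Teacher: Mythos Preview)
Your proof is correct. Parts (1) and (2) are essentially identical to the paper's argument: (1) is an immediate appeal to Lemma~\ref{Lem50_X1}, and (2) builds a common refinement of the two spans via Proposition~\ref{PropBaer}(1), then reduces to (1).

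For (3) you take a genuinely different (and cleaner) route. The paper realizes $\delta$ by an $\mathbb{E}$-triangle $A\overset{x}{\to}B\overset{y}{\to}C\overset{\delta}{\dashrightarrow}$ and applies Proposition~\ref{PropBaer}(2) to this and $A\to V_A\to U_A\overset{\rho_A}{\dashrightarrow}$, producing an object $D$ together with $w\in w\mathit{Fib}$ and $e$ satisfying $w^{\ast}\delta+e^{\ast}\rho_A=0$; the span $(w,-e)$ then witnesses~(\ref{Eq_Ast_50}). Your argument instead takes $D=U^C$ from a $(\mathcal{U},\mathcal{V})$-resolution of $C$, and uses $\mathbb{E}(U^C,V_A)=0$ to make $(\rho_A)_\sharp\colon\mathscr{C}(U^C,U_A)\to\mathbb{E}(U^C,A)$ surjective. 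This avoids invoking Proposition~\ref{PropBaer} and does not require realizing $\delta$ at all. The trade-off is that the paper's explicit ``Baer'' construction of $D$ is re-used verbatim later (e.g.\ in the proofs of Lemma~\ref{Lem35b} and of (TR2) in Theorem~\ref{ThmTriaLoc}), so its approach to (3) is doing double duty; your argument is self-contained but does not feed forward in the same way.
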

\begin{proof}
{\rm (1)} This immediately follows from Lemma~\ref{Lem50_X1}.

{\rm (2)} Let $V_i\overset{v_i}{\longrightarrow}D_i\overset{w_i}{\longrightarrow}C$ $(i=1,2)$ be conflations. By Proposition~\ref{PropBaer} {\rm (1)}, we have a commutative diagram made of conflations as follows.
\[
\xy
(-7,21)*+{V_2}="-12";
(7,21)*+{V_2}="-14";
(-21,7)*+{V_1}="0";
(-7,7)*+{{}^{\exists}D}="2";
(7,7)*+{D_2}="4";
(-21,-7)*+{V_1}="10";
(-7,-7)*+{D_1}="12";
(7,-7)*+{C}="14";
{\ar@{=} "-12";"-14"};
{\ar_{m_2} "-12";"2"};
{\ar^{v_2} "-14";"4"};
{\ar^{m_1} "0";"2"};
{\ar^{e_1} "2";"4"};
{\ar@{=} "0";"10"};
{\ar_{e_2} "2";"12"};
{\ar^{w_2} "4";"14"};
{\ar_{v_1} "10";"12"};
{\ar_{w_1} "12";"14"};
{\ar@{}|\circlearrowright "-12";"4"};
{\ar@{}|\circlearrowright "0";"12"};
{\ar@{}|\circlearrowright "2";"14"};
\endxy
\]
If we put $w=w_1\circ e_2=w_2\circ e_1$, then we have $w\in w\mathit{Fib}\circ w\mathit{Fib}=w\mathit{Fib}$. If we put $k_1=d_2\circ e_1$ and $k_2=d_1\circ e_2$, then they give
\begin{eqnarray*}
&\ell(k_1)\circ\ell(w)^{-1}=\ell(d_2\circ e_1)\circ\ell(w_2\circ e_1)^{-1}=\ell(d_2)\circ\ell(w_2)^{-1},&\\
&\ell(k_2)\circ\ell(w)^{-1}=\ell(d_1\circ e_2)\circ\ell(w_1\circ e_2)^{-1}=\ell(d_1)\circ\ell(w_1)^{-1}.&
\end{eqnarray*}
Since both $k_1,k_2$ satisfy
\begin{eqnarray*}
&k_1^{\ast}\delta=e_1^{\ast} d_2^{\ast}\delta=e_1^{\ast} w_2^{\ast}\delta=w^{\ast}\delta,&\\
&k_2^{\ast}\delta=e_2^{\ast} d_1^{\ast}\delta=e_2^{\ast} w_1^{\ast}\delta=w^{\ast}\delta,&
\end{eqnarray*}
we obtain $\ell(k_1)=\ell(k_2)$ by {\rm (1)}. Thus it follows that $\ell(d_2)\circ\ell(w_2)^{-1}=\ell(d_1)\circ\ell(w_1)^{-1}$.

{\rm (3)} Realize $\delta$ by an $\mathbb{E}$-triangle
\[ A\overset{x}{\longrightarrow}B\overset{y}{\longrightarrow}C\overset{\delta}{\dashrightarrow}. \]
Then Proposition~\ref{PropBaer} {\rm (2)} gives a commutative diagram made of $\mathbb{E}$-triangles
\[
\xy
(-7,7)*+{A}="2";
(7,7)*+{B}="4";
(21,7)*+{C}="6";
(35,7)*+{}="8";
(-7,-7)*+{V_A}="12";
(7,-7)*+{{}^{\exists}D}="14";
(21,-7)*+{C}="16";
(35,-7)*+{}="18";
(-7,-21)*+{U_A}="22";
(7,-21)*+{U_A}="24";
(-7,-35)*+{}="32";
(7,-35)*+{}="34";
{\ar^{x} "2";"4"};
{\ar^{y} "4";"6"};
{\ar@{-->}^{\delta} "6";"8"};
{\ar_{v_A} "2";"12"};
{\ar^{} "4";"14"};
{\ar@{=} "6";"16"};
{\ar^{} "12";"14"};
{\ar_{w} "14";"16"};
{\ar@{-->}_{(v_A)_{\ast}\delta} "16";"18"};
{\ar_{u_A} "12";"22"};
{\ar^{e} "14";"24"};
{\ar@{=} "22";"24"};
{\ar@{-->}_{\rho_A} "22";"32"};
{\ar@{-->}^{x_{\ast}\rho_A} "24";"34"};
{\ar@{}|\circlearrowright "2";"14"};
{\ar@{}|\circlearrowright "4";"16"};
{\ar@{}|\circlearrowright "12";"24"};
\endxy
\]
satisfying $w^{\ast}\delta+e^{\ast}\rho_A=0$. Thus the span $(C\overset{w}{\longleftarrow}D\overset{-e}{\longrightarrow}U_A)$ satisfies $(\ref{Eq_Ast_50})$.
\end{proof}

\begin{proposition}\label{Prop50_X4}
For any $A,C\in\mathscr{C}$, the map $\widetilde{\ell}\colon\mathbb{E}(C,A)\to\widetilde{\mathbb{E}}(C,A)$ is an additive homomorphism.
\end{proposition}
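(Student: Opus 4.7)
The plan is to verify directly that $\widetilde{\ell}$ preserves the zero element and is additive, using Claim~\ref{Claim50_X3} to freely choose convenient representing spans and Proposition~\ref{PropBaer}~{\rm (1)} to bring two given spans to a common domain.

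First, for the split extension $0\in\mathbb{E}(C,A)$, the span $(C\overset{\mathrm{id}_C}{\longleftarrow}C\overset{0}{\longrightarrow}U_A)$ satisfies $\mathrm{id}_C^{\ast}(0)=0=0^{\ast}\rho_A$, and $\mathrm{id}_C\in w\mathit{Fib}$, so $\widetilde{\ell}(0)=\ell(0)\circ\ell(\mathrm{id}_C)^{-1}=0$ in $\widetilde{\mathbb{E}}(C,A)=\widetilde{\mathscr{C}}(C,A[1])$.

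For additivity, let $\delta_1,\delta_2\in\mathbb{E}(C,A)$, and choose representing spans $(C\overset{w_i}{\longleftarrow}D_i\overset{d_i}{\longrightarrow}U_A)$ with $w_i\in w\mathit{Fib}$ and $w_i^{\ast}\delta_i=d_i^{\ast}\rho_A$ for $i=1,2$, as provided by Claim~\ref{Claim50_X3}~{\rm (3)}. Completing each $w_i$ into an $\mathbb{E}$-triangle $V_i\to D_i\overset{w_i}{\longrightarrow}C$ with $V_i\in\mathcal{V}$, Proposition~\ref{PropBaer}~{\rm (1)} produces an object $D\in\mathscr{C}$ together with conflations $V_1\to D\overset{e_2}{\longrightarrow}D_2$ and $V_2\to D\overset{e_1}{\longrightarrow}D_1$ satisfying $w_1\circ e_2=w_2\circ e_1$. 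Denote this common composite by $w\in\mathscr{C}(D,C)$; since $e_1,e_2\in w\mathit{Fib}$, we have $w\in w\mathit{Fib}$ by Proposition~\ref{PropComposClosed}. Setting $k_i=d_i\circ e_{3-i}\in\mathscr{C}(D,U_A)$, one computes $w^{\ast}\delta_i=e_{3-i}^{\ast}w_i^{\ast}\delta_i=e_{3-i}^{\ast}d_i^{\ast}\rho_A=k_i^{\ast}\rho_A$, so the span $(C\overset{w}{\longleftarrow}D\overset{k_i}{\longrightarrow}U_A)$ also represents $\delta_i$, and hence $\widetilde{\ell}(\delta_i)=\ell(k_i)\circ\ell(w)^{-1}$.

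By the biadditivity of $\mathbb{E}$, we then obtain
\[
w^{\ast}(\delta_1+\delta_2)=w^{\ast}\delta_1+w^{\ast}\delta_2=k_1^{\ast}\rho_A+k_2^{\ast}\rho_A=(k_1+k_2)^{\ast}\rho_A,
\]
so the span $(C\overset{w}{\longleftarrow}D\overset{k_1+k_2}{\longrightarrow}U_A)$ represents $\delta_1+\delta_2$, giving
\[
\widetilde{\ell}(\delta_1+\delta_2)=\ell(k_1+k_2)\circ\ell(w)^{-1}=\bigl(\ell(k_1)+\ell(k_2)\bigr)\circ\ell(w)^{-1}=\widetilde{\ell}(\delta_1)+\widetilde{\ell}(\delta_2).
\]
The main subtle point is the last step, which relies on $\ell\colon\mathscr{C}\to\widetilde{\mathscr{C}}$ being an additive functor; this is ensured by the corresponding model structure, since the homotopy category $\widetilde{\mathscr{C}}\simeq\mathcal{Z}/\mathcal{I}$ (Corollary~\ref{CorHTCP}) is additive and $\ell$ is additive by construction. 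Granting this, the two displayed identities complete the verification that $\widetilde{\ell}$ is a group homomorphism.
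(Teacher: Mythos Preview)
Your argument is essentially the same as the paper's: reduce to a common span via the construction in Claim~\ref{Claim50_X3}~{\rm (2)} (which is exactly your use of Proposition~\ref{PropBaer}~{\rm (1)}), then observe that $(k_1+k_2)^{\ast}\rho_A=w^{\ast}(\delta_1+\delta_2)$. The paper does not separately check $\widetilde{\ell}(0)=0$, and leaves the additivity of $\ell$ implicit, so your remarks there are fine additions. One caution: your index conventions are internally inconsistent---with $e_1\colon D\to D_1$ and $e_2\colon D\to D_2$ as you wrote them, the equality should read $w_1\circ e_1=w_2\circ e_2$, and $k_i$ should be $d_i\circ e_i$ (or, if you adopt the paper's swapped labeling $e_1\colon D\to D_2$, $e_2\colon D\to D_1$, then your formulas $w_1\circ e_2=w_2\circ e_1$ and $k_i=d_i\circ e_{3-i}$ are correct); as written, $d_1\circ e_2$ does not type-check. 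This is purely notational and does not affect the validity of the argument.
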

\begin{proof}
Let $\delta_1,\delta_2\in\mathbb{E}(C,A)$ be any pair of elements. By {\rm (3)} of Claim~\ref{Claim50_X3}, we can find spans $(C\overset{w_1}{\longleftarrow}D_1\overset{d_1}{\longrightarrow}U_A),\, (C\overset{w_2}{\longleftarrow}D_2\overset{d_2}{\longrightarrow}U_A)$ which give $\widetilde{\ell}(\delta_1),\, \widetilde{\ell}(\delta_2)$. As in the proof of {\rm (2)} in Claim~\ref{Claim50_X3}, replacing $(w_i,D_i)$ by a common $(w,D)$, we may assume
\[ D_1=D_2=D \quad\text{and}\quad w_1=w_2=w\]
from the start.
Then by $w^{\ast}\delta_1=d_1^{\ast}\rho_A$ and $w^{\ast}\delta_2=d_2^{\ast}\rho_A$, we have
\[ w^{\ast}(\delta_1+\delta_2)=(d_1+d_2)^{\ast}\rho_A. \]
Thus the span $(C\overset{w}{\longleftarrow}D\overset{d_1+d_2}{\longrightarrow}U_A)$ satisfies $(\ref{Eq_Ast_50})$ for $\delta_1+\delta_2$. This shows
\begin{eqnarray*}
\widetilde{\ell}(\delta_1+\delta_2)&=&\ell(d_1+d_2)\circ\ell(w)^{-1}\\
&=&\ell(d_1)\circ\ell(w)^{-1}+\ell(d_2)\circ\ell(w)^{-1}\ =\ \widetilde{\ell}(\delta_1)+\widetilde{\ell}(\delta_2).
\end{eqnarray*}
\end{proof}

\begin{lemma}\label{Lem35b}
For any $U\in\mathcal{U}$ and $T\in\mathcal{T}$, the map
\[ \widetilde{\ell}\colon\mathbb{E}(U,T)\to\widetilde{\mathbb{E}}(U,T) \]
is monomorphic.
\end{lemma}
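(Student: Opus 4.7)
My plan is to reduce the injectivity of $\widetilde{\ell}$ on $\mathbb{E}(U,T)$ to a concrete factorization property for morphisms $U\to U_T$, using fibrant replacement and Remark~\ref{RemUTMono}, and then to import a lift of the universal extension $\rho_T$ to turn the factorization into the vanishing of $\delta$.

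First, I would set up a good representative for $\widetilde{\ell}(\delta)$. Since $U\in\mathcal{U}$ and $V_T\in\mathcal{V}$, the cotorsion pair $(\mathcal{U},\mathcal{V})$ gives $\mathbb{E}(U,V_T)=0$, and the exact sequence of Corollary~\ref{ExactToShow} applied to $T\overset{v_T}{\to}V_T\overset{u_T}{\to}U_T\overset{\rho_T}{\dashrightarrow}$ then shows that $(\rho_T)_\sharp\colon\mathscr{C}(U,U_T)\to\mathbb{E}(U,T)$ is surjective. Pick $d\in\mathscr{C}(U,U_T)$ with $d^{\ast}\rho_T=\delta$. Since $\mathrm{id}_U\in w\mathit{Fib}$ and $\mathrm{id}^{\ast}\delta=d^{\ast}\rho_T$, the span $(U\xleftarrow{\mathrm{id}}U\xrightarrow{d}U_T)$ is admissible in Definition~\ref{Def50_X2}, so $\widetilde{\ell}(\delta)=\ell(d)$. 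Thus the hypothesis $\widetilde{\ell}(\delta)=0$ becomes $\ell(d)=0$ in $\widetilde{\mathscr{C}}(U,U_T)$.

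Next, I would perform a fibrant replacement of $U_T$: by Proposition~\ref{PropFactor} factor $U_T\to 0$ as $U_T\xrightarrow{j}Z\to 0$ with $j\in w\mathit{Cof}$ and $Z\to 0\in\mathit{Fib}$. This produces a conflation $U_T\xrightarrow{j}Z\to S$ with $S\in\mathcal{S}$ and $Z\in\mathcal{T}$. Since $U_T$ and $S$ both lie in $\mathcal{U}$ and $\mathcal{U}$ is extension-closed (Remark~\ref{RemExtClosed}), $Z\in\mathcal{U}$, hence $Z\in\mathcal{Z}$. Because $j\in\mathbb{W}$, $\ell(j)$ is invertible, so $\ell(j\circ d)=0$ in $\widetilde{\mathscr{C}}(U,Z)$. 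Remark~\ref{RemUTMono} applied with $U\in\mathcal{U}$ and $Z\in\mathcal{T}$ identifies $(\mathscr{C}/\mathcal{I})(U,Z)\cong\widetilde{\mathscr{C}}(U,Z)$, so $j\circ d$ factors through an object of $\mathcal{I}$: there exist $I\in\mathcal{I}$, $\alpha\in\mathscr{C}(U,I)$ and $\beta\in\mathscr{C}(I,Z)$ with $j\circ d=\beta\circ\alpha$.

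The crux of the argument is then to produce $\tau\in\mathbb{E}(Z,T)$ satisfying $j^{\ast}\tau=\rho_T$. Assuming this, one computes
\[
\delta \;=\; d^{\ast}\rho_T \;=\; d^{\ast}j^{\ast}\tau \;=\; (j\circ d)^{\ast}\tau \;=\; (\beta\circ\alpha)^{\ast}\tau \;=\; \alpha^{\ast}(\beta^{\ast}\tau),
\]
and since $I\in\mathcal{I}\subseteq\mathcal{S}$ and $T\in\mathcal{T}$, the cotorsion pair $(\mathcal{S},\mathcal{T})$ yields $\mathbb{E}(I,T)=0$, forcing $\beta^{\ast}\tau=0$ and hence $\delta=0$. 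To construct $\tau$, I would combine the conflations $T\to V_T\to U_T$ and $U_T\xrightarrow{j}Z\to S$ by a suitable application of {\rm (ET4)} (or Proposition~\ref{PropShiftOctahedron}), aiming at a commutative diagram
\[
\xy
(-14,8)*+{T}="2";(0,8)*+{V_T}="4";(14,8)*+{U_T}="6";
(-14,-6)*+{T}="12";(0,-6)*+{N}="14";(14,-6)*+{Z}="16";
(0,-20)*+{S}="24";(14,-20)*+{S}="26";
{\ar^{v_T} "2";"4"};{\ar^{u_T} "4";"6"};
{\ar@{=} "2";"12"};{\ar "4";"14"};{\ar^{j} "6";"16"};
{\ar "12";"14"};{\ar "14";"16"};
{\ar "14";"24"};{\ar "16";"26"};
{\ar@{=} "24";"26"};
\endxy
\]
of conflations, whose middle row $T\to N\to Z$ realizes $\tau$. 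The middle column $V_T\to N\to S$ automatically splits because $\mathbb{E}(S,V_T)=0$ by the cotorsion pair $(\mathcal{S},\mathcal{T})$ together with $V_T\in\mathcal{V}\subseteq\mathcal{T}$, so $N\cong V_T\oplus S$; the top square then exhibits $(\mathrm{id}_T,?,j)$ as a morphism of $\mathbb{E}$-triangles and yields $j^{\ast}\tau=\rho_T$ by naturality of $\mathfrak{s}$. The main obstacle is precisely this octahedral construction: in an exact category it is an immediate pushout, but here one must exploit the splitting of $V_T\to N\to S$ to build the morphism $N\to Z$ consistently with $j\circ u_T$, invoking the axioms of an extriangulated category carefully rather than using (co)limits.
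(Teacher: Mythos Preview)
Your first three paragraphs are correct and in fact slightly cleaner than the paper's setup: the paper also arrives at a morphism $q\colon U\to U_T$ (your $d$) with $\delta=q^{\ast}\rho_T$ and $\widetilde{\ell}(\delta)=\ell(q)$, then takes the same fibrant replacement $U_T\overset{z}{\to}Z\to S$ and uses Remark~\ref{RemUTMono} to factor $z\circ q$ through an object of $\mathcal{I}$.

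The gap is in your last step. Neither {\rm (ET4)} nor Proposition~\ref{PropShiftOctahedron} produces the diagram you draw: {\rm (ET4)} would require as input the top row $T\to V_T\to U_T$ \emph{and} the middle column $V_T\to N\to S$, whereas you only have the top row and the right column. Worse, the diagram you want cannot exist in general. If it did, then after identifying $N\cong V_T\oplus S$ via the splitting you note, the map $N\to Z$ would have the form $[j\circ u_T\ \ b]$ for some $b\colon S\to Z$; commutativity of the bottom-right square then forces $p\circ b=\mathrm{id}_S$, i.e.\ the conflation $U_T\to Z\to S$ would have to split. But you need $Z\in\mathcal{T}$ to invoke Remark~\ref{RemUTMono}, and $U_T\oplus S\in\mathcal{T}$ would force $U_T\in\mathcal{T}$, which is not given. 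In exact-category terms, the obstruction to the lift $\tau$ with $j^{\ast}\tau=\rho_T$ lies in $\mathrm{Ext}^2(S,T)$, and nothing in the hypotheses makes this vanish.

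The paper avoids this entirely. After factoring $z\circ q$ through $I\in\mathcal{I}$ via $i\colon I\to Z$, it resolves $Z$ by a further conflation $T'\to I\overset{i}{\to}Z$ with $T'\in\mathcal{T}$ (possible since $Z\in\mathcal{T}$), and applies {\rm (ET4)$^{\mathrm{op}}$} to this together with $U_T\to Z\to S$ to obtain a conflation $T'\to N\overset{p}{\to}U_T$ with $N\in\mathcal{N}$, sitting in a weak pullback square over $Z$. The dual of Lemma~\ref{LemHomotPush} then lifts $(q,k)$ to a morphism $U\to N$; since $N\in\mathrm{Cone}(\mathcal{V},\mathcal{S})$ and $\mathbb{E}(U,\mathcal{V})=0$, this factors through some $S_0\in\mathcal{S}$; and $\mathbb{E}(S_0,T)=0$ forces the resulting $S_0\to U_T$ to factor through $u_T$, whence $q^{\ast}\rho_T=0$. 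The extra resolution $T'\to I\to Z$ and the weak-pullback step are precisely what replace your missing lift $\tau$.
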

\begin{proof}
Let $\delta\in\mathbb{E}(U,T)$ be any $\mathbb{E}$-extension. Realize it by an $\mathbb{E}$-triangle
\[ T\overset{x}{\longrightarrow}A\overset{y}{\longrightarrow}U\overset{\delta}{\dashrightarrow}. \]
Let $T\overset{v_T}{\longrightarrow}V_T\overset{u_T}{\longrightarrow}U_T\overset{\rho_T}{\dashrightarrow}$ be the chosen $\mathbb{E}$-triangle, as before. By Proposition~\ref{PropBaer} {\rm (2)}, we obtain a diagram made of $\mathbb{E}$-triangles
\[
\xy
(-7,7)*+{T}="2";
(7,7)*+{A}="4";
(21,7)*+{U}="6";
(35,7)*+{}="8";
(-7,-7)*+{V_T}="12";
(7,-7)*+{M}="14";
(21,-7)*+{U}="16";
(35,-7)*+{}="18";
(-7,-21)*+{U_T}="22";
(7,-21)*+{U_T}="24";
(-7,-35)*+{}="32";
(7,-35)*+{}="34";
{\ar^{x} "2";"4"};
{\ar^{y} "4";"6"};
{\ar@{-->}^{\delta} "6";"8"};
{\ar_{v_T} "2";"12"};
{\ar^{m} "4";"14"};
{\ar@{=} "6";"16"};
{\ar^{m^{\prime}} "12";"14"};
{\ar_{e^{\prime}} "14";"16"};
{\ar@{-->}_{(v_T)_{\ast}\delta} "16";"18"};
{\ar_{u_T} "12";"22"};
{\ar^{e} "14";"24"};
{\ar@{=} "22";"24"};
{\ar@{-->}_{\rho_T} "22";"32"};
{\ar@{-->}^{x_{\ast}(\rho_T)} "24";"34"};
{\ar@{}|\circlearrowright "2";"14"};
{\ar@{}|\circlearrowright "4";"16"};
{\ar@{}|\circlearrowright "12";"24"};
\endxy
\]
satisfying $e^{\ast}(\rho_T)+e^{\prime\ast}\delta=0$. As the proof of {\rm (3)} of Claim~\ref{Claim50_X3} suggests, we have
\[ \widetilde{\ell}(\delta)=-\ell(e)\circ\ell(e^{\prime})^{-1}. \]

By $\mathbb{E}(U,V_T)=0$, we have $(v_T)_{\ast}\delta=0$. Thus, replacing $M$ by an isomorphic object, we may assume
\[ M=V_T\oplus U,\ \ m^{\prime}=\iota_{V_T}\colon V_T\to M,\ \ e^{\prime}=p_U\colon M\to U, \]
where
\[
\xy
(-18,0)*+{V_T}="-2";
(-15.5,1)*+{}="-10";
(-15.5,-1)*+{}="-11";
(-6.5,1)*+{}="-0";
(-6.5,-1)*+{}="-1";
(0,0)*+{V_T\oplus U}="0";
(15.5,1)*+{}="10";
(15.5,-1)*+{}="11";
(6.5,1)*+{}="+0";
(6.5,-1)*+{}="1";
(18,0)*+{U}="2";
{\ar^{\iota_{V_T}} "-10";"-0"};
{\ar^{p_{V_T}} "-1";"-11"};
{\ar@{<-}^{\iota_U} "+0";"10"};
{\ar@{<-}^{p_U} "11";"1"};
\endxy
\]
is a biproduct.
Put $q=-e\circ\iota_U\colon U\to U_T$. Then we have $e=u_T\circ p_{V_T}-q\circ e^{\prime}$. Since $\ell(p_{V_T})=0$, this gives $\ell(e)=-\ell(q)\circ\ell(e^{\prime})$, namely
\begin{equation}\label{Eq_Airport1}
\widetilde{\ell}(\delta)=\ell(q).
\end{equation}
On the other hand, since the morphism $e^{\prime\ast}$ is a monomorphism, the equality
\[ e^{\prime\ast}\delta=-e^{\ast}(\rho_T)=-p_{V_T}^{\ast} u_T^{\ast}(\rho_T)+e^{\prime\ast}q^{\ast}(\rho_T)=e^{\prime\ast}q^{\ast}(\rho_T) \]
implies
\begin{equation}\label{Eq_Airport2}
\delta=q^{\ast}(\rho_T).
\end{equation}
By $(\ref{Eq_Airport1})$ and $(\ref{Eq_Airport2})$, it suffices to show
\[ \ell(q)=0\ \Rightarrow\ q^{\ast}(\rho_T)=0. \]

Assume $\ell(q)=0$.
Take $\mathbb{E}$-triangles
\begin{eqnarray*}
&U_T\overset{z}{\longrightarrow}Z\to S\dashrightarrow\ \ (Z\in\mathcal{Z},\, S\in\mathcal{S}),&\\
&T^{\prime}\to I\overset{i}{\longrightarrow}Z\dashrightarrow\ \ (I\in\mathcal{I},\, T^{\prime}\in\mathcal{T}).&
\end{eqnarray*}
Then by $\ell(z\circ q)=\ell(z)\circ0=0$, the morphism $z\circ q\in\mathscr{C}(U,Z)$ factors through $i$ by Remark~\ref{RemUTMono}. Namely, there exists $k\in\mathscr{C}(U,I)$ which makes the following diagram commutative.
\[
\xy
(-7,6)*+{U}="0";
(7,6)*+{U_T}="2";
(-7,-6)*+{I}="4";
(7,-6)*+{Z}="6";
{\ar^{q} "0";"2"};
{\ar_{k} "0";"4"};
{\ar^{z} "2";"6"};
{\ar_{i} "4";"6"};
{\ar@{}|\circlearrowright "0";"6"};
\endxy
\]
By {\rm (ET4)$^{\mathrm{op}}$}, we obtain a diagram made of conflations
\[
\xy
(-21,7)*+{T^{\prime}}="0";
(-7,7)*+{{}^{\exists}N}="2";
(7,7)*+{U_T}="4";
(-21,-7)*+{T^{\prime}}="10";
(-7,-7)*+{I}="12";
(7,-7)*+{Z}="14";
(-7,-21)*+{S}="22";
(7,-21)*+{S}="24";
{\ar^{} "0";"2"};
{\ar^{{}^{\exists}p} "2";"4"};
{\ar@{=} "0";"10"};
{\ar_{{}^{\exists}n} "2";"12"};
{\ar^{z} "4";"14"};
{\ar_{} "10";"12"};
{\ar_{i} "12";"14"};
{\ar_{} "12";"22"};
{\ar^{} "14";"24"};
{\ar@{=} "22";"24"};
{\ar@{}|\circlearrowright "0";"12"};
{\ar@{}|\circlearrowright "2";"14"};
{\ar@{}|\circlearrowright "12";"24"};
\endxy
\]
in which, $N$ belongs to $\mathcal{N}$ by Lemma~\ref{LemConcentric1} {\rm (2)}.

By the dual of Lemma~\ref{LemHomotPush}, we obtain $j\in\mathscr{C}(U,N)$ which makes the following diagram commutative.
\[
\xy
(-18,18)*+{U}="-2";
(-7,7)*+{N}="0";
(7,7)*+{U_T}="2";
(-7,-7)*+{I}="4";
(7,-7)*+{Z}="6";
(-3,15)*+{}="5";
(-15,3)*+{}="7";
{\ar_{p} "0";"2"};
{\ar^{n} "0";"4"};
{\ar^{z} "2";"6"};
{\ar_{i} "4";"6"};
{\ar_{j} "-2";"0"};
{\ar@/^0.60pc/^{q} "-2";"2"};
{\ar@/_0.60pc/_{k} "-2";"4"};
{\ar@{}|\circlearrowright "0";"6"};
{\ar@{}|\circlearrowright "0";"5"};
{\ar@{}|\circlearrowright "0";"7"};
\endxy
\]
By $N\in\mathcal{N}=\mathrm{Cone}(\mathcal{V},\mathcal{S})$ and $\mathbb{E}(U,\mathcal{V})=0$, this $j$ factors through some $S_0\in\mathcal{S}$, as follows.
\[
\xy
(-11,11)*+{U}="0";
(-15.5,-5.5)*+{S_0}="2";
(-16,5)*+{}="3";
(0,0)*+{N}="4";
(5,8)*+{}="5";
(16,0)*+{U_T}="6";
{\ar^{} "0";"2"};
{\ar^{j} "0";"4"};
{\ar_{{}^{\exists}r} "2";"4"};
{\ar_{p} "4";"6"};
{\ar@/^0.70pc/^{q} "0";"6"};
{\ar@{}|\circlearrowright "3";"4"};
{\ar@{}|\circlearrowright "4";"5"};
\endxy
\]
By $\mathbb{E}(S_0,T)=0$, the morphism $p\circ r$ factors through $u_T$. This implies $q^{\ast}(\rho_T)=0$ by Lemma~\ref{LemZero}.
\end{proof}

\begin{lemma}\label{LemMorphExtExt}
Let $(A,\delta,C), (A^{\prime},\delta^{\prime},C^{\prime})$ be $\mathbb{E}$-extensions, and let $(a,c)\colon\delta\to\delta^{\prime}$ be a morphism of $\mathbb{E}$-triangles. Then
\[ (\ell(a),\ell(c))\colon\widetilde{\ell}(\delta)\to\widetilde{\ell}(\delta^{\prime}) \]
is a morphism of $\widetilde{\mathbb{E}}$-extensions. Namely,
\[
\xy
(-8,6)*+{C}="0";
(8,6)*+{U_A}="2";
(17,6)*+{=A[1]}="3";
(-8,-6)*+{C^{\prime}}="4";
(8,-6)*+{U_{A^{\prime}}}="6";
(17,-6)*+{=A^{\prime}[1]}="7";
{\ar^{\widetilde{\ell}(\delta)} "0";"2"};
{\ar_{\ell(c)} "0";"4"};
{\ar^{\ell(a)[1]} "2";"6"};
{\ar_{\widetilde{\ell}(\delta^{\prime})} "4";"6"};
{\ar@{}|\circlearrowright "0";"6"};
\endxy
\]
is commutative in $\widetilde{\mathscr{C}}$.
\end{lemma}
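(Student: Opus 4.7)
The plan is to reduce the desired equality $\ell(u_a)\circ\widetilde\ell(\delta)=\widetilde\ell(\delta')\circ\ell(c)$ in $\widetilde{\mathscr{C}}$ to an equality of two morphisms out of a common object, and then detect that equality by pulling back $\rho_{A'}$ via Lemma~\ref{Lem50_X1}. Concretely, fix spans $(C\xleftarrow{w}D\xrightarrow{d}U_A)$ and $(C'\xleftarrow{w'}D'\xrightarrow{d'}U_{A'})$ computing $\widetilde\ell(\delta)$ and $\widetilde\ell(\delta')$ as in Definition~\ref{Def50_X2}, together with the morphism $u_a\in\mathscr{C}(U_A,U_{A'})$ and its resolution $(a,v_a,u_a)$ built from the chosen $\mathbb{E}$-triangles $(\ref{AUchosen})$ for $A$ and $A'$, so that $a_\ast\rho_A=u_a^\ast\rho_{A'}$.

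The key step is to build a common refinement of the spans. Let $V\to D\xrightarrow{w}C\overset{\alpha}{\dashrightarrow}$ and $V'\to D'\xrightarrow{w'}C'\overset{\sigma'}{\dashrightarrow}$ be the defining conflations, with $V,V'\in\mathcal{V}$. Realize the pullback $c^\ast\sigma'$ as an $\mathbb{E}$-triangle $V'\to E\xrightarrow{w''}C\overset{c^\ast\sigma'}{\dashrightarrow}$; the condition $(\ast)$ of Definition~\ref{DefRealization} yields a morphism of $\mathbb{E}$-triangles with middle arrow $g\in\mathscr{C}(E,D')$ satisfying $w'\circ g=c\circ w''$. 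Then apply Proposition~\ref{PropBaer}~(1) to the two $\mathbb{E}$-triangles ending in $C$, namely $\alpha$ and $c^\ast\sigma'$, to produce an object $M$ together with conflations $V'\to M\xrightarrow{e_1}D$ and $V\to M\xrightarrow{e_2}E$ (so $e_1,e_2\in w\mathit{Fib}$) such that $w\circ e_1=w''\circ e_2$. Consequently $c\circ w\circ e_1=w'\circ g\circ e_2$ in $\mathscr{C}$, and therefore $\ell(w')^{-1}\circ\ell(c)\circ\ell(w)\circ\ell(e_1)=\ell(g)\circ\ell(e_2)$ in $\widetilde{\mathscr{C}}$.

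Using this, the desired equality $\ell(d')\circ\ell(w')^{-1}\circ\ell(c)=\ell(u_a)\circ\ell(d)\circ\ell(w)^{-1}$ is obtained by pre-composing with the isomorphism $\ell(w)\circ\ell(e_1)\in\widetilde{\mathscr{C}}(M,C)$ and comparing the two resulting morphisms $M\to U_{A'}$: it reduces to checking that
\[
\ell(d'\circ g\circ e_2)=\ell(u_a\circ d\circ e_1)\quad\text{in }\widetilde{\mathscr{C}}(M,U_{A'}).
\]
By Lemma~\ref{Lem50_X1} applied with $A$ replaced by $A'$, it suffices to verify that both morphisms pull $\rho_{A'}$ back to the same element of $\mathbb{E}(M,A')$. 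A direct computation using $w^\ast\delta=d^\ast\rho_A$, $w'^\ast\delta'=d'^\ast\rho_{A'}$, $a_\ast\delta=c^\ast\delta'$, $a_\ast\rho_A=u_a^\ast\rho_{A'}$, and $w\circ e_1=w''\circ e_2$ shows that both sides equal $(u_a\circ d\circ e_1)^\ast\rho_{A'}=a_\ast(d\circ e_1)^\ast\rho_A$, finishing the argument.

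The main obstacle is the construction of the common refinement $M$ together with the lift $g$: one has to weave together the pullback property in the form of condition $(\ast)$ with Proposition~\ref{PropBaer}~(1) in such a way that the resulting square $w'\circ g\circ e_2=c\circ w\circ e_1$ is strictly commutative in $\mathscr{C}$ (not merely in $\widetilde{\mathscr{C}}$), since we will ultimately invoke Lemma~\ref{Lem50_X1}, which is a statement about $\mathbb{E}$-extensions in $\mathscr{C}$, to conclude.
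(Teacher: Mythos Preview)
Your proof is correct and follows essentially the same route as the paper. The only difference is packaging: the paper first constructs your object $E$ (called $D''$) and map $g$ (called $f$), then observes that both spans $(C\xleftarrow{w}D\xrightarrow{u_a\circ d}U_{A'})$ and $(C\xleftarrow{w''}D''\xrightarrow{d'\circ f}U_{A'})$ satisfy condition~$(\ref{Eq_Ast_50})$ for $c^{\ast}\delta'$ and invokes Claim~\ref{Claim50_X3}~(2) directly, whereas you unfold the proof of that claim by building the common refinement $M$ via Proposition~\ref{PropBaer}~(1) and appealing to Lemma~\ref{Lem50_X1} yourself.
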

\begin{proof}
Take spans $(C\overset{w}{\longleftarrow}D\overset{d}{\longrightarrow}U_A)$ and $(C^{\prime}\overset{w^{\prime}}{\longleftarrow}D^{\prime}\overset{d^{\prime}}{\longrightarrow}U_{A^{\prime}})$ satisfying
\[ w,w^{\prime}\in w\mathit{Fib},\ \ w^{\ast}\delta=d^{\ast}\rho_A,\ \ w^{\prime\ast}\delta^{\prime}=d^{\prime\ast}\rho_{A^{\prime}}. \]
By definition, we have
\[ \widetilde{\ell}(\delta)=\ell(d)\circ\ell(w)^{-1},\ \ \widetilde{\ell}(\delta^{\prime})=\ell(d^{\prime})\circ\ell(w^{\prime})^{-1}. \]
Remark that $\ell(a)[1]=\ell(u)$ is given by a morphism of $\mathbb{E}$-triangles
\[
\xy
(-12,6)*+{A}="0";
(0,6)*+{V_A}="2";
(12,6)*+{U_A}="4";
(24,6)*+{}="6";
(-12,-6)*+{A^{\prime}}="10";
(0,-6)*+{V_{A^{\prime}}}="12";
(12,-6)*+{U_{A^{\prime}}}="14";
(24,-6)*+{}="16";
{\ar^{v_A} "0";"2"};
{\ar^{u_A} "2";"4"};
{\ar@{-->}^{\rho_A} "4";"6"};
{\ar_{a} "0";"10"};
{\ar^{v} "2";"12"};
{\ar^{u} "4";"14"};
{\ar_{v_{A^{\prime}}} "10";"12"};
{\ar_{u_{A^{\prime}}} "12";"14"};
{\ar@{-->}_{\rho_{A^{\prime}}} "14";"16"};
{\ar@{}|\circlearrowright "0";"12"};
{\ar@{}|\circlearrowright "2";"14"};
\endxy.
\]
Since $w^{\prime}\in w\mathit{Fib}$, there exists an $\mathbb{E}$-triangle
\[ V^{\prime}\to D^{\prime}\overset{w^{\prime}}{\longrightarrow}C^{\prime}\overset{\nu^{\prime}}{\dashrightarrow}\quad(V^{\prime}\in\mathcal{V}). \]By realizing $c^{\ast}\nu^{\prime}$, we obtain a morphism of $\mathbb{E}$-triangles
\[
\xy
(-12,6)*+{V^{\prime}}="0";
(0,6)*+{{}^{\exists}D^{\prime\prime}}="2";
(12,6)*+{C}="4";
(24,6)*+{}="6";
(-12,-6)*+{V^{\prime}}="10";
(0,-6)*+{D^{\prime}}="12";
(12,-6)*+{C^{\prime}}="14";
(24,-6)*+{}="16";
{\ar^{} "0";"2"};
{\ar^{{}^{\exists}w^{\prime\prime}} "2";"4"};
{\ar@{-->}^{c^{\ast}\nu^{\prime}} "4";"6"};
{\ar@{=} "0";"10"};
{\ar_{{}^{\exists}f} "2";"12"};
{\ar^{c} "4";"14"};
{\ar_{} "10";"12"};
{\ar_{w^{\prime}} "12";"14"};
{\ar@{-->}_{\nu^{\prime}} "14";"16"};
{\ar@{}|\circlearrowright "0";"12"};
{\ar@{}|\circlearrowright "2";"14"};
\endxy.
\]
Then both spans $(C\overset{w}{\longleftarrow}D\overset{u\circ d}{\longrightarrow}U_{A^{\prime}})$ and $(C\overset{w^{\prime\prime}}{\longleftarrow}D^{\prime\prime}\overset{d^{\prime}\circ f}{\longrightarrow}U_{A^{\prime}})$ satisfy
\begin{eqnarray*}
w^{\ast}(c^{\ast}\delta^{\prime})&=&w^{\ast} a_{\ast}\delta\ =\ a_{\ast} w^{\ast}\delta\\
&=&a_{\ast} d^{\ast}\rho_A\ =\ d^{\ast} u^{\ast}\rho_{A^{\prime}}\ =\ (u\circ d)^{\ast}\rho_{A^{\prime}},
\end{eqnarray*}
\[ w^{\prime\prime\ast}(c^{\ast}\delta^{\prime})=f^{\ast} w^{\prime\ast}\delta^{\prime}\ =\ f^{\ast} d^{\prime\ast}\rho_{A^{\prime}}=(d^{\prime}\circ f)^{\ast}\rho_{A^{\prime}}. \]
Thus by Claim~\ref{Claim50_X3} {\rm (2)}, we obtain
\begin{eqnarray*}
\ell(u\circ d)\circ\ell(w)^{-1}&=&\ell(d^{\prime}\circ f)\circ\ell(w^{\prime\prime})^{-1}\\
&=&\ell(d^{\prime})\circ(\ell(f)\circ\ell(w^{\prime\prime})^{-1})\ =\ \ell(d^{\prime})\circ(\ell(w^{\prime})^{-1}\circ\ell(c)),
\end{eqnarray*}
which means $\ell(u)\circ\widetilde{\ell}(\delta)=\widetilde{\ell}(\delta^{\prime})\circ\ell(c)$.
\end{proof}

\begin{proposition}\label{Prop50_X7}
The functor $[1]\colon\widetilde{\mathscr{C}}\to\widetilde{\mathscr{C}}$ is an auto-equivalence, with quasi-inverse $[-1]$.
\end{proposition}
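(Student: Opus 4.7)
The plan is to produce natural isomorphisms $\eta\colon\mathrm{id}_{\widetilde{\mathscr{C}}}\Rightarrow[-1]\circ[1]$ and $\epsilon\colon[1]\circ[-1]\Rightarrow\mathrm{id}_{\widetilde{\mathscr{C}}}$ by means of the Baer-type squares of Proposition~\ref{PropBaer}; the two cases are strictly dual, so I focus on $\eta$.

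For $A\in\mathscr{C}$, the two chosen $\mathbb{E}$-triangles $A\overset{v_A}{\longrightarrow}V_A\overset{u_A}{\longrightarrow}U_A\overset{\rho_A}{\dashrightarrow}$ (defining $A[1]=U_A$) and $T^{U_A}\overset{t^{U_A}}{\longrightarrow}S^{U_A}\overset{s^{U_A}}{\longrightarrow}U_A\overset{\lambda^{U_A}}{\dashrightarrow}$ (defining $(A[1])[-1]=T^{U_A}$) both end at $U_A$. Applying Proposition~\ref{PropBaer}(1) yields an object $M_A$ fitting into conflations $A\overset{\alpha_A}{\longrightarrow}M_A\to S^{U_A}$ and $T^{U_A}\overset{\beta_A}{\longrightarrow}M_A\to V_A$. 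Since $S^{U_A}\in\mathcal{S}\subseteq\mathcal{N}$ and $V_A\in\mathcal{V}\subseteq\mathcal{N}$, Claim~\ref{ClaimNW}(1) gives $\alpha_A,\beta_A\in\mathbb{W}$, and I define $\eta_A:=\ell(\beta_A)^{-1}\circ\ell(\alpha_A)\colon A\to A[1][-1]$ in $\widetilde{\mathscr{C}}$, which is automatically an isomorphism.

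For naturality of $\eta$, a morphism $f\in\mathscr{C}(A,B)$ produces, from the definitions of $[1]$ and $[-1]$, morphisms of $\mathbb{E}$-triangles $(f,v_f,u_f)\colon\rho_A\to\rho_B$ (with $u_f$ representing $f[1]$) and $(t_{u_f},s_{u_f},u_f)\colon\lambda^{U_A}\to\lambda^{U_B}$ (with $t_{u_f}$ representing $f[1][-1]$). The compatibilities $f_{\ast}\rho_A=u_f^{\ast}\rho_B$, $u_f\circ u_A=u_B\circ v_f$, $t_{u_f\,\ast}\lambda^{U_A}=u_f^{\ast}\lambda^{U_B}$, and $u_f\circ s^{U_A}=s^{U_B}\circ s_{u_f}$ then imply that both $(f,s_{u_f})\colon s^{U_A\ast}\rho_A\to s^{U_B\ast}\rho_B$ and $(t_{u_f},v_f)\colon u_A^{\ast}\lambda^{U_A}\to u_B^{\ast}\lambda^{U_B}$ are morphisms of $\mathbb{E}$-extensions. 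By the realization axiom (Definition~\ref{DefRealization}), each of them lifts to some middle map $M_A\to M_B$. The main obstacle is to show that these two lifts can be taken equal to a common $M_f\colon M_A\to M_B$, at least after applying $\ell$; I will argue this by comparing any two such lifts via Corollary~\ref{CorExact0}, showing their difference factors through morphisms whose images under $\ell$ vanish because $\mathcal{V},\mathcal{S}\subseteq\mathcal{N}$ together with Claim~\ref{ClaimNW}. With a single $M_f$ in hand, the identities $\ell(M_f)\circ\ell(\alpha_A)=\ell(\alpha_B)\circ\ell(f)$ and $\ell(M_f)\circ\ell(\beta_A)=\ell(\beta_B)\circ\ell(t_{u_f})$ combine with the invertibility of $\ell(\alpha_A),\ell(\beta_B)$ to yield $\eta_B\circ\ell(f)=\ell(f[1][-1])\circ\eta_A$, which is the naturality of $\eta$.

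Dually, applying Proposition~\ref{PropBaer}(2) to $T^A\to S^A\to A$ and $T^A\to V_{T^A}\to U_{T^A}$ produces, for each $A$, a common object $M^A$ equipped with deflations $M^A\to A$ and $M^A\to U_{T^A}=A[-1][1]$ both lying in $\mathbb{W}$, hence a natural isomorphism $\epsilon\colon[1]\circ[-1]\Rightarrow\mathrm{id}_{\widetilde{\mathscr{C}}}$, with the same naturality argument as above. Together $\eta$ and $\epsilon$ exhibit $[-1]$ as a quasi-inverse of $[1]$, proving that $[1]$ is an auto-equivalence of $\widetilde{\mathscr{C}}$.
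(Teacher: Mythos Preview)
Your construction of the isomorphism $\eta_A$ via Proposition~\ref{PropBaer} is essentially the same as the paper's: the paper also builds the comparison object from the Baer diagram and observes that the two legs are weak equivalences (this is hidden in the proof of Claim~\ref{Claim50_X3}(3)). The difference lies entirely in the naturality argument, and here your sketch has a genuine gap.

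You obtain two middle maps $g_1,g_2\colon M_A\to M_B$, one realizing $(f,s_{u_f})$ for the conflation $A\to M_A\to S^{U_A}$ and the other realizing $(t_{u_f},v_f)$ for the conflation $T^{U_A}\to M_A\to V_A$. Corollary~\ref{CorExact0} controls the ambiguity \emph{within} each of these two families separately (the choices of $g_1$ differ by maps factoring through $S^{U_A}\in\mathcal{N}$, and similarly for $g_2$), but it says nothing about comparing a $g_1$ with a $g_2$: the two morphisms satisfy commutativity conditions with respect to \emph{different} conflation structures on $M_A$, so $g_1-g_2$ has no evident factorization through an object of $\mathcal{N}$. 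Your sentence ``showing their difference factors through morphisms whose images under $\ell$ vanish'' does not follow from what you cite.

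The paper sidesteps this by recognizing the isomorphism as $\widetilde{\ell}(\lambda^C)$ (or dually), a map already shown to be well-defined independently of choices; naturality is then a one-line consequence of Lemma~\ref{LemMorphExtExt}, since $(g,f)\colon\lambda^C\to\lambda^{C'}$ is a morphism of $\mathbb{E}$-extensions. If you wish to salvage your bare-hands approach, do not compare $g_1$ and $g_2$ after the fact; instead go into the \emph{proof} of Proposition~\ref{PropBaer}, where $M_A$ is the middle term of the $\mathbb{E}$-triangle realizing $\mu_A=\Delta_{U_A}^{\ast}(\rho_A\oplus\lambda^{U_A})$, and observe that $(f\oplus t_{u_f},u_f)\colon\mu_A\to\mu_B$ is a morphism of $\mathbb{E}$-extensions. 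Realizing it produces a \emph{single} $M_f$ with $M_f\circ j_A=j_B\circ(f\oplus t_{u_f})$, and since $\alpha_A=j_A\circ\iota_1$ and $\beta_A=j_A\circ\iota_2$, both required squares commute on the nose.
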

\begin{proof}
By the definitions of $[-1]$ and $[1]$, there are $\mathbb{E}$-triangles
\begin{eqnarray*}
&C[-1]\to S^C\to C\overset{\lambda^C}{\dashrightarrow}\quad(S^C\in\mathcal{S}),&\\
&C[-1]\to V_{C[-1]}\to (C[-1])[1]\overset{\rho_{C[-1]}}{\dashrightarrow}\quad(V_{C[-1]}\in\mathcal{V})&
\end{eqnarray*}
for each $C\in\mathscr{C}$. Then by Proposition~\ref{PropBaer} {\rm (2)}, we have a commutative diagram made of conflations
\[
\xy
(-10,7)*+{C[-1]}="2";
(11,7)*+{S^C}="4";
(26,7)*+{C}="6";
(-10,-7)*+{V_{C[-1]}}="12";
(11,-7)*+{{}^{\exists}D}="14";
(26,-7)*+{C}="16";
(-10,-21)*+{(C[-1])[1]}="22";
(11,-21)*+{(C[-1])[1]}="24";
{\ar^(0.42){} "2";"4"};
{\ar^(0.58){} "4";"6"};
{\ar_{} "2";"12"};
{\ar^{} "4";"14"};
{\ar@{=} "6";"16"};
{\ar^{} "12";"14"};
{\ar^{w} "14";"16"};
{\ar_{} "12";"22"};
{\ar^{e} "14";"24"};
{\ar@{=} "22";"24"};
{\ar@{}|\circlearrowright "2";"14"};
{\ar@{}|\circlearrowright "4";"16"};
{\ar@{}|\circlearrowright "12";"24"};
\endxy
\]
which gives $\widetilde{\ell}(\lambda^C)=-\ell(e)\circ\ell(w)^{-1}$ as in the proof of {\rm (3)} in Claim~\ref{Claim50_X3}.
Since $e\in\mathbb{W}$, it follows that $\widetilde{\ell}(\lambda^C)$ is an isomorphism in $\widetilde{\mathscr{C}}$. Let us show the naturality of
\[ \{\widetilde{\ell}(\lambda^C)\colon C\to (C[-1])[1]\}_{C\in\widetilde{\mathscr{C}}}. \]

For this purpose, it suffices to show the naturality with respect to the morphisms in $\mathscr{C}$.
For any morphism $f\in\mathscr{C}(C,C^{\prime})$, the morphism $\ell(f)[-1]=\ell(g)\in\widetilde{\mathscr{C}}(C[-1],C^{\prime}[-1])$ is given by a morphism of $\mathbb{E}$-extensions $(g,f)\colon \lambda^C\to\lambda^{C^{\prime}}$, dually to Definition~\ref{DefShiftLoc}. Thus by Lemma~\ref{LemMorphExtExt},
\[
\xy
(-10,6)*+{C}="0";
(10,6)*+{(C[-1])[1]}="2";
(-10,-6)*+{C^{\prime}}="4";
(10,-6)*+{(C^{\prime}[-1])[1]}="6";
{\ar^(0.4){\widetilde{\ell}(\lambda^C)} "0";"2"};
{\ar_{\ell(f)} "0";"4"};
{\ar^{\ell(g)[1]=(\ell(f)[-1])[1]} "2";"6"};
{\ar_(0.4){\widetilde{\ell}(\lambda^{C^{\prime}})} "4";"6"};
{\ar@{}|\circlearrowright "0";"6"};
\endxy
\]
becomes commutative. This shows $[1]\circ [-1]\cong\mathrm{Id}$.
The isomorphism $[-1]\circ [1]\cong\mathrm{Id}$ can be shown dually.
\end{proof}

\subsection{Triangulation}

\begin{definition}\label{DefTriaLoc}
For an $\mathbb{E}$-triangle $A\overset{x}{\longrightarrow}B\overset{y}{\longrightarrow}C\overset{\delta}{\dashrightarrow}$, its associated {\it standard triangle} in $\widetilde{\mathscr{C}}$ is defined to be
\[ A\overset{\ell(x)}{\longrightarrow}B\overset{\ell(y)}{\longrightarrow}C\overset{\widetilde{\ell}(\delta)}{\longrightarrow}A[1]. \]
A {\it distinguished triangle} in $\widetilde{\mathscr{C}}$ is a triangle isomorphic to some standard triangle.
\end{definition}

\begin{proposition}\label{PropMorphExtExt}
Any morphism of $\mathbb{E}$-triangles
\[
\xy
(-12,6)*+{A}="0";
(0,6)*+{B}="2";
(12,6)*+{C}="4";
(24,6)*+{}="6";
(-12,-6)*+{A^{\prime}}="10";
(0,-6)*+{B^{\prime}}="12";
(12,-6)*+{C^{\prime}}="14";
(24,-6)*+{}="16";
{\ar^{x} "0";"2"};
{\ar^{y} "2";"4"};
{\ar@{-->}^{\delta} "4";"6"};
{\ar_{a} "0";"10"};
{\ar^{b} "2";"12"};
{\ar^{c} "4";"14"};
{\ar_{x^{\prime}} "10";"12"};
{\ar_{y^{\prime}} "12";"14"};
{\ar@{-->}_{\delta^{\prime}} "14";"16"};
{\ar@{}|\circlearrowright "0";"12"};
{\ar@{}|\circlearrowright "2";"14"};
\endxy
\]
gives the following morphism between standard triangles.
\[
\xy
(-18,6)*+{A}="0";
(-4,6)*+{B}="2";
(12,6)*+{C}="4";
(28,6)*+{A[1]}="6";
(-18,-6)*+{A^{\prime}}="10";
(-4,-6)*+{B^{\prime}}="12";
(12,-6)*+{C^{\prime}}="14";
(28,-6)*+{A^{\prime}[1]}="16";
{\ar^{\ell(x)} "0";"2"};
{\ar^{\ell(y)} "2";"4"};
{\ar^{\ell(\delta)} "4";"6"};
{\ar_{\ell(a)} "0";"10"};
{\ar^{\ell(b)} "2";"12"};
{\ar^{\ell(c)} "4";"14"};
{\ar^{\ell(a)[1]} "6";"16"};
{\ar_{\ell(x^{\prime})} "10";"12"};
{\ar_{\ell(y^{\prime})} "12";"14"};
{\ar_{\ell(\delta^{\prime})} "14";"16"};
{\ar@{}|\circlearrowright "0";"12"};
{\ar@{}|\circlearrowright "2";"14"};
{\ar@{}|\circlearrowright "4";"16"};
\endxy
\]
\end{proposition}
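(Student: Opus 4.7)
The statement is essentially a bookkeeping consequence of results already established, so the plan is short. Given a morphism $(a,b,c)$ of $\mathbb{E}$-triangles from $A\overset{x}{\to}B\overset{y}{\to}C\overset{\delta}{\dashrightarrow}$ to $A^{\prime}\overset{x^{\prime}}{\to}B^{\prime}\overset{y^{\prime}}{\to}C^{\prime}\overset{\delta^{\prime}}{\dashrightarrow}$, we must check commutativity of the three squares in $\widetilde{\mathscr{C}}$:
\begin{enumerate}
\item[(i)] $\ell(b)\circ\ell(x)=\ell(x^{\prime})\circ\ell(a)$,
\item[(ii)] $\ell(c)\circ\ell(y)=\ell(y^{\prime})\circ\ell(b)$,
\item[(iii)] $\ell(a)[1]\circ\widetilde{\ell}(\delta)=\widetilde{\ell}(\delta^{\prime})\circ\ell(c)$.
\end{enumerate}

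For (i) and (ii), the commutativity of the corresponding squares in $\mathscr{C}$ (which is part of the definition of a morphism of $\mathbb{E}$-triangles) gives $b\circ x=x^{\prime}\circ a$ and $c\circ y=y^{\prime}\circ b$ directly, so applying the functor $\ell\colon\mathscr{C}\to\widetilde{\mathscr{C}}$ yields the desired identities in $\widetilde{\mathscr{C}}$.

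For (iii), the key input is already proved: by definition of a morphism of $\mathbb{E}$-triangles, $(a,c)\colon\delta\to\delta^{\prime}$ is a morphism of $\mathbb{E}$-extensions, i.e.\ $a_{\ast}\delta=c^{\ast}\delta^{\prime}$. Then Lemma~\ref{LemMorphExtExt} applies verbatim and asserts exactly that the square
\[
\xy
(-10,6)*+{C}="0";
(10,6)*+{A[1]}="2";
(-10,-6)*+{C^{\prime}}="4";
(10,-6)*+{A^{\prime}[1]}="6";
{\ar^{\widetilde{\ell}(\delta)} "0";"2"};
{\ar_{\ell(c)} "0";"4"};
{\ar^{\ell(a)[1]} "2";"6"};
{\ar_{\widetilde{\ell}(\delta^{\prime})} "4";"6"};
{\ar@{}|\circlearrowright "0";"6"};
\endxy
\]
commutes in $\widetilde{\mathscr{C}}$.

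There is no real obstacle here — the heavy lifting was done in constructing $\widetilde{\ell}$ and checking its naturality in Lemma~\ref{LemMorphExtExt}. The proposition is just the assembly of the three commutative squares (two trivial, one given by that lemma) into a single morphism of standard triangles. Consequently the proof will simply read: \emph{Squares (i) and (ii) are obtained by applying $\ell$ to the commutative squares $b\circ x=x^{\prime}\circ a$ and $c\circ y=y^{\prime}\circ b$ in $\mathscr{C}$. Square (iii) is Lemma~\ref{LemMorphExtExt}.}
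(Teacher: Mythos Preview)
Your proposal is correct and matches the paper's approach exactly: the paper's proof is simply ``This immediately follows from Lemma~\ref{LemMorphExtExt},'' which is precisely your argument with the trivial squares (i) and (ii) left implicit.
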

\begin{proof}
This immediately follows from Lemma~\ref{LemMorphExtExt}.
\end{proof}

This gives a cofibrant replacement of  a standard triangle, as follows.
\begin{corollary}\label{CorCofibReplStan}
Assume $(\mathscr{C},\mathbb{E},\mathfrak{s})$ satisfies Condition~(WIC) as before. 
Any standard triangle is isomorphic to a standard triangle associated to an $\mathbb{E}$-triangle $U\to U^{\prime}\to U^{\prime\prime}\dashrightarrow$ whose terms satisfy $U,U^{\prime},U^{\prime\prime}\in\mathcal{U}$. 
\end{corollary}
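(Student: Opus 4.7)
The plan is to produce a cofibrant replacement of the given $\mathbb{E}$-triangle by resolving the first two terms, lifting $x$ through the resolution of $B$, factoring the lift, and applying the nine Lemma~\ref{LemNine} to verify that the resulting triangle is linked to the original by componentwise weak equivalences.

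I would begin by choosing, via the cotorsion pair $(\mathcal{U},\mathcal{V})$, conflations $V^A\to U^A\overset{u^A}{\longrightarrow}A$ and $V^B\to U'\overset{u^B}{\longrightarrow}B$ with $V^A,V^B\in\mathcal{V}$ and $U^A,U'\in\mathcal{U}$, so that $u^A,u^B\in w\mathit{Fib}$. Since $0\to U^A$ lies in $\mathit{Cof}$, the right lifting property of Proposition~\ref{PropLiftST}(2) yields $\tilde{x}\in\mathscr{C}(U^A,U')$ with $u^B\circ\tilde{x}=x\circ u^A$. Factor $\tilde{x}$ via Proposition~\ref{PropFactor} as $U^A\overset{j}{\longrightarrow}U^B\overset{p}{\longrightarrow}U'$ with $j\in\mathit{Cof}$ and $p\in w\mathit{Fib}$; by Proposition~\ref{PropComposClosed} the composition $0\to U^B$ is again a cofibration, so $U^B\in\mathcal{U}$. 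Completing $j$ to an $\mathbb{E}$-triangle $U^A\overset{j}{\longrightarrow}U^B\to U^C\dashrightarrow$ one has $U^C=\mathrm{Cone}(j)\in\mathcal{U}$, since $j\in\mathit{Cof}$.

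It remains to verify that this new $\mathbb{E}$-triangle is mapped to the original one by a componentwise weak equivalence. The square with rows $j$ and $x$ and verticals $u^A,u^B\circ p$ commutes by construction, and its two verticals are deflations whose cocones lie in $\mathcal{V}$ (for $u^B\circ p$, one uses {\rm (ET4)$^{\mathrm{op}}$} to express $\mathrm{CoCone}(u^B\circ p)$ as an extension of $\mathrm{CoCone}(p)\in\mathcal{V}$ by $V^B\in\mathcal{V}$, and invokes extension-closedness of $\mathcal{V}$). This is precisely the input for the nine Lemma~\ref{LemNine}, which supplies a morphism $\gamma\in\mathscr{C}(U^C,C)$ such that $(u^A,u^B\circ p,\gamma)$ is a morphism of $\mathbb{E}$-triangles, together with a conflation $X\to U^C\overset{\gamma}{\longrightarrow}C$ whose cocone $X$ fits into a conflation $V^A\to\mathrm{CoCone}(u^B\circ p)\to X$. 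Since both ends of this conflation lie in $\mathcal{V}$, also $X\in\mathcal{V}$, and hence $\gamma\in w\mathit{Fib}\subseteq\mathbb{W}$. By Proposition~\ref{PropMorphExtExt} the triple of weak equivalences $(u^A,u^B\circ p,\gamma)$ descends to an isomorphism of standard triangles in $\widetilde{\mathscr{C}}$.

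The main obstacle will be controlling the third component $\gamma$. Naively, {\rm (ET3)} only produces some $\gamma$ completing the square to a morphism of $\mathbb{E}$-triangles, with no information about its status as a weak equivalence. The nine Lemma~\ref{LemNine} supplies this missing control by identifying $\mathrm{CoCone}(\gamma)$ as an extension of the cocones of the two left-hand vertical deflations, which is what forces $\gamma$ to be an acyclic fibration and turns the morphism of $\mathbb{E}$-triangles into an isomorphism after localization.
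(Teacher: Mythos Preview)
Your overall strategy is the same as the paper's: resolve $A$, factor through a cofibration, and invoke the nine Lemma~\ref{LemNine} to control the induced map on cones. The paper is marginally more economical---it factors $x\circ u^A$ directly via Proposition~\ref{PropFactor} rather than first resolving $B$ and lifting---but your additional step is harmless and the two arguments run in parallel thereafter.

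There is, however, one genuine error. From the conflation $V^A\to\mathrm{CoCone}(u^B\circ p)\to X$ with both left terms in $\mathcal{V}$, you conclude $X\in\mathcal{V}$. This inference is not valid: $\mathcal{V}$ is extension-closed (Remark~\ref{RemExtClosed}), but that says nothing about cones of inflations between objects of $\mathcal{V}$. The long exact sequence of Corollary~\ref{ExactToShow} gives $\mathbb{E}(U,V^A)\to\mathbb{E}(U,\mathrm{CoCone}(u^B\circ p))\to\mathbb{E}(U,X)$ for $U\in\mathcal{U}$, and vanishing of the first two terms does not force the third to vanish. What you can conclude is $X\in\mathcal{N}$, either by Lemma~\ref{LemConcentric1}(1) (since $\mathcal{V}\subseteq\mathcal{N}$) or by the 2-out-of-3 property of $\mathcal{N}$ in Proposition~\ref{PropThick}. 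This still suffices: with $X\in\mathcal{N}$, Claim~\ref{ClaimNW}(2) gives $\gamma\in\mathbb{W}$ (not necessarily $\gamma\in w\mathit{Fib}$), and Proposition~\ref{PropMorphExtExt} then yields the desired isomorphism of standard triangles. This is exactly the route the paper takes.
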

\begin{proof}
Let $A\overset{x}{\longrightarrow}B\overset{y}{\longrightarrow}C\overset{\delta}{\dashrightarrow}$ be any $\mathbb{E}$-triangle. Resolve $A$ by an  $\mathbb{E}$-triangle $V\overset{v}{\longrightarrow}U\overset{a}{\longrightarrow}A\overset{\lambda}{\dashrightarrow}$ satisfying $U\in\mathcal{U}$ and $V\in\mathcal{V}$.
By Proposition~\ref{PropFactor}, we have $x\circ a\in w\mathit{Fib}\circ\mathit{Cof}$. Namely, there are $\mathbb{E}$-triangles
\[ U\overset{x^{\prime}}{\longrightarrow}B^{\prime}\overset{y^{\prime}}{\longrightarrow}U_0\overset{\delta^{\prime}}{\dashrightarrow}\ \ \text{and}\ \ V_0\overset{v^{\prime}}{\longrightarrow}B^{\prime}\overset{b}{\longrightarrow}B\overset{}{\dashrightarrow} \]
satisfying $U_0\in\mathcal{U},\, V_0\in\mathcal{V}$ and $x\circ a=b\circ x^{\prime}$.

Since $\mathcal{U}$ is extension-closed, it follows that $B^{\prime}\in\mathcal{U}$. Moreover, by Lemma~\ref{LemNine} and Lemma~\ref{LemConcentric1} {\rm (1)}, we obtain a morphism of $\mathbb{E}$-triangles
\[
\xy
(-12,6)*+{U}="0";
(0,6)*+{B^{\prime}}="2";
(12,6)*+{U_0}="4";
(24,6)*+{}="6";
(-12,-6)*+{A}="10";
(0,-6)*+{B}="12";
(12,-6)*+{C}="14";
(24,-6)*+{}="16";
{\ar^{x^{\prime}} "0";"2"};
{\ar^{y^{\prime}} "2";"4"};
{\ar@{-->}^{\delta^{\prime}} "4";"6"};
{\ar_{a} "0";"10"};
{\ar^{b} "2";"12"};
{\ar^{c} "4";"14"};
{\ar_{x} "10";"12"};
{\ar_{y} "12";"14"};
{\ar@{-->}_{\delta} "14";"16"};
{\ar@{}|\circlearrowright "0";"12"};
{\ar@{}|\circlearrowright "2";"14"};
\endxy
\]
which admits an $\mathbb{E}$-triangle
\[ N\to U_0\overset{c}{\longrightarrow}C\dashrightarrow \]
with some $N\in\mathcal{N}$. By Claim~\ref{ClaimNW}, it follows that $c\in\mathbb{W}$.

By Proposition~\ref{PropMorphExtExt}, we obtain an isomorphism of standard triangles
\[
\xy
(-18,6)*+{U}="0";
(-4,6)*+{B^{\prime}}="2";
(12,6)*+{U_0}="4";
(28,6)*+{U[1]}="6";
(-18,-6)*+{A}="10";
(-4,-6)*+{B}="12";
(12,-6)*+{C}="14";
(28,-6)*+{A[1]}="16";
{\ar^{\ell(x^{\prime})} "0";"2"};
{\ar^{\ell(y^{\prime})} "2";"4"};
{\ar^{\widetilde{\ell}(\delta^{\prime})} "4";"6"};
{\ar_{\ell(a)}^{\cong} "0";"10"};
{\ar^{\ell(b)}_{\cong} "2";"12"};
{\ar^{\ell(c)}_{\cong} "4";"14"};
{\ar^{\ell(a)[1]}_{\cong} "6";"16"};
{\ar_{\ell(x)} "10";"12"};
{\ar_{\ell(y)} "12";"14"};
{\ar_{\widetilde{\ell}(\delta)} "14";"16"};
{\ar@{}|\circlearrowright "0";"12"};
{\ar@{}|\circlearrowright "2";"14"};
{\ar@{}|\circlearrowright "4";"16"};
\endxy
\]
in $\widetilde{\mathscr{C}}$.
\end{proof}

\begin{remark}\label{RemFibReplStan}
Similarly, for any $\mathbb{E}$-triangle $A\overset{x}{\longrightarrow}B\overset{y}{\longrightarrow}C\overset{\delta}{\dashrightarrow}$, we can construct a morphism of $\mathbb{E}$-triangles
\[
\xy
(-21,6)*+{A}="0";
(-7,6)*+{B}="2";
(7,6)*+{C}="4";
(20,6)*+{}="6";
(-21,-6)*+{T_A}="10";
(-7,-6)*+{T_B}="12";
(7,-6)*+{T_C}="14";
(20,-6)*+{}="16";
{\ar^{x} "0";"2"};
{\ar^{y} "2";"4"};
{\ar@{-->}^{\delta} "4";"6"};
{\ar_{a} "0";"10"};
{\ar^{b} "2";"12"};
{\ar^{c} "4";"14"};
{\ar_{} "10";"12"};
{\ar_{} "12";"14"};
{\ar@{-->}_{} "14";"16"};
{\ar@{}|\circlearrowright "0";"12"};
{\ar@{}|\circlearrowright "2";"14"};
\endxy
\]
which satisfies $T_A,T_B,T_C\in\mathcal{T}$ and $a,b,c\in\mathbb{W}$.
\end{remark}

\begin{lemma}\label{LemReplaceTriaLoc}
Let
\begin{equation}\label{Rep1}
A\overset{x}{\longrightarrow}B\overset{y}{\longrightarrow}C\overset{\delta}{\dashrightarrow}
\end{equation}
be any $\mathbb{E}$-triangle. From the $\mathbb{E}$-triangle $A\overset{v_A}{\longrightarrow}V_A\overset{u_A}{\longrightarrow}U_A\overset{\rho_A}{\dashrightarrow}$, we obtain an $\mathbb{E}$-triangle
\begin{equation}\label{Rep2}
A\overset{\Big[\raise1ex\hbox{\leavevmode\vtop{\baselineskip-8ex \lineskip1ex \ialign{#\crcr{$x$}\crcr{$v_A$}\crcr}}}\Big]}{\longrightarrow}B\oplus V_A\to X\overset{\theta}{\dashrightarrow}
\end{equation}
by Corollary~\ref{CorTrivWIC}. Then the standard triangles associated to $(\ref{Rep1}), (\ref{Rep2})$ are isomorphic. Also remark that we have $V_A\in\mathcal{I}$ if $A\in\mathcal{U}$.
\end{lemma}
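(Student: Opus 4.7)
The strategy is to construct an explicit morphism of $\mathbb{E}$-triangles from $(\ref{Rep2})$ to $(\ref{Rep1})$ whose three vertical components are all weak equivalences, and then invoke Proposition~\ref{PropMorphExtExt} to obtain an isomorphism of the associated standard triangles in $\widetilde{\mathscr{C}}$.

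First, I would apply {\rm (ET3)} to the commutative square
\[
\xy
(-16,6)*+{A}="0";
(4,6)*+{B\oplus V_A}="2";
(-16,-6)*+{A}="10";
(4,-6)*+{B}="12";
{\ar^{\Big[\raise1ex\hbox{\leavevmode\vtop{\baselineskip-8ex \lineskip1ex \ialign{#\crcr{$x$}\crcr{$v_A$}\crcr}}}\Big]} "0";"2"};
{\ar@{=} "0";"10"};
{\ar^{p_B} "2";"12"};
{\ar_{x} "10";"12"};
{\ar@{}|\circlearrowright "0";"12"};
\endxy
\]
built from the projection $p_B=[1\ 0]\colon B\oplus V_A\to B$, which satisfies $p_B\circ\Big[\raise1ex\hbox{\leavevmode\vtop{\baselineskip-8ex \lineskip1ex \ialign{#\crcr{$x$}\crcr{$v_A$}\crcr}}}\Big]=x$. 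This yields a morphism $c\in\mathscr{C}(X,C)$ making $(\mathrm{id}_A,p_B,c)\colon\theta\to\delta$ a morphism of $\mathbb{E}$-triangles.

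Next, I would check that both $p_B$ and $c$ belong to $\mathbb{W}$. For $p_B$, the sequence $V_A\xrightarrow{\iota_{V_A}}B\oplus V_A\xrightarrow{p_B}B$ realizes the split extension (using Remark~\ref{RemSplit} {\rm (2)} or Definition~\ref{DefAddSeq}), hence $p_B\in w\mathit{Fib}$ since $V_A\in\mathcal{V}$. For $c$, I would apply Lemma~\ref{LemNine} with $K=0$ (the cocone of $\mathrm{id}_A$) and $K^{\prime}=V_A$ (the cocone of $p_B$): the output is some $X^{\prime}$ sitting in $\mathbb{E}$-triangles $0\to V_A\to X^{\prime}\dashrightarrow$ and $X^{\prime}\to X\xrightarrow{c}C\dashrightarrow$. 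The first forces $X^{\prime}\cong V_A$, so the second exhibits $V_A$ as the cocone of $c$; thus $c\in w\mathit{Fib}\subseteq\mathbb{W}$.

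With these weak equivalences in hand, Proposition~\ref{PropMorphExtExt} produces the commutative diagram
\[
\xy
(-20,6)*+{A}="0";
(-3,6)*+{B\oplus V_A}="2";
(14,6)*+{X}="4";
(30,6)*+{A[1]}="6";
(-20,-6)*+{A}="10";
(-3,-6)*+{B}="12";
(14,-6)*+{C}="14";
(30,-6)*+{A[1]}="16";
{\ar "0";"2"};
{\ar "2";"4"};
{\ar^{\widetilde{\ell}(\theta)} "4";"6"};
{\ar@{=} "0";"10"};
{\ar^{\ell(p_B)}_{\cong} "2";"12"};
{\ar^{\ell(c)}_{\cong} "4";"14"};
{\ar@{=} "6";"16"};
{\ar "10";"12"};
{\ar "12";"14"};
{\ar_{\widetilde{\ell}(\delta)} "14";"16"};
\endxy
\]
in $\widetilde{\mathscr{C}}$, which is the desired isomorphism between the two standard triangles. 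The routine step I expect, and which is the only place an obstacle could hide, is verifying the hypotheses of Lemma~\ref{LemNine}: the diagram it requires is precisely the one assembled above together with the split conflation $V_A\to B\oplus V_A\to B$ and the trivial conflation $0\to A\to A$, so this is straightforward.

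Finally, for the last assertion, suppose $A\in\mathcal{U}$. Since the chosen $\mathbb{E}$-triangle $A\to V_A\to U_A\dashrightarrow$ has $A,U_A\in\mathcal{U}$ and $\mathcal{U}$ is extension-closed (Remark~\ref{RemExtClosed}), we conclude $V_A\in\mathcal{U}\cap\mathcal{V}=\mathcal{I}$.
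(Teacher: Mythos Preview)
Your proof is correct and reaches the same conclusion, but the paper takes a more direct route. Rather than invoking {\rm (ET3)} and then the nine lemma, the paper applies the dual of Proposition~\ref{PropShiftOctahedron} to the three conflations $V_A\to B\oplus V_A\to B$ (split), $(\ref{Rep2})$, and $(\ref{Rep1})$, using the compatibility $[1\ 0]\circ\left[\begin{smallmatrix}x\\v_A\end{smallmatrix}\right]=x$. That single step simultaneously produces the morphism of $\mathbb{E}$-triangles $(\mathrm{id}_A,[1\ 0],e)$ and the conflation $V_A\to X\overset{e}{\to}C$, so $e\in w\mathit{Fib}$ is immediate. Two minor remarks on your version: the {\rm (ET3)} step is redundant, since Lemma~\ref{LemNine} itself outputs the full morphism $(\mathrm{id}_A,p_B,c)$ of $\mathbb{E}$-triangles; and the $c$ you first obtain from {\rm (ET3)} need not coincide with the one produced by the nine lemma, so you should simply work with the latter throughout. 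Your route also relies on Condition~(WIC) through Lemma~\ref{LemNine}, whereas the paper's argument for this lemma does not (though (WIC) is a standing assumption in the surrounding section anyway).
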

\begin{proof}
By the dual of Proposition~\ref{PropShiftOctahedron}, we obtain a commutative diagram made of $\mathbb{E}$-triangles, as follows.
\[
\xy
(-8,7)*+{V_A}="2";
(8,7)*+{V_A}="4";
(-24,-7)*+{A}="10";
(-8,-7)*+{B\oplus V_A}="12";
(8,-7)*+{X}="14";
(23,-7)*+{}="16";
(-24,-21)*+{A}="20";
(-8,-21)*+{B}="22";
(8,-21)*+{C}="24";
(23,-21)*+{}="26";
(-8,-35)*+{}="32";
(8,-35)*+{}="34";
{\ar@{=} "2";"4"};
{\ar "2";"12"};
{\ar "4";"14"};
{\ar "10";"12"};
(-18,-2.5)*+{\Big[\ \ \Big]}="-1";
(-18,-0.5)*+{\scriptstyle{x}};
(-18,-4.5)*+{\scriptstyle{v_A}};
{\ar^{} "12";"14"};
{\ar@{-->}^{\theta} "14";"16"};
{\ar@{=} "10";"20"};
{\ar^{[1\ 0]} "12";"22"};
{\ar^{{}^{\exists}e} "14";"24"};
{\ar_{x} "20";"22"};
{\ar_{y} "22";"24"};
{\ar@{-->}_{\delta} "24";"26"};
{\ar@{-->}_{0} "22";"32"};
{\ar@{-->}^{0} "24";"34"};
{\ar@{}|\circlearrowright "2";"14"};
{\ar@{}|\circlearrowright "10";"22"};
{\ar@{}|\circlearrowright "12";"24"};
\endxy
\]
Remark that $\ell([1\ \, 0])\colon B\to B\oplus V_A$ and $\ell(e)\colon C\to Z$ are isomorphisms in $\widetilde{\mathscr{C}}$.
Thus Lemma~\ref{LemReplaceTriaLoc} follows from Proposition~\ref{PropMorphExtExt}.
\end{proof}

\begin{theorem}\label{ThmTriaLoc}
The shift functor in Definition~\ref{DefShiftLoc} and the class of distinguished triangles in Definition~\ref{DefTriaLoc} give a triangulation of $\widetilde{\mathscr{C}}$.
\end{theorem}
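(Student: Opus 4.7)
The plan is to verify the four axioms (TR1)--(TR4) of a triangulated category for $(\widetilde{\mathscr{C}}, [1])$ with the class of distinguished triangles of Definition~\ref{DefTriaLoc}. Closure under triangle isomorphism is built into the definition, and the split $\mathbb{E}$-triangle $A\xrightarrow{\mathrm{id}_A} A\to 0\dashrightarrow$ furnished by (ET2) yields the identity triangle. Given any morphism $\varphi\colon A\to B$ in $\widetilde{\mathscr{C}}$, I would represent it up to isomorphism as $\ell(f)$ for some $f\in\mathscr{C}$, factor $f=w\circ i$ with $i\in\mathit{Cof}$ and $w\in w\mathit{Fib}$ via Proposition~\ref{PropFactor}, and take the standard triangle of the $\mathbb{E}$-triangle $A\xrightarrow{i} M\to U\dashrightarrow$; since $\ell(w)$ is an isomorphism in $\widetilde{\mathscr{C}}$, $\varphi$ sits as the first arrow of a distinguished triangle.

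The heart of the proof is (TR2), which I would handle by mirroring the strategy of Proposition~\ref{PropTriaExt}(2). Apply Proposition~\ref{PropBaer}(2) to the $\mathbb{E}$-triangle $A\xrightarrow{x} B\xrightarrow{y} C\xrightarrow{\delta}$ and the defining $\mathbb{E}$-triangle $A\xrightarrow{v_A} V_A\xrightarrow{u_A} U_A\xrightarrow{\rho_A}$ to produce a $3\times 3$ diagram containing an $\mathbb{E}$-triangle $B\xrightarrow{m} N\xrightarrow{e_2} U_A\xrightarrow{x_\ast\rho_A}$ in the middle column and an $\mathbb{E}$-triangle $V_A\to N\xrightarrow{e_1} C$ in the middle row (so that $e_1\circ m = y$), together with the Baer-sum relation $e_1^\ast\delta + e_2^\ast\rho_A = 0$. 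Since $V_A\in\mathcal{V}\subseteq\mathcal{N}$ by Remark~\ref{RemNiNf}, Claim~\ref{ClaimNW} gives $e_1\in\mathbb{W}$, so the span $(C\xleftarrow{e_1} N\xrightarrow{-e_2} U_A)$ satisfies the condition of Definition~\ref{Def50_X2} and yields $\widetilde{\ell}(\delta) = -\ell(e_2)\circ\ell(e_1)^{-1}$. Since $x_\ast\rho_A = u^\ast\rho_B$ with $u$ representing $\ell(x)[1]$, the identity span computes $\widetilde{\ell}(x_\ast\rho_A) = \ell(x)[1]$. Then $(\mathrm{id}_B, \ell(e_1), -\mathrm{id}_{U_A})$ provides a triangle isomorphism from the standard triangle of the middle column to the rotated triangle $B\xrightarrow{\ell(y)} C\xrightarrow{\widetilde{\ell}(\delta)} A[1]\xrightarrow{-\ell(x)[1]} B[1]$. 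The inverse rotation is handled dually using the $\mathbb{E}$-triangle $T^C\to S^C\to C\xrightarrow{\lambda^C}$ defining $[-1]$.

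For (TR3) and (TR4), the strategy is to rigidify the data so that (ET3) or Proposition~\ref{PropShiftOctahedron} becomes directly applicable. Given a commutative square in $\widetilde{\mathscr{C}}$ bridging two distinguished triangles, apply Corollary~\ref{CorCofibReplStan} and its dual analogue (Remark~\ref{RemFibReplStan}) to replace them with standard triangles whose objects all lie in $\mathcal{Z} = \mathcal{U}\cap\mathcal{T}$. By Corollary~\ref{CorHTCP}, the vertical arrows then lift to honest morphisms $\tilde a, \tilde b$ in $\mathscr{C}$, and commutativity of the square in $\widetilde{\mathscr{C}}$ means that $\tilde b\circ x - x'\circ\tilde a$ factors through some object of $\mathcal{I}$. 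This error term can be absorbed into the target $\mathbb{E}$-triangle by means of Lemma~\ref{LemReplaceI} (valid since $\mathcal{I}\subseteq\mathcal{V}$ kills $\mathbb{E}(\mathcal{U},-)$), producing a strictly commutative square in $\mathscr{C}$. Then (ET3) and Proposition~\ref{PropMorphExtExt} deliver the required morphism of standard triangles. For (TR4), the same rigidification turns the hypothesis $\ell(g)\circ\ell(f) = \ell(h)$ into an equality in $\mathscr{C}$, after which Proposition~\ref{PropShiftOctahedron} produces the fourth $\mathbb{E}$-triangle and Proposition~\ref{PropMorphExtExt} translates the result into the octahedron in $\widetilde{\mathscr{C}}$.

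The main obstacle is this rigidification step. In a triangulated category it is automatic, and in an exact category the kernel/cokernel structure handles it directly; in the general extriangulated setting, morphisms in $\widetilde{\mathscr{C}}$ are defined only up to weak equivalence, so one must carefully combine cofibrant and fibrant replacement with the ideal-quotient description of $\widetilde{\mathscr{C}}(Z,Z')$ for $Z,Z'\in\mathcal{Z}$ and with Lemma~\ref{LemReplaceI} in order to reach data satisfying the strict commutativity hypotheses of (ET3) and (ET4), without destroying the standard triangles that were fixed at the outset.
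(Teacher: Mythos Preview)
Your plan is essentially the paper's own argument, and the key move for {\rm (TR2)}---applying Proposition~\ref{PropBaer}(2) to the given $\mathbb{E}$-triangle against the defining $\mathbb{E}$-triangle for $[1]$, reading off the span computing $\widetilde{\ell}(\delta)$, and identifying $\widetilde{\ell}(x_\ast\rho_A)=\ell(x)[1]$---matches it exactly.

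A few points of divergence are worth noting. For {\rm (TR3)} the paper does \emph{not} place both rows in $\mathcal{Z}$: it only puts the top row in $\mathcal{U}$ (Corollary~\ref{CorCofibReplStan}) and the bottom row in $\mathcal{T}$ (Remark~\ref{RemFibReplStan}), then invokes Remark~\ref{RemUTMono} to lift the vertical maps. The error term $b x-x'a$, factoring through some $I\in\mathcal{I}$, is absorbed not via Lemma~\ref{LemReplaceI} (whose hypothesis is $I\in\mathrm{Inj}(\mathscr{C})$, stronger than $I\in\mathcal{I}$) but via Lemma~\ref{LemReplaceTriaLoc}: one uses exactness of $\mathscr{C}(V_A,I)\to\mathscr{C}(A,I)\to\mathbb{E}(U_A,I)=0$ to push the factorization through $v_A$, and then replaces $A\overset{x}{\to}B$ by $A\overset{[x;v_A]}{\to}B\oplus V_A$. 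Your variant of Lemma~\ref{LemReplaceI} with the relative-injectivity justification also works, but requires re-examining that lemma's proof.

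For {\rm (TR4)} you want Lemma~\ref{LemOctaRigid}, not Proposition~\ref{PropShiftOctahedron}: the former takes three $\mathbb{E}$-triangles with $h=g\circ f$ and produces the fourth, which is exactly the octahedral configuration; the latter has a different input shape ($h'\circ g=f'$) and is not directly applicable here.
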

\begin{proof}
{\rm (TR1)} By definition, the class of distinguished triangles is closed under isomorphisms. From the $\mathbb{E}$-triangle $A\overset{\mathrm{id}_A}{\longrightarrow}A\to 0\dashrightarrow$, we obtain a distinguished triangle $A\overset{\mathrm{id}_A}{\longrightarrow}A\to 0\to A[1]$.

Let $\alpha\in\widetilde{\mathscr{C}}(A,B)$ be any morphism, and let us show the existence of a distinguished triangle of the form
\[ A\overset{\alpha}{\longrightarrow}B\to C\to A[1]. \]
Up to isomorphism in $\widetilde{\mathscr{C}}$, we may assume $A\in\mathcal{U},B\in\mathcal{T}$ from the start. As in Remark~\ref{RemUTMono}, then there is a morphism $f\in\mathscr{C}(A,B)$ satisfying $\ell(f)=\alpha$.

By Corollary~\ref{CorTrivWIC}, we have an $\mathbb{E}$-triangle
\[ A\overset{\Big[\raise1ex\hbox{\leavevmode\vtop{\baselineskip-8ex \lineskip1ex \ialign{#\crcr{$f$}\crcr{$v_A$}\crcr}}}\Big]}{\longrightarrow}B\oplus V_A\overset{g}{\longrightarrow}C\overset{\delta}{\dashrightarrow}, \]
which gives a standard triangle compatible with $\ell(f)$ as follows.
\[
\xy
(-27,0)*+{A}="0";
(-8,0)*+{B\oplus V_A}="2";
(-20,-9)*+{}="3";
(8,0)*+{C}="4";
(24,0)*+{A[1]}="6";
(-8,-12)*+{B}="12";
{\ar "0";"2"};
(-20,4.5)*+{\Big[\ \ \ \ \ \Big]}="-1";
(-20,6.5)*+{\scriptstyle{\ell(f)}};
(-20,2.5)*+{\scriptstyle{\ell(v_A)}};
{\ar^(0.54){\ell(g)} "2";"4"};
{\ar^(0.46){\widetilde{\ell}(\delta)} "4";"6"};
{\ar_{\ell(f)} "0";"12"};
{\ar^{\cong} "2";"12"};
{\ar@{}|\circlearrowright "2";"3"};
\endxy
\]

{\rm (TR2)} It suffices to show this axiom for standard triangles. Let $A\overset{x}{\longrightarrow}B\overset{y}{\longrightarrow}C\overset{\delta}{\dashrightarrow}$ be an $\mathbb{E}$-triangle, and let $A\overset{\ell(x)}{\longrightarrow}B\overset{\ell(y)}{\longrightarrow}C\overset{\widetilde{\ell}(\delta)}{\longrightarrow}A[1]$ be its associated standard triangle. Let us show that
\[ B\overset{\ell(y)}{\longrightarrow}C\overset{\widetilde{\ell}(\delta)}{\longrightarrow}A[1]\overset{-\ell(x)[1]}{\longrightarrow}B[1] \]
is distinguished. By Proposition~\ref{PropBaer} {\rm (2)}, we obtain a commutative diagram made of $\mathbb{E}$-triangles
\[
\xy
(-7,7)*+{A}="2";
(7,7)*+{B}="4";
(21,7)*+{C}="6";
(35,7)*+{}="8";
(-7,-7)*+{V_A}="12";
(7,-7)*+{{}^{\exists}D}="14";
(21,-7)*+{C}="16";
(35,-7)*+{}="18";
(-7,-21)*+{U_A}="22";
(7,-21)*+{U_A}="24";
(-7,-35)*+{}="32";
(7,-35)*+{}="34";
{\ar^{x} "2";"4"};
{\ar^{y} "4";"6"};
{\ar@{-->}^{\delta} "6";"8"};
{\ar_{v_A} "2";"12"};
{\ar^{m} "4";"14"};
{\ar@{=} "6";"16"};
{\ar^{} "12";"14"};
{\ar_{w} "14";"16"};
{\ar@{-->}_{(v_A)_{\ast}\delta} "16";"18"};
{\ar_{u_A} "12";"22"};
{\ar^{e} "14";"24"};
{\ar@{=} "22";"24"};
{\ar@{-->}_{\rho_A} "22";"32"};
{\ar@{-->}^{x_{\ast}\rho_A} "24";"34"};
{\ar@{}|\circlearrowright "2";"14"};
{\ar@{}|\circlearrowright "4";"16"};
{\ar@{}|\circlearrowright "12";"24"};
\endxy
\]
satisfying $w^{\ast}\delta+e^{\ast}\rho_A=0$.
In particular, we obtain a distinguished triangle
\[ B\overset{\ell(m)}{\longrightarrow}D\overset{\ell(e)}{\longrightarrow}A[1]\overset{\widetilde{\ell}(x_{\ast}\rho_A)}{\longrightarrow}B[1]. \]
Remark that we have $\widetilde{\ell}(\delta)=-\ell(e)\circ\ell(w)^{-1}$, as the proof of {\rm (3)} of Claim~\ref{Claim50_X3} suggests. Thus it remains to show the commutativity of the right-most square of
\[
\xy
(-27,7)*+{B}="0";
(-9,7)*+{D}="2";
(9,7)*+{A[1]}="4";
(27,7)*+{B[1]}="6";
(-27,-7)*+{B}="10";
(-9,-7)*+{C}="12";
(9,-7)*+{A[1]}="14";
(27,-7)*+{B[1]}="16";
{\ar^{\ell(m)} "0";"2"};
{\ar^{\ell(e)} "2";"4"};
{\ar^(0.48){\widetilde{\ell}(x_{\ast}\rho_A)} "4";"6"};
{\ar@{=} "0";"10"};
{\ar^{\ell(w)}_{\cong} "2";"12"};
{\ar^{-1}_{\cong} "4";"14"};
{\ar@{=} "6";"16"};
{\ar_{\ell(y)} "10";"12"};
{\ar_{\widetilde{\ell}(\delta)} "12";"14"};
{\ar_(0.48){-\ell(x)[1]} "14";"16"};
{\ar@{}|\circlearrowright "0";"12"};
{\ar@{}|\circlearrowright "2";"14"};
{\ar@{}|\circlearrowright "4";"16"};
\endxy
\]
in $\widetilde{\mathscr{C}}$.
Let 
\[
\xy
(-21,6)*+{A}="0";
(-7,6)*+{V_A}="2";
(7,6)*+{U_A}="4";
(20,6)*+{}="6";
(-21,-6)*+{B}="10";
(-7,-6)*+{V_B}="12";
(7,-6)*+{U_B}="14";
(20,-6)*+{}="16";
{\ar^{v_A} "0";"2"};
{\ar^{u_A} "2";"4"};
{\ar@{-->}^{\rho_A} "4";"6"};
{\ar_{x} "0";"10"};
{\ar^{v} "2";"12"};
{\ar^{u} "4";"14"};
{\ar_{v_B} "10";"12"};
{\ar_{u_B} "12";"14"};
{\ar@{-->}_{\rho_B} "14";"16"};
{\ar@{}|\circlearrowright "0";"12"};
{\ar@{}|\circlearrowright "2";"14"};
\endxy
\]
be a morphism of $\mathbb{E}$-triangles, which gives $\ell(x)[1]=\ell(u)$. Then, since the span $(U_A\overset{\mathrm{id}}{\longleftarrow}U_A\overset{u}{\longrightarrow}U_B)$ satisfies $\mathrm{id}^{\ast}(x_{\ast}\rho_A)=x_{\ast}\rho_A=u^{\ast}\rho_B$, it follows that $\widetilde{\ell}(x_{\ast}\rho_A)=\ell(u)=\ell(x)[1]$.

{\rm (TR3)} Up to isomorphism of triangles, it suffices to show this axiom for standard triangles. Let
\[ A\overset{x}{\longrightarrow}B\overset{y}{\longrightarrow}C\overset{\delta}{\dashrightarrow},\quad A^{\prime}\overset{x^{\prime}}{\longrightarrow}B^{\prime}\overset{y^{\prime}}{\longrightarrow}C^{\prime}\overset{\delta^{\prime}}{\dashrightarrow}\]
be $\mathbb{E}$-triangles, and suppose we are given a commutative diagram
\begin{equation}\label{CommTR3Comm}
\xy
(-7,6)*+{A}="0";
(7,6)*+{B}="2";
(-7,-6)*+{A^{\prime}}="4";
(7,-6)*+{B^{\prime}}="6";
{\ar^{\ell(x)} "0";"2"};
{\ar_{\alpha} "0";"4"};
{\ar^{\beta} "2";"6"};
{\ar_{\ell(x^{\prime})} "4";"6"};
{\ar@{}|\circlearrowright "0";"6"};
\endxy
\end{equation}
in $\widetilde{\mathscr{C}}$. By Corollary~\ref{CorCofibReplStan} and Remark~\ref{RemFibReplStan}, we may assume $A,B,C\in\mathcal{U}$ and $A^{\prime},B^{\prime},C^{\prime}\in\mathcal{T}$ from the start. In that case, $\alpha$ and $\beta$ can be written as $\alpha=\ell(a),\, \beta=\ell(b)$ for some $a\in\mathscr{C}(A,A^{\prime})$ and $b\in\mathscr{C}(B,B^{\prime})$ by Remark~\ref{RemUTMono}. Moreover, the commutativity of $(\ref{CommTR3Comm})$ means that $b\circ x-x^{\prime}\circ a$ factors through some $I\in\mathcal{I}$. By the exactness of

\[ \mathscr{C}(V_A,I)\overset{-\circ v_A}{\longrightarrow}\mathscr{C}(A,I)\to\mathbb{E}(U_A,I)=0, \]
this shows that there exists $j\in\mathscr{C}(V_A,B^{\prime})$ which makes
\[
\xy
(-8,6)*+{A}="0";
(8,6)*+{B\oplus V_A}="2";
(-8,-6)*+{A^{\prime}}="4";
(8,-6)*+{B^{\prime}}="6";
{\ar^(0.42){x_0} "0";"2"};
{\ar_{a} "0";"4"};
{\ar^{[b\ j]} "2";"6"};
{\ar_{x^{\prime}} "4";"6"};
{\ar@{}|\circlearrowright "0";"6"};
\endxy
\]
commutative in $\mathscr{C}$, where $x_0=\Big[\raise1ex\hbox{\leavevmode\vtop{\baselineskip-8ex \lineskip1ex \ialign{#\crcr{$x$}\crcr{$v_A$}\crcr}}}\Big]$. By Lemma~\ref{LemReplaceTriaLoc}, replacing $A\overset{x}{\longrightarrow}B$ by $A\overset{x_0}{\longrightarrow}B\oplus V_A$, we may assume
\[
\xy
(-12,6)*+{A}="0";
(0,6)*+{B}="2";
(12,6)*+{C}="4";
(24,6)*+{}="6";
(-12,-6)*+{A^{\prime}}="10";
(0,-6)*+{B^{\prime}}="12";
(12,-6)*+{C^{\prime}}="14";
(24,-6)*+{}="16";
{\ar^{x} "0";"2"};
{\ar^{y} "2";"4"};
{\ar@{-->}^{\delta} "4";"6"};
{\ar_{a} "0";"10"};
{\ar^{b} "2";"12"};
{\ar_{x^{\prime}} "10";"12"};
{\ar_{y^{\prime}} "12";"14"};
{\ar@{-->}_{\delta^{\prime}} "14";"16"};
{\ar@{}|\circlearrowright "0";"12"};
\endxy
\]
is commutative, from the start. Now {\rm (TR3)} follows from {\rm (ET3)} and Proposition~\ref{PropMorphExtExt}.

{\rm (TR4)} Let
\begin{eqnarray*}
&A\overset{\alpha}{\longrightarrow}B\to D\to A[1],&\\
&B\overset{\beta}{\longrightarrow}C\to F\to B[1],&\\
&A\overset{\gamma}{\longrightarrow}C\to E\to A[1]&
\end{eqnarray*}
be any distinguished triangles in $\widetilde{\mathscr{C}}$ satisfying $\beta\circ\alpha=\gamma$. In a similar way as in the proof of {\rm (TR3)}, we may assume that they are standard triangles associated to $\mathbb{E}$-triangles
\begin{eqnarray*}
&A\overset{f}{\longrightarrow}B\overset{f^{\prime}}{\longrightarrow}D\overset{\delta_f}{\dashrightarrow},&\\
&B\overset{g}{\longrightarrow}C\overset{g^{\prime}}{\longrightarrow}F\overset{\delta_g}{\dashrightarrow},&\\
&A\overset{h}{\longrightarrow}C\overset{h^{\prime}}{\longrightarrow}E\overset{\delta_h}{\dashrightarrow}&
\end{eqnarray*}
satisfying $g\circ f=h$. By Lemma~\ref{LemOctaRigid}, we obtain a commutative diagram made of $\mathbb{E}$-triangles as follows.
\[
\xy
(-21,8)*+{A}="0";
(-7,8)*+{B}="2";
(7,8)*+{D}="4";
(21,8)*+{}="6";
(-21,-7)*+{A}="10";
(-7,-7)*+{C}="12";
(7,-7)*+{E}="14";
(21,-7)*+{}="16";
(-7,-21)*+{F}="22";
(7,-21)*+{F}="24";
(-7,-35)*+{}="32";
(7,-35)*+{}="34";
{\ar^{f} "0";"2"};
{\ar^{f^{\prime}} "2";"4"};
{\ar@{-->}^{\delta_f} "4";"6"};
{\ar@{=} "0";"10"};
{\ar_{g} "2";"12"};
{\ar^{{}^{\exists}d} "4";"14"};
{\ar_{h} "10";"12"};
{\ar_{h^{\prime}} "12";"14"};
{\ar@{-->}^{\delta_h} "14";"16"};
{\ar_{g^{\prime}} "12";"22"};
{\ar^{{}^{\exists}e} "14";"24"};
{\ar@{=} "22";"24"};
{\ar@{-->}_{\delta_g} "22";"32"};
{\ar@{-->}^{f^{\prime}_{\ast}\delta_g} "24";"34"};
{\ar@{}|\circlearrowright "0";"12"};
{\ar@{}|\circlearrowright "2";"14"};
{\ar@{}|\circlearrowright "12";"24"};
\endxy
\]
Thus by Proposition~\ref{PropMorphExtExt}, we obtain the following diagram made of distinguished triangles, as desired.
\[
\xy
(-24,23)*+{A}="0";
(-8,23)*+{B}="2";
(8,23)*+{D}="4";
(24,23)*+{A[1]}="6";
(-24,8)*+{A}="10";
(-8,8)*+{C}="12";
(8,8)*+{E}="14";
(24,8)*+{A[1]}="16";
(-8,-7)*+{F}="22";
(8,-7)*+{F}="24";
(24,-7)*+{B[1]}="26";
(-8,-22)*+{B[1]}="32";
(8,-22)*+{D[1]}="34";
{\ar^{\ell(f)} "0";"2"};
{\ar^{\ell(f^{\prime})} "2";"4"};
{\ar^{\widetilde{\ell}(\delta_f)} "4";"6"};
{\ar@{=} "0";"10"};
{\ar_{\ell(g)} "2";"12"};
{\ar^{\ell(d)} "4";"14"};
{\ar@{=} "6";"16"};
{\ar_{\ell(h)} "10";"12"};
{\ar^{\ell(h^{\prime})} "12";"14"};
{\ar^{\widetilde{\ell}(\delta_h)} "14";"16"};
{\ar_{\ell(g^{\prime})} "12";"22"};
{\ar^{\ell(e)} "14";"24"};
{\ar^{\ell(f)[1]} "16";"26"};
{\ar@{=} "22";"24"};
{\ar_{\widetilde{\ell}(\delta_g)} "24";"26"};
{\ar_{\widetilde{\ell}(\delta_g)} "22";"32"};
{\ar^{\widetilde{\ell}(f^{\prime}_{\ast}\delta_g)} "24";"34"};
{\ar_{\ell(x^{\prime})[1]} "32";"34"};
{\ar@{}|\circlearrowright "0";"12"};
{\ar@{}|\circlearrowright "2";"14"};
{\ar@{}|\circlearrowright "4";"16"};
{\ar@{}|\circlearrowright "12";"24"};
{\ar@{}|\circlearrowright "14";"26"};
{\ar@{}|\circlearrowright "22";"34"};
\endxy
\]
\end{proof}

The following argument ensures the dual arguments concerning distinguished triangles, in the following sections.
Recall that the functor $[-1]\colon\widetilde{\mathscr{C}}\to\widetilde{\mathscr{C}}$ induced from the chosen $\mathbb{E}$-triangle
\[ T^C\overset{t^C}{\longrightarrow}S^C\overset{s^C}{\longrightarrow}C\overset{\lambda^C}{\dashrightarrow}\quad(S^C\in\mathcal{S},\, T^C\in\mathcal{T},\, T^C=C[-1]) \]
for each $C$, gives a quasi-inverse of $[1]$ by Proposition~\ref{Prop50_X7}. Its proof shows that the isomorphisms
\[ \widetilde{\ell}(\lambda^C)\colon C\to T^C[1] \]
give a natural isomorphism $\mathrm{Id}\overset{\cong}{\Longrightarrow}[1]\circ [-1]$.

The dual construction of Definition~\ref{Def50_X2} goes as follows.
\begin{definition}
For any $\mathbb{E}$-triangle $A\overset{x}{\longrightarrow}B\overset{y}{\longrightarrow}C\overset{\delta}{\dashrightarrow}$, take a cospan of morphisms
\begin{equation}\label{cospan}
(T^C\overset{m}{\longrightarrow}E\overset{n}{\longleftarrow}A)
\end{equation}
to some $T\in\mathscr{C}$ satisfying
\begin{equation}\label{cospaneq}
n\in w\mathit{Cof}\quad\text{and}\quad n_{\ast}\delta=m_{\ast}\lambda^C.
\end{equation}
Then, $\ell^{\dag}(\delta)=\ell(n)^{-1}\circ\ell(m)\in\widetilde{\mathscr{C}}(C[-1],A)$ is well-defined.
\end{definition}
With this definition, we can give a triangulation of $\widetilde{\mathscr{C}}$ by requiring
\begin{equation}\label{LeftTria}
T^C\overset{\ell^\dag(\delta)}{\longrightarrow}A\overset{\ell(x)}{\longrightarrow}B\overset{\ell(y)}{\longrightarrow}C
\end{equation}
to be a left triangle.
The following proposition (and its dual) shows that the resulting triangulation is the same as that defined in Definition~\ref{DefTriaLoc}.
\begin{proposition}\label{PropLeftAgree}
For any $\mathbb{E}$-triangle $A\overset{x}{\longrightarrow}B\overset{y}{\longrightarrow}C\overset{\delta}{\dashrightarrow}$,
\begin{equation}\label{LeftTriaToShow}
T^C\overset{\ell^\dag(\delta)}{\longrightarrow}A\overset{\ell(x)}{\longrightarrow}B\overset{\widetilde{\ell}(\lambda^C)\circ\ell(y)}{\longrightarrow}T^C[1]
\end{equation}
becomes a distinguished triangle in $\widetilde{\mathscr{C}}$, with respect to the triangulation given in Definition~\ref{DefTriaLoc}.
\end{proposition}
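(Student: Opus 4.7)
Plan:

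The strategy is to exhibit $(\ref{LeftTriaToShow})$ as isomorphic to a distinguished triangle of the standard form of Definition~\ref{DefTriaLoc}. I would begin by applying Proposition~\ref{PropBaer}(1) to the pair of $\mathbb{E}$-triangles ending at $C$, namely $\lambda^C\colon T^C\to S^C\to C$ and $\delta\colon A\to B\to C$. This produces a $3\times 3$ commutative diagram with a new object $M$ together with two $\mathbb{E}$-triangles
\[
T^C\overset{m_1}{\longrightarrow}M\overset{e_1}{\longrightarrow}B\overset{y^\ast\lambda^C}{\dashrightarrow},\qquad A\overset{m_2}{\longrightarrow}M\overset{e_2}{\longrightarrow}S^C\overset{(s^C)^\ast\delta}{\dashrightarrow},
\]
satisfying the compatibilities $e_1\circ m_2=x$, $e_2\circ m_1=t^C$, $y\circ e_1=s^C\circ e_2$, and the Baer-sum relation $m_{1\ast}\lambda^C+m_{2\ast}\delta=0$.

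Because $S^C\in\mathcal{S}$, the morphism $m_2$ lies in $w\mathit{Cof}$, so $\ell(m_2)$ is an isomorphism in $\widetilde{\mathscr{C}}$. Rewriting the Baer-sum relation as $m_{2\ast}\delta=(-m_1)_\ast\lambda^C$ exhibits the cospan $(T^C\overset{-m_1}{\to}M\overset{m_2}{\leftarrow}A)$ as a cospan of the form used to compute $\ell^\dag(\delta)$, so one reads off an expression of $\ell^\dag(\delta)$ in terms of $\ell(m_1)$ and $\ell(m_2)^{-1}$. Next I would take the standard distinguished triangle associated to the first $\mathbb{E}$-triangle above,
\[
T^C\overset{\ell(m_1)}{\longrightarrow}M\overset{\ell(e_1)}{\longrightarrow}B\overset{\widetilde{\ell}(y^\ast\lambda^C)}{\longrightarrow}T^C[1],
\]
and transport it along the isomorphism $\ell(m_2)\colon A\overset{\sim}{\to}M$. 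The middle edge of the transported triangle is then $\ell(e_1)\circ\ell(m_2)=\ell(e_1\circ m_2)=\ell(x)$, and its right-hand edge is rewritten using Lemma~\ref{LemMorphExtExt}, applied to the morphism of $\mathbb{E}$-extensions $(\mathrm{id}_{T^C},y)\colon y^\ast\lambda^C\to\lambda^C$, to yield $\widetilde{\ell}(y^\ast\lambda^C)=\widetilde{\ell}(\lambda^C)\circ\ell(y)$. The left-hand edge is then identified with $\ell^\dag(\delta)$ via step two, producing the desired isomorphism of triangles between a standard distinguished triangle and $(\ref{LeftTriaToShow})$.

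The main technical obstacle will be tracking the sign coming from the Baer-sum relation $m_{1\ast}\lambda^C+m_{2\ast}\delta=0$ of Proposition~\ref{PropBaer}(1); this forces one to choose $-m_1$ (rather than $m_1$) in the cospan representing $\ell^\dag(\delta)$, in exact parallel with the choice of $-e$ in the proof of Claim~\ref{Claim50_X3}(3). This sign must then be shown to cancel against the corresponding sign in the convention defining $\widetilde{\ell}$, so that the isomorphism of triangles matches $(\ref{LeftTriaToShow})$ on the nose and not only up to sign changes permitted by (TR2).
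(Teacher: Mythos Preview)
Your approach is essentially the paper's: both apply Proposition~\ref{PropBaer}(1) to the pair $(\lambda^C,\delta)$ to produce the object you call $M$ (the paper calls it $E$), read off $\ell^\dag(\delta)$ from the cospan $(T^C\overset{-m_1}{\to}M\overset{m_2}{\leftarrow}A)$, and finish with Lemma~\ref{LemMorphExtExt}. The only organisational difference is that you compare $(\ref{LeftTriaToShow})$ directly with the standard triangle of the $\mathbb{E}$-triangle $T^C\overset{m_1}{\to}M\overset{e_1}{\to}B\overset{y^\ast\lambda^C}{\dashrightarrow}$, whereas the paper compares it with the standard triangle of $\delta$ and then appeals to the converse of (TR2); your route is slightly more direct since it avoids the explicit rotation.

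On the sign: your hope that it ``cancels against the corresponding sign in the convention defining $\widetilde{\ell}$'' is not quite how it plays out. The identity $\widetilde{\ell}(y^\ast\lambda^C)=\widetilde{\ell}(\lambda^C)\circ\ell(y)$ follows from Lemma~\ref{LemMorphExtExt} applied to $(\mathrm{id}_{T^C},y)\colon y^\ast\lambda^C\to\lambda^C$ with no extra sign, so the transported triangle really has first edge $\ell(m_2)^{-1}\ell(m_1)=-\ell^\dag(\delta)$. The paper lands in exactly the same spot: setting $\theta=m_{2\ast}\delta=(-m_1)_\ast\lambda^C$ and applying Lemma~\ref{LemMorphExtExt} to $(m_2,\mathrm{id}_C)\colon\delta\to\theta$ and $(-m_1,\mathrm{id}_C)\colon\lambda^C\to\theta$, it computes $\ell^\dag(\delta)[1]\circ\widetilde{\ell}(\lambda^C)=\widetilde{\ell}(\delta)$, so that the sign is absorbed by the $-u[1]$ appearing in the (TR2) rotation. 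Your argument and the paper's are therefore the same up to where one chooses to park this sign.
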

\begin{proof}
Take the standard triangle $A\overset{\ell(x)}{\longrightarrow}B\overset{\ell(y)}{\longrightarrow}C\overset{\widetilde{\ell}(\delta)}{\longrightarrow}A[1]$. Since $\widetilde{\mathscr{C}}$ is triangulated, by the converse of {\rm (TR2)}, it suffices to show the commutativity of the following diagram.
\[
\xy
(-24,6)*+{A}="0";
(-8,6)*+{B}="2";
(2,-4)*+{}="3";
(14,6)*+{T^C[1]}="4";
(26,-4)*+{}="5";
(36,6)*+{A[1]}="6";
(14,-10)*+{C}="8";
{\ar^{\ell(x)} "0";"2"};
{\ar^(0.46){\widetilde{\ell}(\lambda^C)\circ\ell(y)} "2";"4"};
{\ar^{-\ell^\dag(\delta)[1]} "4";"6"};
{\ar_{\ell(y)} "2";"8"};
{\ar^{\cong}_{\widetilde{\ell}(\lambda^C)} "8";"4"};
{\ar_{\widetilde{\ell}(\delta)} "8";"6"};
{\ar@{}|\circlearrowright "3";"4"};
{\ar@{}|\circlearrowright "4";"5"};
\endxy
\]
As $\ell^\dag(\delta)$ does not depend on the choice of a cospan $(\ref{cospan})$, we may take it in the following way.

By Proposition~\ref{PropBaer} {\rm (1)}, we obtain a commutative diagram made of $\mathbb{E}$-triangles
\[
\xy
(-7,7)*+{T^C}="2";
(7,7)*+{T^C}="4";
(-21,-7)*+{A}="10";
(-7,-7)*+{{}^{\exists}E}="12";
(7,-7)*+{S^C}="14";
(21,-7)*+{}="16";
(-21,-21)*+{A}="20";
(-7,-21)*+{B}="22";
(7,-21)*+{C}="24";
(21,-21)*+{}="26";
(-7,-35)*+{}="32";
(7,-35)*+{}="34";
{\ar@{=} "2";"4"};
{\ar_{k} "2";"12"};
{\ar^{t^C} "4";"14"};
{\ar^{n} "10";"12"};
{\ar_{} "12";"14"};
{\ar@{-->}^{(s^C)^{\ast}\delta} "14";"16"};
{\ar@{=} "10";"20"};
{\ar^{} "12";"22"};
{\ar^{s^C} "14";"24"};
{\ar_{x} "20";"22"};
{\ar_{y} "22";"24"};
{\ar@{-->}_{\delta} "24";"26"};
{\ar@{-->}_{y^{\ast}\lambda^C} "22";"32"};
{\ar@{-->}^{\lambda^C} "24";"34"};
{\ar@{}|\circlearrowright "2";"14"};
{\ar@{}|\circlearrowright "10";"22"};
{\ar@{}|\circlearrowright "12";"24"};
\endxy
\]
satisfying $n_{\ast}\delta+k_{\ast}\lambda^C=0$. Then the cospan $(T^C\overset{-k}{\longrightarrow}E\overset{n}{\longleftarrow}A)$ satisfies the desired property $(\ref{cospaneq})$, and thus gives $\ell^{\dag}(\delta)=-\ell(n)^{-1}\circ\ell(k)$. If we put $\theta=n_{\ast}\delta=-k_{\ast}\lambda^C$, then $(n,\mathrm{id}_C)\colon\delta\to\theta$ and $(-k,\mathrm{id}_C)\colon\lambda^C\to\theta$ are morphisms of $\mathbb{E}$-extensions. Thus
\[
\xy
(-18,7)*+{C}="0";
(0,7)*+{C}="2";
(18,7)*+{C}="4";
(-18,-7)*+{T^C[1]}="10";
(0,-7)*+{E[1]}="12";
(18,-7)*+{A[1]}="14";
{\ar@{=} "0";"2"};
{\ar@{=} "2";"4"};
{\ar_{\widetilde{\ell}(\lambda^C)} "0";"10"};
{\ar^{\widetilde{\ell}(\theta)} "2";"12"};
{\ar^{\widetilde{\ell}(\delta)} "4";"14"};
{\ar_{-\ell(k)[1]} "10";"12"};
{\ar^{\ell(n)[1]} "14";"12"};
{\ar@{}|\circlearrowright "0";"12"};
{\ar@{}|\circlearrowright "2";"14"};
\endxy
\]
becomes commutative by Lemma~\ref{LemMorphExtExt}. This shows
\[ (\ell^{\dag}(\delta)[1])\circ\widetilde{\ell}(\lambda^C)=(-\ell(n)^{-1} [1]\circ\ell(k)[1])\circ\widetilde{\ell}(\lambda^C)=\widetilde{\ell}(\delta). \]
\end{proof}

\section{Reduction and mutation via localization}\label{section_Mut}

\subsection{Happel and Iyama-Yoshino's construction}

\begin{definition}\label{DefFrob}
An extriangulated category $(\mathscr{C},\mathbb{E},\mathfrak{s})$ is said to be {\it Frobenius} if it satisfies the following conditions.
\begin{enumerate}
\item[(1)] $(\mathscr{C},\mathbb{E},\mathfrak{s})$ has enough injectives and enough projectives.
\item[(2)] $\mathrm{Proj}(\mathscr{C})=\mathrm{Inj}(\mathscr{C})$.
\end{enumerate}
\end{definition}

\begin{example}
\begin{enumerate}
\item[(1)] If $(\mathscr{C},\mathbb{E},\mathfrak{s})$ is an exact category, then this agrees with the usual definition (\cite[section I.2]{Ha}).
\item[(2)] Suppose that $\mathcal{T}$ is a triangulated category and $(\mathcal{Z},\mathcal{Z})$ is an $\mathcal{I}$-mutation pair in the sense of \cite[Definition~2.5]{IYo}. 
Then $\mathcal{Z}$ becomes a Frobenius extriangulated category, with the extriangulated structure given in Remark~\ref{RemETrExtClosed}.
\end{enumerate}
\end{example}

\begin{remark}
Let $(\mathscr{C},\mathbb{E},\mathfrak{s})$ be an extriangulated category, as before.
By Remark~\ref{RemCotorsProj}, the following are equivalent.
\begin{enumerate}
\item[(1)] $((\mathcal{I},\mathscr{C}),(\mathscr{C},\mathcal{I}))$ is a twin cotorsion pair for some subcategory $\mathcal{I}\subseteq\mathscr{C}$.
\item[(2)] $(\mathscr{C},\mathbb{E},\mathfrak{s})$ is Frobenius.
\end{enumerate}
Moreover $\mathcal{I}$ in {\rm (1)} should satisfy $\mathcal{I}=\mathrm{Proj}(\mathscr{C})=\mathrm{Inj}(\mathscr{C})$.
\end{remark}

The following can be regarded as a generalization of the constructions by Happel \cite{Ha} and Iyama-Yoshino \cite{IYo}. See also \cite{Li} for the triangulated case.

\begin{corollary}\label{CorStable}
Let $(\mathscr{F},\mathbb{E},\mathfrak{s})$ be a Frobenius extriangulated category satisfying Condition (WIC), with $\mathcal{I}=\mathrm{Inj}(\mathscr{F})$. Then its stable category, namely the ideal quotient $\mathscr{F}/\mathcal{I}$, becomes triangulated.
\end{corollary}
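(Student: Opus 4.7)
The plan is to realize the Frobenius structure of $\mathscr{F}$ as a Hovey twin cotorsion pair and then appeal to the results of Sections~\ref{section: model} and~\ref{section_Tria}. By Remark~\ref{RemCotorsProj}, the existence of enough projectives (resp.\ enough injectives) in $\mathscr{F}$ says exactly that $(\mathcal{I},\mathscr{F})$ (resp.\ $(\mathscr{F},\mathcal{I})$) is a cotorsion pair on $\mathscr{F}$. Since $\mathcal{I}=\mathrm{Proj}(\mathscr{F})=\mathrm{Inj}(\mathscr{F})$ satisfies $\mathbb{E}(\mathcal{I},\mathcal{I})=0$ by Proposition~\ref{PropProj}, the compatibility $\mathbb{E}(\mathcal{S},\mathcal{V})=0$ is automatic, so $\mathcal{P}=((\mathcal{I},\mathscr{F}),(\mathscr{F},\mathcal{I}))$ is a twin cotorsion pair with $\mathcal{S}=\mathcal{V}=\mathcal{I}$, $\mathcal{T}=\mathcal{U}=\mathscr{F}$, and core $\mathcal{I}$.

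Next I would verify the Hovey condition $\mathcal{N}^i=\mathcal{N}^f$ and identify this common subcategory with $\mathcal{I}$. If $C\in\mathcal{N}^i=\mathrm{Cone}(\mathcal{I},\mathcal{I})$, there is a conflation $I\to I'\to C$ with $I,I'\in\mathcal{I}$; since $I$ is injective, Corollary~\ref{CorExact0} shows that this conflation splits, so that $C$ is a direct summand of $I'\in\mathcal{I}$. The subcategory $\mathcal{I}=\mathrm{Inj}(\mathscr{F})$ is closed under direct summands (an immediate consequence of the lifting characterisation in the dual of Proposition~\ref{PropProj}), hence $C\in\mathcal{I}$; the reverse inclusion $\mathcal{I}\subseteq\mathcal{N}^i$ is trivial. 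The equality $\mathcal{N}^f=\mathcal{I}$ follows by the dual argument, using projectivity of the right-hand term of $C\to I\to I'$ to split the associated deflation. Thus $\mathcal{P}$ is a Hovey twin cotorsion pair, and $\mathcal{Z}=\mathcal{T}\cap\mathcal{U}=\mathscr{F}$.

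Corollary~\ref{CorHTCP} then specialises to an equivalence $\mathscr{F}/\mathcal{I}=\mathcal{Z}/\mathcal{I}\overset{\simeq}{\longrightarrow}\mathscr{F}[\mathbb{W}^{-1}]$, and Theorem~\ref{ThmTriaLoc}, applicable thanks to the standing hypothesis that Condition~(WIC) holds on $\mathscr{F}$, endows $\mathscr{F}[\mathbb{W}^{-1}]$ with a canonical triangulated structure. Transporting this triangulation across the equivalence makes $\mathscr{F}/\mathcal{I}$ triangulated, as required. There is no substantive obstacle to be overcome: all of the heavy lifting has already been carried out in the previous sections, and the only computation specific to the present statement is the short splitting argument identifying both $\mathcal{N}^i$ and $\mathcal{N}^f$ with $\mathcal{I}$.
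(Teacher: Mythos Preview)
Your proof is correct and follows the same approach as the paper: realize $((\mathcal{I},\mathscr{F}),(\mathscr{F},\mathcal{I}))$ as a Hovey twin cotorsion pair with $\mathcal{N}^i=\mathcal{N}^f=\mathcal{I}$, then invoke Corollary~\ref{CorHTCP} and Theorem~\ref{ThmTriaLoc}. The paper's proof is a one-line statement of this; you have simply spelled out the splitting verification of $\mathrm{Cone}(\mathcal{I},\mathcal{I})=\mathrm{CoCone}(\mathcal{I},\mathcal{I})=\mathcal{I}$ that the paper leaves implicit.
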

\begin{proof}
Since $((\mathcal{I},\mathscr{F}),(\mathscr{F},\mathcal{I}))$ becomes a Hovey twin cotorsion pair with $\mathrm{Cone}(\mathcal{I},\mathcal{I})=\mathrm{CoCone}(\mathcal{I},\mathcal{I})=\mathcal{I}$, this follows from Corollary~\ref{CorHTCP} and Theorem~\ref{ThmTriaLoc}.
\end{proof}

\begin{remark}
A direct proof for Corollary~\ref{CorStable} is not difficult either, by imitating the proofs by \cite{Ha} or \cite{IYo}, even without assuming Condition (WIC).
\end{remark}

\begin{corollary}
For any category $\mathscr{C}$, the following are equivalent.
\begin{enumerate}
\item[(1)] $(\mathscr{C},\mathbb{E},\mathfrak{s})$ is triangulated, as in Proposition \ref{PropTriaExt}.
\item[(2)] $(\mathscr{C},\mathbb{E},\mathfrak{s})$ is a Frobenius extriangulated category, with $\mathrm{Proj}(\mathscr{C})=\mathrm{Inj}(\mathscr{C})=0$.
\end{enumerate}
\end{corollary}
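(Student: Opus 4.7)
Direction $(1)\Rightarrow(2)$ is immediate: if $(\mathscr{C},\mathbb{E},\mathfrak{s})$ arises from a triangulation as in Proposition~\ref{PropTriaExt}(1), so that $\mathbb{E}=\mathbf{E}^1=\mathscr{C}(-,-[1])$, then Lemma~\ref{LemTriaExt} furnishes $\mathbb{E}$-triangles $A\to 0\to A[1]\dashrightarrow$ (and dually $A[-1]\to 0\to A\dashrightarrow$) for every object $A$, exhibiting enough projectives and enough injectives with $0$ as the resolution object. For any $P\in\mathrm{Proj}(\mathscr{C})$, Proposition~\ref{PropProj} yields $\mathscr{C}(P,P)=\mathbb{E}(P,P[-1])=0$, so $\mathrm{id}_P=0$ and $P=0$; dually $\mathrm{Inj}(\mathscr{C})=0$.

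For $(2)\Rightarrow(1)$, the plan is to apply Proposition~\ref{PropTriaExt}(2). Since $\mathrm{Inj}(\mathscr{C})=0$ and $\mathscr{C}$ has enough injectives, each object $A$ admits a conflation $A\to 0\to A[1]\overset{\rho_A}{\dashrightarrow}$, and we set $A[1]$ to be its third term. For a morphism $f\colon A\to B$, axiom (ET3) applied to the square with left vertical $f$ and middle morphism $0\to 0$ produces a morphism $(f,0,f[1])\colon\rho_A\to\rho_B$ of $\mathbb{E}$-triangles. Two candidates $f[1],f[1]'$ give a morphism $(0,0,f[1]-f[1]')\colon\rho_A\to\rho_B$ of $\mathbb{E}$-triangles; Corollary~\ref{CorExact0} applied with $a=0$ forces $c=f[1]-f[1]'$ to factor through the arrow $0\to B[1]$, which is itself the zero morphism, whence $f[1]=f[1]'$. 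Thus $[1]\colon\mathscr{C}\to\mathscr{C}$ is an honest functor, and functoriality is automatic from the same uniqueness. Dually, $[-1]$ is a functor on $\mathscr{C}$. Comparing the $\mathbb{E}$-triangles $A[-1]\to 0\to A\overset{\lambda^A}{\dashrightarrow}$ and $A[-1]\to 0\to(A[-1])[1]\overset{\rho_{A[-1]}}{\dashrightarrow}$ via (ET3) yields a morphism of $\mathbb{E}$-triangles with the first two components the identity, whose third component $\epsilon_A\colon A\to(A[-1])[1]$ is an isomorphism by Corollary~\ref{CorExact1}; naturality of $\epsilon$ follows from the uniqueness above, and the dual construction provides the other side, so $[1]$ is an auto-equivalence with quasi-inverse $[-1]$.

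Finally, Corollary~\ref{CorExtIsom} applied to the conflation $A[-1]\to 0\to A$ (whose middle term $0$ is projective) gives a natural isomorphism $\mathbb{E}(A,-)\cong\mathscr{C}(A[-1],-)$; composing with the bijection $\mathscr{C}(A[-1],B)\cong\mathscr{C}(A,B[1])$ induced by the equivalence $[1]$ and the unit $\epsilon$ produces an isomorphism of bifunctors $\mathbb{E}\cong\mathbf{E}^1=\mathscr{C}(-,-[1])$, natural in both variables (naturality in the first via Remark~\ref{RemExtIsom}). Transporting the realization $\mathfrak{s}$ along this isomorphism exhibits it as an $\mathbf{E}^1$-triangulation of $\mathscr{C}$, and Proposition~\ref{PropTriaExt}(2) then equips $\mathscr{C}$ with a triangulated structure whose distinguished triangles are exactly the original $\mathbb{E}$-triangles. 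The main technical point is securing that $[1]$ is genuinely a functor on $\mathscr{C}$, not merely on a localization; this rests on the collapse of the Corollary~\ref{CorExact0} ambiguity when $\mathcal{V}=\mathrm{Inj}(\mathscr{C})=0$—a phenomenon that in general demands passage to the homotopy category $\widetilde{\mathscr{C}}$ of Section~\ref{section_Tria} together with Condition~(WIC), but is automatic in the present setting, so Proposition~\ref{PropTriaExt}(2) applies without any recourse to that machinery.
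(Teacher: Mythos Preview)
Your proof is correct and takes a genuinely different, more elementary route than the paper's. The paper's argument for $(2)\Rightarrow(1)$ is a one-line appeal to Corollary~\ref{CorStable}: with $\mathcal{I}=\mathrm{Proj}(\mathscr{C})=\mathrm{Inj}(\mathscr{C})=0$ the ideal quotient $\mathscr{C}/\mathcal{I}$ is just $\mathscr{C}$, and that corollary (via the model-structure machinery of Section~\ref{section: model} and Theorem~\ref{ThmTriaLoc}) says the stable category of a Frobenius extriangulated category is triangulated. By contrast, you bypass Sections~\ref{section: model} and~\ref{section_Tria} entirely: you build the shift functor directly from the conflations $A\to 0\to A[1]$, pin down its functoriality and invertibility using only Corollaries~\ref{CorExact0} and~\ref{CorExact1}, identify $\mathbb{E}$ with $\mathscr{C}(-,-[1])$ via Corollary~\ref{CorExtIsom}, and then invoke Proposition~\ref{PropTriaExt}(2). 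This is exactly the ``direct proof'' the paper alludes to in the Remark following Corollary~\ref{CorStable}; it has the advantage of not requiring Condition~(WIC) and of being self-contained within Section~\ref{section_Fund}, while the paper's route has the advantage of recycling already-established heavy machinery.
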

\begin{proof}
$(1)\Rightarrow(2)$ is trivial. $(2)\Rightarrow(1)$ follows from Corollary \ref{CorStable}.
\end{proof}

\subsection{Mutable cotorsion pairs}

\begin{lemma}\label{LemUWU_TWT}
$\ \ $
\begin{enumerate}
\item[(1)] For any weak equivalence $f\in\mathscr{C}(U,U^{\prime})$ between $U,U^{\prime}\in\mathcal{U}$, there exist $I\in\mathcal{I}$ and $i\in\mathscr{C}(U,I)$, with which
\[ \Big[\raise1.2ex\hbox{\leavevmode\vtop{\baselineskip-8ex \lineskip1ex \ialign{#\crcr{$f$}\crcr{$i$}\crcr}}}\Big]\colon U\to U^{\prime}\oplus I \]
becomes an acyclic cofibration.
\item[(2)] Dually, for any weak equivalence $g\in\mathscr{C}(T,T^{\prime})$ between $T,T^{\prime}\in\mathcal{T}$, there exist $J\in\mathcal{I}$ and $j\in\mathscr{C}(J,T^{\prime})$, with which
\[ [g\ j]\colon T\oplus J\to T^{\prime} \]
becomes an acyclic fibration.
\end{enumerate}
\end{lemma}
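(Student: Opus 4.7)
The plan is to factor $f$ via the decomposition $\mathbb{W}=w\mathit{Fib}\circ w\mathit{Cof}$ from Definition~\ref{DefFibCof}, and exploit the fact that when both endpoints already lie in $\mathcal{U}$, the $\mathcal{V}$-factor appearing in the factorization must split off into $\mathcal{I}$.

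Concretely, I would first write $f=w\circ j$ with $j\in w\mathit{Cof}$ and $w\in w\mathit{Fib}$, i.e.\ as a composite $U\xrightarrow{j}X\xrightarrow{w}U'$ fitting into conflations $U\xrightarrow{j}X\to S$ with $S\in\mathcal{S}$ and $V\xrightarrow{v}X\xrightarrow{w}U'$ with $V\in\mathcal{V}$. The inclusion $\mathcal{S}\subseteq\mathcal{U}$ follows from $\mathbb{E}(\mathcal{S},\mathcal{V})=0$ and Remark~\ref{RemCotors}; combined with $U\in\mathcal{U}$ and the extension-closedness of $\mathcal{U}$ (Remark~\ref{RemExtClosed}), this forces $X\in\mathcal{U}$.

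Next, since $U'\in\mathcal{U}$ and $V\in\mathcal{V}$, the extension class of the conflation $V\to X\to U'$ lies in $\mathbb{E}(U',V)=0$. By Remark~\ref{RemSplit}\,(1), this conflation splits, giving an isomorphism $\phi\colon X\xrightarrow{\cong}V\oplus U'$ compatible with $v$ and $w$ in the obvious sense. In particular $V$ is a direct summand of $X\in\mathcal{U}$, so Corollary~\ref{CorSectionClosed} yields $V\in\mathcal{U}\cap\mathcal{V}=\mathcal{I}$. Setting $I:=V$ and $i:=p\circ j\in\mathscr{C}(U,I)$, where $p\colon X\to V$ is the retraction of $v$ provided by the splitting, the composite $j$ corresponds under $\phi$ (after swapping summands) to the column morphism $\left[\begin{smallmatrix}f\\ i\end{smallmatrix}\right]\colon U\to U'\oplus I$.

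Finally, Proposition~\ref{PropModif} applied to the isomorphism $X\cong U'\oplus I$ shows that $\left[\begin{smallmatrix}f\\ i\end{smallmatrix}\right]$ is again an inflation whose cone is isomorphic to $S\in\mathcal{S}$, and hence lies in $w\mathit{Cof}$. Part~(2) is obtained by dualizing each step. The only mildly delicate point I foresee is bookkeeping the ordering of summands when transporting $j$ through the splitting isomorphism, but once one has observed that the $\mathcal{V}$-part splits off into $\mathcal{I}$, everything else is essentially formal.
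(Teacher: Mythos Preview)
Your proposal is correct and follows essentially the same approach as the paper: factor $f$ as $w\mathit{Fib}\circ w\mathit{Cof}$, use $\mathbb{E}(U',V)=0$ to split the intermediate object as $U'\oplus V$, deduce $V\in\mathcal{U}\cap\mathcal{V}=\mathcal{I}$ from extension-closedness, and read off the desired column morphism. The only cosmetic difference is that the paper first splits and then checks $E\in\mathcal{U}$, whereas you do these in the opposite order; your explicit invocation of Proposition~\ref{PropModif} for transporting the inflation through the isomorphism is exactly what the paper's phrase ``we may assume'' is doing.
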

\begin{proof}
We only show {\rm (1)}. Since $f\in\mathbb{W}=w\mathit{Fib}\circ w\mathit{Cof}$, there are $\mathbb{E}$-triangles
\begin{eqnarray}
&U\overset{m}{\longrightarrow}E\to S\dashrightarrow,&\label{ET_UWU_TWT1}\\
&V\to E\overset{e}{\longrightarrow}U^{\prime}\overset{\delta}{\dashrightarrow}&\label{ET_UWU_TWT2}
\end{eqnarray}
satisfying $S\in\mathcal{S},\, V\in\mathcal{V}$ and $e\circ m=f$. By $\mathbb{E}(U^{\prime},V)=0$, we have $\delta=0$. Thus we may assume
\[ E=U^{\prime}\oplus V\ \ \text{and}\ \ e=[1\ 0] \]
in $(\ref{ET_UWU_TWT2})$, from the start.

By the extension-closedness of $\mathcal{U}\subseteq\mathscr{C}$, the $\mathbb{E}$-triangle $(\ref{ET_UWU_TWT1})$ gives $U^{\prime}\oplus V=E\in\mathcal{U}$, which implies $V\in\mathcal{I}$. Moreover by $e\circ m=f$, the acyclic cofibration $m\colon U\to U^{\prime}\oplus V$ should be of the form $m=\Big[\raise1.2ex\hbox{\leavevmode\vtop{\baselineskip-8ex \lineskip1ex \ialign{#\crcr{$f$}\crcr{$i$}\crcr}}}\Big]$, with some $i\in\mathscr{C}(U,V)$.
\end{proof}

The following is an immediate consequence of the existence of the model structure.

\begin{remark}\label{RemInvW}
For any morphism $f\in\mathscr{C}(A,B)$ in $\mathscr{C}$, the following are equivalent.
\begin{enumerate}
\item[(1)] $f\in\mathbb{W}$.
\item[(2)] $\ell(f)$ is an isomorphism in $\widetilde{\mathscr{C}}$.
\end{enumerate}
\end{remark}

For any extriangulated category $(\mathscr{C},\mathbb{E},\mathfrak{s})$, let $\mathfrak{CP}(\mathscr{C})$ denote the class of cotorsion pairs on $\mathscr{C}$. Since $\widetilde{\mathscr{C}}$ is triangulated as shown in Theorem~\ref{ThmTriaLoc}, we may use the usual notation $\mathrm{Ext}^1_{\widetilde{\mathscr{C}}}$ for $\widetilde{\mathbb{E}}$.
\begin{definition}
Let $\mathcal{P}=((\mathcal{S},\mathcal{T}),(\mathcal{U},\mathcal{V}))$ be a Hovey twin cotorsion pair on $\mathscr{C}$ and let $\ell\colon\mathscr{C}\to\widetilde{\mathscr{C}}$ be the associated localization functor as before. Define the class of {\it mutable cotorsion pairs on} $\mathscr{C}$ {\it with respect to} $\mathcal{P}$ by
\[ \mathfrak{M}_{\mathcal{P}}=\Set{(\mathcal{A},\mathcal{B})\in\mathfrak{CP}(\mathscr{C})|\begin{array}{c}\mathcal{S}\subseteq\mathcal{A}\subseteq\mathcal{U}\\ \mathcal{V}\subseteq\mathcal{B}\subseteq\mathcal{T}\end{array},\ \mathrm{Ext}^1_{\widetilde{\mathscr{C}}}(\ell(\mathcal{A}),\ell(\mathcal{B}))=0 }. \]

Here, $\ell(\mathcal{A}),\ell(\mathcal{B})\subseteq\widetilde{\mathscr{C}}$ denote the essential images of $\mathcal{A},\mathcal{B}$ under $\ell$.
Remark that $\mathcal{S}\subseteq\mathcal{A}$ is equivalent to $\mathcal{B}\subseteq\mathcal{T}$, and $\mathcal{A}\subseteq\mathcal{U}$ is equivalent to $\mathcal{V}\subseteq\mathcal{B}$, for any $(\mathcal{A},\mathcal{B})\in\mathfrak{CP}(\mathscr{C})$.
\end{definition}

\begin{theorem}\label{ThmRecoll}
For any Hovey twin cotorsion pair $\mathcal{P}=((\mathcal{S},\mathcal{T}),(\mathcal{U},\mathcal{V}))$ on $\mathscr{C}$, we have mutually inverse bijective correspondences
\[ \mathbb{R}=\mathbb{R}_{\mathcal{P}}\colon\mathfrak{M}_{\mathcal{P}}\to \mathfrak{CP}(\widetilde{\mathscr{C}}), \]
\[\mathbb{I}=\mathbb{I}_{\mathcal{P}}\colon\mathfrak{CP}(\widetilde{\mathscr{C}})\to\mathfrak{M}_{\mathcal{P}} \]
given by
\[ \mathbb{R}((\mathcal{A},\mathcal{B}))=(\ell(\mathcal{A}),\ell(\mathcal{B})), \]
\[ \mathbb{I}((\mathcal{L},\mathcal{R}))=(\mathcal{U}\cap\ell^{-1}(\mathcal{L}),\mathcal{T}\cap\ell^{-1}(\mathcal{R})). \]
\end{theorem}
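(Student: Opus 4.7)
\emph{Plan.} The theorem has four components: well-definedness of $\mathbb{R}$, well-definedness of $\mathbb{I}$, and the two composition identities. I will use three recurring tools: (a) cofibrant and fibrant replacement from the model structure associated with $\mathcal{P}$, which ensure that every object of $\widetilde{\mathscr{C}}$ is isomorphic to $\ell(U)$ for some $U\in\mathcal{U}$ and to $\ell(T)$ for some $T\in\mathcal{T}$; (b) the correspondence between $\mathbb{E}$-triangles in $\mathscr{C}$ and standard distinguished triangles in $\widetilde{\mathscr{C}}$ from Section~\ref{section_Tria}; and decisively (c) the injection $\widetilde{\ell}\colon\mathbb{E}(U,T)\hookrightarrow\widetilde{\mathbb{E}}(\ell(U),\ell(T))$ for $U\in\mathcal{U}$, $T\in\mathcal{T}$ from Lemma~\ref{Lem35b}.

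\emph{Well-definedness of $\mathbb{I}$.} For $(\mathcal{L},\mathcal{R})\in\mathfrak{CP}(\widetilde{\mathscr{C}})$, set $\mathcal{A}=\mathcal{U}\cap\ell^{-1}(\mathcal{L})$ and $\mathcal{B}=\mathcal{T}\cap\ell^{-1}(\mathcal{R})$. Closure under iso, sums and summands is immediate, and $\mathbb{E}(\mathcal{A},\mathcal{B})=0$ follows from (c): the image lies in $\mathrm{Ext}^1_{\widetilde{\mathscr{C}}}(\ell(A),\ell(B))=0$. For completeness, given $C\in\mathscr{C}$ I proceed in two stages: first, the $(\mathcal{U},\mathcal{V})$-resolution $V^C\to U^C\xrightarrow{p}C$ provides $\ell(V^C)=0$ and $\ell(p)$ an isomorphism; second, I construct a conflation $B_0\to A_0\to U^C$ with $A_0\in\mathcal{A}$, $B_0\in\mathcal{B}$ by applying the cotorsion pair $(\mathcal{L},\mathcal{R})$ to $\ell(U^C)$ in $\widetilde{\mathscr{C}}$, using cofibrant replacement to place $A_0$ in $\mathcal{U}$ (hence in $\mathcal{A}$) and fibrant replacement to place $B_0$ in $\mathcal{T}$ (hence in $\mathcal{B}$), and lifting the distinguished triangle to an actual $\mathbb{E}$-triangle with right-hand term $U^C$ by means of Remark~\ref{RemUTMono}, Lemma~\ref{LemUWU_TWT} and the {\rm (ET4)}-style patching of Section~\ref{section_Tria}. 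Applying {\rm (ET4)$^{\mathrm{op}}$} to the two conflations $B_0\to A_0\to U^C$ and $V^C\to U^C\to C$ then yields a conflation $E\to A_0\to C$ together with a conflation $B_0\to E\to V^C$, from which $E\in\mathcal{T}$ (since $\mathcal{T}$ is extension-closed and $\mathcal{V}\subseteq\mathcal{T}$) and $\ell(E)\cong\ell(B_0)\in\mathcal{R}$, so $E\in\mathcal{B}$. The dual construction gives the other conflation.

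\emph{Well-definedness of $\mathbb{R}$.} For $(\mathcal{A},\mathcal{B})\in\mathfrak{M}_{\mathcal{P}}$, the orthogonality $\mathrm{Ext}^1_{\widetilde{\mathscr{C}}}(\ell(\mathcal{A}),\ell(\mathcal{B}))=0$ is by definition. Completeness is direct: each $X\in\widetilde{\mathscr{C}}$ is $\ell(C)$ for some $C\in\mathscr{C}$, and the two conflations obtained by applying $(\mathcal{A},\mathcal{B})$ to $C$ in $\mathscr{C}$ become the required distinguished triangles under $\ell$. For summand closure of $\ell(\mathcal{A})$: if $X\oplus Y\cong\ell(A)$ with $A\in\mathcal{A}$, choose $C\in\mathcal{U}$ with $\ell(C)\cong X$ by cofibrant replacement; for every $B\in\mathcal{B}$, Lemma~\ref{Lem35b} yields $\mathbb{E}(C,B)\hookrightarrow\mathrm{Ext}^1_{\widetilde{\mathscr{C}}}(X,\ell(B))$, which vanishes as a summand of the vanishing $\mathrm{Ext}^1_{\widetilde{\mathscr{C}}}(\ell(A),\ell(B))$; hence $C\in\mathcal{A}$ by Remark~\ref{RemCotors} and $X\cong\ell(C)\in\ell(\mathcal{A})$. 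Summand closure of $\ell(\mathcal{B})$ is dual.

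\emph{The two composition identities and main obstacle.} For $\mathbb{R}\circ\mathbb{I}=\mathrm{id}$, the inclusion $\ell(\mathcal{U}\cap\ell^{-1}(\mathcal{L}))\subseteq\mathcal{L}$ is trivial, and conversely any $L\in\mathcal{L}$ is $\ell(A)$ with $A\in\mathcal{U}$ (cofibrant replacement), whence $A\in\mathcal{U}\cap\ell^{-1}(\mathcal{L})$; analogously for $\mathcal{R}$. For $\mathbb{I}\circ\mathbb{R}=\mathrm{id}$, the inclusion $\mathcal{A}\subseteq\mathcal{U}\cap\ell^{-1}(\ell(\mathcal{A}))$ is trivial; conversely, if $U\in\mathcal{U}$ satisfies $\ell(U)\cong\ell(A)$ for some $A\in\mathcal{A}$, then for each $B\in\mathcal{B}\subseteq\mathcal{T}$, Lemma~\ref{Lem35b} gives $\mathbb{E}(U,B)\hookrightarrow\widetilde{\mathbb{E}}(\ell(U),\ell(B))\cong\widetilde{\mathbb{E}}(\ell(A),\ell(B))=0$, so $U\in\mathcal{A}$ by Remark~\ref{RemCotors}, and dually for $\mathcal{B}$. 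The main obstacle of the entire proof is the completeness step for $\mathbb{I}$, namely lifting an abstract distinguished triangle in $\widetilde{\mathscr{C}}$ to a conflation in $\mathscr{C}$ whose right-hand term is the prescribed object $C$ rather than some weakly equivalent placeholder; the two-stage strategy outlined above (first reduce to $C\in\mathcal{U}$ via a $(\mathcal{U},\mathcal{V})$-resolution, then splice via {\rm (ET4)$^{\mathrm{op}}$}) together with Lemma~\ref{LemUWU_TWT} is where all of the machinery of the previous sections is brought to bear.
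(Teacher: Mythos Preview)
Your proposal is correct and follows essentially the same approach as the paper: the decisive tool in both is Lemma~\ref{Lem35b}, cofibrant/fibrant replacements supply the reductions, and the completeness of $\mathbb{I}$ is obtained by lifting a distinguished triangle to a conflation in $\mathcal{Z}$ and then splicing via {\rm (ET4)$^{\mathrm{op}}$}. Two small organizational differences are worth noting. First, for the completeness step in $\mathbb{I}$ you reduce $C$ to $U^C\in\mathcal{U}$ via the $(\mathcal{U},\mathcal{V})$-resolution and then lift, whereas the paper instead passes to a fibrant replacement $T_X$ of $X$ and connects $Z$ to $T_X$ using Lemma~\ref{LemUWU_TWT}\,(2); your variant is dual and equally valid, but you will need the analogue Lemma~\ref{LemUWU_TWT}\,(1) to force the lifted conflation to end exactly at $U^C$. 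Second, for the composition identities the paper is slightly slicker: once $\mathbb{R}$ and $\mathbb{I}$ are known to land in cotorsion pairs, a single inclusion $\mathcal{A}\subseteq\mathcal{U}\cap\ell^{-1}(\ell(\mathcal{A}))$ (resp.\ $\ell(\mathcal{U}\cap\ell^{-1}(\mathcal{L}))\subseteq\mathcal{L}$) forces equality, since two cotorsion pairs with componentwise inclusions must coincide; your direct arguments via Lemma~\ref{Lem35b} and cofibrant replacement work too, but duplicate effort already absorbed in the well-definedness steps.
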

\begin{proof}
It suffices to show the following.
\begin{enumerate}
\item[(1)] For any $(\mathcal{A},\mathcal{B})\in\mathfrak{M}_{\mathcal{P}}$, we have $\mathbb{R}((\mathcal{A},\mathcal{B}))\in\mathfrak{CP}(\widetilde{\mathscr{C}})$.
\item[(2)] For any $(\mathcal{L},\mathcal{R})\in\mathfrak{CP}(\widetilde{\mathscr{C}})$, we have $\mathbb{I}((\mathcal{L},\mathcal{R}))\in\mathfrak{M}_{\mathcal{P}}$.
\item[(3)] $\mathbb{I}\circ\mathbb{R}=\mathrm{id}$.
\item[(4)] $\mathbb{R}\circ\mathbb{I}=\mathrm{id}$.
\end{enumerate}
To distinguish, in this proof, let $\ell(X)\in\widetilde{\mathscr{C}}$ denote the image under $\ell$ of an object $X\in\mathscr{C}$. 


{\rm (1)} Since $\ell(\mathcal{A})$ is the essential image of $\mathcal{A}$ under $\ell$, it is closed under isomorphisms and finite direct sums. $\mathscr{C}=\mathrm{Cone}(\mathcal{B},\mathcal{A})$ implies $\widetilde{\mathscr{C}}=\ell(\mathcal{A})\ast\ell(\mathcal{B})[1]$, by Definition~\ref{DefTriaLoc}. $\mathrm{Ext}^1_{\widetilde{\mathscr{C}}}(\ell(\mathcal{A}),\ell(\mathcal{B}))=0$ follows from the definition of $\mathfrak{M}_{\mathcal{P}}$.

It remains to show that $\ell(\mathcal{A}),\ell(\mathcal{B})\subseteq\widetilde{\mathscr{C}}$ are closed under direct summands. To show that $\ell(\mathcal{A})\subseteq\widetilde{\mathscr{C}}$ is closed under direct summands, it suffices to show $\ell(\mathcal{A})={}^{\perp}\ell(\mathcal{B})[1]$. Take any $X\in\mathscr{C}$, and suppose it satisfies $\mathrm{Ext}^1_{\widetilde{\mathscr{C}}}(\ell(X),\ell(\mathcal{B}))=0$.

Let us show $\ell(X)\in\ell(\mathcal{A})$. By a cofibrant replacement, we may assume $X$ belongs to $\mathcal{U}$. Resolve $X$ by an $\mathbb{E}$-triangle in $\mathscr{C}$
\[ B\to A\to X\overset{\delta}{\dashrightarrow} \quad(A\in\mathcal{A},B\in\mathcal{B}). \]
Since $\ell(\delta)=0$ by assumption, we obtain $\delta=0$ by Lemma~\ref{Lem35b}. Thus $X$ is a direct summand of $A$, which implies that $X$ itself belongs to $\mathcal{A}$.
Similarly for $\ell(\mathcal{B})\subseteq\widetilde{\mathscr{C}}$.


(2) Put $\mathcal{A}=\mathcal{U}\cap\ell^{-1}(\mathcal{L})$, $\mathcal{B}=\mathcal{T}\cap\ell^{-1}(\mathcal{R})$. Since both $\mathcal{U}$ and $\ell^{-1}(\mathcal{L})$ are closed under isomorphisms, finite direct sums and direct summands, so is their intersection $\mathcal{A}$. Similarly for $\mathcal{B}$.
By $\ell(\mathcal{S})\subseteq\ell(\mathcal{N})=0$, we have $\mathcal{S}\subseteq\mathcal{A}\subseteq\mathcal{U}$. By Lemma~\ref{Lem35b}, $\mathrm{Ext}^1_{\widetilde{\mathscr{C}}}(\ell(\mathcal{A}),\ell(\mathcal{B}))=0$ implies $\mathbb{E}(\mathcal{A},\mathcal{B})=0$.
It remains to show $\mathscr{C}=\mathrm{Cone}(\mathcal{B},\mathcal{A})=\mathrm{CoCone}(\mathcal{B},\mathcal{A})$. Let us show $\mathscr{C}=\mathrm{Cone}(\mathcal{B},\mathcal{A})$.

Let $X\in\mathscr{C}$ be any object. By the assumption, there exist $R\in\ell^{-1}(\mathcal{R}),\, L\in\ell^{-1}(\mathcal{L})$ and a distinguished triangle
\[ \ell(R)\to \ell(L)\to \ell(X)\to \ell(R)[1] \]
in $\widetilde{\mathscr{C}}$. By definition, it is isomorphic to the standard triangle associated to an $\mathbb{E}$-triangle, which we may assume to be of the form
\begin{equation}\label{ET_36A}
R_0\overset{x}{\longrightarrow}L_0\overset{y}{\longrightarrow}Z\overset{\delta}{\dashrightarrow}
\end{equation}
satisfying $R_0,L_0,Z\in\mathcal{Z}$, by a fibrant-cofibrant replacement (Corollary~\ref{CorCofibReplStan} and Remark~\ref{RemFibReplStan}).
Thus we have an $\mathbb{E}$-triangle $(\ref{ET_36A})$ satisfying $R_0\in\mathcal{Z}\cap\ell^{-1}(\mathcal{R}),\, L_0\in\mathcal{Z}\cap\ell^{-1}(\mathcal{L})$ and $Z\in\mathcal{Z}$, together with an  isomorphism $\zeta\colon \ell(Z)\overset{\cong}{\longrightarrow}\ell(X)$ in $\widetilde{\mathscr{C}}$.
Resolve $X$ by an $\mathbb{E}$-triangle
\[ X\overset{t_X}{\longrightarrow}T_X\overset{s_X}{\longrightarrow}S_X\overset{\rho_X}{\dashrightarrow}\quad(T_X\in\mathcal{T},\, S_X\in\mathcal{S}). \]
Then there exists a morphism $z\in\mathscr{C}(Z,T_X)$ which satisfies $\zeta=\ell(t_X)^{-1}\circ\ell(z)$. Since $\zeta$ is an isomorphism, it follows that $z\in\mathbb{W}$. By Lemma~\ref{LemUWU_TWT} {\rm (2)}, there exists an $\mathbb{E}$-triangle
\[ V\to Z\oplus I\overset{[z\ i]}{\longrightarrow}T_X\dashrightarrow\quad(V\in\mathcal{V},\, I\in\mathcal{I}). \]

On the other hand by {\rm (ET2)}, we have an $\mathbb{E}$-triangle
\[ R_0\overset{x_0}{\longrightarrow}L_0\oplus I\overset{y_0}{\longrightarrow}Z\oplus I\overset{}{\dashrightarrow} \]
from $(\ref{ET_36A})$, where $x_0=\Big[\raise1ex\hbox{\leavevmode\vtop{\baselineskip-8ex \lineskip1ex \ialign{#\crcr{$x$}\crcr{$0$}\crcr}}}\Big]$, $y_0=y\oplus\mathrm{id}_I$. Thus by {\rm (ET4)$^{\mathrm{op}}$}, we obtain a diagram
\begin{eqnarray*}
&\xy
(-24,8)*+{R_0}="0";
(-8,8)*+{{}^{\exists}E}="2";
(8,8)*+{V}="4";
(-24,-7)*+{R_0}="10";
(-8,-7)*+{L_0\oplus I}="12";
(8,-7)*+{Z\oplus I}="14";
(-8,-22)*+{T_X}="22";
(8,-22)*+{T_X}="24";
{\ar^{{}^{\exists}r} "0";"2"};
{\ar^{} "2";"4"};
{\ar@{=} "0";"10"};
{\ar_{{}^{\exists}e} "2";"12"};
{\ar^{} "4";"14"};
{\ar_(0.42){x_0} "10";"12"};
{\ar_{y_0} "12";"14"};
{\ar_{d} "12";"22"};
{\ar^{[z\ i]} "14";"24"};
{\ar@{=} "22";"24"};
{\ar@{}|\circlearrowright "0";"12"};
{\ar@{}|\circlearrowright "2";"14"};
{\ar@{}|\circlearrowright "12";"24"};
\endxy &\\
&(d=[z\ \ i]\circ y_0=[z\circ y\ \ i])&
\end{eqnarray*}
made of conflations.
Since $\ell(r)$ is an isomorphism in $\widetilde{\mathscr{C}}$, we have $E\in\ell^{-1}(\mathcal{R})$. Besides, $R_0,V\in\mathcal{T}$ implies $E\in\mathcal{T}$.

By {\rm (ET4)$^{\mathrm{op}}$}, we obtain a diagram
\[
\xy
(-24,8)*+{E}="0";
(-8,8)*+{{}^{\exists}F}="2";
(8,8)*+{X}="4";
(-24,-7)*+{E}="10";
(-8,-7)*+{L_0\oplus I}="12";
(8,-7)*+{T_X}="14";
(-8,-22)*+{S_X}="22";
(8,-22)*+{S_X}="24";
{\ar^{} "0";"2"};
{\ar^{} "2";"4"};
{\ar@{=} "0";"10"};
{\ar_{{}^{\exists}f} "2";"12"};
{\ar^{t_X} "4";"14"};
{\ar_(0.42){e} "10";"12"};
{\ar_(0.56){d} "12";"14"};
{\ar_{} "12";"22"};
{\ar^{s_X} "14";"24"};
{\ar@{=} "22";"24"};
{\ar@{}|\circlearrowright "0";"12"};
{\ar@{}|\circlearrowright "2";"14"};
{\ar@{}|\circlearrowright "12";"24"};
\endxy
\]
made of conflations.
Since $\ell(f)$ is an isomorphism in $\widetilde{\mathscr{C}}$, this shows $F\in\ell^{-1}(\mathcal{L})$. Resolve $F$ by an $\mathbb{E}$-triangle
\[ V^F\to U^F\to F\dashrightarrow\quad(U^F\in\mathcal{U},\, V^F\in\mathcal{V}). \]
By {\rm (ET4)$^{\mathrm{op}}$}, we obtain a diagram
\[
\xy
(-21,7)*+{V^F}="0";
(-7,7)*+{{}^{\exists}G}="2";
(7,7)*+{E}="4";
(-21,-7)*+{V^F}="10";
(-7,-7)*+{U^F}="12";
(7,-7)*+{F}="14";
(-7,-21)*+{X}="22";
(7,-21)*+{X}="24";
{\ar^{} "0";"2"};
{\ar^{} "2";"4"};
{\ar@{=} "0";"10"};
{\ar_{} "2";"12"};
{\ar^{} "4";"14"};
{\ar_{} "10";"12"};
{\ar_{} "12";"14"};
{\ar_{} "12";"22"};
{\ar^{} "14";"24"};
{\ar@{=} "22";"24"};
{\ar@{}|\circlearrowright "0";"12"};
{\ar@{}|\circlearrowright "2";"14"};
{\ar@{}|\circlearrowright "12";"24"};
\endxy
\]
made of conflations. Then in the $\mathbb{E}$-triangle $G\to U^F\to X\dashrightarrow$, we have $U^F\in\mathcal{U}\cap\ell^{-1}(\mathcal{L})=\mathcal{A}$ and $G\in\mathcal{T}\cap\ell^{-1}(\mathcal{R})=\mathcal{B}$.


(3) For any $(\mathcal{A},\mathcal{B})\in\mathfrak{M}_{\mathcal{P}}$, we have
\[ \mathbb{I}\circ\mathbb{R}((\mathcal{A},\mathcal{B}))=(\mathcal{U}\cap\ell^{-1}(\ell(\mathcal{A})),\mathcal{T}\cap\ell^{-1}(\ell(\mathcal{B}))). \]
Obviously, $\mathcal{A}\subseteq\mathcal{U}\cap\ell^{-1}(\ell(\mathcal{A}))$ and $\mathcal{B}\subseteq\mathcal{T}\cap\ell^{-1}(\ell(\mathcal{B}))$ hold. Since both $(\mathcal{A},\mathcal{B})$ and $\mathbb{I}\circ\mathbb{R}((\mathcal{A},\mathcal{B}))$ are cotorsion pairs, this means $(\mathcal{A},\mathcal{B})=\mathbb{I}\circ\mathbb{R}((\mathcal{A},\mathcal{B}))$.


(4) For any $(\mathcal{L},\mathcal{R})\in\mathfrak{CP}(\widetilde{\mathscr{C}})$, we have
\[ \mathbb{R}\circ\mathbb{I}((\mathcal{L},\mathcal{R}))=(\ell(\mathcal{U}\cap\ell^{-1}(\mathcal{L})),\ell(\mathcal{T}\cap\ell^{-1}(\mathcal{R}))), \]
which obviously satisfies $\ell(\mathcal{U}\cap\ell^{-1}(\mathcal{L}))\subseteq\mathcal{L}$ and $\ell(\mathcal{T}\cap\ell^{-1}(\mathcal{R}))\subseteq\mathcal{R}$. Similarly as in (3), it follows that $(\mathcal{L},\mathcal{R})=\mathbb{R}\circ\mathbb{I}((\mathcal{L},\mathcal{R}))$.
\end{proof}

\begin{claim}\label{ClaimConeMut}
For any $(\mathcal{A},\mathcal{B})\in\mathfrak{CP}(\mathscr{C})$ satisfying $\mathcal{S}\subseteq\mathcal{A}\subseteq\mathcal{U}$ (or equivalently $\mathcal{V}\subseteq\mathcal{B}\subseteq\mathcal{T}$), we have
\begin{eqnarray*}
&\mathcal{U}\cap\ell^{-1}\ell(\mathcal{A})=\mathcal{U}\cap\mathrm{CoCone}(\mathcal{A},\mathcal{S}),&\\
&\mathcal{T}\cap\ell^{-1}\ell(\mathcal{B})=\mathcal{T}\cap\mathrm{Cone}(\mathcal{V},\mathcal{B}).&
\end{eqnarray*}
\end{claim}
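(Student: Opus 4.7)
By the self-duality of the setup, it suffices to prove the first equality; the second follows by the same argument applied with the roles of the two cotorsion pairs exchanged, using Lemma~\ref{LemUWU_TWT}(2) in place of (1) and the fact that $\mathcal{I}\subseteq\mathcal{V}\subseteq\mathcal{B}$.

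For the inclusion $\mathcal{U}\cap\mathrm{CoCone}(\mathcal{A},\mathcal{S})\subseteq\mathcal{U}\cap\ell^{-1}\ell(\mathcal{A})$, suppose $U\in\mathcal{U}$ admits a conflation $U\to A\to S$ with $A\in\mathcal{A}$ and $S\in\mathcal{S}$. Since $\mathcal{S}\subseteq\mathcal{N}$, Claim~\ref{ClaimNW}(1) shows that $U\to A$ is a weak equivalence, hence $\ell(U)\cong\ell(A)\in\ell(\mathcal{A})$, giving $U\in\ell^{-1}\ell(\mathcal{A})$.

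For the reverse inclusion, suppose $U\in\mathcal{U}$ and $\ell(U)\cong\ell(A_0)$ in $\widetilde{\mathscr{C}}$ for some $A_0\in\mathcal{A}$. The plan is to produce a \emph{single} weak equivalence $f\colon U\to T$ in $\mathscr{C}$ with $T\in\mathcal{A}$, and then to massage it into an acyclic cofibration whose cofiber lies in $\mathcal{S}$. To construct $f$, I will fibrantly replace both $U$ and $A_0$ through the $(\mathcal{S},\mathcal{T})$-cotorsion pair: choose conflations $U\to T_U\to S_U$ and $A_0\to T_{A_0}\to S_{A_0}$ with $T_U,T_{A_0}\in\mathcal{T}$ and $S_U,S_{A_0}\in\mathcal{S}$. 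Extension-closedness of $\mathcal{U}$ (resp.\ of $\mathcal{A}$, using $\mathcal{S}\subseteq\mathcal{A}$) places $T_U,T_{A_0}$ in $\mathcal{Z}=\mathcal{T}\cap\mathcal{U}$, and $T_{A_0}$ moreover in $\mathcal{A}$. The maps $U\to T_U$ and $A_0\to T_{A_0}$ are weak equivalences by Claim~\ref{ClaimNW}(1), so the assumed isomorphism descends to an isomorphism $\ell(T_U)\cong\ell(T_{A_0})$ in $\widetilde{\mathscr{C}}$. By Corollary~\ref{CorHTCP}, the bijection $(\mathcal{Z}/\mathcal{I})(T_U,T_{A_0})\cong\widetilde{\mathscr{C}}(T_U,T_{A_0})$ lets us lift this isomorphism to a morphism $g\colon T_U\to T_{A_0}$ in $\mathscr{C}$ whose image in $\mathcal{Z}/\mathcal{I}$ is invertible; by Remark~\ref{RemInvW} the map $g$ then belongs to $\mathbb{W}$. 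Composing yields the desired $f\colon U\to T_U\overset{g}{\to}T_{A_0}$, a weak equivalence with $T_{A_0}\in\mathcal{A}\cap\mathcal{U}$.

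To conclude, apply Lemma~\ref{LemUWU_TWT}(1) to $f$ (both source and target are in $\mathcal{U}$) to obtain $I\in\mathcal{I}$ and $i\in\mathscr{C}(U,I)$ such that $\bigl[\begin{smallmatrix}f\\i\end{smallmatrix}\bigr]\colon U\to T_{A_0}\oplus I$ is an acyclic cofibration; completing to a conflation gives
\[ U\longrightarrow T_{A_0}\oplus I\longrightarrow S \]
with $S\in\mathcal{S}$. Since $\mathcal{I}=\mathcal{S}\cap\mathcal{T}\subseteq\mathcal{S}\subseteq\mathcal{A}$ and $T_{A_0}\in\mathcal{A}$, the middle term lies in $\mathcal{A}$, so $U\in\mathrm{CoCone}(\mathcal{A},\mathcal{S})$. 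The real content of the argument is the middle step: the abstract isomorphism in the localisation $\widetilde{\mathscr{C}}$ has no intrinsic right to be realised by a morphism of $\mathscr{C}$, and the only reason it can be is that fibrant replacement via $(\mathcal{S},\mathcal{T})$ lands simultaneously inside $\mathcal{Z}$ (so that Corollary~\ref{CorHTCP} applies) and inside $\mathcal{A}$ (so that $T_{A_0}$ is still available as a target in $\mathcal{A}$); choosing the $(\mathcal{U},\mathcal{V})$-replacement instead would leave us outside $\mathcal{A}$.
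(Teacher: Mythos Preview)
Your proof is correct. Both your argument and the paper's rest on the same core idea: fibrantly replace via the $(\mathcal{S},\mathcal{T})$-cotorsion pair so that the abstract isomorphism in $\widetilde{\mathscr{C}}$ can be lifted to an honest morphism in $\mathscr{C}$, then exploit that morphism to exhibit $U$ as a $\mathrm{CoCone}(\mathcal{A},\mathcal{S})$. The execution differs. The paper fibrantly replaces \emph{only} $U$, obtaining $U\to Z\to S$ with $Z\in\mathcal{Z}$, and then lifts the isomorphism $\ell(A)\cong\ell(Z)$ to a morphism $f\colon A\to Z$ via Remark~\ref{RemUTMono} (using $A\in\mathcal{U}$, $Z\in\mathcal{T}$); factorising $f=h\circ g$ with $g\in w\mathit{Cof}$, $h\in w\mathit{Fib}$ and noting that $h$ splits (since $\mathbb{E}(Z,\mathcal{V})=0$) shows $Z$ is a summand of an object of $\mathcal{A}$, hence $Z\in\mathcal{A}$, and the original conflation $U\to Z\to S$ already witnesses the claim. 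You instead fibrantly replace \emph{both} objects, so that $T_{A_0}\in\mathcal{A}$ is immediate by extension-closedness, and then invoke Lemma~\ref{LemUWU_TWT}(1) to upgrade the composite weak equivalence $U\to T_{A_0}$ to an acyclic cofibration $U\to T_{A_0}\oplus I$. Your route trades the splitting/factorisation argument for the black box of Lemma~\ref{LemUWU_TWT}; the paper's route is slightly more self-contained but requires observing that the intermediate object $E$ lies in $\mathcal{A}$ and splits off $Z$.
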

\begin{proof}
We only show $\mathcal{U}\cap\ell^{-1}\ell(\mathcal{A})=\mathcal{U}\cap\mathrm{CoCone}(\mathcal{A},\mathcal{S})$.

$\mathcal{U}\cap\mathrm{CoCone}(\mathcal{A},\mathcal{S})\subseteq\mathcal{U}\cap\ell^{-1}\ell(\mathcal{A})$ is obvious. For the converse, let $U\in\mathcal{U}$ be any object satisfying $\ell(U)\cong \ell(A)$ in $\widetilde{\mathscr{C}}$ for some $A\in\mathcal{A}$. Resolve $U$ by an $\mathbb{E}$-triangle
\[ U\to Z\to S\dashrightarrow\quad(Z\in\mathcal{Z},\, S\in\mathcal{S}). \]
Then $\ell(A)\cong \ell(Z)$ holds in $\widetilde{\mathscr{C}}$. Since $A\in\mathcal{U},Z\in\mathcal{T}$, there is a morphism $f\in\mathscr{C}(A,Z)$ which gives the isomorphism $\ell(f)\colon\ell(A)\to\ell(Z)$ by Remark~\ref{RemUTMono}. Factorize this $f\in\mathbb{W}$ as
\[ f=h\circ g\quad(g\in w\mathit{Cof},\, h\in w\mathit{Fib}). \]
By definition, we have $\mathbb{E}$-triangles
\begin{eqnarray*}
V_0\to E\overset{h}{\longrightarrow}Z\dashrightarrow&&(V_0\in\mathcal{V}),\\
A\overset{g}{\longrightarrow}E\to S_0\dashrightarrow&&(S_0\in\mathcal{S}).
\end{eqnarray*}
Since $\mathbb{E}(Z,V_0)=0$, it follows that $V_0\oplus Z\cong E\in\mathcal{A}$, which implies $Z\in\mathcal{A}$.
\end{proof}

The class $\mathfrak{M}_{\mathcal{P}}$ can be rewritten as follows.
\begin{corollary}\label{CorCondExt}
Let $\mathcal{P}$ be a Hovey twin cotorsion pair on $\mathscr{C}$. For any $(\mathcal{A},\mathcal{B})\in\mathfrak{CP}(\mathscr{C})$ satisfying $\mathcal{S}\subseteq\mathcal{A}\subseteq\mathcal{U}$ (or equivalently $\mathcal{V}\subseteq\mathcal{B}\subseteq\mathcal{T}$), the following are equivalent.
\begin{enumerate}
\item[{\rm (1)}] $(\mathcal{A},\mathcal{B})\in\mathfrak{M}_{\mathcal{P}}$ i.e., it satisfies $\mathrm{Ext}^1_{\widetilde{\mathscr{C}}}(\ell(\mathcal{A}),\ell(\mathcal{B}))=0$
\item[{\rm (2)}] $\mathcal{U}\cap\ell^{-1}\ell(\mathcal{A})=\mathcal{A}$.
\item[{\rm (2)$^{\prime}$}] $\mathcal{U}\cap\ell^{-1}\ell(\mathcal{A})\subseteq\mathcal{A}$.
\item[{\rm (3)}] $\mathcal{T}\cap\ell^{-1}\ell(\mathcal{B})=\mathcal{B}$.
\item[{\rm (3)$^{\prime}$}] $\mathcal{T}\cap\ell^{-1}\ell(\mathcal{B})\subseteq\mathcal{B}$.
\end{enumerate}
Thus by Claim~\ref{ClaimConeMut}, we have
\begin{eqnarray*}
\mathfrak{M}_{\mathcal{P}}&=&\{(\mathcal{A},\mathcal{B})\in\mathfrak{CP}(\mathscr{C})\mid \mathcal{S}\subseteq\mathcal{A}\subseteq\mathcal{U},\ \mathcal{U}\cap\mathrm{CoCone}(\mathcal{A},\mathcal{S})\subseteq\mathcal{A}\}\\
&=&\{(\mathcal{A},\mathcal{B})\in\mathfrak{CP}(\mathscr{C})\mid \mathcal{V}\subseteq\mathcal{B}\subseteq\mathcal{T},\ \mathcal{T}\cap\mathrm{Cone}(\mathcal{V},\mathcal{B})\subseteq\mathcal{B}\}.
\end{eqnarray*}
\end{corollary}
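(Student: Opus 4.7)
The equivalences $(2)\Leftrightarrow(2')$ and $(3)\Leftrightarrow(3')$ are immediate, since $\mathcal{A}\subseteq\mathcal{U}\cap\ell^{-1}\ell(\mathcal{A})$ and $\mathcal{B}\subseteq\mathcal{T}\cap\ell^{-1}\ell(\mathcal{B})$ always hold. The forward implications $(1)\Rightarrow(2)$ and $(1)\Rightarrow(3)$ are a direct consequence of Theorem~\ref{ThmRecoll}: if $(\mathcal{A},\mathcal{B})\in\mathfrak{M}_{\mathcal{P}}$, then $\mathbb{I}\circ\mathbb{R}(\mathcal{A},\mathcal{B})=(\mathcal{U}\cap\ell^{-1}\ell(\mathcal{A}),\mathcal{T}\cap\ell^{-1}\ell(\mathcal{B}))$ equals $(\mathcal{A},\mathcal{B})$. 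The final displayed characterization of $\mathfrak{M}_{\mathcal{P}}$ is then an immediate consequence of Claim~\ref{ClaimConeMut}, which rewrites $\mathcal{U}\cap\ell^{-1}\ell(\mathcal{A})$ as $\mathcal{U}\cap\mathrm{CoCone}(\mathcal{A},\mathcal{S})$ and dually.

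The substantial content is the converse, and by duality it suffices to establish $(2')\Rightarrow(1)$. My plan is to first prove $(2')\Rightarrow(3')$, and then to close the loop by deducing $(1)$ from $(2')$ and $(3')$ jointly. For the latter: given $A\in\mathcal{A}$, $B\in\mathcal{B}$ and $\xi\in\widetilde{\mathbb{E}}(\ell(A),\ell(B))$, apply Corollary~\ref{CorCofibReplStan} to realise $\xi=\widetilde\ell(\delta)$ for an $\mathbb{E}$-triangle $U_0\to U_1\to U_2\overset{\delta}{\dashrightarrow}$ with $U_i\in\mathcal{U}$, $\ell(U_0)\cong\ell(B)$, $\ell(U_2)\cong\ell(A)$. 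Hypothesis $(2')$ gives $U_2\in\mathcal{A}$. Now fibrantly replace $U_0$ by $U_0\overset{w}{\to}T_0$ with $T_0\in\mathcal{T}$ and $w\in w\mathit{Cof}$; then $\ell(T_0)\cong\ell(U_0)\cong\ell(B)$, so $(3')$ yields $T_0\in\mathcal{B}$. The pushforward $w_{\ast}\delta\in\mathbb{E}(U_2,T_0)$ vanishes by the cotorsion-pair property $\mathbb{E}(\mathcal{A},\mathcal{B})=0$, and since Lemma~\ref{LemMorphExtExt} applied to $(\mathrm{id}_{U_2},w)$ gives $\widetilde\ell(w_{\ast}\delta)=\ell(w)[1]\circ\widetilde\ell(\delta)$ with $\ell(w)$ invertible, we conclude $\widetilde\ell(\delta)=\xi=0$.

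For $(2')\Rightarrow(3')$, take $T\in\mathcal{T}\cap\ell^{-1}\ell(\mathcal{B})$ and its cotorsion-pair coresolution $T\to B_T\to A_T\overset{\rho}{\dashrightarrow}$ with $A_T\in\mathcal{A},B_T\in\mathcal{B}$. Since $A_T\in\mathcal{U}$ and $T\in\mathcal{T}$, Lemma~\ref{Lem35b} tells us $\widetilde\ell\colon\mathbb{E}(A_T,T)\hookrightarrow\widetilde{\mathbb{E}}(\ell(A_T),\ell(T))$ is injective; so it suffices to show $\widetilde\ell(\rho)=0$, whereupon $\rho=0$, the coresolution splits, and $T$ appears as a direct summand of $B_T\in\mathcal{B}$, forcing $T\in\mathcal{B}$. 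To kill $\widetilde\ell(\rho)$, cofibrantly replace the triangle associated to $\rho$ to obtain $U_0'\to U_1'\to U_2'\overset{\rho'}{\dashrightarrow}$ with $U_i'\in\mathcal{U}$, $\ell(U_2')\cong\ell(A_T)$, and $\widetilde\ell(\rho')$ matching $\widetilde\ell(\rho)$ under the isomorphisms. By $(2')$, $U_2'\in\mathcal{A}$. Resolve $U_0'\overset{m}{\to}B^{U_0'}\to A^{U_0'}\overset{\sigma}{\dashrightarrow}$ by $(\mathcal{A},\mathcal{B})$ and apply Proposition~\ref{PropBaer}(2): the middle row produces $m_{\ast}\rho'\in\mathbb{E}(U_2',B^{U_0'})=0$, and Lemma~\ref{LemMorphExtExt} then gives $\ell(m)[1]\circ\widetilde\ell(\rho')=0$. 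Combining this with the long exact sequence of the triangle $\ell(U_0')\to\ell(B^{U_0'})\to\ell(A^{U_0'})\overset{\widetilde\ell(\sigma)}{\to}\ell(U_0')[1]$ writes $\widetilde\ell(\rho')=\widetilde\ell(\sigma)\circ\phi$ for some $\phi\colon\ell(U_2')\to\ell(A^{U_0'})$.

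The main obstacle is precisely this last step: showing that $\widetilde\ell(\sigma)\circ\phi=0$ without begging the question. The plan is to exploit that both $U_2'$ and $A^{U_0'}$ are in $\mathcal{A}$, so by iterating the cofibrant replacement, the Baer-sum construction, and $(2')$, the same argument applies to the pair $(A^{U_0'},U_0')$ and eventually descends the problem to an instance where either the connecting class or the composing morphism becomes visibly zero in $\widetilde{\mathscr{C}}$; alternatively, one can invoke Lemma~\ref{Lem35b} after a further fibrant replacement of $U_0'$ inside $\mathcal{U}$, reducing the vanishing of $\widetilde\ell(\sigma)\circ\phi$ to the vanishing of an explicit element of $\mathbb{E}(\mathcal{A},\mathcal{B})$, which is zero by the cotorsion-pair property. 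This bootstrapping is the genuinely delicate part of the proof; everything else is diagram manipulation of the sort already systematised in Section~\ref{section_Fund}.
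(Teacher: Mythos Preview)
Your reductions $(2)\Leftrightarrow(2')$, $(3)\Leftrightarrow(3')$, and $(1)\Rightarrow(2),(3)$ via Theorem~\ref{ThmRecoll} are fine, and the final description of $\mathfrak{M}_{\mathcal{P}}$ via Claim~\ref{ClaimConeMut} is indeed immediate once the equivalences are established.

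The genuine gap is in your detour through $(2')\Rightarrow(3')$. You yourself flag the ``main obstacle'': after writing $\widetilde\ell(\rho')=\widetilde\ell(\sigma)\circ\phi$ you need $\widetilde\ell(\sigma)\circ\phi=0$, and your proposed ``bootstrapping'' does not terminate. Iterating the cofibrant replacement and Baer construction simply reproduces the same shape of problem with new objects in $\mathcal{A}$; nothing in the argument makes the obstruction strictly smaller, so there is no induction to run. The alternative you sketch (a further fibrant replacement of $U_0'$ inside $\mathcal{U}$ plus Lemma~\ref{Lem35b}) is not a proof either: to conclude you would need exactly the vanishing $\mathrm{Ext}^1_{\widetilde{\mathscr{C}}}(\ell(\mathcal{A}),\ell(\mathcal{B}))=0$ that you are trying to establish.

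The paper avoids this loop entirely by proving $(2')\Rightarrow(1)$ \emph{directly}, without passing through $(3')$. The key move you are missing is to compute $\mathrm{Ext}^1_{\widetilde{\mathscr{C}}}(\ell(A),\ell(B))$ not via realising an arbitrary $\widetilde{\mathbb{E}}$-extension, but as a Hom-set in $\mathscr{C}/\mathcal{I}$. Concretely: take a fibrant-cofibrant model $Z^T\in\mathcal{Z}$ for $A[-1]$ (obtained from $\mathbb{E}$-triangles $T^A\to S^A\to A\dashrightarrow$ with $S^A\in\mathcal{S},T^A\in\mathcal{T}$ and $V^T\to Z^T\to T^A\dashrightarrow$ with $Z^T\in\mathcal{Z},V^T\in\mathcal{V}$). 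Since $Z^T\in\mathcal{U}$ and $B\in\mathcal{T}$, Remark~\ref{RemUTMono} gives
\[
\mathrm{Ext}^1_{\widetilde{\mathscr{C}}}(\ell(A),\ell(B))\cong\widetilde{\mathscr{C}}(\ell(Z^T),\ell(B))\cong(\mathscr{C}/\mathcal{I})(Z^T,B).
\]
Now factor $Z^T\to S^A$ as $\mathit{Cof}$ followed by $w\mathit{Fib}$; a short diagram chase (using Lemma~\ref{LemNine}) produces a conflation $Z^T\overset{g}{\to}E\to U_0$ with $E\in\mathcal{S}$ and $U_0\in\mathcal{U}$ weakly equivalent to $A$. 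Hypothesis $(2')$ then gives $U_0\in\mathcal{A}$, hence $\mathbb{E}(U_0,B)=0$, so every $f\colon Z^T\to B$ factors through $E\in\mathcal{S}$; since $B\in\mathcal{T}$, Lemma~\ref{LemUV} for the cotorsion pair $(\mathcal{S},\mathcal{T})$ forces $\overline{f}=0$ in $\mathscr{C}/\mathcal{I}$. This is a single, terminating computation, and it uses $(2')$ exactly once.
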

\begin{proof}
We only show $(1)\Leftrightarrow(2)\Leftrightarrow(2)^{\prime}$. 

$(1)\Rightarrow (2)$ follows from Theorem~\ref{ThmRecoll}.

$(2)\Rightarrow (2)^{\prime}$ is obvious.
It remains to show $(2)^{\prime}\Rightarrow (1)$.

Suppose $(2)^{\prime}$ is satisfied. Let us show $\mathrm{Ext}^1_{\widetilde{\mathscr{C}}}(\ell(A),\ell(B))=0$ for any pair of objects $A\in\mathcal{A},\, B\in\mathcal{B}$.
Resolve $A$ by an $\mathbb{E}$-triangle
\[ T^A\overset{t}{\longrightarrow}S^A\to A\dashrightarrow\quad(T^A\in\mathcal{T},\, S^A\in\mathcal{S}) \]
and $T_A$ by
\[ V^T\to Z^T\overset{z}{\longrightarrow}T^A\dashrightarrow\quad(Z^T\in\mathcal{Z},\, V^T\in\mathcal{V}). \]
Then we have $\ell(A)[-1]\cong\ell(T^A)\cong\ell(Z^T)$, and thus
\[ \mathrm{Ext}^1_{\widetilde{\mathscr{C}}}(\ell(A),\ell(B))\cong\widetilde{\mathscr{C}}(\ell(Z^T),\ell(B))\cong(\mathscr{C}/\mathcal{I})(Z^T,B) \]by Remark~\ref{RemUTMono}. Let us show $(\mathscr{C}/\mathcal{I})(Z^T,B)=0$.
Factorize $t\circ z$ as
\[ t\circ z=h\circ g\quad(g\in\mathit{Cof},\, h\in w\mathit{Fib}), \]
to obtain a diagram
\[
\xy
(-7,15)*+{V^T}="-12";
(7,15)*+{{}^{\exists}V_0}="-14";
(-7,2)*+{Z^T}="2";
(7,2)*+{{}^{\exists}E}="4";
(21,2)*+{{}^{\exists}U_0}="6";
(-7,-12)*+{T^A}="12";
(7,-12)*+{S^A}="14";
(21,-12)*+{A}="16";
{\ar_{} "-12";"2"};
{\ar^{} "-14";"4"};
{\ar^{g} "2";"4"};
{\ar^{} "4";"6"};
{\ar_{z} "2";"12"};
{\ar^{h} "4";"14"};
{\ar_{t} "12";"14"};
{\ar_{} "14";"16"};
{\ar@{}|\circlearrowright "2";"14"};
\endxy\qquad(U_0\in\mathcal{U},\, V_0\in\mathcal{V})
\]
made of conflations.
Since $\mathbb{E}(S^A,V_0)=0$, we have $E\cong S^A\oplus V_0$. Besides, by the extension-closedness of $\mathcal{U}\subseteq\mathscr{C}$, we have $E\in\mathcal{U}$, which shows $V_0\in\mathcal{V}\cap\mathcal{U}=\mathcal{I}$. Thus it follows that $E\in\mathcal{S}$.

By Lemma~\ref{LemNine}, we obtain $X\in\mathscr{C}$ and conflations
\[ V^T\to V_0\to X\ \ \text{and}\ \ X\to U_0\overset{u}{\longrightarrow} A. \]
By Lemma~\ref{LemConcentric1} {\rm (1)}, we have $X\in\mathcal{N}$, and thus $u\in\mathbb{W}$ by Claim~\ref{ClaimNW}. This shows $\ell(U_0)\cong\ell(A)$, which means $U_0\in\mathcal{U}\cap\ell^{-1}\ell(\mathcal{A})\subseteq\mathcal{A}$ by assumption. Thus we obtain exact sequence
\[ \mathscr{C}(E,B)\overset{\mathscr{C}(g,B)}{\longrightarrow}\mathscr{C}(Z^T,B)\to\mathbb{E}(U_0,B)=0. \]
This shows that any morphism $f\in\mathscr{C}(Z^T,B)$ factors through $E\in\mathcal{S}$, and thus $\overline{f}=0$ holds in $(\mathscr{C}/\mathcal{I})(Z^T,B)$.
\end{proof}

The above arguments allows us to define the mutation of cotorsion pairs as follows.
\begin{definition}
Let $\mathcal{P}=((\mathcal{S},\mathcal{T}),(\mathcal{U},\mathcal{V}))$ be a Hovey twin cotorsion pair on $\mathscr{C}$. We define {\it mutation with respect to} $(\mathcal{S},\mathcal{V})$ as a $\mathbb{Z}$-action on $\mathfrak{M}_{\mathcal{P}}$ given by $\mu_n=\mathbb{I}\circ [n]\circ \mathbb{R}$, i.e.,
\[ \mu_n\colon\mathfrak{M}_{\mathcal{P}}\to\mathfrak{M}_{\mathcal{P}}\ ;\ (\mathcal{A},\mathcal{B})\mapsto \big(\mathcal{U}\cap\ell^{-1}(\ell(\mathcal{A})[n]),\mathcal{T}\cap\ell^{-1}(\ell(\mathcal{B})[n])\big) \]
for any $n\in\mathbb{Z}$.
\end{definition}

\end{document}